\DeclareMathAlphabet{\mathpgoth}{OT1}{pgoth}{m}{n}
\DeclareMathAlphabet{\mathesstixfrak}{U}{esstixfrak}{m}{n}
\DeclareMathAlphabet{\mathboondoxfrak}{U}{BOONDOX-frak}{m}{n}
\numberwithin{equation}{section}
\definecolor{darkred}{rgb}{0.5,0,0}
\definecolor{darkgreen}{rgb}{0,0.5,0}
\definecolor{darkblue}{rgb}{0,0.3,0.8}
\numberwithin{equation}{section}
\newtheorem{thm}{Theorem}[section] 
\newtheorem{cor}[thm]{Corollary}
\newtheorem{thmd}[thm]{Theorem-Definition}
\newtheorem{lemd}[thm]{Lemma-Definition}
\newtheorem{conj}[thm]{Conjecture}
\newtheorem{prop}[thm]{Proposition}
\newtheorem{lemma}[thm]{Lemma}
\theoremstyle{definition}
\newtheorem{defn}[thm]{Definition}
\newtheorem{convention}[thm]{Convention}
\theoremstyle{remark}
\newtheorem{rem}[thm]{Remark}
\newtheorem{example}[thm]{Example}
\xdef\csname c\x\endcsname{\noexpand\ensuremath{\noexpand\mathcal{\x}}}
\xdef\csname frak\x\endcsname{\noexpand\ensuremath{\noexpand\mathfrak{\x}}}
\xdef\csname b\x\endcsname{\noexpand\ensuremath{\noexpand\mathbb{\x}}}
\newcommand{\sq}{/\!\!/}
\newcommand{\ob}{\operatorname{Ob}}
\newcommand{\op}{\operatorname{op}}
\newcommand{\std}{\mathrm{std}}
\newcommand{\pr}{\operatorname{pr}}
\newcommand{\flow}{\psi^s_{Z}}
\newcommand{\pX}{\partial_{\infty} X}
\newcommand{\uhclim}[1]{\underset{#1}{\operatorname{hocolim}}\hspace{1.5pt}}
\newcommand{\uhlim}[1]{\underset{#1}{\operatorname{holim}}\hspace{1.5pt}}
\newcommand{\crit}{\operatorname{Crit}}
\newcommand{\Crit}{\operatorname{Crit}}
\newcommand{\norm}[1]{\left\lVert#1\right\rVert}
\newcommand{\hor}{\operatorname{hor}}
\newcommand{\ver}{\operatorname{vert}}
\newcommand{\moduli}{\widetilde{\cM}}
\newcommand{\Gr}{\operatorname{Gr}} 
\newcommand{\diag}{\operatorname{diag}}  
\newcommand{\Tr}{\operatorname{Tr}}
\newcommand{\Z}{\mathbb{Z}}
\newcommand{\R}{\mathbb{R}}
\newcommand{\C}{\mathbb{C}}
\newcommand{\pt}{\mathrm{pt}}
\newcommand{\Cone}{\mathrm{Cone}}
\newcommand{\Aut}{\mathrm{Aut}}
\title[Equivariant partially wrapped Fukaya categories]{Equivariant Partially Wrapped Fukaya Categories on Liouville Sectors}
\author[Choa]{Dongwook Choa}
\address{Department of Mathematics\\ Chungbuk National University}
\email{dwchoa@chungbuk.ac.kr}
\author[Hu]{Jiawei Hu}
\address{Department of Mathematics\\ Boston University}
\email{jiaweihu@bu.edu}
\author[Lau]{Siu-Cheong Lau}
\address{Department of Mathematics\\ Boston University}
\email{lau@math.bu.edu}
\author[Li]{Yan-Lung Leon Li}
\address{Center of Geometry and Physics, Institute for Basic Science (IBS), Pohang 37673, Korea}
\email{ylli@ibs.re.kr}
\date{\today}
\begin{document}

\begin{abstract}
We develop an equivariant Lagrangian Floer theory for Liouville sectors that have symmetry of a Lie group $G$.   Moreover, for Liouville manifolds with $G$-symmetry, we develop a correspondence theory to relate the equivariant Lagrangian Floer cohomology upstairs and Lagrangian Floer cohomology of its quotient.   Furthermore, we study the symplectic quotient in the presence of nodal type singularities and prove that the equivariant correspondence gives an isomorphism on cohomologies which was conjectured by Lekili-Segal.
\end{abstract}
\maketitle    

{ \hypersetup{hidelinks} \tableofcontents }

\section{Introduction}
We introduce an equivariant wrapped Fukaya category  for Liouville manifolds (or sectors) equipped with a Liouville action by a compact Lie group $G$. This construction is carried out through a variant of the Borel construction in which  the homotopy quotient is modeled by approximations of a sequence of Liouville manifolds (sectors). Within this framework, we develop a Lagrangian correspondence theory which,  in the case of a free action, generates a fully faithful $A_{\infty}$-functor from a full subcategory of  the equivariant wrapped Fukaya category to the ordinary wrapped Fukaya category of the exact symplectic quotient. 

Let $X$ be a symplectic manifold whose mirror is denoted by $X^\vee$. Teleman conjectured that a Hamiltonian $G$ action on $X$ is mirror to a holomorphic fibration 
$$ F: X^\vee \to G^\vee_\mathbb{C}/\mathrm{Ad} $$
where $G^\vee_\mathbb{C}$ is the complexified Langlands dual group, and $G^\vee_\mathbb{C}/\mathrm{Ad}$ denotes the space of conjugacy classes, such that 
the mirror of a symplectic quotient $X \sq_{\xi}\, G$ at a regular moment-map level $\xi\in\mathfrak{g}^*$ is given by a fiber $X^\vee_{\phi(\xi)} = F^{-1}\{\phi(\xi)\}$ for some $\phi(\xi) \in G^\vee_\mathbb{C}/\mathrm{Ad}$. Equivariant quantum cohomology in this setup was studied in \cite{PT1, GMP1, GMP2} and equivariant open-string mirror symmetry was studied in \cite{KLZ23,lauleungli}.

In this paper, we study the equivariant partially wrapped Fukaya category of a Liouville sector with a Hamiltonian $G$-symmetry. The theory of partially wrapped Fukaya categories for Liouville sectors developed by \cite{GPS1,GPS3} enjoys nice functorial properties and provides a powerful tool of sectorial decompositions to study wrapped Fukaya categories. 
One important motivation is the Fukaya-Seidel category of a Landau-Ginzburg model $(Y,W)$, which can be understood as the partially wrapped theory for the corresponding Liouville sector obtained by taking a page of superpotential as Liouville hypersurface. Landau-Ginzburg models appear as the mirrors of Fano or general-type varieties and their Fukaya-Seidel categories reflect the derived categories of coherent sheaves of these varieties. 

For simplicity, let us consider $G = \bS^1$ (while the general theory is developed for a general compact Lie group $G$). By homological mirror symmetry \cite{Kont-HMS}, the conjecture can be reformulated as follows.
\begin{conj} \label{Teleman}
  Let $X$ be a Liouville sector equipped with a Hamiltonian $\bS^1$-action. Suppose that its symplectic quotient $\underline{X}$ at the zero moment level is smooth and is itself a Liouville sector. Then there exists a $\C[s]$-module structure on the Hochschild cohomology $HH^*(\cW(X))$ such that $HH^*(\cW(\underline{X}))$ is isomorphic to the quotient of the Hochschild cohomology of the partially wrapped Fukaya category of $\bS^1$-invariant objects in $X$ by the ideal generated by $(s-1)$.
\end{conj}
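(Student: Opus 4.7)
The plan is to deduce the conjecture from the Lagrangian correspondence functor established in this paper, combined with an equivariant localization statement on Hochschild cohomology. Write $\cA \subset \cW(X)$ for the full $A_\infty$-subcategory whose objects are $\bS^1$-invariant Lagrangian branes; under the smoothness hypothesis on $\underline{X}$, the paper's main correspondence theorem supplies a fully faithful $A_\infty$-functor $\Phi\colon\cA\to\cW(\underline{X})$ induced by the Lagrangian correspondence $\mu^{-1}(0)$.

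For the $\C[s]$-module structure, I would exploit the fact that the Hamiltonian $\bS^1$-action extends to an action on $\cW(X)$ by $A_\infty$-autoequivalences. On each $\bS^1$-invariant object $L\in\cA$ the flow restricts to a canonical self-isotopy, and packaging these into a natural transformation of the identity functor produces a distinguished class $\eta\in HH^0(\cA)$. Setting $s:=\eta$ yields a $\C$-algebra homomorphism $\C[s]\to HH^0(\cA)$, giving $HH^*(\cA)$ the structure of a $\C[s]$-module; the analogous construction on all of $\cW(X)$ provides the ambient $\C[s]$-structure on $HH^*(\cW(X))$, and the two are compatible under the restriction map $HH^*(\cW(X))\to HH^*(\cA)$.

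The crucial geometric observation is that $\Phi$ sends $\eta$ to the identity natural transformation on $\operatorname{Im}(\Phi)\subset\cW(\underline{X})$: after passing to the symplectic quotient the $\bS^1$-orbits collapse and the flow of $\mu$ becomes trivialisable through a compactly supported isotopy downstairs. Therefore $(s-1)$ acts by zero on $HH^*(\operatorname{Im}\Phi)$, which by full faithfulness of $\Phi$ is canonically identified with $HH^*(\cA)$, and one obtains a canonical comparison map
$$\overline{\Phi}\colon HH^*(\cA)/(s-1)\longrightarrow HH^*(\cW(\underline{X})).$$
To promote $\overline{\Phi}$ to an isomorphism, I would verify (i) that $\operatorname{Im}(\Phi)$ split-generates $\cW(\underline{X})$, using Abouzaid's generation criterion and the fact that cotangent-fibre generators of $\cW(\underline{X})$ lift to $\bS^1$-invariant Lagrangians in $\mu^{-1}(0)$ via the principal $\bS^1$-bundle $\mu^{-1}(0)\to\underline{X}$, so that Morita invariance of Hochschild cohomology yields $HH^*(\operatorname{Im}\Phi)\cong HH^*(\cW(\underline{X}))$; and (ii) that the kernel of the restriction $HH^*(\cA)\to HH^*(\operatorname{Im}\Phi)$ is exactly the $(s-1)$-submodule.

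Step (ii) is the main obstacle, serving as the Floer-theoretic analogue of Kirwan's equivariant surjectivity theorem. I would approach it through the Borel-model spectral sequence computing $HH^*(\cA)$ from the equivariant approximations of $X$ introduced in this paper, using action-filtration control near $\mu^{-1}(0)$ to constrain the higher differentials and match the $(s-1)$-saturated quotient with the non-equivariant Hochschild cohomology of $\operatorname{Im}\Phi$. The nodal-singularity contributions to the correspondence moduli, handled later in the paper through a dedicated equivariant virtual-class analysis, generate correction terms in this spectral sequence that must be carefully controlled; this is where most of the technical work will lie.
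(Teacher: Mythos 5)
This statement is stated by the paper as a \emph{conjecture}, not a theorem, and the paper contains no proof of it. The paper's contributions---the equivariant wrapped category $\cW_G(X)$, the $G$-equivariant PSS isomorphism (Theorem \ref{thm:G-equivariant_PSS_isomorphism}), the fully faithful correspondence functor $\Psi_{\downarrow}\colon \cW_G(X)_0 \to \cW(X\sq G)$ of Theorem \ref{prop: GoingDown}, and the nodal-singularity result of Theorem \ref{thm: S1case}---are offered only as \emph{partial progress toward} this conjecture. There is therefore no ``paper's own proof'' for you to be compared against; the honest reference point is that the authors themselves stop short of a proof.

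On the merits of your proposal, several gaps remain. First, your $\Phi\colon \cA \to \cW(\underline{X})$ is not the paper's $\Psi_{\downarrow}$: the latter has domain the \emph{equivariant} category $\cW_G(X)_0$, whose morphism spaces are $\varprojlim_N CF^\bullet(L_0,L_1;X(N))$, whereas your $\cA$ is the \emph{non-equivariant} full subcategory of $\cW(X)$ on invariant objects, and the conjecture is phrased in terms of $\cA$. Bridging these two categories (e.g.\ by viewing the non-equivariant theory as a specialization of the equivariant one) is itself a nontrivial step you do not address. Second, the class $s$ in the conjecture is the closed-open image $CO(S)$ of the Seidel element, as the introduction explains; your $\eta$---assembled from the $\bS^1$-isotopies as a natural transformation of the identity---is a plausible candidate but you never show it agrees with $CO(S)$, and the assertion that $\Phi$ sends it to the identity ``because the orbits collapse'' ignores the contributions of pseudoholomorphic sections with nontrivial winding that the Seidel element counts, even in the exact setting when wrapping at infinity is present. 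Third, step (ii)---that the kernel of $HH^\bullet(\cA)\to HH^\bullet(\operatorname{Im}\Phi)$ is exactly $(s-1)$---is the actual content of the conjecture, and your appeal to a ``Borel-model spectral sequence computing $HH^\bullet(\cA)$'' overreaches what the paper constructs: the Borel tower there computes equivariant Floer cohomology of pairs of Lagrangians, not Hochschild cohomology of the non-equivariant subcategory $\cA$. Fourth, the final paragraph invokes an ``equivariant virtual-class analysis'' for nodal singularities that does not occur in the paper (it explicitly avoids Kuranishi/virtual techniques in the exact setting, replacing them by geometric Hamiltonian perturbations); moreover the conjecture assumes $\underline{X}$ smooth, so the singular-quotient machinery is not relevant here. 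Finally, your generation claim that cotangent-fibre generators of $\cW(\underline{X})$ lift to invariant exact Lagrangians in $\mu^{-1}(0)$ is plausible but unproved and is not supplied by the paper.
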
 

We develop the theory of equivariant wrapped Fukaya category and correspondence to tackle this conjecture. The theory of Lagrangian correspondence was developed by \cite{MWW, Fukaya2023} and \cite{gao2017wrapped,Gaothesis} in the wrapped setting. In \cite{lauleungli}, we constructed equivariant Lagrangian correspondence for symplectic quotients in the compact setup using the machinery of \cite{Fukaya2023} where Kuranishi perturbations were essential. In the current exact setting, we can use geometric Hamiltonian perturbations in place of Kuranishi perturbation techniques. Moreover, the Lagrangian correspondence is tautologically unobstructed since it does not bound any non-constant holomorphic disk, and we can construct the cyclic element without using boundary deformation of Lagrangian submanifolds.

\subsection{Equivariant wrapped Fukaya category via Borel construction}
The first step of our construction is to define an \emph{equivariant wrapped Fukaya category} $\cW_G(X)$ associated to a Liouville manifold (or sector) $X$, which carries an action of a compact Lie group $G$ preserving the Liouville form. Throughout, we always assume that $X$ is of \emph{finite type} (i.e.\ with compact skeleton). The objects of $\cW_G(X)$ are embedded, exact, $G$-invariant Lagrangian submanifolds that are conical near infinity. The morphism complexes are defined as homotopy colimits of the corresponding equivariant Floer complexes, taken along the flow of a $G$-invariant wrapping Hamiltonian.

Classically, $G$-equivariant cohomology may be viewed as a functor $H_G^\bullet(-)$ on the category of $G$-spaces. One way to construct it is via the \emph{Borel construction}:
\begin{align*}
    H_G^\bullet(X) \coloneqq H^\bullet(X_G), \quad 
    X_G \coloneqq X \times_G EG,    
\end{align*}
where $EG$ is a contractible space on which $G$ acts freely. The associated quotient $X_G$ plays a double role:
\begin{enumerate}
    \item a locally trivial fibration $X \to X_G \to BG$;
    \item a (typically non-locally trivial) fibration 
    $
    B\operatorname{Stab}(x) \to X_G \to [x] \in X/G,
    $
    where $B\operatorname{Stab}(x)$ is the classifying space of the stabilizer of $x$.
\end{enumerate}
When the $G$-action is free, these identifications immediately yield $H_G^\bullet(X) \cong H^\bullet(X/G).$

There is a substantial body of work extending this perspective to Floer theory in various settings; see Section \ref{sec:relevant_works} for a brief summary. 
A complication in our setting is that we must work with \emph{families of Floer equations} parametrized by the infinite-dimensional Hilbert manifold $BG$.  
In infinite dimensions, many familiar tools from Morse and Floer theory break down---one reason is that  closed balls in Hilbert space are never compact.  To avoid these analytic difficulties, we adopt a \emph{finite-dimensional approximation} approach and prove that Floer theory behaves compatibly along these approximations.

A more subtle issue arises when considering families of Floer equations parametrized by Morse trajectories on $BG$.  For a general choice of model $EG \to BG$ and an arbitrary connection on this principal bundle, fiberwise exact Lagrangians need not be preserved under boundary parallel transport. 

To address this, we construct $X_G$ as the colimit of a sequence of Liouville manifolds (or sectors)
\begin{align*}
X(1) \hookrightarrow X(2) \hookrightarrow \cdots \hookrightarrow X(N) \hookrightarrow \cdots,
\end{align*}
each equipped with the structure of a Liouville fibration. 
This setup allows us to reduce the equivariant theory to Floer theory in a symplectic (Liouville) fibration, a setting that has already been extensively studied---most notably in the context of Lefschetz fibrations. Within such framework, we construct the equivariant wrapped Fukaya category $\cW_G(X)$. 

A fundamental structural result for (non-wrapped) equivariant Floer cohomology is that it satisfies an analogue of the Piunikhin-Salamon-Schwarz (PSS) isomorphism \cite{PSS96}.
\begin{thm}[Theorem \ref{thm:G-equivariant_PSS_isomorphism}]
    For an embedded $G$-invariant exact cylindrical Lagrangian,  there is an isomorphism 
    \begin{align*}
        H_{G}^{\bullet}(L;\mathbb{K}) \cong HF_{G}^{\bullet}(L,L)
    \end{align*}
    which we call the \textit{$G$-equivariant PSS isomorphism.}  
    \end{thm}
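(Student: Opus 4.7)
The plan is to construct the equivariant PSS map by assembling classical PSS maps along the Liouville approximation $X(1) \hookrightarrow X(2) \hookrightarrow \cdots$ of the Borel construction and then to show that the resulting map on homotopy colimits is a quasi-isomorphism. Because each $X(N)$ is set up as a Liouville fibration, the equivariant Floer complex at level $N$ is a parametrized version of the ordinary Floer complex on a fiber, while the Borel model for the equivariant singular cochain complex of $L$ is computed by the ordinary cochain complex of the total space $L(N) \subset X(N)$ of the corresponding $L$-fibration.

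\emph{Step 1 (PSS at each level).} Fix a $G$-invariant Morse function on $L$ and a Morse function on the base $B(N)$ of $X(N) \to B(N)$, and combine them into an honest Morse function on $L(N)$. The standard PSS moduli space upgrades in the parametrized setting to pairs consisting of a half-plane in a fiber of $X(N)$ solving Floer's equation for a $G$-invariant wrapping Hamiltonian, together with a negative gradient trajectory on $L(N)$ attached at the interior puncture. Counting rigid configurations yields a chain map
\begin{align*}
PSS^{(N)} : CM^\bullet(L(N)) \longrightarrow CF^\bullet_G(L,L;N).
\end{align*}
Compactness rests on exactness of $L$ and the cylindrical behavior at infinity; transversality is achieved by generic perturbation inside the finite-dimensional $X(N)$, so no analytic novelty beyond the classical case is required.

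\emph{Step 2 (Quasi-isomorphism at each level).} Both sides carry a filtration induced by a Morse/cell decomposition of $B(N)$, and $PSS^{(N)}$ respects it. On the associated graded, the map degenerates fiberwise to the classical non-equivariant PSS, which is known to be a quasi-isomorphism for embedded exact cylindrical Lagrangians in Liouville sectors. Since $B(N)$ is finite-dimensional and compact, the associated spectral sequence converges strongly, and $PSS^{(N)}$ is a quasi-isomorphism.

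\emph{Step 3 (Passing to the colimit).} Choose the auxiliary data (Morse functions on $B(N)$, wrapping Hamiltonians, almost complex structures) inductively so that restriction from level $N+1$ to level $N$ is compatible at least up to coherent homotopy. Then the family $\{PSS^{(N)}\}$ induces a map on homotopy colimits; since these preserve quasi-isomorphisms, the limiting map is also a quasi-isomorphism. The left-hand colimit computes $H_G^\bullet(L;\bK)$ by the Borel model, while the right-hand colimit is $HF_G^\bullet(L,L)$ by construction.

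The main obstacle, in my view, is Step 3: constructing a coherent system of perturbation data over the tower and verifying that boundary parallel transport in each $X(N)$ preserves the class of $G$-invariant exact cylindrical Lagrangians used to define the equivariant Floer complex --- precisely the subtle point flagged in the introduction. Steps 1 and 2 are parametrized versions of well-established arguments in a Liouville fibration, and, once the coherence issues are settled, should go through without essential new difficulty.
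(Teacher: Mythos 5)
Your high-level strategy matches the paper's: build a PSS map at each finite level $N$ of the approximation tower (which the paper does as $\mathrm{PSS}_N$ in Theorem \ref{thm:comparison_between_spectral_sequences}), establish that it is a quasi-isomorphism via a spectral-sequence comparison whose associated graded reduces to the classical fiberwise PSS, and then pass along the tower. However, Step~3 contains a genuine confusion between direct and inverse limits that the paper carefully avoids. The tower of spaces is a colimit, but cohomology reverses this: $H_G^\bullet(L) = \varprojlim_N H^\bullet(L(N))$, and likewise the paper defines $HF_G^\bullet(L,L) := \varprojlim_N HF^\bullet(L,L;X(N))$ via the \emph{projection} maps $P_{N+1}$, not via inclusions. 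Your claim that ``homotopy colimits preserve quasi-isomorphisms'' is therefore aimed in the wrong direction: for inverse limits one must rule out a potential $\varprojlim^1$ obstruction, and the paper does this explicitly by observing that the connecting maps are surjective and hence the Mittag--Leffler condition holds. Without this observation, passing to the limit is not automatic.

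A second, related point: you propose choosing auxiliary data ``compatible at least up to coherent homotopy,'' but the paper achieves \emph{strict} compatibility, not just coherence up to homotopy. Theorem-Definition \ref{thm:family_of_Morse_functions} constructs a family of Morse--Smale pairs on the $BG(N)$ (from explicit trace functions on complex Grassmannians) satisfying conditions (1)--(5), which guarantee that the projections $P_{N+1}$ are honest chain maps (Lemma \ref{lem:approximation_commutative}) and commute on the nose with the differentials, the continuation maps, and the level-$N$ PSS maps (the commuting square \eqref{equ:PSS_commutative}). This strictness is what makes the inverse-limit formalism tractable; a homotopy-coherent replacement would require building all higher coherence data, which is substantially more work and not what the paper does. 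Finally, the middle step of your argument is lighter than the paper's: between ``Morse cohomology of $L(N)$'' and ``$HF^\bullet(L,L;X(N))$'' the paper inserts an intermediate comparison (Proposition \ref{prop:comparison_between_Morse_and_LS}) identifying the Morse-theoretic spectral sequence with the Leray--Serre spectral sequence from the $E_2$-page onward, using the CW-decomposition by unstable manifolds; your ``the map degenerates fiberwise to classical PSS on the associated graded'' compresses this into one sentence and glosses over both the $E_1$-to-$E_2$ bookkeeping and the noncompactness of the fiber, which the paper handles by requiring the fiberwise Morse functions to be linear near infinity.
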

    In particular, when the $G$-action on $L$ is free, the  isomorphism $H_{G}^{\bullet}(L)\cong H^{\bullet}(L/G)$ combined with the usual PSS isomorphism implies that the equivariant Floer cohomology reduces to ordinary Floer cohomology on the quotient  $$HF_{G}^{\bullet}(L, L) \cong        HF^{\bullet}(L/G, L/G) .$$
    To extend this correspondence to pairs of cylindrical Lagrangians with wrapping, and to make the construction functorial, we will employ the technique of Lagrangian correspondences in the wrapped setting.

\subsection{Equivariant Lagrangian correspondence}
Assume that the $G$-action on the zero level set of the moment map is free, so that the symplectic quotient $X\sq_0 \,G \coloneqq \mu^{-1}(0)/G$ is smooth.\footnote[1]{Throughout the paper, we will simply write $X \sq G$ for $X \sq_{0}\, G$, since we always work at the zero moment level.} Since the homotopy quotient $X_{G}$ is modeled as the colimit of a sequence of Liouville manifolds, using finite-dimensional approximations, we study the Lagrangian correspondence
\begin{align*}
    L_{G}^{\pi} \subset X_{G}^{-}\times X\sq G.
\end{align*}
 This will produce an $A_{\infty}$-functor relating the equivariant wrapped Fukaya category $\cW_{G}(X)$ and the (ordinary) wrapped Fukaya category  $\cW(X\sq G)$.

Our approach follows the strategy of Fukaya \cite{Fukaya2023}, which constructs an $A_{\infty}$-bimodule equipped with a distinguished \emph{cyclic element}. The key ingredient is an extension to the Liouville setting of a similar construction previously developed in the compact case \cite{lauleungli}. This produces the following $A_{\infty}$-functor.

\begin{thm}[Theorem \ref{prop: GoingDown}]
    Let $X$ be a Liouville manifold equipped with a free action of a compact Lie group $G$ preserving the Liouville form. Let $\cW_{G}(X)_{0}$ denote the full subcategory of $\cW_{G}(X)$ generated by $G$-invariant Lagrangians contained in $\mu^{-1}(0)$. Then there exists a fully faithful $A_{\infty}$-functor 
\begin{align*}
    \Psi_{\downarrow}: \cW_{G}(X)_{0} \rightarrow \cW(X\sq G), 
\end{align*} 
which sends a $G$-invariant exact Lagrangian $L\subset X$ to its quotient $L/G$. Therefore, $\cW(X\sq G)$ is quasi-equivalent to the full subcategory $\cW_{G}(X)_{0}$.
\end{thm}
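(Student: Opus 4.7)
My plan is to realize $\Psi_{\downarrow}$ via the equivariant Lagrangian correspondence $L_{G}^{\pi}\subset X_{G}^{-}\times (X\sq G)$ defined through the finite-dimensional approximations $X(N)$, following the cyclic-element formalism of Fukaya. First I would assemble an $\AI$-bimodule
\[
\cB\colon \cW_{G}(X)_{0}^{\op}\otimes \cW(X\sq G)\longrightarrow \Ch
\]
whose components count quilted pseudo-holomorphic strips with one seam on $L_{G}^{\pi}$, working at each approximation level $N$ and then passing to the colimit. Because $L_{G}^{\pi}$ is exact and bounds no non-constant holomorphic disks, the bimodule is tautologically unobstructed and no Kuranishi machinery is needed; this is the main technical simplification of the Liouville setting over the compact case treated in \cite{lauleungli}.

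Next I would produce a distinguished cyclic element $\bb_{L}\in \cB(L,L/G)$ for each $G$-invariant $L\subset \mu^{-1}(0)$. Geometrically, $L_{G}^{\pi}$ intersects $L\times (L/G)$ along the graph of the quotient map $L\to L/G$, which gives a canonical chain. The key verification is that $\{\bb_{L}\}$ is a cyclic element of $\cB$: it is closed under the bimodule differential and intertwines the left/right $\AI$-actions up to coherent homotopy, checked by a standard one-parameter moduli degeneration as in \cite{lauleungli}. Once the cyclic element is in place, Fukaya's algebraic machinery converts $(\cB,\bb)$ into an $\AI$-functor $\Psi_{\downarrow}\colon \cW_{G}(X)_{0}\to\cW(X\sq G)$ with $\Psi_{\downarrow}(L)=L/G$.

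For fully faithfulness, I would prove that $\Psi_{\downarrow}$ induces isomorphisms on cohomology of all morphism complexes. On the identity object this amounts to composing the $G$-equivariant PSS isomorphism $HF_{G}^{\bullet}(L,L)\cong H_{G}^{\bullet}(L)$ established above with the free-action identification $H_{G}^{\bullet}(L)\cong H^{\bullet}(L/G)$ and the ordinary PSS isomorphism $H^{\bullet}(L/G)\cong HF^{\bullet}(L/G,L/G)$; by construction the cyclic element intertwines these three identifications, which is exactly the compatibility one needs. For a general pair $(L_{0},L_{1})$, I would take a $G$-invariant wrapping Hamiltonian upstairs whose descent to $X\sq G$ is a wrapping Hamiltonian, and verify that $\Psi_{\downarrow}$ is compatible with the telescope direct system defining the wrapped morphism complexes upstairs and downstairs; a continuation argument then reduces the general case to the identity case fiberwise along the wrapping. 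Essential surjectivity onto the image holds tautologically from the definition of $\cW_{G}(X)_{0}$, so together these yield the stated quasi-equivalence.

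The hard part will be the homotopical bookkeeping under the colimit $X(1)\hookrightarrow X(2)\hookrightarrow\cdots$. Specifically, one must check that the cyclic element at level $N$ is compatible with the inclusion $X(N)\hookrightarrow X(N+1)$ up to coherent higher homotopies, so that it descends to a well-defined element of the colimit bimodule rather than only to a homotopy-coherent system. A closely related analytical issue is the need for uniform energy estimates and maximum principles for the quilted strips defining $\cB$ and $\bb_{L}$ that do not deteriorate as $N\to\infty$; this should follow from the fact that on each $X(N)$ the $G$-invariant Liouville form and the wrapping Hamiltonian grow only in the fiberwise direction of the Liouville fibration $X(N)\to BG(N)$, but carrying it out cleanly requires a careful inspection of the boundary parallel transport in the Liouville fibration setup.
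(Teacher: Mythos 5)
Your proposal reproduces the architecture the paper actually uses: the $A_\infty$-bimodule $\cQ$ defined by quilted strips with one seam on $L^\pi_G$, the cochain-level cyclic element sitting on the clean intersection $L^\pi \cap (L\times \underline{L}) = L$, Fukaya's representability machinery to extract $\Psi_{\downarrow}$, and compatibility of the $G$-invariant wrapping Hamiltonian with the quotient. So the outline is right. But there is a genuine gap at the single most substantive step: the verification that the element $\mathbf{1}_L$ is cyclic, i.e.\ that $\mu^{0|1|1}(\mathbf{1}_L,-)$ and $\mu^{1|1|0}(-,\mathbf{1}_L)$ are quasi-isomorphisms. Appealing to ``a standard one-parameter moduli degeneration as in \cite{lauleungli}'' does not establish this. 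In Lemma~\ref{lem: bicyclic} of the paper this is proven by a filtration argument, not a degeneration: one observes that the action functionals on $CF^{\bullet}(\underline{L},\underline{K})$ and $CF_G^{\bullet}(L,L^\pi,\underline{K})$ agree under the identification of Remark~\ref{rem:primitive_identification}, so the map $\mu^{0|1|1}(\mathbf{1}_L,-)$ preserves the energy filtrations and hence induces a morphism of the associated clean-intersection spectral sequences. On the $E_1$-page both sides become $H^{\bullet}(C_l/G)\cong H_G^{\bullet}(C_l)$ (local Floer cohomology $\cong$ equivariant cohomology of the orbit, using freeness), and then an explicit left inverse $F$ is built by reversing the roles of input and output in the quilt; the computation $[F(\mathbf{1}_L)] = e_{\underline{L}}$ (only constant quilts contribute) gives injectivity, hence isomorphism, on $E_1$. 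Without this spectral-sequence reduction plus the explicit left inverse, cyclicity is not established, and the rest of your plan (representability, localization, full faithfulness) has nothing to stand on.

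A secondary, more minor point: you flag ``homotopical bookkeeping under the colimit $X(1)\hookrightarrow X(2)\hookrightarrow\cdots$'' as the hard part and worry about coherent higher homotopies between levels. The paper sidesteps this entirely: by Theorem-Definition~\ref{thm:family_of_Morse_functions} and Proposition~\ref{pro:strict_A_infinity_functor}, the Floer data are chosen so that the projections $P_{N+1}\colon CF^\bullet(\cdot;X(N+1))\to CF^\bullet(\cdot;X(N))$ are \emph{strict} $A_\infty$-functors and all operations commute with them on the nose. The inverse system is strictly commuting and satisfies Mittag--Leffler, so no higher-homotopy coherence is needed. In the same spirit, your proposed split of full faithfulness into the ``identity object'' case (via three PSS-type isomorphisms) and a ``general pair'' case (reduced fiberwise along the wrapping) is redundant: bicyclicity of $\mathbf{1}_L$ already produces the quasi-isomorphism $CF^{\bullet}(\underline{L},\underline{K})\xrightarrow{\sim} CF_G^{\bullet}(L,K)$ for every pair, and the passage to wrapped complexes is the purely formal localization argument of Corollary~\ref{cor:colimit_compute_localization} and \cite[Lemma 3.13]{GPS1}.
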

The same result extends to Liouville sectors, provided that the Floer data are chosen carefully so that the target space has bounded geometry. We do not address these technicalities here, and refer the reader to \cite{GPS1,GPS3} for further details.

For a path of Hamiltonian diffeomorphisms on a symplectic manifold $X$, one has the open Seidel morphism on Lagrangian Floer cohomologies defined by \cite{Hu-Lalonde, Hu_Lalonde_Leclercq, Charette_Cornea}. 
In the case of a Hamiltonian $\bS^1$-action, the closed-open image $CO(S)$ of the Seidel element \cite{Seidel97} gives an element $s\in HH^*(\cW(X))$, which is obtained by counting pseudoholomorphic polygons with an interior insertion by the Seidel element.  For invariant Lagrangians, $CO(S)$ is closely related to the equivariant disk potential \cite{KLZ23}, which takes a very simple form for a single exact Lagrangian submanifold since it does not bound any non-constant pseudoholomorphic disk. This will be useful to construct the conjectural isomorphism between the quotient of $s-1$ and $HH^*(\cW(\underline{X}))$ in Conjecture \ref{Teleman}.

\subsection{Singular symplectic quotients}
However, in the exact setup where the moment correspondence $L^\pi$ is required to be exact, the action on $L^\pi$ is not free even in simple examples.

\begin{example}
Let us consider the Hamiltonian $\bS^1$-action with weights $(1,-1)$ on $\C^2$. The action is free on all moment levels $\xi \not=0$, and it has the fixed point $0$ on the exact level $\xi =0$. The quotient $\C^2\sq_0\, \bS^1$ has a singular symplectic structure. 

On non-exact levels $\xi \not=0$, the equivariant Lagrangian correspondence $L^\pi_{\bS^1}$ is obstructed due to  contributions of holomorphic disks contained in the $x$- and $y$-axes of $\C^2$.  This obstruction leads to a discrepancy between the disk potential $W_{\C^2} = T^A (z+T^\xi w)$ for standard Lagrangian tori in $\C^2$, when restricted to the Teleman's fiber $zw^{-1}=1$, and the disk potential $W_{\C} = T^A z$ of the quotient. To eliminate this obstruction, it is necessary to introduce a bulk deformation of $\bC$ by $\mathbf{b} = \log (1+T^\xi) [\pt]$  so that both potentials become $T^A (1+T^\xi)z$. See \cite[Example 1.4]{lauleungli} for a closely related example. If we use a non-trivial spin structure for $L^\pi$, the sign is changed and we obtain the potential $T^A (1-T^\xi)z$ (with $\mathbf{b}=\log (1-T^\xi) [\pt]$). With this bulk deformation $\mathbf{b}$, the $\bS^1$-equivariant Fukaya category of $\C^2$ at the moment level $\xi$ corresponds to the Fukaya category of $(\C,\mathbf{b})$.

As $\xi \to 0$, the required bulk deformation blows up to $-\infty$. This leads to a pinching of $\C$ at the origin, where $\C^*$ has zero disk potential. In such a situation, we expect that the equivariant Lagrangian correspondence at the limiting exact level $\xi \to 0$ would give a quasi-equivalence between the $\bS^1$-equivariant wrapped category of $\C^2$ with the wrapped category of $\C^*$. In the more general case of $X$ being the total space of conic fibrations, this was conjectured by Lekili-Segal \cite{lekili2023equivariant}.
\end{example}

Under symplectic reduction of a symplectic manifold $X$ at a singular value of the moment map, the quotient $\underline{X}$ is in general a singular symplectic space. One may remove the singular locus to obtain a smooth open symplectic manifold $\underline{X}^{\circ}$. In certain cases, one can equip it with a suitable Liouville structure, and study its wrapped Fukaya category. 

In a series of examples, Lekili-Segal \cite{lekili2023equivariant} observed an intersecting phenomenon that the equivariant Lagrangian Floer cohomology of a Lagrangian $L \subset X$ is isomorphic to the wrapped Floer cohomology of the corresponding Lagrangian $\underline{L}^{\circ} \subset \underline{X}^{\circ}$. They conjectured that this correspondence holds in general. It is not known whether a Lagrangian in $\underline{X}^{\circ}$ admits a compactification to a smooth Lagrangian in $X$ in full generality. Thus, we impose a suitable local model for such compactifications. Under this assumption, and using the machinery developed in this paper, we prove the following result.

\begin{thm}[see Theorem \ref{thm: S1case} for the precise statement]
Let $X$ be a symplectic manifold with a Hamiltonian $\mathbb{S}^1$-action, and let $\mu: X \to \mathbb{R}$ denote the moment map. Consider the symplectic quotient $\underline{X} = \mu^{-1}(0)/\mathbb{S}^1$ and let $\underline{X}^\circ$ be the complement of the singular locus of $\mu^{-1}(0)$. Assume that the isotropy group of each point in $\mu^{-1}(0)$ is either trivial or the whole $\mathbb{S}^1$, and that only nodal singularities occur. Then, for invariant Lagrangians in $\mu^{-1}(0)\subset X$ which are of disk type near the singularities, the equivariant Floer cohomology is isomorphic to the wrapped Floer cohomology of the quotient in $\underline{X}^\circ$.
\end{thm}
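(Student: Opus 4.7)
The plan is to combine the equivariant correspondence functor $\Psi_\downarrow$ of Theorem \ref{prop: GoingDown} on the locus where the action is free with an explicit local computation near each node, and then glue via a Mayer--Vietoris / sectorial decomposition. Let $S \subset \mu^{-1}(0)$ denote the (necessarily isolated) set of fixed points lying on $L$. By the isotropy hypothesis the $\bS^1$-action is free on $\mu^{-1}(0) \setminus S$, and the disk-type assumption says that near each $p \in S$ there is a Darboux chart isomorphic to a ball in $(\C^2, \omega_{\mathrm{std}})$ with the standard $(1,-1)$-action, in which $L$ is a pair of Lagrangian disks $D_1 \cup_p D_2$ meeting cleanly at $p$.

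First I would set up the local model. Excising a small $\bS^1$-invariant neighborhood $U_p$ of each $p \in S$ turns $X$ into a Liouville sector $X^\circ$ with a new stop coming from the boundary of $U_p$, and identifies the corresponding piece of the symplectic quotient with $\underline{X}^\circ = (\mu^{-1}(0) \setminus S)/\bS^1$, which near the image of $p$ is modeled by the standard punctured plane $\C^* \subset \C$ (with the reduced Liouville form $s\, d\phi$). On the free complement, Theorem \ref{prop: GoingDown} immediately gives an isomorphism $HW_{\bS^1}^\bullet(L \setminus S,\, L \setminus S;\, X^\circ) \cong HW^\bullet(\underline{L}^\circ,\, \underline{L}^\circ;\, \underline{X}^\circ)$ compatible with the wrapping.

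Second, I would handle the local contribution at each node. On the local model, the equivariant PSS isomorphism (Theorem \ref{thm:G-equivariant_PSS_isomorphism}) computes $HF_{\bS^1}^\bullet(L_{\mathrm{loc}}, L_{\mathrm{loc}})$ as $H_{\bS^1}^\bullet(D_1 \cup_p D_2) \cong \mathbb{K}[u]$, and a direct Hamiltonian-chord computation identifies this with the wrapped Floer cohomology of the two quotient rays $\underline{L}^\circ_{\mathrm{loc}}$ inside the local $\C^*$ (after matching gradings and the Liouville flow coming from reduction). The key point is that the disk-type hypothesis forces the local Lagrangian to be equivariantly contractible to the fixed point, so no extra generators appear beyond what is captured on the quotient side.

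Third, I would glue the two pieces via a Mayer--Vietoris sequence for both the equivariant wrapped Fukaya category of $X$ (pairing $X^\circ$ with the local models around $S$) and the wrapped Fukaya category of $\underline{X}^\circ$ (pairing the corresponding pieces), and verify naturality of the identifications under the continuation and restriction maps. The bulk of the work, and the main obstacle, lies in this gluing step: one has to establish a sectorial Mayer--Vietoris on the equivariant side, where the Borel model is built as a colimit of Liouville fibrations $X(N) \hookrightarrow X(N{+}1)$, and show that stop removal commutes with the finite-dimensional approximation. A secondary difficulty is organizing the Liouville-sector structure of $\underline{X}^\circ$ near the punctures coming from the nodes so that its wrapping matches the equivariant wrapping on the upstairs side in the limit. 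Both are technical but should follow from the bounded-geometry framework of \cite{GPS1, GPS3} combined with the finite-dimensional approximation developed earlier in the paper.
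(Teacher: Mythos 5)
Your proposal takes a genuinely different route from the paper's, and in a way that I think misses the central mechanism. The paper does not prove this by Mayer--Vietoris or sectorial descent. Instead (Section 7.2, Proposition \ref{prop: conicsingred}), it compares the Floer complexes for $L\subset X$ and $L^\circ\subset X^\circ$ \emph{directly}, by choosing Hamiltonians $H_\nu$ that wrap $\nu$ times around the fixed locus $F$ and organizing both complexes by the action/energy filtration. The key point of that proof is that the two complexes do \emph{not} agree level by level: $CF_{\bS^1}^\bullet(L_1,L_2;H_\nu)$ has an extra fixed-point generator $p^{\nu+1}$ that simply has no counterpart in $CF_{\bS^1}^\bullet(L_1^\circ,L_2^\circ;H_\nu^\circ)$ or downstairs. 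What rescues the comparison is that under the continuation map $c_{\nu,\nu'}$ (for $\nu'\gg\nu$) the class of $p^{\nu+1}$ is mapped into the span of the free-orbit generators $x^{\nu'}$, so the fixed-point contribution is \emph{absorbed into the colimit} rather than cancelled locally. This is exactly the intertwining $x\leftrightarrow\lambda$ visible in the computation $\mu^1_L(x^\nu t)=x^\nu\lambda\pm x^{\nu+1}$, and it is what makes the paper's proof go through.

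Your proposed gluing argument faces two concrete difficulties that I do not think can be waved away as ``technical but should follow from bounded geometry.'' First, a sectorial/Mayer--Vietoris descent theorem for the equivariant wrapped Fukaya category built from the Borel--Liouville colimit $X(1)\hookrightarrow X(2)\hookrightarrow\cdots$ is not established anywhere in the paper, and proving it would be a substantial piece of new machinery (one would need descent for wrapped Fukaya categories of families of Liouville sectors, compatibly with all the finite-dimensional approximations and the projection maps $P_N$). Second, and more fundamentally, the two sides of the intended comparison do not decompose in parallel: excising a neighborhood of the node leaves $L$ with an extra interior piece containing the fixed point, while $\underline L^\circ$ has nothing there---the puncture has been removed. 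So even with a Mayer--Vietoris in hand, the ``local'' contributions on the two sides are a priori \emph{unequal}, and the extra generator $p$ upstairs has no local match downstairs. You would still need precisely the absorption phenomenon (fixed-point generators escaping to infinity in the wrapping colimit) that constitutes the content of Proposition \ref{prop: conicsingred}; your second step, a naive identification $H^\bullet_{\bS^1}(D_1\cup_p D_2)\cong\Bbbk[u]\cong HW^\bullet(\underline L^\circ_{\mathrm{loc}},\underline L^\circ_{\mathrm{loc}};\C^*)$, conflates an unwrapped equivariant group with a wrapped one and does not by itself produce a map compatible with the gluing. In short, the paper's argument is not a decomposition argument at all; it is an action-filtration and colimit argument, and the proposal as written does not recover the step that makes the theorem true.
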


\subsection{Relevant works in literature}
\label{sec:relevant_works}
Various equivariant extensions of Lagrangian Floer theory have been studied in the literature since the work of Khovanov and Seidel \cite{KS-quiver}. We refer to \cite[Section 1]{lauleungli} and the references therein for a summary, and to \cite{KKK} for another formulation of $\mathbb{Z}_2$-equivariant wrapped Floer theory. In this paper, we focus on the equivariant Floer theory for a Lie group $G$.

In a range of classes of geometries, substantial evidence has been found linking equivariant Floer theory to singular symplectic quotients; see, for instance \cite{lauleungli, LLLM, MSZ, AsplundLi, XL}. Lekili and Segal formulate several conjectures in this direction. The conjectures most relevant to our work are Conjecture A, D and E in \cite{lekili2023equivariant}. 

In this paper, we consider a general Liouville manifold $X$ with a Liouville $\bS^1$-action whose quotient $X \sq\, \bS^1$ has at worst nodal type singularities. We establish an isomorphism between the equivariant Floer cohomology of $X$ and wrapped Floer cohomology of the quotient $X \sq\, \bS^1$. 

For an additive hypertoric variety $X$,  Lee, Li, Liu and Mak \cite{LLLM} used sectorial descent to compute the partially wrapped Fukaya category of its hyperplane arrangement in terms of its equivariant convolution algebra, which is (conjecturally) related to its equivariant partially wrapped Fukaya category;
for a multiplicative hypertoric variety $X$, using microlocal sheaf theory, McBreen, Shende and Zhou \cite{MSZ} proved a ``Fukaya category commutes with reduction" theorem in terms of module action on categories.

\subsection{Acknowledgment}
The work of the first and the fourth authors was supported by the Institute for Basic Science (IBS-R003-D1). The second author would like to thank Denis Auroux and Daniel Pomerleano for helpful discussions. The third and fourth authors express their gratitude to Naichung Conan Leung for great discussions in The Chinese University of Hong Kong and sharing his insights on 3D mirror symmetry and Teleman's conjecture.

\subsection*{Notations and  conventions} 
For the convenience of the reader, we summarize some notations and conventions used throughout this paper. They are ordered according to their first appearance in the main text.
\begin{itemize}
    \item{\textbf{Lie groups and actions.}}  All the Lie groups are assumed to be compact and  connected. When a Lie  group $G$ acts on a manifold $X$, we denote the action by an element $g$ on a point $x\in X$ by $\tau_{g}(x)$, or $g\cdot x$ for short. The vector field on  $X$ induced by a Lie algebra element $ \xi \in \frakg$ is denoted by $X_{\xi}$. 
    \item{\textbf{Hamiltonian vector fields.}} The sign convention for a Hamiltonian vector field is $$\omega(X_{H}, \cdot) = -dH.$$ 
    \item{\textbf{Morse and Floer theory conventions.}} We adopt the ``back-propagation'' convention. For a Riemann disk, the negative puncture corresponds to the output of the $\mu^{k}$ operations, while the positive punctures correspond to the inputs.  
    The similar convention applies to Floer strips, as well as to Morse trajectories and Morse trees.     
    \item{\textbf{Wrapped Floer complexes.}} In contrast to \cite{GPS1}, where the wrapped Floer complex is obtained as the colimit
    \begin{align*}
        CF^{\bullet}(L^0, K) \rightarrow  CF^{\bullet}(L^1, K)\rightarrow \cdots,
    \end{align*}
    with respect to the inverse system $\cdots \rightarrow L^1\rightarrow L^0$, our wrapped Floer complex  is defined as the colimit 
    \begin{align*}
        CF^{\bullet}(L, K^0)\rightarrow CF^{\bullet}(L, K^1)\rightarrow \cdots  
    \end{align*}
    with respect to the direct system $K^{0} \rightarrow K^{1}\rightarrow \cdots$. The reason for this choice is that, in our situation, it is often more convenient to consider pseudo-holomorphic strips $u$ bounded by $L_{0}$ and $L_{1}$ with a Hamiltonian perturbation term. Such a strip corresponds bijectively to an honest holomorphic strip by setting: $\tilde{u}\coloneqq (\phi^{t}_{H})^{-1}(u)$, with boundary conditions $L_{0}$ and $(\phi^{1}_{H})^{-1}(L_{1})$. 
    \item{\textbf{Categories and $A_{\infty}$-categories.} } Unless stated otherwise, all categories are assumed to be small. Our $A_{\infty}$-composition follows the same ``forward-composition'' convention as in \cite{GPS1,GPS3}.  
  The $A_{\infty}$-operations take the form
    \begin{align*}
        \mu^d: \cC(X_{0},X_{1})\otimes \cdots \otimes \cC(X_{d-1}, X_{d}) \rightarrow \cC(X_{0},X_{d})[2-d]. 
    \end{align*}
    \item{\textbf{$A_{\infty}$-modules.}}    The conventions for $A_{\infty}$ left modules, right modules, and bimodules are defined accordingly with respect to the $A_{\infty}$-operations specified above; see \cite{GPS1} for details. For an $A_{\infty}$-category $\cC$, we denote by $\mathrm{Mod}^{\cC}$ the dg-category of \emph{left} $\cC$-modules, and by $\mathrm{Mod}_{\cC}$ the dg-category of \emph{right} $\cC$-modules. 
\end{itemize}
\section{Group actions on Liouville manifolds or sectors}

\subsection{Liouville manifolds}
References for Liouville manifolds and the relations with contact manifolds include \cite{Yakovgromov91,Sei08,GPS1}. To fix notation and convention, we briefly recall several standard definitions.

A vector filed $Z$ on a symplectic manifold $(X,\omega)$ of dimension $2n$ is called \textit{Liouville} if $\cL_{Z}\omega = \omega$. By Cartan's formula, this is equivalent to  $\omega = d\lambda$ where $\lambda\coloneqq \iota_{Z}\omega$ is the associated \textit{Liouville form}. The flow $\psi_{Z}^s$ of $Z$ exponentially expands both $\lambda$ and $\omega$, i.e. 
\begin{align*}
    (\psi_{Z}^{s})^* \lambda = e^s \lambda, \quad  (\psi_{Z}^{s})^* \omega = e^s \omega.
\end{align*} 

A (co-oriented) \textit{contact structure}  on a manifold $Y$ of dimension $2n - 1$ is a distribution $\cD$ on $Y$ given globally as the kernel of a $1$-form $\alpha$ such that $\alpha\wedge (d\alpha)^{n-1} \neq 0 $. The form $\alpha$ is called a \textit{contact form}. Whenever convenient we will refer to a pair $(Y, \alpha)$ or to a pair $(Y, \cD)$ as a contact manifold. The \textit{Reeb vector field}  is defined by the equations $\alpha(R_{\alpha})=1$ and $d\alpha(R_{\alpha},\cdot)= 0$.

A \textit{Liouville domain} $X_{0}$ is an exact symplectic manifold with boundary whose Liouville vector field is transverse to the boundary and outward pointing (often referred to as \emph{convexity} of the boundary); the restriction of $\lambda$ to the boundary of $X_{0}$ is a contact form $\alpha = \lambda|_{\partial X_{0}}$.

A \textit{Liouville manifold} is an exact symplectic manifold which is cylindrical at infinity:  there exists a Liouville domain $X_0 \subset X$ such that $X$ can be identified with $X_{0}$ attaching to an infinite cone: 
$$X=  X_{0} \cup_{\partial X_{0 }} \left(\partial X_{0}\times \mathbb{R}_{\geq 0}\right) $$
where $\left(\partial X_{0}\times \mathbb{R}_{\geq 0}\right)$ is equipped with the symplectic form $d(e^s\alpha )$;  the symplectomorphism is induced by the flow of $Z$ for time $s$. The infinite cone $\left(\partial X_{0}\times \mathbb{R}_{\geq 0}, d(e^{s}\alpha)\right)$ is called the \textit{positive half symplectization} of the contact manifold $(\partial X_{0}, \alpha)$, which we will denote by $S_{\geq 0}\partial X_{0}$. In general, for a contact manifold $(Y,\alpha)$ where $\alpha>0$, one may construct a symplectic manifold $(SY,d(e^s \alpha))$ in a similar manner by replacing $\bR_{\geq 0}$ with $\bR$, which we refer to as the \textit{symplectization of $(\alpha,Y)$}. The Liouville vector field in above local coordinates is $Z= \frac{\partial }{\partial s}$. We often use coordinate transformation $r=  e^s$.

The image of $\partial X_{0}$ under the Liouville flow is a family of contact manifolds parametrized by $s$, which are contactomorphic to each other. Thus there is a well-defined notion of ``contact-type boundary at infinity'', which we denote it by $\partial_{\infty} X$. There is a natural embedding of the symplectization of $\partial_{\infty} X$ into $X$. We  assume $\partial_{\infty} X$ is compact in general. However, the issues emanating from the non-compactness of $\partial_{\infty} X$ on defining wrapped Fukaya category can be neutralized by taking a colimit of the compact cases over a homology hypercover (see \cite[Section 1.3]{GPS1}). 

An object invariant under the Liouville flow near infinity is called \textit{cylindrical}; the infinite cone defined above is also called a \textit{cylindrical end}. 

Between Liouville manifolds,  a \textit{Liouville embedding} is an embedding $(X,\lambda ) \xhookrightarrow{\phi} (X',\lambda')$ such that 
\begin{equation}
    \label{eq:liouville map}
    \begin{aligned}
      &\phi^*\lambda'=\lambda+df \quad &&\text{for some compactly supported $f$,}\\
      &\phi_* Z_{X}= Z_{X'}  &&\text{outside a compact set.}
    \end{aligned}
    \end{equation} Here, the second condition is  redundant  if the codimension of $\phi$ is zero.

\begin{example}
The Euclidean space $\bC^n$ with its standard symplectic structure $\omega_{\std}=\sum_{i} dx_i\wedge dy_{j}$ is a Liouville manifold. The Liouville vector field in local coordinates is 
\begin{align*}
    Z= \frac{1}{2}\sum_{i}\left(x_{i}\frac{\partial }{\partial x_{i}}+y_{i}\frac{\partial }{\partial y_{i}}\right). 
\end{align*}  
\end{example}

\begin{example}
The cotangent bundle $(T^*Q,\omega= - d\lambda_{\mathrm{can}})$ of a compact manifold $Q$ is a Liouville manifold, whose corresponding Liouville vector field $Z$ is given by radial dilation in each fiber. Any tubular neighborhoods of the zero section with boundary transverse to $Z$ can be chosen to be $X_{0}$. A canonical choice is to equip $Q$ with a Riemannian metric, then  choose $X_{0}$ as the unit codisk bundle.
\end{example}

\subsection{Liouville actions}
We now briefly recall the notions of symplectic and contact reductions. 
\begin{defn}[Symplectic reduction]
    Let $(X,\omega)$ be a  symplectic manifold with a proper Hamiltonian action of a Lie group $G$, with the corresponding equivariant moment map $\mu: X\rightarrow \frakg^*$ such that $$d\langle \mu, \xi\rangle = -\iota_{{X_{\xi}}}\omega $$ for $\xi \in \frakg$. The symplectic reduction is
\begin{align*}
    X\sq G \coloneqq  \mu^{-1}(0)/G.
\end{align*}
\end{defn}
If  $G$ acts freely on $\mu^{-1}(0)$ (and hence $0$ is a regular value of $\mu$), then $X\sq G$ is a smooth symplectic manifold.

Recall that a \textit{contact action} is a group action preserving the contact structure $\cD$. Regardless of whether the Lie group is compact, one can always find a contact form $\alpha$ so that $\ker \alpha = \mathcal{D}$. This assertion follows by averaging the 1-form over some open covers induced by the contact action. For more details, see, for example, \cite{Le}.

\begin{defn}[Contact reduction]
    Let $(Y,\alpha )$ be a contact manifold with a proper contact action of a Lie group  $G$. The \textit{contact moment map} is defined by 
    \begin{equation}
        \label{equ:contact_moment_map}
        \mu:Y\rightarrow \mathfrak{g}^*:\langle \mu, \xi \rangle = \alpha(X_{\xi})
    \end{equation}
    for $\xi \in \frakg$. The contact reduction is defined by 
    \begin{align*}
        Y \sq G \coloneqq  \mu^{-1}(0)/G.  
    \end{align*}
\end{defn}

Similarly to the symplectic case, if $G$ acts freely on $\mu^{-1}(0)$ and $0$ is a regular value of the corresponding moment map $\mu$, the contact reduction is a smooth contact manifold with a contact form descended from  $\alpha|_{\mu^{-1}(0)}$. 

See \cite{Al,Gh} for more details (cf. \cite{GS} for a similar result but from a different perspective).

Let $(Y,\alpha)$ be a contact manifold with a contact action of a Lie group $G$, with moment map $\mu_{Y}$. There is a natural way to extend it to  a Hamiltonian action on its symplectization $SY$, by defining the moment map $\mu_{SY}: SY\rightarrow \frakg^*$ according to the formula: 
\begin{equation}
    \label{equ:extension}
    \mu_{SY}(y,s) = e^s \mu_{Y}(y)  
\end{equation}
 for all $(y,s)\in Y\times \bR$. The corresponding action is given by 
 \begin{align*}
    g\cdot (y,s) = (g\cdot y, s).
 \end{align*}
 In particular, we have $\cL_{Z}X_{\xi} = 0$ for all $\xi \in \frakg^*$. 

In order for the reduction of a Liouville manifold to remain a Liouville manifold, we introduce the following class of group actions.
\begin{defn}
\label{defn:Liouville_action}
Let $(X, \lambda)$ be a Liouville manifold with a proper Hamiltonian group action by $G$ with moment map $\mu_{X}$. We say this action is a \textit{pre-Liouville action} if it  satisfies the following two equivalent conditions:
\begin{enumerate}
    \item Near infinity, $\cL_{X_{\xi}}\lambda = 0$  and \begin{align}
        \label{equ: equal_sign}
        \langle \mu_{X},\xi \rangle = \lambda (X_{\xi}).  
    \end{align}
    \item  There exists $\partial_{\infty} X \subset X$ such that the restriction of $G$-action to $\partial_{\infty} X$ is a contact action. The moment map $\mu_{X}$ satisfies 
    \begin{align}
        \label{equ:equ_identity}
        \mu_{X} =  e^s \mu_{\partial_{\infty} X}    
    \end{align}
    on $S_{\geq 0}\partial_{\infty} X$, where $\mu_{\partial_{\infty}X}$ is defined by Equation \eqref{equ:contact_moment_map}.
\end{enumerate} 
\end{defn}
Here we view $\partial_{\infty} X$ as a contact hypersurface near infinity. We refer to $\mu_{X}|_{\pX}$ as the restriction of $\mu_{X}$ to $\pX$, and refer to $\mu_{\pX}$ as the contact moment map defined by the induced contact action, which are actually equal by  Equation \eqref{equ: equal_sign}.

\begin{proof}[Proof of equivalence in  Definition \ref{defn:Liouville_action}]
To show (1) implies (2), we observe that since $(\psi^{s}_{Z})^* \lambda  = e^s \lambda$, the leaves of $G$-orbits lie in the level sets of $\flow$.  So WLOG, we may define $\pX = \psi^{-1}(0)$. Then the restriction of $G$-action to $\pX$ is a contact action. Taking derivative of the following identity with respect to $Z$, 
\begin{align*}
    \langle \mu_{X},\xi \rangle = \lambda (X_{\xi}), 
\end{align*} 
we have $Z \mu_{X} = \mu_{X}.$ Combing the fact $\mu_{X}|_{\pX} = \mu_{\pX}$, we obtain the desired result. 

Conversely, let $\alpha = \lambda|_{\pX}$, then $\lambda  = e^s \alpha$. Take derivative with respect to $Z$, we have $\cL_{X_{\xi}}\lambda = 0$. Equation \eqref{equ: equal_sign} follows from combining  $\lambda  = e^s \alpha$ and  Equation \eqref{equ:equ_identity}.
\end{proof}

\begin{defn}
\label{defn:real_Liouville_action}
We say a proper group action $G$ on $(X,\lambda)$ is \textit{a Liouville action} if it preserves the Liouville form $\lambda$. 
\end{defn}
The following proposition is well-established. For the sake of clarity, we present it here for future reference. 
\begin{prop}
    \label{prop:Group_action_1_form}
    A Liouville action is a pre-Liouville action. 
\end{prop}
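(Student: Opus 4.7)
The plan is to observe that condition (1) in Definition \ref{defn:Liouville_action} follows directly from the assumption that $\lambda$ is $G$-invariant, provided we show that the Liouville $1$-form furnishes us with a valid moment map via the formula $\langle \mu_X, \xi\rangle := \lambda(X_\xi)$.

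First I would differentiate the identity $\tau_g^*\lambda=\lambda$ in the direction of $\xi\in\frakg$ at $g=e$ to conclude that $\cL_{X_\xi}\lambda=0$ globally on $X$, not merely near infinity. This gives one half of condition (1) for free.

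Next I would define $\langle \mu_X, \xi\rangle := \lambda(X_\xi)$ and verify that this is indeed a moment map for the action. By Cartan's magic formula,
\begin{align*}
0 \;=\; \cL_{X_\xi}\lambda \;=\; d(\iota_{X_\xi}\lambda) + \iota_{X_\xi}\,d\lambda \;=\; d(\lambda(X_\xi)) + \iota_{X_\xi}\omega,
\end{align*}
so $d\langle\mu_X,\xi\rangle = -\iota_{X_\xi}\omega$, which is precisely the defining condition of the moment map used in the paper (matching the sign convention $\omega(X_H,\cdot)=-dH$). The second half of condition (1), namely $\langle\mu_X,\xi\rangle=\lambda(X_\xi)$, then holds tautologically, and again everywhere on $X$, not just near infinity.

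The only remaining check is the equivariance of $\mu_X$ under the coadjoint action, which is needed for the moment map to descend to a symplectic/contact reduction and to match the framework set up before the proposition. I would establish this by computing, for $g \in G$ and using $\tau_g^*\lambda=\lambda$ together with the naturality $d\tau_g(X_{\mathrm{Ad}_{g^{-1}}\xi})=X_\xi\circ\tau_g$,
\begin{align*}
\langle\mu_X(g\cdot x),\xi\rangle
= \lambda_{g\cdot x}(X_\xi|_{g\cdot x})
= (\tau_g^*\lambda)_x(X_{\mathrm{Ad}_{g^{-1}}\xi}|_x)
= \langle\mu_X(x),\mathrm{Ad}_{g^{-1}}\xi\rangle,
\end{align*}
i.e.\ $\mu_X(g\cdot x)=\mathrm{Ad}_g^*\mu_X(x)$. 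I do not anticipate any genuine obstacle: the content of the statement is just that invariance of $\lambda$ automatically yields the canonical ``Liouville moment map'', and the bookkeeping reduces to Cartan's formula plus naturality of the infinitesimal action. Properness of the $G$-action, which is built into the definitions, is not used in the construction itself but justifies the appearance of $\mu_X$ in the reduction framework introduced above.
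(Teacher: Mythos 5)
Your proof is correct and follows essentially the same route as the paper: define $\langle\mu_X,\xi\rangle:=\lambda(X_\xi)$, use Cartan's formula together with $\cL_{X_\xi}\lambda=0$ to get $d\langle\mu_X,\xi\rangle=-\iota_{X_\xi}\omega$, and observe that condition (1) of Definition~\ref{defn:Liouville_action} then holds tautologically. The only difference is that you spell out the equivariance computation explicitly, which the paper leaves as ``one can check.''
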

\begin{proof}
  We define the moment map $\mu: X \rightarrow \frakg^*
    $ by: 
   \begin{align}
   \label{equ:moment_map}
       \langle \mu(x),\xi  \rangle = \lambda(X_{\xi}) \text{ for all $x\in X$ and $\xi \in \frakg$.}
   \end{align}   
   One can check $\mu$ is an equivariant map. We  compute that $$d\langle \mu, \xi\rangle = \mathcal{L}_{X_{\xi}}\lambda - \iota_{X_{\xi}}d\lambda = - \iota_{X_{\xi}}\omega $$
    which shows it is a Hamiltonian action. It then follows from (1) of Definition \ref{defn:Liouville_action}.   
\end{proof}

\begin{thm}
\label{thm:perturbing_1_form}
Let $(X,\lambda)$ be a Liouville manifold with a pre-Liouville action by $G$. One can deform $\lambda$ in a compact region so that the $G$-action on $(X,\overline{\lambda})$ is a Liouville action. Moreover, we have $\overline{\lambda}=\lambda +df$ where $f$ is a compactly supported function. 
\end{thm}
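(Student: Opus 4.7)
The strategy is to average $\lambda$ over $G$ using the Haar measure and then to verify that the resulting $G$-invariant primitive of $\omega$ differs from $\lambda$ by an exact form whose primitive is compactly supported. The pre-Liouville hypothesis will be precisely what guarantees this compactness of support.

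Set $\overline{\lambda} := \int_G g^*\lambda\, dg$ using the normalized Haar measure on $G$. Since the $G$-action is Hamiltonian, it is symplectic, so $d(g^*\lambda) = g^*\omega = \omega$ for every $g$, and therefore $d\overline{\lambda} = \omega$. By construction $\overline{\lambda}$ is $G$-invariant, so the action on $(X, \overline{\lambda})$ is automatically Liouville in the sense of Definition~\ref{defn:real_Liouville_action}. It remains to show $\overline{\lambda} - \lambda = df$ for a compactly supported $f$.

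To produce $f$, first exhibit for each $g \in G$ a specific primitive $F_g$ of the closed 1-form $g^*\lambda - \lambda$ that vanishes off a fixed compact set. Since $G$ is connected, choose a smooth path $g_t$ from $e$ to $g$ generated by the time-dependent vector field $X_t = X_{\xi_t}$, whose Hamiltonian is $H_t = \langle \mu_X, \xi_t\rangle$. Cartan's formula and $\iota_{X_t}\omega = -dH_t$ give
\begin{align*}
    \frac{d}{dt}\, g_t^*\lambda \;=\; g_t^*\, \mathcal{L}_{X_t}\lambda \;=\; g_t^*\, d\bigl(\lambda(X_t) - H_t\bigr),
\end{align*}
so integrating in $t$ yields $g^*\lambda - \lambda = dF_g$ with $F_g := \int_0^1 g_t^*\bigl(\lambda(X_t) - H_t\bigr)\,dt$. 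The pre-Liouville condition~\eqref{equ: equal_sign} says exactly that $\lambda(X_\xi) - \langle \mu_X, \xi\rangle$ vanishes on the complement of some compact set $K$ for every $\xi \in \mathfrak{g}$. After enlarging $K$ to be $G$-invariant (possible by compactness of $G$) and noting that the $G$-action preserves the Liouville flow near infinity and hence preserves the near-infinity region, each integrand $g_t^*(\lambda(X_t) - H_t)$ also vanishes on $X \setminus K$. Consequently $F_g$ is supported in $K$ uniformly in $g$, and $f := \int_G F_g\, dg$ satisfies $df = \overline{\lambda} - \lambda$ with compact support, as required.

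The main technical point that requires care is arranging the paths $g_t$ to vary smoothly in $g$ so that the double integral $\int_G F_g\, dg$ is well-defined. I would handle this in the standard way, e.g.\ by covering $G$ with finitely many charts on which the exponential map supplies a canonical path and patching via a partition of unity, or (more intrinsically) by normalizing $F_g$ to vanish on $X \setminus K$, which by connectedness of components makes $F_g$ the unique primitive independent of the path chosen, so that smooth dependence on $g$ follows from smooth dependence of $g^*\lambda - \lambda$ on $g$.
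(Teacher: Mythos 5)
Your proof is correct and reaches the same conclusion, but it handles the key technical point in a genuinely more careful way. Both arguments start by averaging $\lambda$ over $G$. The paper then observes that $\overline{\lambda} = \lambda$ outside a compact set (since $\lambda$ is already $G$-invariant near infinity by the pre-Liouville hypothesis) and concludes directly that $\overline{\lambda} - \lambda = df$ with $f$ compactly supported, on the grounds that the two forms are cohomologous. That step is slightly quick: a closed $1$-form that is exact and vanishes outside a compact set has a primitive that is locally constant near infinity, and if $\partial_\infty X$ is disconnected the constants on different ends need not be simultaneously normalizable to zero. Your construction sidesteps this entirely by producing, for each $g \in G$, an explicit primitive $F_g = \int_0^1 g_t^*\bigl(\lambda(X_{\xi_t}) - \langle \mu_X,\xi_t\rangle\bigr)\,dt$ of $g^*\lambda - \lambda$ that vanishes outside a fixed $G$-invariant compact set, precisely because the pre-Liouville identity $\langle \mu_X,\xi\rangle = \lambda(X_\xi)$ near infinity kills the integrand there; averaging $F_g$ over $G$ then gives the desired compactly supported $f$ with no cohomological hand-waving. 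The normalization remark at the end (that vanishing on $X \setminus K$ pins down $F_g$ uniquely, ensuring smooth dependence on $g$) is the right way to make the double integral legitimate. In short, your argument is constructive where the paper's is cohomological, and in exchange it is robust even when the Liouville manifold has several cylindrical ends.
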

\begin{proof}

Since  $G$ is compact, one can find a bi-invariant volume form  $dg$  such that $\int_{G}dg =1$. Let $X_{0}$ be a Liouville domain large enough such that the $G$-action preserves $\lambda|_{\partial X_{0}}$. We can average the primitive $1$-form in $X_{0}$ by the following transformation:
      $$\lambda\rightarrow \overline{\lambda} = \int_{G}\tau^*_{g}\lambda dg.$$
Since outside $\partial_{\infty} X$, the $1$-form is invariant under $G$-action, we have $\lambda = \overline{\lambda}$ outside a compact subset. This implies that $(X_{0},\overline{\lambda})$ is still a Liouville domain.

Since $\lambda$ and $\overline{\lambda}$ are in the same cohomology class, by our construction, we get $$\overline{\lambda}-\lambda = df,$$where $f$ is a compactly supported function. 
\end{proof}
Therefore, when discussing the wrapped Fukaya category of a Liouville manifold under a pre-Liouville action, the following homotopy argument shows that we can generally assume the group action preserves the Liouville form. 

\begin{cor}
    In the same setting as Theorem \ref{thm:perturbing_1_form}, there is a quasi-equivalence between the wrapped Fukaya categories $\cW(X,\lambda)\cong \cW(X,\overline{\lambda})$. 
\end{cor}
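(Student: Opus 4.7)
The plan is to exploit the fact that $\lambda$ and $\overline{\lambda}$ induce the same symplectic form $\omega = d\lambda = d\overline{\lambda}$ and agree outside a compact set, since $\overline{\lambda}-\lambda = df$ with $f$ compactly supported. Hence we are not genuinely changing the underlying symplectic manifold nor its cylindrical structure at infinity---only a choice of primitive in a compact region---so I expect to exhibit a strict identification of Floer data which descends to the desired quasi-equivalence.

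First I would compare the classes of admissible objects. A Lagrangian $L$ is cylindrical when the Liouville vector field is tangent to $L$ outside a compact subset. Since $\iota_{Z}\omega = \lambda$ and $\iota_{\overline{Z}}\omega = \overline{\lambda}$, the two Liouville vector fields differ by the Hamiltonian vector field of $f$, which is supported in a compact set; thus $Z = \overline{Z}$ outside $\mathrm{supp}(f)$, and cylindricity of $L$ for $\lambda$ is equivalent to cylindricity for $\overline{\lambda}$. Exactness is equally easy: if $\lambda|_{L}=dg$, then $\overline{\lambda}|_{L} = d(g+f|_{L})$, and conversely. Therefore the two Liouville structures yield the same collection of objects, each equipped with canonically related primitives.

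Next I would match the Floer data. Near infinity the cylindrical coordinate $r=e^s$ and the contact form $\alpha = \lambda|_{\partial_{\infty}X} = \overline{\lambda}|_{\partial_{\infty}X}$ agree, so any Hamiltonian linear in $r$ at infinity is simultaneously admissible for both wrapped categories, and the same is true for $\omega$-compatible cylindrical almost complex structures. The pseudoholomorphic curve equations and associated moduli spaces depend only on $\omega$ and $J$, not on the primitive; therefore the Floer complexes and all $A_{\infty}$-structure maps produced from a common choice of data are literally identical. The identity functor on objects and morphism complexes then furnishes the required quasi-equivalence $\cW(X,\lambda)\simeq\cW(X,\overline{\lambda})$.

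The only point requiring genuine care is to check that shifting each Lagrangian primitive by the constant-on-components function $f|_{L}$ does not alter signs, gradings, or the action filtration in a way that spoils the $A_{\infty}$-operations; this is routine because the action changes are telescopic around the boundary of each disk and cancel in the definition of $\mu^{d}$. If one prefers a more structural viewpoint, the same conclusion follows from a Moser-type argument along the linear path $\lambda_{t} = \lambda + t\,df$ for $t\in[0,1]$, combined with the invariance of the wrapped Fukaya category under Liouville homotopies established in \cite{GPS1}; each $\lambda_{t}$ is a Liouville form with the same $\omega$ and the same cylindrical end, so the family $(X,\lambda_{t})$ lies in a single Liouville isotopy class.
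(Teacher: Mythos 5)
Your proof is correct and you have in fact identified both a valid direct argument and, in your last paragraph, the exact route the paper takes: the paper cites a Moser-type argument (Proposition 11.8 of \cite{CE12}) to produce an isotopy of Liouville isomorphisms intertwining $\lambda$ and $\overline{\lambda}$, and then invokes invariance of the wrapped Fukaya category under Liouville deformations. Your main argument is more hands-on: since $d\lambda = d\overline{\lambda}$ and the two forms agree near infinity, the class of cylindrical exact Lagrangians, the admissible cylindrical almost complex structures, the admissible linear-at-infinity Hamiltonians, and hence the Floer moduli spaces and $A_{\infty}$-operations can all be chosen \emph{identically}, with only the primitives $f_L$ shifting by $f|_L$; the resulting map on objects and morphism complexes is the identity. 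This is valid and arguably more elementary, but it shifts the burden onto checking that the apparatus of the wrapped category (admissibility of wrapping data, the colimit defining the morphism spaces, compactness estimates, the localization at quasi-units) is genuinely sensitive only to $(\omega, J, H)$ and to the behavior of $\lambda$ outside a compact set. That checking is routine but nontrivial, since the action functional --- which governs compactness and the direction of continuation maps --- does depend on the primitive; the key point, which you correctly flag, is that the change in action is uniformly bounded because $f$ is compactly supported, so no estimate degenerates. The paper's Moser-plus-invariance argument sidesteps all of this by reducing to a black-boxed invariance statement from \cite{GPS1} (Liouville homotopies through Liouville structures with fixed behavior at infinity induce quasi-equivalences), at the cost of being less explicit about the identification. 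Both arguments are sound; yours gives a strict identification rather than just a quasi-equivalence, which is a slightly stronger (if practically equivalent) conclusion.
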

\begin{proof}
    By a Moser-type argument \cite[Proposition 11.8]{CE12}, there exists an isotopy $\phi_{t}:X\rightarrow X$ of Liouville isomorphisms that intertwines $\lambda$ and $\overline{\lambda}$. The wrapped Fukaya category is invariant under such deformations. 
\end{proof}

\begin{thm}
    \label{thm:Liouville_reduction}
        If  a group $G$ acts on a Liouville manifold $X$ in a (pre)-Liouville fashion, and suppose $G$ acts freely on $\mu^{-1}(0)$, then $X\sq G$ is a smooth Liouville manifold.
    \end{thm}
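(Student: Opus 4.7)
The plan is to first reduce to the Liouville (not merely pre-Liouville) case by invoking Theorem~\ref{thm:perturbing_1_form}, so we may assume $\tau_g^*\lambda = \lambda$ for all $g\in G$. Then the proof naturally splits into two parts: establishing that $X\sq G$ is a smooth exact symplectic manifold, and verifying that it carries a cylindrical end at infinity.

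For the exact symplectic structure, I would first invoke the Marsden--Weinstein--Meyer reduction theorem: since $0$ is a regular value of $\mu$ (which follows from the free action) and $G$ acts freely and properly on $\mu^{-1}(0)$, the quotient $X\sq G = \mu^{-1}(0)/G$ is a smooth manifold and inherits a symplectic form $\underline{\omega}$ characterized by $\pi^*\underline{\omega} = \iota^*\omega$, where $\iota : \mu^{-1}(0)\hookrightarrow X$ and $\pi : \mu^{-1}(0)\twoheadrightarrow X\sq G$. To exhibit a Liouville primitive, I would show that $\iota^*\lambda$ is basic, i.e.\ both $G$-invariant and horizontal. Invariance is immediate from $\tau_g^*\lambda=\lambda$. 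Horizontality reduces to checking that $\iota_{X_\xi}(\iota^*\lambda) = 0$ along $\mu^{-1}(0)$ for every $\xi\in\frakg$, which is precisely the identity $\lambda(X_\xi) = \langle \mu, \xi\rangle$ from Equation~\eqref{equ: equal_sign} restricted to $\mu^{-1}(0)$. Hence $\iota^*\lambda = \pi^*\underline{\lambda}$ for a unique 1-form $\underline{\lambda}$ on $X\sq G$, and differentiating gives $d\underline{\lambda} = \underline{\omega}$.

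For the cylindrical structure at infinity, I would work on the conical end of $X$, where $X \cong \partial_\infty X\times \mathbb{R}_{\geq 0}$ with $\lambda = e^s\alpha$ and the $G$-action is the trivial extension $g\cdot(y,s) = (g\cdot y, s)$ of the induced contact action on $(\partial_\infty X,\alpha)$. By Equation~\eqref{equ:equ_identity}, $\mu^{-1}(0)$ in the end is precisely $\mu_{\partial_\infty X}^{-1}(0)\times \mathbb{R}_{\geq 0}$, and freeness of the action upstairs forces freeness of the induced contact action on $\mu_{\partial_\infty X}^{-1}(0)$. The contact reduction $\partial_\infty X \sq G$ is therefore a smooth contact manifold with contact form $\underline{\alpha}$ descended from $\alpha|_{\mu_{\partial_\infty X}^{-1}(0)}$. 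Taking the quotient commutes with the product decomposition, so the end of $X\sq G$ is diffeomorphic to $(\partial_\infty X \sq G)\times \mathbb{R}_{\geq 0}$, and the descended primitive $\underline{\lambda}$ restricts there to $e^s\underline{\alpha}$. This exhibits $X\sq G$ as $(\mu^{-1}(0)\cap X_0)/G$ attached to the positive half-symplectization of $\partial_\infty X\sq G$, with Liouville vector field given by $\partial/\partial s$ on the end.

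The main obstacle I expect is bookkeeping: carefully verifying that the cylindrical end of the quotient is honestly identified with $S_{\geq 0}(\partial_\infty X\sq G)$, i.e.\ that the Liouville vector field $Z$ is tangent to $\mu^{-1}(0)$ on the end and descends to the radial vector field on the reduced cone. Tangency follows from $\mu$ being homogeneous of degree one under the Liouville flow (from \eqref{equ:equ_identity}), so $Z$ preserves $\mu^{-1}(0)$ on the end; $G$-invariance of $Z$ (since the action commutes with the flow in the end) lets $Z$ descend. Finally, the reduced domain $(\mu^{-1}(0)\cap X_0)/G$ is compact with boundary $\partial_\infty X\sq G$ along which the descended $Z$ points outward, so $X\sq G$ is a bona fide Liouville manifold of finite type.
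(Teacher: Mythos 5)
Your proof is correct and takes essentially the same route as the paper's own (quite terse) argument: $\lambda$ is basic along $\mu^{-1}(0)$ so it descends, and symplectization commutes with reduction so the conical end descends; you simply spell out the details the paper leaves implicit. The one small slip is that the identity $\lambda(X_\xi)=\langle\mu,\xi\rangle$ you invoke for horizontality holds globally for a Liouville action via \eqref{equ:moment_map} (Proposition \ref{prop:Group_action_1_form}), not merely near infinity as in \eqref{equ: equal_sign}, which is what you actually need since $\mu^{-1}(0)$ is not confined to the cylindrical end.
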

    \begin{proof}
    The proof is similar to the symplectic case. Since $\lambda$ vanishes in the direction of $G$-action by \eqref{equ:moment_map}, it descend to a Liouville form on $X\sq G$. To show $X\sq G$ is a Lioville manifold, we observe that for a contact manifold $(Y,\alpha)$, the symplectization commute with reduction, i.e. $(SY) \sq G \cong S(Y\sq G)$.  If the action is only pre-Liouville, one can apply Theorem \ref{thm:perturbing_1_form}.
    \end{proof}
We will refer to $X\sq G$ as the \textit{Liouville reduction of $X$}.   
\begin{rem}
The above theorem also shows that if we decompose $X$ into $X_{0} \cup_{\partial X_{0 }} \left(\partial X_{0}\times \mathbb{R}_{\geq 0}\right)$, the symplectic reduction will preserve the decomposition. In the case of $G=\bS^1$, this  coincides with the notion of \textit{contact cut} defined by \cite{Le}.
\end{rem}

\begin{example}
    \label{ex:closed_manifold}
    ($\dim Q>\dim G$) Let $G$ acts on a closed manifold $Q$, 
    \begin{align*}
        G\rightarrow \mathrm{Diff}(X), \quad g\rightarrow \tau_{g}.     
    \end{align*}
    It lifts to a Hamiltonian action on $T^*Q$ by $$g\cdot(q,p) = (\tau_{g} q, (\tau_{g^{-1}})^* p).$$ The fact it is a Liouville action follows from the observation $\lambda_{\mathrm{can}}$ is invariant under all automorphisms of $T^*Q$ arising from the automorphisms of the base, then apply Proposition \ref{prop:Group_action_1_form}. 
    
    The moment map of this action is 
    \begin{align}
    \label{equ:moment_map_for_cotangentbundle}
        \mu(q,p)(\xi) = p(X_{\xi}(q)).
    \end{align}
    In the free case, the cotangent lifted action on $T^*X$ is also free, hence $T^*X /\!\!/G$ is a Liouville manifold. It turns out it is canonically symplectomorphic to $T^*(X/G)$. An explicit symplectomorphism can be constructed as follows: From  Equation \eqref{equ:moment_map_for_cotangentbundle}, one has $$p\in \mu^{-1}(0)\cap T_{q}^*Q\Leftrightarrow p(X_{\xi}(q)) =0.$$ In other words, the zero level set of $\mu$ is the annihilator of the subbundle generated by the orbit of $G$. Now consider the map \begin{align*}\phi: TX/G & \rightarrow T(X/G) \\
    \phi([v_{x}])& \rightarrow D_{x}\pi (v_{x}) \end{align*} where $D_{x}\pi$ is the differential of the quotient map at $x$. It is easy to check that its dual is the desired symplectomorphism. 
\end{example}

\begin{thm}
Let $X$ and $Y$ be Liouville manifolds with Liouville $G$ actions. Then the diagonal action of $G$ on $X\times Y$ is also Liouville.    
\end{thm}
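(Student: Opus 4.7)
The plan is a direct verification against Definition~\ref{defn:real_Liouville_action}, so the argument is essentially a definitional check. First I would equip $X\times Y$ with the standard product Liouville structure $\lambda_{X\times Y}\coloneqq \pi_X^*\lambda_X + \pi_Y^*\lambda_Y$, whose symplectic form is $d\lambda_{X\times Y} = \pi_X^*\omega_X + \pi_Y^*\omega_Y$ and whose Liouville vector field splits as $Z_{X\times Y} = Z_X \oplus Z_Y$ under $T(X\times Y) = \pi_X^* TX \oplus \pi_Y^* TY$. The skeleton of $X\times Y$ is the product $\mathrm{Skel}(X)\times\mathrm{Skel}(Y)$, which is compact, so $X\times Y$ is a finite-type Liouville manifold.

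Next, I would verify that the diagonal action $\tau_g(x,y)\coloneqq (\tau_g^X x,\tau_g^Y y)$ preserves $\lambda_{X\times Y}$. Each projection $\pi_X,\pi_Y$ is tautologically $G$-equivariant with respect to the diagonal action on the source and the factorwise action on the target, so
\begin{align*}
\tau_g^*\lambda_{X\times Y} \;=\; \pi_X^*\bigl((\tau_g^X)^*\lambda_X\bigr) + \pi_Y^*\bigl((\tau_g^Y)^*\lambda_Y\bigr) \;=\; \pi_X^*\lambda_X + \pi_Y^*\lambda_Y \;=\; \lambda_{X\times Y},
\end{align*}
where the middle equality uses that both individual actions are Liouville. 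This is the content of Definition~\ref{defn:real_Liouville_action}.

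Finally, I would check properness of the diagonal action. The preimage of a compact set $K \subset (X\times Y)\times (X\times Y)$ under the map $(g,(x,y))\mapsto ((x,y),(gx,gy))$ is contained in the preimage of $(\pi_X\times\pi_X)(K)$ under the analogous map for the (proper) $G$-action on $X$, crossed with a compact subset of $Y$; hence it is compact.

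There is no genuine obstacle in this proof; the real content is that Definition~\ref{defn:real_Liouville_action} was set up to make such operations automatic. As a consistency check, Proposition~\ref{prop:Group_action_1_form} then yields the expected formula $\mu_{X\times Y}(x,y) = \mu_X(x) + \mu_Y(y)$ for the moment map of the diagonal action, which is the standard normalization for product Hamiltonian actions.
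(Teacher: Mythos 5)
Your proof is correct and takes the same approach as the paper: the paper's argument is a one-line observation that the product Liouville form $\lambda_X + \lambda_Y$ is invariant under the diagonal action, which is exactly your middle computation. You additionally spell out the properness and finite-type checks, which the paper leaves implicit.
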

\begin{proof}
    The product $X\times Y$ is a Liouville manifold with Liouville form $\lambda_{X}+\lambda_{Y}$, which is invariant under the diagonal action. 
\end{proof}

\subsection{Stopped Liouville manifolds and Liouville sectors}
\label{sec:Group_action_on_Liouville_sectors}
A \textit{stopped Liouville manifold} $(X,\frakf)$ is Liouville manifold together with a closed subset $\frakf\subset \partial_{\infty} X$ called the \textit{stop}. 

This concept is closely related to the notion of a Liouville pair $(X,F)$ where $F$ is a $(2n-2)$-dimensional Liouville domain embedded in $\partial_{\infty} X$, called the \textit{Liouville hypersurface}, as discussed in \cite{Sylvan15,GPS1}. In this case, the stop is given as a closed neighborhood of $F$. We will often abuse the notation and identify the Liouville domain $F$ with its completion. 

It follows from Theorem \ref{thm:Liouville_reduction}, if the stop is $G$-invariant, the symplectic reduction of a stopped a Liouville manifold is also a stopped Liouville manifold. 
\begin{lemma}
Given a stopped Liouville manifold $(X,\frakf)$ with a Liouville action by $G$, where $\frakf$ is $G$-invariant, the symplectic reduction of $(X,\frakf)$ is a stopped Liouville manifold $(X\sq G, \tilde{\frakf} )$, where $\tilde{\frakf}$ is the reduction of $\frakf$.    
\end{lemma}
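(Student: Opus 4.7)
The plan is to reduce this lemma to \hyperref[thm:Liouville_reduction]{Theorem \ref{thm:Liouville_reduction}} together with a simple descent argument for the stop. First I would invoke Theorem \ref{thm:Liouville_reduction}, which (under the tacit assumption that $G$ acts freely on $\mu_X^{-1}(0)$, as needed for smoothness) yields that $X \sq G$ is a smooth Liouville manifold. The remark following that theorem notes that symplectic reduction preserves the cylindrical decomposition $X = X_0 \cup_{\partial X_0}(\partial X_0 \times \R_{\geq 0})$; in other words, symplectization commutes with reduction, giving a canonical identification
\[
\partial_\infty(X \sq G) \;\cong\; \partial_\infty X \sq G,
\]
where the right-hand side is the contact reduction along the contact moment map $\mu_{\partial_\infty X}$ associated to the induced contact action.

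Next I would construct the reduced stop. The relevant compatibility comes from Equation \eqref{equ:equ_identity} of Definition \ref{defn:Liouville_action}, which yields $\mu_X = e^{s}\mu_{\partial_\infty X}$ on the cylindrical end; consequently $\mu_X^{-1}(0) \cap (\partial_\infty X \times \R_{\geq 0}) = \mu_{\partial_\infty X}^{-1}(0) \times \R_{\geq 0}$, so that the freeness of the $G$-action on $\mu_X^{-1}(0)$ restricts to freeness on $\mu_{\partial_\infty X}^{-1}(0)$. Since $\frakf \subset \partial_\infty X$ is closed and $G$-invariant, the intersection $\frakf \cap \mu_{\partial_\infty X}^{-1}(0)$ is a closed $G$-invariant subset of $\mu_{\partial_\infty X}^{-1}(0)$. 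I would therefore define
\[
\tilde{\frakf} \;\coloneqq\; \bigl(\frakf \cap \mu_{\partial_\infty X}^{-1}(0)\bigr)\big/G \;\subset\; \partial_\infty X \sq G.
\]
Because $G$ is compact, its action on $\mu_{\partial_\infty X}^{-1}(0)$ is proper, and the quotient of a closed invariant subset by a proper free action is closed; hence $\tilde{\frakf}$ is a closed subset of $\partial_\infty(X \sq G)$, which exhibits $(X \sq G, \tilde{\frakf})$ as a stopped Liouville manifold in the sense of the definition recalled in Section \ref{sec:Group_action_on_Liouville_sectors}.

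There is no serious obstacle in this lemma; the only point that merits care is verifying that the symplectic and contact reductions are performed compatibly on the cylindrical end, which as noted above is precisely the content of Equation \eqref{equ:equ_identity}. If one prefers to formulate things in terms of a Liouville hypersurface $F \subset \partial_\infty X$ (rather than an abstract closed stop), the same argument applies: a $G$-invariant Liouville hypersurface descends to a Liouville hypersurface $F/G \subset \partial_\infty X \sq G$ by applying Theorem \ref{thm:Liouville_reduction} to $F$ itself (which inherits a pre-Liouville $G$-action from the inclusion), and one takes a $G$-invariant closed neighborhood of $F$ as the stop upstairs, whose quotient provides the stop downstairs.
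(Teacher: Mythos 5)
Your proof is correct and takes the same route the paper intends: the paper simply asserts the lemma as an immediate consequence of Theorem \ref{thm:Liouville_reduction} (together with the remark that reduction commutes with symplectization and the cylindrical decomposition), and you have spelled out precisely the descent argument for the stop that the paper leaves implicit. The only minor remark is that closedness of $\tilde{\frakf}$ in the quotient needs no properness or freeness at all --- the preimage of $\tilde{\frakf}$ under the quotient map is $\frakf \cap \mu_{\partial_\infty X}^{-1}(0)$ itself by $G$-invariance, which is closed, so $\tilde{\frakf}$ is closed by the definition of the quotient topology.
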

In the following section, we will refine the above statement in the case of a Liouville pair.

The result of removing an open neighborhood of 
$F$ is a Liouville manifold with boundary, called a \textit{Liouville sector}.  Near infinity, the Liouville sector can be identified with the symplectization of a contact manifold with a convex boundary. 

Equivalently, a Liouville sector can be defined as a Liouville manifold with boundary, characterized by the existence of a \textit{linear function} (i.e. $ZI=\alpha I$ for some $\alpha >0$) along the boundary, as shown in \cite{GPS1}.  

\begin{defn}
    A Liouville sector is Liouville manifold with boundary satisfying the following  condition: 
\begin{itemize}
  \item For some (any) $\alpha>0$, there exists a function $I:\partial X \rightarrow \mathbb{R}$ with $ZI=\alpha I$  near infinity and $X_{I}$ is strictly outward pointing. 
\end{itemize}
Without otherwise specified, we will always assume $\alpha = 1$. 
\end{defn}

The existence of such a function carries rich geometric information around the boundary, leading to many important properties of the Liouville sector. For our purpose, we only state some of them: 
\begin{enumerate}
    \item One can deform the Liouville $1$-form by a compactly supported function so that the Liouville vector field $Z$ is tangent to $\partial X$ everywhere.  
    \item  Every Liouville sector arises from a Liouville pair $(X,F)$. 
    \item The defining function $I$ gives rise to a decomposition $\partial X = F\times \bR$.
\end{enumerate}
Therefore, we will implicitly identify a Liouville sector with its underlying Liouville pair and we shall always assume $Z$ is tangent to its boundary.

In the spirit of Definition \ref{defn:Liouville_action}, we define the following. 
\begin{defn}
Let $X$ be a Liouville sector. Let $(G,\mu)$ be a group action together with  a smooth equivariant function $\mu:X\rightarrow \frakg^*.$ We say $(G,\mu)$ is a pre-Liouville action if 
\begin{enumerate}
    \item The restriction of $G$ action to $X^{\circ}$ is a Hamiltonian action with moment map $\mu|_{X^{\circ}}$; 
    \item Near infinity, it preserves $\lambda$ and satisfies \begin{align}
        \label{equ: equal_sign2}
        \langle \mu_{X},\xi \rangle = \lambda (X_{\xi}).  
    \end{align}
\end{enumerate}
\end{defn}
Note that Equation \eqref{equ: equal_sign2} hold for both $G|_{\partial X}$ and $G|_{X^{\circ}}$.

\begin{defn}
\label{equ:real_Liouville_action}
We say a proper group action $G$ on a Liouville sector $(X,\lambda)$ is  \textit{a Liouville action} if it preserves the Liouville form $\lambda$. 
\end{defn}

\begin{thm}
    Let $(X,\partial{X},\lambda)$ be a Liouville sector with a pre-Liouville action by $G$. One can deform $\lambda$ in a compact region so that the $G$-action on $(X,\overline{\lambda})$ is a Liouville action. Moreover, we have $\overline{\lambda}=\lambda +df$ where $f$ is a compactly supported function. 
\end{thm}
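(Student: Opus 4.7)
The plan is to adapt the averaging argument of Theorem \ref{thm:perturbing_1_form} to the sector setting. I would average $\lambda$ over the compact Lie group $G$, verify that the averaging only modifies $\lambda$ on a compact set, and then check that the resulting $1$-form still defines a Liouville sector carrying a genuine Liouville $G$-action.

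First, pick a Liouville domain $X_{0} \subset X$ large enough that $\tau_{g}^{*}\lambda = \lambda$ on $X \setminus X_{0}$ for every $g \in G$; by compactness of $G$ we may enlarge $X_{0}$ so that $X \setminus X_{0}$ is $G$-invariant. Set
\begin{align*}
    \overline{\lambda} \coloneqq \int_{G} \tau_{g}^{*}\lambda \, dg.
\end{align*}
Then $\overline{\lambda}$ is $G$-invariant by construction, $\overline{\lambda} = \lambda$ outside $X_{0}$, and $d\overline{\lambda} = \omega$ because each $\tau_{g}$ is a symplectomorphism. To obtain the compactly supported primitive $f$, I would use that $G$ is connected (per the paper's standing convention): for each $g$ the diffeomorphism $\tau_{g}$ is Hamiltonian isotopic to the identity on the interior, so $\tau_{g}^{*}\lambda - \lambda = d\psi_{g}$ for a function $\psi_{g}$ which is locally constant near infinity (since $\tau_{g}^{*}\lambda = \lambda$ there); subtracting a constant on each end makes $\psi_{g}$ compactly supported, and $f \coloneqq \int_{G} \psi_{g}\, dg$ yields $\overline{\lambda} - \lambda = df$ with $f$ compactly supported.

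Finally, I would verify that the sector structure is preserved under $\lambda \rightsquigarrow \overline{\lambda}$. Since $\overline{\lambda} = \lambda$ near infinity, the new Liouville vector field $\overline{Z}$ defined by $\iota_{\overline{Z}}\omega = \overline{\lambda}$ coincides with $Z$ there, so the defining linear function $I$ on $\partial X$ continues to satisfy $\overline{Z}I = I$ near infinity, and $X_{I}$ remains strictly outward pointing (it depends only on $I$ and $\omega$). Hence $(X, \partial X, \overline{\lambda})$ is again a Liouville sector, and $G$ preserves $\overline{\lambda}$ by construction, making it a Liouville action in the sense of Definition \ref{equ:real_Liouville_action}. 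The main obstacle I anticipate is the compactly supported primitive step: one must choose the functions $\psi_{g}$ measurably in $g$ and consistently normalize them to vanish near infinity, which is delicate because the sector boundary $\partial X \cong F \times \mathbb{R}$ adds extra ends to the ideal boundary; this is handled by using connectedness of the contact boundary of the sector at infinity to match the normalization across all ends simultaneously.
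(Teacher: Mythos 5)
Your averaging construction is exactly the argument the paper intends (the theorem is stated as the sector analogue of Theorem~\ref{thm:perturbing_1_form} and given no separate proof), and the sector-specific check at the end — that the old defining function $I$ still works for $\overline{\lambda}$ because $\overline{Z}=Z$ near infinity and $X_{I}$ depends only on $\omega$ and $I$ — is the right observation. One minor simplification worth noting: there is no need to choose a primitive $\psi_{g}$ for each $g$ and worry about measurable dependence and per-$g$ normalization; since each $\tau_{g}$ is homotopic to the identity (connectedness of $G$), the single closed, compactly supported $1$-form $\overline{\lambda}-\lambda$ is already de Rham trivial, so one takes one primitive $\psi$, observes it is locally constant on $X\setminus X_{0}$, and subtracts a constant — and the thing that makes this normalization consistent is connectedness of $X\setminus X_{0}$, not connectedness of the contact boundary at infinity, so the worry in your last paragraph is aimed at the wrong object.
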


Let $\iota: \partial X\rightarrow X$ be the inclusion of the boundary into a symplectic manifold. Following conventions in \cite{Oh2021}, we will refer to the restriction form $$ \omega_{\partial X} \coloneqq   \iota^* \omega$$ as the \textit{pre-symplectic form}. Its kernel defines a distribution whose integral curves give rise to an one-dimensional foliation $C$, called the \textit{characteristic foliation}. For convenience, we also call the kernel characteristic foliation. The foliation is endowed with an orientation so that $\omega(N,C)>0$ for any inward pointing vector $N$. 
 We first recall a Lemma proved by \cite{Oh2021}. 

\begin{lemma}[Lemma 7.3 of \cite{Oh2021}]
    \label{lem:presymplectic-bdy}
        Fix a diffeomorphism
        $\phi: (X, \partial X)\to (X, \partial X)$ and suppose $\phi^*\lambda = \lambda  +df$
        for a function $f: X \to \bR$, not necessarily  compactly supported.
        Then the restriction $\phi|_{\partial X} = \phi_\partial : \partial X \to \partial X$
        is a presymplectic diffeomorphism, i.e., satisfies $\phi_{\partial}^*\omega_{\partial X} = \omega_{\partial X}$.
         In particular, it preserves the characteristic foliation of $ \partial X$.
\end{lemma}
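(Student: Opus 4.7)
The proof is essentially a formal consequence of the hypothesis combined with the identity $d^2 = 0$. The plan is to show that $\phi$ is already a symplectomorphism of the whole of $X$, and then to restrict to the boundary.

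\textbf{Step 1: Globalize the hypothesis to a symplectic identity.} Applying the exterior derivative to $\phi^*\lambda = \lambda + df$ and using $d^2 f = 0$ together with $d$ commuting with pullback, one obtains
\begin{equation*}
    \phi^*\omega \;=\; \phi^*(d\lambda) \;=\; d(\phi^*\lambda) \;=\; d\lambda + d(df) \;=\; \omega
\end{equation*}
on all of $X$. Note that non-compactness of the support of $f$ plays no role here, since only $d(df) = 0$ is used.

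\textbf{Step 2: Restrict to the boundary.} Let $\iota: \partial X \hookrightarrow X$ denote the inclusion. By the assumption that $\phi$ preserves the boundary, we have the commutation
\begin{equation*}
    \phi \circ \iota \;=\; \iota \circ \phi_\partial.
\end{equation*}
Pulling back $\omega$ along this equality and using Step 1 yields
\begin{equation*}
    \phi_\partial^*\,\omega_{\partial X} \;=\; \phi_\partial^*\,\iota^*\omega \;=\; (\iota\circ\phi_\partial)^*\omega \;=\; (\phi\circ\iota)^*\omega \;=\; \iota^*(\phi^*\omega) \;=\; \iota^*\omega \;=\; \omega_{\partial X},
\end{equation*}
which is the required presymplectic identity.

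\textbf{Step 3: Preservation of the characteristic foliation.} The foliation $C$ is defined pointwise by $C_p = \ker\omega_{\partial X,p}$. Given $X_p \in C_p$ and any $Y \in T_{\phi_\partial(p)}(\partial X)$, set $Y = (\phi_\partial)_* W$ for the unique $W \in T_p(\partial X)$. Then by Step 2,
\begin{equation*}
    \omega_{\partial X,\phi_\partial(p)}\bigl((\phi_\partial)_*X_p,\, Y\bigr) \;=\; (\phi_\partial^*\omega_{\partial X})_p(X_p, W) \;=\; \omega_{\partial X,p}(X_p, W) \;=\; 0,
\end{equation*}
so $(\phi_\partial)_* X_p \in C_{\phi_\partial(p)}$. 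Hence $\phi_\partial$ sends characteristic leaves to characteristic leaves.

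There is no genuine obstacle in this argument: the only content beyond bookkeeping is the observation that the exactness of $\phi^*\lambda - \lambda$ automatically upgrades to $\phi^*\omega = \omega$, which is what allows us to ignore the fact that $f$ need not be compactly supported. (If one also wanted to record preservation of the co-orientation of $C$, one would additionally check that $\phi_\partial$ preserves the inward direction, which follows from $\phi(\partial X) = \partial X$ and $\phi$ being a diffeomorphism of the pair $(X,\partial X)$.)
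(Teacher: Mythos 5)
Your proof is correct, and since the paper cites this as Lemma~7.3 of \cite{Oh2021} without reproducing a proof, there is no in-text argument to compare against; your three steps (promoting exactness of $\phi^*\lambda - \lambda$ to $\phi^*\omega = \omega$ by applying $d$, restricting via $\phi\circ\iota = \iota\circ\phi_\partial$, and reading off preservation of $\ker\omega_{\partial X}$) constitute the standard argument and are exactly what one would expect.

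One small remark on your closing parenthetical about co-orientation: the paper's convention orients the characteristic foliation so that $\omega(N,C)>0$ for inward-pointing $N$, so to check that $\phi_\partial$ preserves that co-orientation you need not only that $\phi$ maps inward vectors to inward vectors (which follows from $\phi(\partial X)=\partial X$), but also that $\phi^*\omega=\omega$ — i.e.\ you are implicitly using Step~1 again rather than just the pair-preserving property. This is a minor point and your main argument does not depend on it.
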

In particular, when $\phi$ is given by a Liouville action, the above lemma tells us the characteristic foliation is preserved by the group action.   

\begin{thm}
    \label{thm:existence_of_linear_function}
    There exists a defining function $ZI = I$ such that $I$ is a $G$-invariant function. 
\end{thm}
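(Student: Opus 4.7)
The plan is to obtain $I$ by averaging an arbitrary linear defining function over $G$ with respect to the normalized bi-invariant Haar measure $dg$, and then verify that all three defining properties of a linear function survive averaging. More precisely, by the definition of a Liouville sector there exists some $I_{0}: X \to \mathbb{R}$ with $Z I_{0} = I_{0}$ near infinity and $X_{I_{0}}$ strictly outward pointing along $\partial X$. We set
\begin{equation*}
    I \coloneqq \int_{G} \tau_{g}^{*} I_{0}\, dg .
\end{equation*}
The $G$-invariance is automatic from the bi-invariance of $dg$.

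The key structural input is that a Liouville action (Definition \ref{equ:real_Liouville_action}) satisfies $\tau_{g}^{*}\lambda = \lambda$, which forces $\tau_{g}^{*} Z = Z$ (since $Z$ is characterized by $\iota_{Z}\omega = \lambda$) and $\tau_{g}^{*}\omega = \omega$. Two consequences follow immediately. First, the region ``near infinity'' — the image of the flow $\psi^{s}_{Z}$ for large $s$ of a Liouville domain that one may also take to be $G$-invariant by averaging — is $G$-invariant, and on it
\begin{equation*}
    Z(\tau_{g}^{*} I_{0}) \;=\; \tau_{g}^{*}(Z I_{0}) \;=\; \tau_{g}^{*} I_{0},
\end{equation*}
so by linearity of $Z$ under integration we obtain $Z I = I$ near infinity. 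Second, $G$-invariance of $\omega$ gives $X_{\tau_{g}^{*} I_{0}} = \tau_{g}^{*} X_{I_{0}}$, whence
\begin{equation*}
    X_{I} \;=\; \int_{G} \tau_{g}^{*} X_{I_{0}}\, dg .
\end{equation*}

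It remains to check that $X_{I}$ is strictly outward pointing along $\partial X$, which is the only genuinely non-formal point. By Lemma \ref{lem:presymplectic-bdy} every $\tau_{g}$ preserves $\partial X$, so $d\tau_{g}^{-1}$ sends outward-pointing vectors at $\tau_{g}(p)$ to outward-pointing vectors at $p$. Pick any local defining function $\rho$ for $\partial X$ on the positive side; strict outward pointing of $X_{I_{0}}$ says $d\rho(X_{I_{0}}) < 0$ on $\partial X$, and the computation above shows $d\rho_{p}(X_{I}(p)) = \int_{G} d\rho_{p}\bigl((\tau_{g}^{*} X_{I_{0}})(p)\bigr) dg$ is an integral of strictly negative continuous functions of $g$, hence strictly negative. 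Therefore $I$ is the desired $G$-invariant linear defining function.

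The main (mild) obstacle is ensuring that the averaging is compatible with both the asymptotic equation $ZI = I$ and the transversality along $\partial X$; both reduce cleanly to the single fact that a Liouville action preserves $Z$ and $\omega$, and preserves $\partial X$. No deformation of $\lambda$ is required since the hypothesis is already a genuine Liouville action.
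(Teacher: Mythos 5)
Your proposal is correct and follows the same overall strategy as the paper's proof (averaging a pre-chosen defining function over $G$ and checking that the two defining properties survive), but the verification of strict outward-pointing is carried out differently, and your route is somewhat more elementary.

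The paper verifies the transversality condition by expressing it as $dI|_{C}>0$ along the characteristic foliation $C$ of $\partial X$, and then invokes Oh's Lemma \ref{lem:presymplectic-bdy} to know that each $\tau_{g}$ preserves $C$ (together with its orientation, since symplectomorphisms preserve inward-pointing normals and $\omega$). You instead observe that ``outward-pointing at $p$'' is the open half-space condition $d\rho_{p}(v)<0$ for any local boundary-defining function $\rho$, that any diffeomorphism of $(X,\partial X)$ preserves this condition, and that $G$-invariance of $\omega$ gives $X_{\tau_{g}^{*}I_{0}} = \tau_{g}^{*}X_{I_{0}}$, so that $X_{I}=\int_{G}\tau_{g}^{*}X_{I_{0}}\,dg$ is an average of outward-pointing vectors over a probability measure and hence outward-pointing. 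The two checks are equivalent --- $X_{I}$ outward pointing is equivalent to $dI|_{C}>0$ --- but your argument does not need Oh's lemma or any statement about the characteristic foliation at all, only the elementary fact that diffeomorphisms of a manifold with boundary preserve the interior and the boundary, plus convexity of the outward cone. This is a real simplification. One small inaccuracy in your write-up: you attribute the fact that $\tau_{g}$ preserves $\partial X$ to Lemma \ref{lem:presymplectic-bdy}, but that lemma takes this preservation as a hypothesis (it is automatic for any diffeomorphism of $(X,\partial X)$) and concludes presymplecticity of $\tau_{g}|_{\partial X}$, which you never actually use. Your proof of $ZI=I$ via $\tau_{g*}Z=Z$ is also fine and matches the paper's coordinate computation in spirit; the invariance of the ``near infinity'' region follows most cleanly from the observation that $\tau_{g}$ commutes with $\psi_{Z}^{s}$, rather than by invoking an averaged Liouville domain, though that works too.
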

\begin{proof}
    Let $\hat{I}$ be a prefixed $1$-defining function. Recall by our assumption, $\partial X$ is completely contained in $S\partial_{\infty}X$, thus we can parametrize $\partial X$ by the coordinate restricted from $S\partial_{\infty}X$. Since $G$ is compact, we define 
    \begin{align*}
        I= \int_{G} \hat{I}(g\cdot y, s)dg.  
    \end{align*}
   Hence, we have 
    \begin{align*}
        ZI & = \int_{G} \frac{\partial}{\partial s}\hat{I}(g\cdot y, s)dg \\ 
        & = \int_{G} \hat{I}(g\cdot y, s)dg \\ 
        & = I. 
    \end{align*} 
By the Lemma \ref{lem:presymplectic-bdy}, the characteristic foliation is invariant under $G$, so for a characteristic foliation $C$:
\begin{align*}
    dI|_{C} & = \int_{G} d\hat{I}(g\cdot y, s)|_{C}dg \\
    & = \int_{G} d\hat{I}(g\cdot y, s)|_{g\cdot C}dg \\
    & = \int_{G} \tau_{g}^* d\hat{I}(y, s)|_{C}dg > 0  
\end{align*}
where the third equality follows from the fact symplectomorphism is orientation preserving. 

\end{proof}

We now turn to a local description of  Liouville action on a Liouville sector. The following theorem is an adaptation of the product decomposition near the boundary of a Liouville sector specified to the linear function $I$ defined in Theorem \ref{thm:existence_of_linear_function}.

\begin{cor}
\label{cor:symplectic_reduction_of_Liouville_pair}
Let $X$ be a Liouville sector with a $G$-invariant defining function as given by Theorem \ref{thm:existence_of_linear_function}. Let $(X,F)$ be the underlying Liouville pair. The symplectic reduction of $X$ is again a Liouville sector, with its Liouville pair being $(X\sq G, F\sq G)$.  
\end{cor}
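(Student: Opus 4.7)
The strategy is to verify directly that $X\sq G$ admits the defining data of a Liouville sector by descending the $G$-invariant structures through the reduction, and then to identify the underlying Liouville pair via the product decomposition $\partial X \cong F\times \mathbb{R}$ determined by the defining function $I$.

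By Theorem \ref{thm:Liouville_reduction}, $X\sq G$ is already a Liouville manifold, with Liouville form $\bar\lambda$ and vector field $\bar Z$ descended from the $G$-invariant $\lambda$ and $Z$. Lemma \ref{lem:presymplectic-bdy} applied to the Liouville action shows that $\partial X$ is $G$-invariant, so $\partial(X\sq G) = (\partial X \cap \mu^{-1}(0))/G$. Since the defining function $I$ from Theorem \ref{thm:existence_of_linear_function} is $G$-invariant, it descends to a function $\bar I$ on $\partial(X\sq G)$, and the relation $ZI = I$ passes to $\bar Z\bar I = \bar I$ by $G$-equivariance of both sides.

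To check that $X_{\bar I}$ is strictly outward-pointing, the first step is to show that $X_I$ is tangent to $\mu^{-1}(0)$ near infinity. The $G$-invariance of $I$ gives $dI(X_\xi) = 0$ in this region, which in turn implies $X_I\cdot\langle\mu,\xi\rangle = 0$ via \eqref{equ: equal_sign2} and the moment map equation. Thus $X_I$ preserves $\mu^{-1}(0)\cap \partial X$; it also commutes with the $G$-action as the Hamiltonian vector field of an invariant function under a Liouville action, so it descends to a vector field on $\partial(X\sq G)$, which must coincide with $X_{\bar I}$. Outward-pointingness survives the quotient because $X_I$ has no component along the orbit directions on $\partial X\cap\mu^{-1}(0)$, so the image vector under the submersion is still transverse to and points out of the boundary.

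For the Liouville pair identification, the $G$-invariant $I$ induces a $G$-equivariant product decomposition $\partial X \cong F\times \mathbb{R}$ near infinity, where the action on the $\mathbb{R}$-factor (parameterising the flow of $X_I$) is trivial. Each $X_\xi$ is tangent to the $F$-slices since $dI(X_\xi)=0$, so the moment map is constant along the $\mathbb{R}$-factor and factors through the projection to $F$. This gives $\mu^{-1}(0)\cap \partial X \cong (\mu_F^{-1}(0)\cap F)\times \mathbb{R}$, and quotienting by $G$ yields $(F\sq G)\times \mathbb{R}$, identifying the reduced Liouville pair as $(X\sq G, F\sq G)$. The main obstacle is precisely this last paragraph: one must verify carefully that the $G$-equivariant splitting near infinity truly factorises the moment map so that the two commuting reductions---along $\mathbb{R}$ via $X_I$ and along $G$---combine to produce the asserted pair, rather than some more complicated quotient mixing the two directions. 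Once this compatibility is in place, the remaining verifications are routine descent arguments for the invariant structures.
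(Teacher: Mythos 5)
Your strategy is broadly parallel to the paper's: descend the $G$-invariant data through the reduction and then split near the boundary to read off the reduced pair. Your explicit verification that $X_{\bar I}$ is strictly outward-pointing is more careful than the paper, which simply asserts that $I$ descends to a defining function.

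There is, however, a genuine gap in your final paragraph, and your self-diagnosed obstacle points in the wrong direction. The issue is not whether the $\mathbb{R}$- and $G$-reductions commute, but whether the $G$-action induced on $F$ preserves a Liouville structure at all — this is what makes $F\sq G$ a Liouville manifold rather than merely a topological quotient, and it is the real content of saying $(X\sq G, F\sq G)$ is a Liouville \emph{pair}. You treat $\partial X\cong F\times\mathbb{R}$ as a bare product, but by [GPS1, Prop.~2.25] the Liouville form on the collar is $\lambda_X = \lambda_F + \lambda_{\mathbb{C}} + df$, with a correction term $df$ that is not compactly supported in general. The $G$-action preserves the total form $\lambda_X$, which does not automatically mean it preserves $\lambda_F$. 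The paper closes this gap by observing that $f$ is supported inside $F_0\times\mathbb{C}$ for some Liouville domain $F_0\subset F$, so that away from a compact region in the $F$-direction $G$ does preserve $\lambda_F$; hence the induced action on $F$ is a \emph{pre-Liouville} action in the sense of Definition~\ref{defn:Liouville_action}, and Theorem~\ref{thm:Liouville_reduction} then yields that $F\sq G$ is a Liouville manifold. Without this step your argument identifies the underlying spaces but says nothing about the Liouville structure on $F\sq G$, which is what the statement requires.
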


\begin{proof} 
    By Theorem \ref{thm:Liouville_reduction}, $X\sq G$ is a Liouville manifold with boundary. Since $\omega$ and $\lambda$ are $G$-invariant, the Liouville vector field $Z$ is also $G$-invariant, which descends to $X\sq G$.  Applying Theorem \ref{thm:existence_of_linear_function}, $I$ descends to a $1$-defining function for $X\sq G$, which shows $X\sq G$ is a Liouville sector. 

    By \cite[Proposition 2.25]{GPS1}, there is a decomposition near the boundary of $X$, 
    \begin{equation}
        \label{equ:decomposition}
        (X,\lambda_{X})= (F\times \mathbb{C}_{\mathrm{Re}\geq 0},\lambda_{F}+\lambda_{\mathbb{C}}+df),
        \end{equation}
        in which $I=y$ is the imaginary part of the $\mathbb{C}_{\mathrm{Re}\geq 0}$-coordinate, $\lambda_{\bC}=-ydx$, $(F,\lambda_{F})$ is a Liouville manifold and $f:F\times \mathbb{C}_{\mathrm{Re}\geq 0}\rightarrow \mathbb{R}$ satisfies the following properties:
\begin{enumerate}
\item $f$ is supported inside $F_{0}\times \mathbb{C}$ for some Liouville domain $F_{0}\subseteq F$.
\item $f$ coincides with some $f_{\pm \infty}: F\rightarrow \mathbb{R}$ for $|I|$ sufficiently large.
\end{enumerate}
We remark that $f$ is not a compactly supported function in general. By our choice of $I$, the leaves of $G|_{\partial X}$-orbits (i.e. restricted to $F\times \sqrt{-1}\bR$) lie in $F\times\{y\}$ for $y\in \sqrt{-1}\bR$. Since $G$ preserves the Liouville form and $f$ is supported inside some Liouville domain, it follows that $G$ preserves $\lambda_{F}$ away from a compact set. In other words, this is a pre-Liouville action and $F\sq G$ is a Liouville manifold.  
\end{proof}

With this understood, we study the lifted action on cotangent bundle of a manifold with boundary.
\begin{example}
($\dim Q>\dim G \text{ and } \dim Q \geq 2 $) Let $Q$ be a connected $n$-manifold with boundary, and $G$ a Lie group acting on $Q$. For simplicity, we assume $\partial Q$ is connected. As shown in Example \ref{ex:closed_manifold}, the cotangent lifted action in a Liouville action.   
  It follows that the symplectic reduction of the cotangent bundle is $T^*(Q/G)$, which is a Liouville sector with Liouville hypersurface being $T^*(\partial Q/G)$.  
\end{example}

\begin{example}[Open book decomposition]
Let $(M,\alpha)$ be a contact manifold. An open book decomposition is a pair $(B,\pi)$ where 
\begin{enumerate}
    \item $B$ is a codimension two submanifold called the \textit{binding} of the open book and 
    \item $\pi:M\setminus B\rightarrow \mathbb{S}^1$ is a fibration of the complement of $B$ such that each fiber $\pi^{-1}(\theta)$, $\theta\in \mathbb{S}^1$, corresponds to the interior of a compact hypersurface $\Sigma_{\theta}$, called the \emph{pages} of the open book, with $\partial \Sigma_{\theta}$ equal to $B$. 
\end{enumerate}
A page of the open book decomposition gives rise to a Liouville hypersurface as shown in \cite[Lemma 2.18]{GPS1}. Therefore, suppose a Liouville action preserves the pages, we can apply Corollary \ref{cor:symplectic_reduction_of_Liouville_pair}. The resulting symplectic reduction is a Liouville pair with its Liouville hypersurface being the reduction of the page. 
\end{example}

\begin{example}[Conic fibration]
    Consider the conic fibration $\bC^2\setminus\{\epsilon\} \rightarrow \bC\setminus\{\epsilon\}$ given by $(x,y)\rightarrow xy$. Write polar coordinates  $(x,y)=(r_{1}e^{i\theta_{1}},r_{2}e^{i\theta_{2}})$ and consider  the $\mathbb{S}^1$-action of weight $(1,-1)$. Then the Liouville form  $r_{1}^2d\theta_{1}+ r_{2}^2d\theta_{2}$ is invariant under $\mathbb{S}^1$-action. We choose $F_{0}$ as a page of the standard open book decomposition given by $\frac{xy}{|xy|}=-1$, in other words, they are the Liouville hypersurface specified by $\theta_{1}+\theta_{2}=\pi$.

    The map $f:\mathbb{C}^2\setminus\{xy=\epsilon\}\rightarrow \mathbb{C}\setminus\{\epsilon\}$ can also be understood as the quotient map, so the stopped Liouville manifold downstairs is $(\mathbb{C}\setminus\{\epsilon\},-\infty)$.  
\end{example}

\section{Homotopy quotient of Liouville manifolds} \label{sec:liouvilleborel}
\subsection{Borel construction revisited}
 The classifying space $BG$ of a compact Lie group  $G$, together with its universal bundle $EG\rightarrow BG$, can be constructed via a sequence of finite-dimensional  approximations. In particular, one may choose a collection of  $N$-connected closed manifold equipped with free $G$-actions such that $EG = \bigcup_{N} EG(N)$ (explicit constructions of these approximations  will be introduced in the Section \ref{sec:approximation}).
\begin{equation} 
    \label{diag:approximation1}
\begin{tikzcd} 
  G = EG(0) \arrow[r, hook] \arrow[d]
    & EG(1) \arrow[r, hook] \arrow[d]
    &\cdots
    \arrow[r, hook]
    &EG(N) \arrow[r, hook] \arrow[d]
    &\cdots  \arrow[r, hook] 
    &EG \arrow[d]\\
pt = BG(0) \arrow[r, hook] 
    & BG(1) \arrow[r, hook] 
    &\cdots
    \arrow[r, hook]
    &BG(N)\arrow[r, hook] &\cdots \arrow[r, hook] & BG 
    \end{tikzcd}
\end{equation}

For each $N \in \mathbb{Z}_{>0}$,  the space $EG(N)$ is an $N$-connected principal $G$-bundle over $BG(N).$ The $G$-action induces a Liouville action on their cotangent bundles, and we have the canonical identification 
\begin{align*}
    T^*EG(N) \sq G = T^*(EG(N)/G) = T^*BG(N).
\end{align*}
We choose a family of $G$-invariant Riemannian metrics $\{g_{N}\}$ on $EG(N)$ such that the restriction of each metric agrees with the one on the preceding stratum in the approximation tower. 

In general, for an embedded submanifold $i: X \hookrightarrow Y$, there is a natural exact sequence of vector bundles:
\begin{align*}
    0 \longrightarrow N^*X \longrightarrow i^*T^*Y \longrightarrow T^*X \longrightarrow 0.
\end{align*}
A choice of Riemannian metric determines a splitting of this sequence, and hence an  embedding of cotangent bundles as in   
\eqref{diag:approximation2}. In the same manner, the cotangent bundle $T^*EG$ is approximated by the sequence $T^*EG(N)$.

For each cotangent bundle $T^*EG(N)$, the  $G$-bundle $EG(N)$ embeds as the zero section $0_{EG(N)}\hookrightarrow T^*EG(N)$, which is a $G$-invariant exact  Lagrangian contained in  $\mu^{-1}(0)$. Its reduction is $0_{BG(N)}\subset T^*BG(N)$.  We note that $0_{EG(N)}$ and $0_{BG(N)}$ are compatible with the embeddings in \eqref{diag:approximation2}. 
\begin{equation} \label{diag:approximation2}
    \begin{tikzcd}
  T^*G = T^*EG(0) \arrow[r, hook] \arrow[d]
    & T^*EG(1) \arrow[r, hook] \arrow[d]
    &\cdots
    \arrow[r, hook]
    &T^*EG(N) \arrow[r, hook] \arrow[d]
    &\cdots \arrow[r, hook] 
    &T^*EG \arrow[d] \\
pt = T^*BG(0) \arrow[r, hook] 
    & T^*BG(1) \arrow[r, hook] 
    &\cdots
    \arrow[r, hook]
    &T^*BG(N) \arrow[r, hook] &\cdots \arrow[r, hook] & T^*BG  
    \end{tikzcd}
\end{equation}

Let $(X,\lambda)$ be a Liouville manifold (or sector) and let $G\rightarrow \mathrm{Aut}(X,\lambda)$ be a Liouville action.  We denote by $\mu_{X}$ the moment map for the action of $G$ on $X$, and  by $\mu_{T^*EG(N)}$ the moment map for the Liouville action of $G$ on $T^*EG(N)$, obtained by lifting the $G$-action on $EG(N)$.

The diagonal action of $G$ on $(X\times T^*EG(N), \lambda\oplus \lambda_{T^*EG(N)})$ is a free Liouville action. Here $\lambda_{T^*EG(N)}$ denotes the canonical one form of $T^*EG(N)$.

To define the model of the Borel construction that  we will use, we  give two equivalent descriptions. The first is  the associated bundle of the universal $G$-bundle, that is the classical Borel construction. The second one is obtained by the Liouville reduction of the diagonal action above. These two constructions agree by an  argument analogous to the classical \textit{Sternberg--Weinstein universal phase space construction}  \cite{weinsteinym}. 
\begin{defn}
    \label{defn:Borel_Liouville_space}
    Let $X$ be a Liouville manifold (or sector) equipped with a Liouville action of  $G$. For each $N$, the \textit{Borel--Liouville space} $X(N)$ is defined by either of the following equivalent constructions (up to choices of Riemannian metrics) : 
    \begin{itemize}
        \item{(\textbf{Construction \MakeUppercase{\romannumeral 1}})}  The associated bundle of the $G$-principal bundle $  \mu_{T^*EG(N)}^{-1}(0) \rightarrow T^*BG(N)$, i.e. $X(N)\coloneqq X\times_{G} \mu_{T^*EG(N)}^{-1}(0)$. 
        \item{(\textbf{Construction \MakeUppercase{\romannumeral 2}})} The Liouville reduction $X(N)\coloneqq \left(X\times T^*EG(N)\right)\sq G = \left(\mu_{X}+\mu_{T^*EG(N)}\right)^{-1}(0)/G$. 
    \end{itemize} 
\end{defn}
We define  the \textit{Borel--Liouville construction} (or simply Borel construction) to be 
\begin{align*}
    X_{G}\coloneqq  \varinjlim_{N}X(N). 
\end{align*} 
By Construction \MakeUppercase{\romannumeral 1}, $\mu_{T^*EG}^{-1}(0)$  deformation retracts onto its base  $EG$ in a $G$-equivariant way, and hence has the same homotopy type as $EG$. it follows that  $X_{G}$ is homotopy equivalent to the classical Borel construction. 

By Construction \MakeUppercase{\romannumeral 1}, there is a fibration 
\begin{equation} 
    \label{diag:X_fibration}
\begin{tikzcd} 
 X \arrow[r, hook] 
    & X(N) \arrow[r, "\pi_{N}"] 
     & T^*BG(N)  
    \end{tikzcd}.
\end{equation}
To prove the equivalence of two constructions, we begin by analyzing how the above  fibration structure is realized using Construction  \MakeUppercase{\romannumeral 2}.  A symplectic analogue appears in  \cite{Cazassus_2024}. 

Fix  $q\in BG(N)$, let $O_{q} \coloneqq T_{q}(G\cdot q)$ denote the tangent space to the $G$-orbit passing through $q$.  As explained in Example \ref{ex:closed_manifold}, the zero level set of the moment map $\mu_{T^*EG(N)}$ is the annihilator of the subbundle generated by the  $G$-orbits.
We choose a $G$-invariant metric on $EG(N)$, which determines an orthogonal splitting 
\begin{align*}
    T_{q}EG(N) = O_{q}\oplus O_{q}^{\perp} \cong \mathfrak{g}\oplus O_{q}^{\perp}.
\end{align*}
Passing to duals, we obtain a corresponding spiltting  $T_{q}^* EG(N) =  \mathfrak{g}^*\oplus (O_{q}^{\perp})^*$. 
We set $(\cO^{\perp})^* = \bigcup_{q}(O_{q}^{\perp})^*$, so that 
 $$\mu_{T^*EG(N)}^{-1}(0) = (\cO^{\perp})^*.$$ 
 
 Thus, the orthogonal projection $T^*EG(N) \rightarrow (\cO^{\perp})^*$,
composed with the quotient projection induces a projection $T^*EG(N) \rightarrow T^*BG(N)$. Restricting to the zero level set of the diagonal moment map gives a $G$-invariant map 
\begin{align}
    \mu_{\mathrm{diag}}^{-1}(0)\hookrightarrow X\times T^*EG(N) \rightarrow T^*BG(N)
\end{align}
which descends to a projection  $X(N) \xrightarrow[]{\pi_{N}} T^*BG(N)$. We now show that the fiber of $\pi_{N}$ is diffeomorphic to $X$. For any $b=(q_{b},p_{b})\in T^*BG(N)$, the inclusion $i_{b}$ of the fiber $\pi^{-1}_{N}(b)$ is given explicitly by 
\begin{equation}
    \label{equ:inclusion_of_the_fiber}
    i_{b}: X \rightarrow X(N): x\mapsto \left[x,(q_{b},p_{b}-\mu_{X}(x))\right]
\end{equation}
where we identify $\mu_{X}(x) \in   \mathfrak{g}^* \cong O_{q}^*$.

\begin{proof}[Proof of equivalence in  Definition \ref{defn:Borel_Liouville_space}]
    \label{rem:equivalence_of_two_constructions}        
     Let   
     \begin{align*}
        X(N)\coloneqq \left(X\times T^*EG(N)\right)\sq G, \quad X(N)'\coloneqq X \times_G \mu^{-1}_{T^*EG(N)}(0).
     \end{align*}
We construct an isomorphism of $G$-bundles between these two models. Consider the diagram 
            \begin{equation} 
            \begin{tikzcd} 
              X \arrow[r, hook] \arrow[equal]{d}
                & X(N) \arrow[r, "\pi_{N}"] \arrow[d, "\Phi"]
                & T^*BG(N)  \arrow[equal]{d}\\
             X \arrow[r, hook] 
                & X(N)' \arrow[r, "\pi_{N}'"] 
                 & T^*BG(N)  
                \end{tikzcd},
            \end{equation}
           where the vertical map   $\Phi$ is defined on representatives  by $$\Phi([x, (q, p)]) = [x, (q,\pr_{(\cO^{\perp})^*}(p))].$$ Here, $\pr_{(\cO^{\perp})^*}$ denotes the orthogonal projection onto the annihilator of the orbit directions. 
            
           It is straightforward to check $\Phi$ is a $G$-invariant isomorphism. 
    \end{proof}
    From Construction  \MakeUppercase{\romannumeral 2}, the Liouville reduction endows $X(N)$ with a Liouville $1$-form $\lambda_{X(N)}$. We now show that this Liouville structure is compatible  with the fibration.  
    \begin{rem}
        The two-form $(\Phi^{-1})^* d\lambda_{X(N)}$ on $X(N)'$ defines a weak coupling form  and hence determines a Hamiltonian connection. In particular, this form coincides with the symplectic structure  on the symplectic Borel space used in \cite{KLZ23}.
    \end{rem}
\begin{lemma}
    \label{lem:Liouville_fibration}
    The fibration \eqref{diag:X_fibration} is compatible with the Liouville form $\lambda_{X(N)}$ and the symplectic form $\omega_{X(N)}$ obtained in Theorem \ref{thm:Liouville_reduction}, in the sense that 
    \begin{align}
        i_{b}^*\lambda_{X(N)}= \lambda_{X},\quad    i_{b}^*\omega_{X(N)}= \omega_{X}
    \end{align}
    where  $i_{b}$ denotes the inclusion of the fiber at a point $b\in T^*BG(N)$. 
    \end{lemma}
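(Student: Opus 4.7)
The plan is to verify both identities by lifting $i_{b}$ to the pre-quotient space and pulling back the ambient Liouville form $\lambda_{X}\oplus\lambda_{T^{*}EG(N)}$ directly. Concretely, I define a smooth lift $\tilde{\imath}_{b}:X\to \mu_{\mathrm{diag}}^{-1}(0)\subset X\times T^{*}EG(N)$ by
\[
\tilde{\imath}_{b}(x)=\bigl(x,\,(q_{b},\,p_{b}-\mu_{X}(x))\bigr),
\]
where $\mu_{X}(x)\in\mathfrak{g}^{*}$ is inserted into $T^{*}_{q_{b}}EG(N)$ via the metric splitting $T^{*}_{q_{b}}EG(N)=\mathfrak{g}^{*}\oplus(O_{q_{b}}^{\perp})^{*}$, with $p_{b}$ viewed in the summand $(O_{q_{b}}^{\perp})^{*}\cong T^{*}_{q_{b}}BG(N)$. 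By the formula (3.3) for $i_{b}$, one has $i_{b}=\pi\circ\tilde{\imath}_{b}$ where $\pi$ is the reduction projection, so it suffices to show $\tilde{\imath}_{b}^{*}(\lambda_{X}\oplus\lambda_{T^{*}EG(N)})=\lambda_{X}$.

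First I would confirm that $\tilde{\imath}_{b}$ actually lands in $\mu_{\mathrm{diag}}^{-1}(0)$. Using the cotangent moment-map formula $\mu_{T^{*}EG(N)}(q,p)(\xi)=p(X_{\xi}(q))$ from Example 2.9, together with the fact that $p_{b}\in(O_{q_{b}}^{\perp})^{*}$ annihilates orbit directions while $\mu_{X}(x)$ pairs with $X_{\xi}(q_{b})$ to give $\langle\mu_{X}(x),\xi\rangle$ under the chosen identification, one finds
\[
\mu_{T^{*}EG(N)}\bigl(\tilde{\imath}_{b}(x)\bigr)=-\mu_{X}(x),
\]
so the diagonal moment map vanishes on the image of $\tilde{\imath}_{b}$. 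The pullback computation then splits into two pieces: the first factor of $\tilde{\imath}_{b}$ is the identity, giving $\tilde{\imath}_{b}^{*}\lambda_{X}=\lambda_{X}$; for the second factor, the base projection $x\mapsto q_{b}$ is constant, so in canonical coordinates $\lambda_{\mathrm{can}}=p\,dq$ the pullback vanishes identically since $d(q_{b})=0$. This gives $i_{b}^{*}\lambda_{X(N)}=\lambda_{X}$, and taking exterior derivatives on both sides immediately yields $i_{b}^{*}\omega_{X(N)}=\omega_{X}$.

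The main subtlety here is not analytic but bookkeeping: one must keep track of the three identifications $\mathfrak{g}^{*}\cong O_{q_{b}}^{*}$, $T^{*}_{q_{b}}BG(N)\cong(O_{q_{b}}^{\perp})^{*}$, and the resulting decomposition of $T^{*}_{q_{b}}EG(N)$, all of which depend on the choice of $G$-invariant Riemannian metric $g_{N}$. Once those identifications are fixed, the cancellation in the moment-map check and the vanishing of $\tilde{\imath}_{b}^{*}\lambda_{T^{*}EG(N)}$ are purely formal, reflecting the elementary fact that the canonical $1$-form on any cotangent bundle pulls back to zero along a map into a single cotangent fibre.
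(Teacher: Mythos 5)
Your proof is correct and takes essentially the same approach as the paper: lift the fiber inclusion to the pre-quotient $\mu_{\mathrm{diag}}^{-1}(0)\subset X\times T^{*}EG(N)$ via the explicit formula \eqref{equ:inclusion_of_the_fiber}, pull back $\lambda_{X}\oplus\lambda_{\mathrm{can}}$, and observe that the $\lambda_{\mathrm{can}}$ contribution vanishes because the base component $q_{b}$ is held fixed (equivalently, the derivative lands in the vertical subspace of $T^{*}EG(N)$). The paper compresses this by writing the pullback directly in terms of $i_{b}$ and citing the verticality of $-(d\mu_{X})_{x}(v)$; your explicit verification that $\tilde{\imath}_{b}$ lands in $\mu_{\mathrm{diag}}^{-1}(0)$ is a worthwhile sanity check that the paper leaves implicit in the construction of $i_{b}$.
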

    \begin{proof}
         Let $x\in X$ and $v\in T_{x}X$. From the explicit formula for $i_{b}$ in \eqref{equ:inclusion_of_the_fiber}, we have 
        \begin{align*}
            \left((i_{b})^*\lambda_{X(N)}\right)_{x}(v) = \lambda_{X}(v)_{x} + \lambda_{\mathrm{can}}(-(d \mu_{X})_{x}(v)).
        \end{align*}
        By the property of the canonical $1$-form, $\lambda_{\mathrm{can}}(-(d \mu_{X})_{x}(v))$ vanishes since $-(d \mu_{X})_{x}(v)$ lies in the vertical direction. Taking  derivatives establishes the compatibility of symplectic forms as well.
        \end{proof}

Definition \ref{defn:Borel_Liouville_space} shows that whenever a Liouville manifold carries a Liouville $G$-action, one may construct a model of  Borel construction  obtained by finite-dimensional approximations of Liouville manifolds. \footnote[1]{Throughout this paper we do not address whether the direct limit $X_{G}$ carries a Liouville structure. Our use of the Borel construction relies only on the finite-stage Liouville manifold. } 

 Recall that a connection on a fiber bundle $\pi: E\rightarrow B$ is a splitting of the tangent bundle  
\begin{equation}
    TE \cong TE^{\hor}\oplus    TE^{\ver}, 
\end{equation} 
where $TE^{\ver} = \ker d\pi$. As usual, we refer to $TE^{\ver}$ (resp. $TE^{\hor}$) as the \textit{vertical (resp. horizontal) bundle}.

In particular, the symplectic form $\omega_{X(N)}$ determines a canonical connection on the fibration 
\begin{align*}
    \pi_{N}:X(N)\rightarrow T^*BG(N),
\end{align*}
given by the  $\omega_{X(N)}$-orthogonal complement of $TX(N)^{\ver} $.  We refer to this as the  \textit{connection associated with $\omega_{X(N)}$}.  Let us conclude this section by recording some standard properties of this  connection.
\begin{rem}[Parallel transport]
    \label{rem:parallel_transport}
    The connection determines a well-defined notion of parallel transport, which allows us to construct local trivializations via families of parallel transports. 
    
    The differential of the projection $$d\pi_{N}: T_{x}X(N)\rightarrow T_{\pi_{N}(x)}T^*BG(N)$$ is surjective everywhere and restricts to an isomorphism $$d\pi_{N}|_{\mathrm{hor}}: T_{x}X(N)^{\hor}\rightarrow T_{\pi_{N}(x)}T^*BG(N).$$ Given a vector $ Y\in T_{\pi_{N}(x)}T^*BG(N)$, we denote  by $Y^{\#}$ its \textit{horizontal lift}, namely the preimage of  $Y$ under  $d\pi_{N}|_{\mathrm{hor}}$. 

    For any path $\gamma: [0,1]\rightarrow T^*BG(N)$, the associated parallel transport 
    \begin{align*}
        \Phi_{[\gamma]}: F_{\gamma(0)} \rightarrow F_{\gamma(1)}
    \end{align*}
    is a symplectomorphism between the fibers $(F_{\gamma(0)},\omega_{\gamma(0)})$ and  $(F_{\gamma(1)},\omega_{\gamma(1)})$. 
\end{rem}
\subsection{Borel construction of Lagrangians}
\label{rem:Borel_Lagrangians}
Let $(X,\omega, \mu)$ be a symplectic manifold with a Hamiltonian  $G$-action. If the $G$-action is free, then every $G$-invariant Lagrangian $L\subset \mu^{-1}(0)$ 
corresponds uniquely to a Lagrangian 
\begin{align*}
 \underline{L}\coloneqq L/G.
\end{align*}
Thus the map $     L\rightarrow \underline{L}$ defines a one-to-one correspondence between $G$-invariant Lagrangians $L$ in the zero level set and  Lagrangians in $X\sq G$. 

The moment map $\mu$ is constant on any connected $G$-invariant Lagrangian $L$ (see, for example, \cite[Lemma 4.1]{CGS}); we denote this constant by $\mu(L)$. Hence  in Construction \MakeUppercase{\romannumeral 2}, the reduction of the Lagrangian $$
L\times (b,-\mu(L))\subset \mu_{\mathrm{diag}}^{-1}(0)$$ defines a  Lagrangian submanifold of $X(N)$, lying over the zero section of $T^*BG(N)$. We denote these Lagrangians by $L(N)$. These Lagrangians correspond bijectively to  Lagrangians $L\times_{G}0_{EG(N)}$ in Construction \MakeUppercase{\romannumeral 1} by the identification established earlier. This fits into the diagram 
\begin{equation} 
    \label{diag:fibration}
\begin{tikzcd} 
  L \arrow[r, hook] \arrow[d]
    & L(N) \arrow[r] \arrow[d]
    & 0_{BG(N)}  \arrow[d]\\
 X \arrow[r, hook] 
    & X(N) \arrow[r, "\pi_{N}"] 
     & T^*BG(N)  
    \end{tikzcd}.
\end{equation}
We  set 
\begin{align*}
    L_{G}\coloneqq \varinjlim_{N}L(N),
\end{align*}
which recovers the usual Borel construction of $L$. Since each $L(N)$ is Lagrangian, parallel transport along the paths contained in $0_{BG(N)}$, computed using connection associated with $\omega_{X(N)}$,  preserves the Lagrangian fiber $L$. 

The preceding discussion applies to general symplectic manifolds and Lagrangians. In the exact setting, we additionally require the primitives of exact Lagrangians be chosen $G$-invariant. This can always be achieved. Indeed, let $f_{L}: L\rightarrow \bR$ be a  primitive of $L$, we may replace $f_{L}$ by its averaged function over $G$ which still satisfies $df_{L} = \lambda|_{L}$ as $\lambda$ is $G$-invariant.  This function also serves as a primitive for the  reduced Lagrangian  $\underline{L}$.

Using the associated-bundle description of the Borel--Liouville construction, we have the diagram 
\begin{equation} 
    \label{diag:associate_fibration}
\begin{tikzcd} 
  L  \arrow[d]
    & \arrow[l]  L \times  0_{EG}  \arrow[r] \arrow[d]
    & 0_{EG}  \arrow[d]\\
\underline{L}
    & \arrow[l] L_{G} \arrow[r, "\pi_{G}"] 
     & 0_{BG}  
    \end{tikzcd}.
\end{equation}
Via this diagram, the $G$-invariant primitive $f_{L}$ induces a global function on the Borel construction $L_{G}$ (by lifting to $L\times 0_{EG}$ and descending to $L_{G}$). By construction, the restriction of this function to every fiber of $\pi_{G}$ agrees with $f_{L}$ itself. Clearly, the same argument applies to each finite-dimensional approximation $L(N)$, and these constructions are compatible with respect to the approximation maps.

Combining this observation with Lemma~\ref{lem:Liouville_fibration}, we conclude that each fiber $L \subset L(N)$ is an exact Lagrangian with primitive given by  $f_{L}$ with respect to the fiberwise restriction of Liouville form $\lambda_{X(N)}$.
\subsection{Classifying spaces as Hilbert manifolds}
\label{sec:approximation}
In the previous subsection, we used a finite-dimensional approximation $EG(N)$ to $EG$, where each $EG(N)$ is a closed manifold. The purpose of this subsection is to provide an explicit and  ``workable'' model for these approximations and to develop a corresponding Morse framework that meets the requirements for  applications in Floer theory. We follow a similar strategy as in  \cite{KLZ23}. 

Let $G$ be a compact connected Lie group, $G$ can be embedded into $ U(k)$ for some sufficiently large $k\in \bN$. Since $U(k)$ acts freely on the contractible Stiefel variety $V(k,\infty)$, the restriction of this action to $G$ remains free. Thus $V(k,\infty)$ serves as a model for the  universal $G$-bundle $EG$, which we approximate by  
\begin{equation}
    EG(N)\coloneqq  V(k,N+k). 
\end{equation}
The corresponding classifying space and its finite-dimensional approximations are obtained by taking the quotient by $G$:
\begin{align*}
    BG(N)\coloneqq  EG(N)/G.  
\end{align*}
The infinite Stiefel variety $V(k,\infty)$ is an infinite dimensional Hilbert manifold and so is $BG$. 

\begin{defn}
\label{defn:pseudo_gradient_vector_field}
    A vector field $\cV$ is said to be  \textit{pseudo-gradient} adapted to a  Morse function $f:X\rightarrow \bR$ if it satisfies the following  conditions:
\begin{enumerate}
    \item in a Morse chart around a critical point, $\cV$ coincides with the standard  negative gradient with respect to  the Euclidean metric; and 
    \item $df(\cV)\leq 0$ where equality holds if and only if  at  a critical point of $f$.
\end{enumerate}
\end{defn}
The key ingredient here is that $BG(N)$  admits a rich supply of Morse functions together with adapted pseudo-gradient vector fields that respect  the system of finite-dimensional approximations. To carry this out, we begin by analyzing the fibration structure of $BG$.

Consider the natural projection from the Stiefel variety to the complex Grassmannian, 
\begin{align}
    \label{equ:projection_from_EG_to_Gr}
    EG = V(k,\infty) \rightarrow \Gr(k,\infty)
\end{align}
which sends a $k$-frame to  the subspace spanned by that frame. This projection descends to a  fibration 
\begin{align}
    \label{equ:projection_from_BG_to_Gr}
 U(k)/G \hookrightarrow BG\xrightarrow{   \pi_{BG}} \Gr(k,\infty). 
\end{align}
 These projections are compatible with the embeddings of $EG(N)$ and $BG(N)$ respectively. The corresponding projection from $BG(N)$ to $\Gr(k,N+k)$ is denoted by $\pi_{BG(N)}$. 

Let $V\subset  \bC^{\infty}$ be a closed subspace, and denote by $P_{V}$ the orthogonal projection onto $V$. Let $\cL(E,F)$ denote the space of continuous linear maps from a Banach space $E$ to $F$, and write $\norm{T}$ for the operator norm of $T\in \cL(E,F)$. The assignment $V\rightarrow P_{V}$  induces an embedding $ \Gr(k,\infty) \rightarrow \cL(\bC^{\infty},\bC^{\infty})$, and hence we may define a metric on $\Gr(k,\infty)$ by 
\begin{align}
    \mathrm{dist}(V_{1},V_{2})\coloneqq  \norm{P_{V_{1}}-P_{V_{1}}}. 
\end{align}
This makes $\Gr(k,\infty)$ a complete metric space. Moreover, one can show that the unit ball is contractible.  The infinite complex Grassmannian $\Gr(k,\infty)$ is also a Hilbert manifold. 

To construct Morse functions  on each finite dimensional approximation  $BG(N)$, we begin by constructing Morse functions on $\mathrm{Gr}(k,N+k)$ that satisfy the desired properties and then lift their gradient vector fields to   pseudo-gradient vector fields on $BG(N)$ via the fibration \eqref{equ:projection_from_BG_to_Gr} and the following lemma. 

\begin{lemma}
\label{lem:pseudo_gradient_Lemma}
Let $ F\hookrightarrow E\xrightarrow{\pi} B$ be a locally trivial fibration, where $F$ and $B$ are connected compact finite-dimensional manifolds. Fix an auxiliary connection with a Riemannian metric  of the  form $\pi^*g_{B}\oplus g_{F}$ where $g_{B}$ is a Riemannian metric on $B$,  and $g_{F}$ is a fiberwise metric. We further assume that these metrics are of Euclidean type in local coordinates near critical points of the Morse functions introduced below\footnote[1]{Technically, this assumption is not required for the current lemma if one drops the locally Euclidean assumption from Definition \ref{defn:pseudo_gradient_vector_field}; it will be used later when discussing the existence of a CW complex structure, see Remark \ref{rem:CW_complex}.}. 

Let $f_{B}:B\rightarrow \bR$ be a Morse function on the base.  Choose a function $f_{E}:E\rightarrow \bR$ satisfying the condition: in a local trivialization $U_{i}\times F \cong \pi^{-1}(U_{i})$, one has 
    \begin{align}
        \label{equ:pullback_morse}
        f_E = \pr_F^* f_{F}.         
    \end{align}
where $f_{F}: F\rightarrow \bR$ is a Morse function and  $\pr_F: U_{i} \times F \to F$ denotes  the projection to the  fiber.

Then there exists a function $F_{E}: E\rightarrow \bR$ such that the vector field 
\begin{align}
    \label{equ:vertical_plus_horizontal}
    \cV\coloneqq (-\nabla f_{B})^{\#}-\nabla^{v}f_{E}
\end{align}
is a pseudo-gradient vector filed for $F_{E}$. Here $(-\nabla f_{B})^{\#}$ is the horizontal lift the negative gradient vector field of $f_{B}$ with respect to the connection determined by the metric defined above, and   $\nabla^{v}f_{E}$ denotes the fiberwise gradient vector field of $f_{E}$.  
\end{lemma}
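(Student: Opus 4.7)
The natural candidate is $F_{E}\coloneqq \pi^{*}f_{B}+f_{E}$, and the plan is to verify directly that $\cV=-\nabla F_{E}$ with respect to the given metric, from which both conditions of Definition \ref{defn:pseudo_gradient_vector_field} follow.

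First I would analyze the horizontal/vertical splitting. Since the metric has the form $\pi^{*}g_{B}\oplus g_{F}$, it is block-diagonal with respect to $TE^{\hor}\oplus TE^{\ver}$. Choosing each local trivialization $\psi_{i}$ to be connection-compatible (so that the product-horizontal in $U_{i}\times F$ coincides with the true horizontal of $\pi^{-1}(U_{i})$), the hypothesis $f_{E}=\pr_{F}^{*}f_{F}$ gives $df_{E}|_{(u,v)}(X,Y)=df_{F}|_{v}(Y)$, so $df_{E}$ annihilates every horizontal vector. Consequently $-\nabla f_{E}=-\nabla^{v}f_{E}$, and the decomposition $-\nabla F_{E}=(-\nabla f_{B})^{\#}-\nabla^{v}f_{E}=\cV$ holds. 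Therefore
\[
dF_{E}(\cV)=-|\nabla F_{E}|^{2}\leq 0,
\]
with equality exactly at zeros of $\cV$, i.e.\ joint critical points $(p,q)$ with $p\in\Crit f_{B}$ and $q\in\Crit f_{F}|_{F_{p}}$. This gives condition (2) of the pseudo-gradient definition.

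Next I would verify condition (1) at such a critical point by picking Morse coordinates $(x,y)$ for $f_{B}$ around $p$ and for $f_{F}$ around $q$ in a connection-compatible trivialization, and invoking the standing Euclidean-metric assumption. In these coordinates one has $F_{E}=f_{B}(p)+Q_{B}(x)+f_{F}(q)+Q_{F}(y)$, a sum of non-degenerate quadratic forms in disjoint variables, so $F_{E}$ is Morse with block-diagonal Hessian and $\cV=-\nabla_{\mathrm{Euc}}F_{E}$ is the standard Euclidean negative gradient.

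The main obstacle is the global-compatibility step in which one uses connection-compatible trivializations witnessing $f_{E}=\pr_{F}^{*}f_{F}$: implicitly this requires the chosen $f_{F}$ to be invariant under the holonomy of the fixed connection (equivalently, under the structure-group transition functions), since otherwise a spurious cross term $df_{E}((-\nabla f_{B})^{\#})$ would appear in $dF_{E}(\cV)$ and destroy negativity. This compatibility is implicit in the global existence of $f_{E}$ as assumed in the hypothesis, but I would take care to spell it out; once spelled out, the rest of the verification is automatic from the block-diagonal form of the metric.
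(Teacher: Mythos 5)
Your approach takes a genuinely different route and, as written, has a gap. You set $F_{E}=\pi^{*}f_{B}+f_{E}$ and try to show $\cV=-\nabla F_{E}$ exactly, which hinges on the claim that $df_{E}$ annihilates every horizontal vector. That claim requires the trivializations witnessing $f_{E}=\pr_{F}^{*}f_{F}$ to be connection-compatible (so that product-horizontal agrees with connection-horizontal), i.e.\ the fixed connection to be flat over those opens; equivalently, $f_{F}$ must be invariant under the holonomy of the fixed connection. You identify this issue yourself but then assert that it is ``implicit in the global existence of $f_{E}$.'' It is not. The hypothesis only asks that $f_{E}$ agree with $\pr_{F}^{*}f_{F}$ in trivializations over neighborhoods $U_{i}$ of the critical points of $f_{B}$ (which need not cover $B$), and even when they do cover $B$, transition-invariance of $f_{F}$ is a condition on the structure group, not on the holonomy of an independently fixed connection. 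For a non-flat connection the cross term $df_{E}\bigl((-\nabla f_{B})^{\#}\bigr)$ does appear, and your argument for negativity of $dF_{E}(\cV)$ breaks down.

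The paper's proof avoids this entirely by not using $f_{E}$ globally. It sets
\[
F_{E}\;=\;\pi^{*}f_{B}\;+\;\sum_{c_{i}\in\Crit(f_{B})}\epsilon_{i}\,\phi_{i}\,f_{F},
\]
where each $\phi_{i}$ is a bump function supported in $U_{i}$ and equal to $1$ near $c_{i}$. Near $c_{i}$ (where $\phi_{i}\equiv 1$), one arranges the connection to be trivial so the cross term vanishes and $\cV$ agrees with the Euclidean negative gradient of a nondegenerate quadratic; on the annulus where $d\phi_{i}\neq 0$ one has $\|\nabla f_{B}\|$ bounded away from zero and $\phi_{i}$, $d\phi_{i}$, $f_{F}$, $df_{F}$ all bounded (here compactness of $F$ is essential), so the $\epsilon_{i}$-weighted perturbation terms --- including any residual cross term --- are dominated by $-\|\nabla f_{B}\|^{2}$ for $\epsilon_{i}$ small. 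This is why the lemma assumes $F$ compact and introduces the $\epsilon_{i}$: your argument, which never uses either, is a signal that you are implicitly assuming more than the statement grants. If you do want to salvage your cleaner ``honest gradient'' approach, you would need to add an explicit hypothesis (flat connection, or $f_{F}$ holonomy-invariant), which is strictly narrower than what the lemma states and than what the later applications in the paper require.
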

\begin{proof}
Choose a local trivialization $U_{i}\times F$ around each critical point $c_{i} \in \crit(f_{B})$ such that $f_{E}$ if of the form \eqref{equ:pullback_morse}.  Let $\phi_{i}:U_{i} \rightarrow \bR$  be a bump function supported in $U_{i}$ identically equal to  $1$ near $c_{i}$.  Define a function $F_{E}$  by 
\begin{align*}
    F_{E} \coloneqq  \pi^*f_{B} +\sum_{c_{i}\in \crit(f_{B})}\epsilon_{i} \phi_{i} f_{F},
\end{align*} 
where the constants $\epsilon_{i}>0$ are taken sufficiently small,  and $f_{F}$ is viewed as a function defined on $U_{i}\times F$. 

 Without loss of generality, we may assume that $\mathrm{supp}(d\phi_{i})$ lies in an  annulus  centered at $c_{i}$. Because $F$ is compact, for sufficiently small $\epsilon_{i}$, no new critical points are created. Moreover, in a neighborhood of each $c_i$ on which $\phi_i \equiv 1$, both $\nabla f_B$ and $\nabla^{v} f_F$ are gradient vector fields, and the connection induced by the metric agrees with the trivial connection $\ker(\pr_F)_*$ on $U_i \times F$ near the critical points. Thus, the vector field $\cV$ coincides in this region with the genuine gradient vector field of a quadratic form with respect to the Euclidean metric, provided that $f_B$ and $f_F$ themselves enjoy this property. 

To verify the Condition (2) of Definition \ref{defn:pseudo_gradient_vector_field}, we compute  
\begin{align}
    \label{equ:mixted_term}
    dF_{E}(\cV)= - \norm{\nabla f_{B}}^2 - \sum_{c_{i}\in \crit(f_{B})}\epsilon_{i} d\phi_{i}(\nabla f_{B})f_{F} - \sum_{c_{i}\in \crit(f_{B})} \epsilon_{i}\phi_{i}\norm{\nabla^{v}f_{F}}^2. 
\end{align} 
 Since $d\phi_{i}, \nabla f_{B}$ and  $f_{F}$ are bounded on the support of $d\phi_{i}$, for sufficiently small $\epsilon_{i}$, it follows that $dF_{E}(\cV)\leq 0$, with  equality holding only at the critical points of $F_{E}$.  
\end{proof}

Unfortunately, Lemma \ref{lem:pseudo_gradient_Lemma} does not extend to infinite-dimensional  Hilbert manifolds. The  obstruction is that a closed ball in a Hilbert space is compact if and only if the Hilbert space itself is finite-dimensional.  Thus,  the second term in \eqref{equ:mixted_term} can no longer be uniformly bounded; on the other hand, it may happen that $F_{E}$ acquires additional critical points not originating from those of $f_{F}$ and $f_{B}$. 

Nevertheless, for the purposes of the Borel construction, the following theorem is sufficient. The key point is that all the finite-dimensional approximation are compact, and all Morse-theoretic constructions take place at these finite levels before passage to the colimit. \footnote[1]{An alternative strategy would be to verify directly that vector fields of the form \eqref{equ:vertical_plus_horizontal} on a Hilbert manifold satisfy the analytic properties required in Morse theory, which is beyond the scope of the present paper.}

\begin{thmd}
\label{thm:family_of_Morse_functions}
Let $ BG $ and its finite-dimensional approximations  $BG(N)$ be as described above. A family of Morse-Smale pairs $\{(f_{N},\cV_{N})\}$ consisting of Morse functions $f_{N}$ and pseudo-gradient vector fields $\cV_{N}$ is said to be  admissible if following conditions hold: 
\begin{enumerate}
    \item For each $N\in \bN$, there is an inclusion of critical points sets: $$\Crit(f_{N})\subset \Crit (f_{N+1}).$$
    \item For any $c\in \Crit(f_{N})$, the stable manifold satisfy $W^{s}(f_{N+1},\cV_{N+1}; c)\times_{BG(N+1)}BG(N) = W^{s}(f_{N},\cV_{N};c)$. 
    \item For any $c\in \Crit(f_{N})$, the image of the unstable manifold $W^u(f_{N},\cV_{N}; c)$ in $BG(N)$ coincides with $W^u(f_{N+1},\cV_{N+1};c)$ in $BG(N+1)$. 
    \item For any $c\in BG(N+1)\setminus BG(N)$,  the negative gradient flow line $\gamma_{t}(c)$ satisfies $\gamma_{t}(c) \not\in BG(N)$ for all $t\geq 0$. 
    \item  For each $l\geq 0$, there exists an integer $N(l)>0$ such that $$
        |c|>l \quad \text{for all } N\geq N(l) \text{ and } c\in \Crit(f_{N})\setminus \Crit(f_{N-1}).$$
\end{enumerate}
Moreover, for any  sequence of strictly increasing negative real numbers $(a_{1},a_{2},\ldots)$, one can construct such a family of Morse functions.  
\end{thmd}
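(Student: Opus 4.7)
The plan is to build $(f_N, \cV_N)$ on $BG(N)$ by combining a canonical Morse function on the Grassmannian base $\Gr(k, N+k)$, constructed from the prescribed sequence $(a_1, a_2, \ldots)$, with a fixed Morse--Smale pair on the fiber $U(k)/G$ of the projection in \eqref{equ:projection_from_BG_to_Gr}. Applying Lemma \ref{lem:pseudo_gradient_Lemma} will then yield $(f_N, \cV_N)$, and the five conditions will reduce to the explicit dynamics of the Bialynicki--Birula flow on the Grassmannian together with coherent choices of auxiliary data along the tower.

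First, I would set $A_N \coloneqq \diag(a_1, \ldots, a_{N+k})$ and define the base Morse function $f_N^{\Gr}(V) \coloneqq \Tr(P_V A_N)$ on $\Gr(k, N+k)$. Since the $a_i$ are distinct, $f_N^{\Gr}$ is a Morse function whose critical set is $\{V_I : I \subset \{1, \ldots, N+k\},\ |I| = k\}$ with $V_I = \Span(e_{i_1}, \ldots, e_{i_k})$; in particular $\Crit(f_N^{\Gr})$ injects into $\Crit(f_{N+1}^{\Gr})$. A Hessian calculation at $V_I$ yields Morse index $2|\{(j, l) : l < i_j,\, l \notin I\}|$, and for any new critical point at level $N$ (those with $N+k \in I$), the slot $i_k = N+k$ alone contributes at least $2N$. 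With respect to the natural $U(N+k)$-invariant metric, the negative gradient flow is the Bialynicki--Birula flow generated by $\exp(-tA_N)$; the stable and unstable manifolds are Schubert cells, so $f_N^{\Gr}$ is Morse--Smale. Crucially, if $V \in \Gr(k, N+k+1)$ has nonzero $e_{N+k+1}$-component then so does $\exp(\pm t A_{N+1})\, V$ for all $t \in \bR$, so $\Gr(k, N+k) \hookrightarrow \Gr(k, N+k+1)$ is flow-invariant in both time directions and so is its complement.

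Next, I would fix once and for all a Morse--Smale pair $(f_F, g_F)$ on the compact fiber $U(k)/G$ and choose, inductively in $N$, local trivializations of the bundle near each $V_I \in \Crit(f_N^{\Gr})$ that extend those chosen at level $N-1$. Applying Lemma \ref{lem:pseudo_gradient_Lemma} with bump functions $\phi_I$ and scalars $\epsilon_I$ depending only on $V_I$ then produces $f_N$ and $\cV_N \coloneqq (-\nabla f_N^{\Gr})^{\#} - \nabla^v f_E^N$ on $BG(N)$ satisfying $f_{N+1}|_{BG(N)} = f_N$ and $\cV_{N+1}|_{BG(N)} = \cV_N$ by construction. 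Under this coherent setup, Condition (1) is immediate from $\Crit(f_N) = \Crit(f_N^{\Gr}) \times \Crit(f_F)$. Conditions (2) and (3) reduce to the Schubert cell identities $W^s(f_{N+1}^{\Gr}; V_I) \cap \Gr(k, N+k) = W^s(f_N^{\Gr}; V_I)$ and $W^u(f_{N+1}^{\Gr}; V_I) = W^u(f_N^{\Gr}; V_I)$ for $V_I \in \Gr(k, N+k)$, which follow directly from the exponential flow formula. Condition (4) is the forward flow-invariance of the complement established above, while Condition (5) follows from the index bound $|(V_I, c_F)| \geq 2N$ for any new critical point, so one may take $N(l) = \lceil l/2 \rceil + 1$. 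The main obstacle I anticipate is the bookkeeping required to make the local trivializations, metrics, and bump functions coherent along the entire tower so that $\cV_{N+1}|_{BG(N)} = \cV_N$ holds on the nose; this is a standard but delicate inductive construction, best organized by choosing a compatible system of connections on the universal fibration $BG \to \Gr(k, \infty)$.
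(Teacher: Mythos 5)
Your proposal is correct and follows essentially the same route as the paper: a Morse function $f_A$ on the Grassmannian built from the prescribed sequence, explicit flow $e^{-A_Nt}V$ to verify conditions (2)--(4), the Schubert index formula for condition (5), and Lemma \ref{lem:pseudo_gradient_Lemma} to lift everything to $BG(N)$ using a fiber Morse function. The only cosmetic difference is that the paper assembles the fiberwise function via a global partition of unity on $BG$ rather than your coherent local trivializations, and it notes explicitly the inductive generic perturbation needed to guarantee the Morse--Smale condition for the lifted pair $(f_N,\cV_N)$ -- a point your final ``bookkeeping'' paragraph gestures at but should make explicit.
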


\begin{proof}
We begin by recalling the  construction of a Morse function on the complex Grassmannian; see \cite{Nic94,DV96, Nic07} for details.

Let  $ -a_{1}>-a_{2}>\cdots >-a_{n}>\cdots >0$ be a strictly increasing sequence of negative real numbers, and write 
\begin{align*}
    A\coloneqq  \mathrm{diag}(a_1, \ldots, a_n,\cdots), \quad  A_n \coloneqq  \mathrm{diag}(a_1, \ldots, a_n). 
\end{align*}  
The function \begin{align}
\label{equ:Morse_function_on_complex_grassmannian}
f_{A_n}(V) \coloneqq  \mathrm{Re} \left( \Tr(A_n P_V) \right)    
\end{align}
is a Morse function on $ \Gr(k, n)$  (We adopt the cohomological convention, so $f_{A_{n}}$ differs  from that  in \cite{Nic94} by a negative sign). The critical points of $ f_{A_{n }}$ are $A_{n}$-invariant $k$-planes, which are nondegenerate iff  $A_{n}$ is nondegenerate. Moreover, the negative gradient flow of $f_{A_{n}}$ though a $k$-plane $V$ in
the natural metric is given by 
\begin{align}
    \label{equ:Morse_flow}
    \gamma_{t}(V) = e^{-A_{n}t} V = \diag (e^{-a_{1}t}, \cdots, e^{-a_{n}t})V.
\end{align}
Thus, we obtain a Morse function $f_{A}$ on $ \Gr(k,\infty)$ whose restriction to each stratum is $f_{A_{k+N}}$. There is an natural inclusion of critical point sets obtained by identifying $A_{k+N}$-invariant $k$-planes with $A_{k+N+1}$-invariant $k$-planes. It is straightforward to verify that $f_{A}$ satisfies the conditions (2) (3) and~(4) from the explicit description of the gradient flow \eqref{equ:Morse_flow}. For condition~(5), note that for any critical point $p$ contained in  $\Gr(k,\infty)\setminus \Gr(k,k+N)$, we have $|p|>2N$, see~ \cite{Nic94} for an explicit computation of the Morse index.

Next, let $f_{U(k)/G}$ be a Morse function on the fiber $U(k)/G$. Choose an open cover $\{U_{\alpha}\}$ of $BG$ together with a  partition of unity subordinate to it $\{\rho_{\alpha}\}$ (the existence of such a partition for Hilbert manifolds is proved in \cite[Section 3]{Lan95}). We then define 
\begin{align*}
    F_{U(k)/G} = \sum_{\alpha}\rho_{\alpha}f_{U(k)/G}, 
\end{align*}
where both $\rho_{\alpha}$ and $f_{U(k)/G}$ are understood to be defined on local trivializations $U(k)/G \times U_{\alpha}$.

Fix a connection on the fibration \eqref{equ:projection_from_BG_to_Gr} and choose a corresponding Riemannian metric of the form $\pi_{BG}^* g_{\Gr(k,\infty)}\oplus g_{U(k)/G}$ (cf. Remark \ref{rem:CW_complex}). We then consider the vector field 
\begin{align*}
    \cV_{\infty}\coloneqq  (-\nabla f_{A})^{\#} + (-\nabla^v F_{U(k)/G}) 
\end{align*}
where $(-\nabla f_{A})^{\#}$ denotes the horizontal lift of the negative gradient vector field of $f_{A}$, and $-\nabla^v F_{U(k)/G}$ is the fiberwise negative gradient. 

We define $\cV_{N}\coloneqq  \cV_{\infty}|_{BG(N)}$, then it follows from the construction that 
\begin{align*}
    \cV_{N} = (-\nabla f_{A_{k+N}})^{\#} + (-\nabla^v F_{U(k)/G}).
\end{align*}
Thus we can  construct a Morse function $f_{N}$ as in Lemma \ref{lem:pseudo_gradient_Lemma}.  The family $\{(f_{N},\cV_{N})\}$  clearly satisfies the desired properties, as the  projections of $\cV_{N}$ to $\Gr(k,N+k)$ do. Up to a generic perturbation, they also satisfy the Morse-Smale condition. We will perform the perturbations inductively across the approximations, ensuring at each stage that all of the above properties remain intact. 
\end{proof}

\begin{rem}
\label{rem:PS_conditions}
 It is well known that Morse theory on infinite-dimensional Hilbert manifolds is well-defined under suitable analytical assumptions (see, for example, \cite{AM05,AM06,Qin10}). A standard way  to address the difficulties arising from infinite dimensionality  is to require that the Morse function $f$ satisfies the \emph{Palais-Smale Condition (C)} \cite{PS64}, which means every sequence $\{x_{n}\}\subset X$ for which $f(x_{n})$ is bounded and $\norm{\nabla f(x_{n})}\rightarrow 0$ possesses a convergent subsequence. If, in addition, each critical point $c\in \crit(f)$ has finite Morse index, then the  unstable manifolds are finite-dimensional and admit  compactifications in the usual way. We refer  the reader  to \cite{Qin10}, where such Morse functions are said to satisfy  the \textit{CF-conditions}. 

In our setting, we have constructed a Morse function on $ \Gr(k,\infty) $ whose critical points all have finite index. Moreover, the negative pseudo-gradient flow  of $f_{A}$ is explicitly defined for all $t\in \bR$, and is either unbounded or convergent to a limit point---behavior that is  stronger than required by the Palais-Smale condition.   
\end{rem}

\begin{example}
\label{ex:Schubert_cell}
Let $G= \mathbb{S}^1$, so that  $k=1$ and  $\Gr(1,N+1) = \mathbb{CP}^{N}$. For a point $V= [z] \in  \mathbb{CP}^{N}$, the corresponding projection operator $P_{V}$ is represented by the normalized outer product 
\begin{align*}
    P_{V} = \frac{1}{|z|^2}zz^*. 
\end{align*}
The fibers of the projection \eqref{equ:projection_from_BG_to_Gr} are $0$-dimensional. Hence, the Morse function takes the form $$f_{N}([z]) = \frac{1}{|z|^2} \sum_{i=1}^{N+1} a_{i}|z_{i}|^2 $$ which is the standard Morse function on $\mathbb{CP}^{N}$. Consider a point of the form $$[z] = [z_{1}:\cdots:z_{i}:1:0:\cdots :0]^T.$$ Then, we see that $[z]$ lies in the unstable manifold of the critical point of Morse index $2i$. Indeed, we have 
$$\lim_{t\rightarrow -\infty}\gamma_{t} ( [z]) = \diag(e^{(-a_{1}+a_{i+1})t},\cdots, e^{(-a_{i}+a_{i+1})t}, 1, \cdots )[z] = [0:\cdots :1 :0:\cdots 0].$$ 
All the Morse indices are even, and the unstable manifold coincide with the Schubert cells.

The situation for a more general unitary group $U(k)$ is similar. Let $I\coloneqq \{i_{1},\cdots, i_{k}\}\subset \{1,2,\cdots \}$. The unstable manifold corresponding to the $I$-th critical point consists of matrices of the form
\begin{align*}
    \begin{pmatrix}
        * & \cdots & * \\
        \vdots & & \vdots \\
        *  &  & \vdots \\
        1 &  & \vdots \\
          & \ddots  & \vdots \\
         & &  1
     \end{pmatrix}.
\end{align*} 
One sees that the dimension of the unstable manifold (and hence the Morse index) is given by $(i_{1}-1)+\cdots (i_{k}-k)$. This  provides an alternative verification of Condition (5) in Theorem-Definition~\ref{thm:family_of_Morse_functions}. 
\end{example}
\section{Equivariant Floer cohomology}
\label{sec:equivariant_Floer_cohomology}
\subsection{Almost complex structure}
\label{sec:almost_complex_structure}
Since the ambient space is non-compact, it is essential to choose almost complex structures that prevent pseudo-holomorphic curves from escaping to infinity. A well-established approach is to use contact type almost complex structures. We now recall this construction and adapt it to the Borel construction. 
\begin{defn}
Let $(\mathbb{R}\times V,\omega)$ be the symplectization of a contact manifold $V$. An $\omega$-compatible almost complex structure $J$ is said to be \textit{contact type} if it satisfies the following conditions: 
\begin{enumerate}
    \item $J$ is invariant under translations in $\mathbb{R}$. 
    \item $J(\partial_{r}) = R$ where $r$ is the coordinate of $\mathbb{R}$, and $R$ is the Reeb vector field on $V$.
    \item $J(\cD)\subset \cD$ where $\cD$ is the contact distribution. 
\end{enumerate} 
Theses conditions are equivalent to $dr= \lambda\circ J$ where $r= e^s$.  
\end{defn}

Fix a compatible almost complex structure $J_{T^*BG(N)}$ on $(T^*BG(N), \omega_{T^*BG(N)})$, the pullback $\pi^*_{N}J_{T^*BG(N)}$ is a natural almost complex structure on the horizontal subbundle $TX(N)^{\hor}$. We will denote this almost complex structure by $J_{T^*BG(N)}$ for the sake of simplicity. 

\begin{defn}
    \label{defn:admissible_almost_complex_structures}
Let $J_{T^*BG(N)}$ be an almost complex structure on $T^*BG(N)$ which is of contact type near infinity.    We consider almost complex structures on $X(N)$  of the form 
    \begin{align*}
        J_{X(N)} = \begin{pmatrix}
            J_{T^*BG(N)}   & 0 \\
           0 &  J_{F}  
        \end{pmatrix},
    \end{align*}
    viewed as a block matrix with respect to the splitting $$    TX(N)= TX(N)^{\hor}\oplus    TX(N)^{\ver}.$$     Moreover, we choose a subset $X^{\mathrm{in}}(N)\subset X(N)$ which is proper over $T^*BG(N)$ such that on the complement  $X(N) \setminus X^{\mathrm{in}}(N)$, the restriction of $J_{X(N)}$ to each fiber is of contact type.  
    
    The almost complex structure satisfying the above conditions is called \textit{admissible}.    
\end{defn}

It follows from the construction that  $\pi_{N}$ is $(J_{X(N)}, J_{T^*BG(N)})$-holomorphic and the space of admissible almost complex structures is contractible. Therefore, for the Borel spaces $\{X(N)\}$, we may choose a family of admissible almost complex structures $\{J_{X(N)}\}$ such that 
\begin{align*}
    J_{X(N)}|_{X(N-1)} = J_{X(N-1)}, \quad  D\pi_{N}\circ J_{X(N)} = J_{T^*BG(N)}\circ D\pi_{N}.
\end{align*}

\subsection{Family of Hamiltonians}
Recall that we use the coordinate transformation $r=  e^s$ on the cylindrical end. Let $(Y, \alpha)$ be a contact manifold, and let $R_{\alpha}$ be the corresponding Reeb vector field. We consider its positive half symplectization  $(Y\times (1,\infty), d(r \alpha))$. Let $h:Y\rightarrow \bR$ be a smooth function that is invariant under the Reeb flow.  Then the Hamiltonian function  $H\coloneqq  h(y)r$ defines the Hamiltonian vector field 
\begin{align*}
    X_{H}(y,r) = h(y) R_{\alpha}(y,r).
\end{align*}
 We refer to such functions as \textit{linear Hamiltonians} and the function $h(y)$ is called the \textit{slope  functions}. It is straightforward to verify that $ X_H $ satisfies the following conditions:

\begin{enumerate}
    \item $ZH = H$; 
    \item $dr(X_{H}) = 0$;
    \item $H = \lambda(X_{H})$
\end{enumerate}
where $Z$ and $\lambda$ are  the Liouville vector fields and Liouville form respectively.

\begin{defn}
\label{defn:family_Hamiltonians}
Fix a  family of Morse-Smale pairs $\{(f_{N},\cV_{N})\}$ as in Theorem \ref{thm:family_of_Morse_functions}.   Let $\{H(N)\}$ be a family of Hamiltonian functions, with each $H(N) \in C^{\infty}(X(N))$. 
We say that this family is \textit{admissible} if it satisfies the following conditions: 
\begin{enumerate}
    \item The restriction of $H(N+1)$ to $X(N)$ agrees with $H(N)$, i.e. $H(N+1)|_{X(N)} = H(N)$. 
    \item For each $b \in T^*BG(N)$, the restriction $H_{b}\coloneqq  H(N)|_{\pi_{N}^{-1}(b)}$ is linear near infinity. (We omit $N$ in the notation, as this condition is independent of $N$ by (1).)
     \item Near each $c \in \crit(f_N)$, the Hamiltonian $H_b$ takes the form 
    $$
        H_b = \pr_X^* H_c,
    $$
    where $\pr_X: U \times X \to X$ is the projection map in a local trivialization, and $H_c$ is a linear Hamiltonian defined on $X$.
    \item    For any two critical points $c_{0}, c_{1} \in \crit(f_{N})$, the parallel transport along the flow line of $\mathcal{V}_{N}$ connecting $c_{0}$ to $c_{1}$ does \emph{not} send a Hamiltonian chord of $H_{c_{0}}$ between $L_{0}$ and $L_{1}$ to a Hamiltonian chord of $H_{c_{1}}$ between $L_{0}$ and $L_{1}$.  
\end{enumerate}
 \end{defn}
 For any $b \in T^*BG(N)$, we denote by $F_b$ the fiber over $b$, which is Liouville isomorphic to $X$ by Lemma~\ref{lem:Liouville_fibration}. 
 
 The restriction $i_b^*\omega_{X(N)}$ of the symplectic form to the fiber $F_b$ agrees with the symplectic form $\omega_X$ on $X$. Hence, the fiberwise Hamiltonian vector field of $H(N)$ over a critical point $c \in \crit(f_N)$ coincides with the usual Hamiltonian vector field on $X$ with respect to $\omega_X$. Moreover, since the parallel transport preserves the Lagrangian submanifolds, conditions (4) is well-defined. 

 \begin{rem}
    In the Definition \ref{defn:admissible_almost_complex_structures} of the family of almost complex structures, we require that the almost complex structure is of contact type outside a region $X^{\mathrm{in}}(N) \subset X(N)$ that is proper over the base. By enlarging $X^{\mathrm{in}}(N)$ if necessary, we may assume that the Hamiltonian $H(N)$ is linear outside the same region.
 \end{rem}
\begin{defn} 
\label{defn:compatible_family}
Let $L_{0},L_{1}\subset X$ be exact cylindrical $G$-invariant Lagrangians. For an admissible family $\{ f_{N},\cV_{N}, H(N)\}$ associated to their Borel constructions, we say that  it is \textit{compatible with $L_{0}$ and $L_{1}$} if, for each level $N\in \bN$ and each critical point $c\in \crit(f_{N})$, the fiberwise time-$1$ Hamiltonian flow $\varphi_{H_{c}}^1(L_{0})$ intersect transversely with $L_{1}$ in the fiber $F_{c}$ and  their intersection set is compact. In this case, the \textit{Borel Floer complex} is defined by 
\begin{equation}
    CF(L_{0},L_{1};X(N))\coloneqq  \bigoplus_{\substack{\mathbf{x}\coloneqq  (c, x)} } \bK\cdot \mathbf{x},
\end{equation} 
where each generator $\mathbf{x}\coloneqq  (c,x)$ consists of a critical point   $c\in \crit(f_{N})$ and a  Hamiltonian $1$-chord  $x$ of $H_{c}$ with endpoints  on $L_{0}$ and $L_{1}$. 
\end{defn}
As usual, the \emph{action functional} at  a generator  $\mathbf x = (c, x)$  is defined by 
\footnote{A pair $(c, x)$ itself is not a critical point of $\mathcal A(c,x)\coloneqq \mathcal A_c(x) + f_N(c)$, as the Hamiltonian term $H_c$ depends on $c\in BG(N)$:}
\begin{equation*}
\label{eq: action}
    \mathcal A_c(x)\coloneqq f_{L_1}(x(1))-f_{L_0}(x(0)) + \int_{0}^{1} -x^*\lambda_X+ H_{c}(x(t)) dt. 
\end{equation*}
The definition make sense because $x$ is confined to a fiber $F_c\simeq X$. 
\begin{lemma}
\label{lem:abundant_choices}
The choice of compatible admissible families in Definition \ref{defn:compatible_family} forms a Baire set. Moreover one can choose the linear Hamiltonians to have the constant slopes $a\in \bR_{\geq 0}$ (with $a = 0$ understood as the case of a compactly supported Hamiltonian).
\end{lemma}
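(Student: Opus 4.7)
The plan is to build the compatible admissible family inductively across the Borel tower $X(1)\subset X(2)\subset\cdots$, at each stage reducing the problem to standard fiberwise transversality in $F_c\simeq X$. Throughout, we fix a constant $a\in\bR_{\geq 0}$ lying in the complement of the (discrete, countable) set of lengths of Reeb chords between $\partial_\infty L_0$ and $\partial_\infty L_1$, and require every fiberwise Hamiltonian $H_c$ to equal $a\cdot r$ near infinity (the case $a=0$ being interpreted as compactly supported). By Lemma~\ref{lem:Liouville_fibration} the cylindrical structure is preserved fiberwise, and a standard maximum principle on the radial coordinate then confines all Hamiltonian chords of $H_c$ with boundary on $L_0,L_1$ to a compact subset of $F_c$. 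This establishes the constant-slope claim and the compactness part of Definition~\ref{defn:compatible_family}.

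Assume inductively that a compatible admissible family $\{f_M,\cV_M,H(M)\}$ has been constructed on $X(M)$ for all $M\leq N-1$. By property (4) of Theorem-Definition~\ref{thm:family_of_Morse_functions}, the new critical points $c\in\crit(f_N)\setminus\crit(f_{N-1})$ and their unstable manifolds lie outside $X(N-1)$, so any perturbation of $H(N)$ supported near these new critical points does not disturb the previously fixed data. For each such $c$, I choose $H_c$ to be a linear Hamiltonian of slope $a$ on $F_c\cong X$, plus a compactly supported correction; then I extend $H_c$ from the critical points to all of $X(N)$ via the local product structure and a partition of unity, exactly as in the construction of $F_E$ in Lemma~\ref{lem:pseudo_gradient_Lemma}. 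The transversality condition $\varphi^1_{H_c}(L_0)\pitchfork L_1$ cuts out an open dense subset in the Fr\'echet space of compactly supported perturbations of the linear Hamiltonian; this is the standard Floer-theoretic transversality argument for pairs of exact cylindrical Lagrangians.

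The delicate condition is (4) of Definition~\ref{defn:family_Hamiltonians}. For each pseudo-gradient trajectory of $\cV_N$ connecting $c_0$ and $c_1$ in $\crit(f_N)$, parallel transport with respect to the connection associated with $\omega_{X(N)}$ gives a Liouville isomorphism $\Phi:F_{c_0}\to F_{c_1}$ which preserves the Lagrangians $L_0, L_1$ (see the discussion in Section~\ref{rem:Borel_Lagrangians}). The requirement is that $\Phi$ send \emph{no} Hamiltonian chord of $H_{c_0}$ to a Hamiltonian chord of $H_{c_1}$. Granted the transversality above, $H_{c_0}$ and $H_{c_1}$ each admit only finitely many chords with boundary on $L_0,L_1$, so condition (4) amounts to finitely many codimension-one constraints on (say) the perturbation defining $H_{c_1}$. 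Running this argument over the countably many flow lines appearing throughout the tower gives a countable intersection of open dense conditions, and the Baire category theorem in the Fr\'echet space of admissible families yields the desired residual (Baire) set.

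The main obstacle will be verifying condition (4) cleanly: one must check that a codimension-one perturbation of a single fiberwise Hamiltonian $H_{c_1}$ actually moves the specific chord $\Phi(x)$ off the chord locus of $H_{c_1}$, rather than being absorbed by the Hamiltonian-diffeomorphism symmetry of the problem. The point is that a compactly supported perturbation $\delta H$ of $H_{c_1}$ changes $\varphi^1_{H_{c_1}}$ by composition with a nontrivial Hamiltonian diffeomorphism supported in an arbitrary open region of $F_{c_1}$, and by choosing $\delta H$ supported near an interior point of $\Phi(x)$ one can move $\varphi^1_{H_{c_1}}(\Phi(x)(0))$ off $\Phi(x)(1)\in L_1$ while keeping the slope condition intact. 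A standard Sard--Smale argument then upgrades this infinitesimal transversality to the generic statement, closing the induction.
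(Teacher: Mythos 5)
Your proposal is correct and follows essentially the same route as the paper's proof: inductive perturbation of the fiberwise Hamiltonians across the Borel tower, generic transversality of $\varphi^1_{H_c}(L_0)\pitchfork L_1$, use of finiteness of $\crit(f_N)$ at each level to handle condition (4), and the Baire category theorem for the countable intersection over $N$. The only substantive difference is that the paper also briefly addresses genericity of the Morse--Smale data (density of diagonal matrices with distinct eigenvalues, convexity of increasing sequences), which you take as given, while you supply additional detail the paper leaves implicit—the maximum-principle argument for compactness of chords at fixed slope $a$, and the verification that a compactly supported perturbation actually moves $\Phi(x)$ off the chord locus in condition (4).
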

\begin{proof}
For the Morse-Smale pair $\{f_{N},\cV_{N}\}$ part, note that diagonal matrices with distinct eigenvalues form a dense subset in the spectrum of diagonal matrices (cf. Theorem \ref{thm:family_of_Morse_functions}), and that increasing sequences of real numbers constitute a convex set.

To construct the family of Hamiltonians,  take a local trivialization around each critical point $c_{i}$ and define $H_{b}$ near $c_{i}$ as in (3) of Definition \ref{defn:family_Hamiltonians}, extending it to all of  $X(N)$. For a generic choice of $H$, the image $\phi^{1}_{H_{c}}(L_{0})$ intersects $L_{1}$ transversely and is  contained in a compact set. Since for each $N\in \bN$, the critical set $\crit(f_{N})$ is finite, we can inductively perturb Hamiltonians in  compact subsets so that (4) of Definition \ref{defn:family_Hamiltonians} is satisfied. 

The intersection of all such choices over $N$ is  a Baire set by the Baire category theorem. 
\end{proof}
\subsection{Gradings and signs}
Under the assumption that the bicanonical bundle $\kappa^2$ of $X$ admits a trivialization (for example, when $2c_{1}(X) = 0$), we work with cohomologically unital, $\bZ$-graded $A_{\infty}$-categories over an algebraically closed field $\mathbb{K}$ of characteristic $2$.  See \cite{Sei08} for a detailed construction.

In general, for the family version of Floer theory, one only obtains  a $\bZ/2\bZ$-grading, and there is no canonical refinement to a $\bZ$-grading. This is because we make no prior assumptions on the automorphism group of $X$. However, in the context of Borel construction, there is a canonical way to obtain such a refinement. 

To  define the $\bZ$-grading, we fix a fiber of the projection $X(N)\rightarrow T^*BG(N)$, and choose  a trivialization of the bicanonical bundle on the fiber $X$. Since the structure group $G$ is connected, it acts trivially on the homotopy class of such a trivialization. Thus, the grading is well-defined on each fiber.

For each generator $\mathbf{x} \coloneqq  (c, x) \in CF(L_0, L_1; X(N))$, by the above discussion, there are well-define $\bZ$-gradings of $c$ and $x$. The grading of $|x|$ is defined as 
\begin{align*}
    |\mathbf{x}|\coloneqq  |c|+|x|. 
\end{align*}

\subsection{Floer theory in Borel construction}
\label{sec:borel_construction_of_floer_cohomology}
Let $L_{0},L_{1}\subset X$ be exact cylindrical Lagrangians, and fix a compatible admissible family  $\{f_{N},\cV_{N}, H(N)\}$ as in Definition \ref{defn:compatible_family}. Let 
\begin{align*}
    \mathbf{x}_{i} = (c_{i}, x_{i})\in CF(L_{0},L_{1};X(N)), \quad  i =0,1 
\end{align*}
be two generators of the Borel Floer complex. We abbreviate $H_{(\tau)} \coloneqq  H(N)_{\gamma(\tau)}$, where $\gamma$ denotes a trajectory of $\cV_{N}$; other auxiliary data (such as almost complex structures) are abbreviated in the same way.  We impose the boundary condition $\partial_{\tau}H_{(\tau)}\leq 0$ outside $X^{\mathrm{in}}(N)$, where the derivative is taken with respect to parallel transport along $\gamma$. Let  $\pi_{N}: X(N) \rightarrow T^*BG(N)$ denote the projection map as before.

\begin{defn}
    \label{defn:moduli_space}
    Let $\moduli(\mathbf{x}_{0},\mathbf{x}_{1};X(N))$ denote the space of pairs $$ \{(\gamma, u)\mid  \gamma: \bR\rightarrow 0_{BG(N)}, u: \bR\times [0,1] \rightarrow X(N)\}$$ satisfying the following conditions: 
    \begin{numcases}{}
            \partial_{\tau} \gamma - \cV_{N} = 0, \quad \pi_{N}\circ u = \gamma,  & \label{equ:horizontal_Floer_equation} \\ 
            \left(du^{\ver} - X_{H_{(\tau)}}\otimes dt\right)^{0,1}_{J_{(\tau,t)}^{\ver}} = 0, &  \label{equ:vertical_Floer_equation} \\
            u(\tau,0)\in L_{0}(N),\quad u(\tau,1) \in L_{1}(N), & \label{equ:boundary_condition} \\  
            \lim_{\tau \rightarrow -\infty} \gamma (\tau) = c_{0}, \quad  \lim_{\tau \rightarrow +\infty} \gamma (\tau) = c_{1}, & \\ 
            \lim_{\tau \rightarrow -\infty} u(\tau ,\cdot) = x_{0}, \quad  \lim_{\tau \rightarrow +\infty} u(\tau,\cdot) = x_{1}.
    \end{numcases}
    Here $du^{\ver}$ denotes vertical component of the differential $du$ with respect to the connection determined by $\omega_{X(N)}$, and $ X_{H_{(\tau)}}$ is the fiberwise Hamiltonian vector field. We define 
    \begin{equation}
        \cM(\mathbf{x}_{0},\mathbf{x}_{1};X(N))\coloneqq  \moduli(\mathbf{x}_{0},\mathbf{x}_{1};X(N))/ \bR. 
    \end{equation}
\end{defn} 
 From the horizontal equation \eqref{equ:horizontal_Floer_equation}, the projection $\pi_{N}\circ u$ is a Morse trajectory on the zero section of $T^*BG(N)$. Therefore, each element of $\cM(\mathbf{x}_{0},\mathbf{x}_{1};X(N))$ is of one of the following two types:
\begin{itemize}
    \item[(1)] A Hamiltonian-perturbed pseudo-holomorphic curve bounded by $L_{0}$ and $L_{1}$,  entirely contained in the fiber $F_{c_{i}}$ over the critical points $c_{0}, c_{1} \in \crit(f_{N})$; 
    \item[(2)] A continuation-type map associated to a family of Hamiltonians  $H_{(\tau)}$, whose projection $\pi_{N}\circ u$ is a non-constant Morse trajectory.
\end{itemize}
The first case corresponds to the situation where $\pi_{N}\circ u$ is constant, while the second arises when $\pi_{N}\circ u$ is non-constant.    Along a given trajectory $\gamma$,  we may trivialize  the bundle $X(N)$ and view the vertical component of $u$ as a map into a fixed fiber $X$. Under this identification, the boundary condition \eqref{equ:boundary_condition} simply becomes $$u(\tau, i)\in L_{i}$$ for $i\in \{0,1\}$, since the parallel transport preserves the Lagrangian. 


The moduli space $\cM(\mathbf{x}_{0},\mathbf{x}_{1};X(N))$ is the common zero locus of the Fredholm sections defined by the Floer equations \eqref{equ:horizontal_Floer_equation}
and \eqref{equ:vertical_Floer_equation}. Hence, its expected (virtual) dimension is given by 
\begin{equation}
    \label{equ:virtual_dimension}
    \mathrm{vdim} \hspace{2pt} \cM(\mathbf{x}_{0},\mathbf{x}_{1};X(N)) =   |\mathbf{x}_{0}|-|\mathbf{x}_{1}|-1. 
\end{equation}
\begin{thm}
\label{thm:regular_moduli}
For a generic choice of admissible almost complex structure,  the moduli space   $\cM(\mathbf{x}_{0},\mathbf{x}_{1};X(N))$ is a smooth manifold of the expected dimension given in \eqref{equ:virtual_dimension}. 
\end{thm}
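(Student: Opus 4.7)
The plan is to prove transversality by applying Sard–Smale to the universal moduli space parametrized by admissible $J_{X(N)}$, after decoupling the horizontal and vertical equations. The horizontal equation \eqref{equ:horizontal_Floer_equation} is autonomous in $\gamma$ and, by the Morse--Smale hypothesis built into Theorem-Definition~\ref{thm:family_of_Morse_functions}, already cuts out a smooth moduli space of Morse trajectories on $0_{BG(N)}$ of dimension $|c_0|-|c_1|$. Once such a $\gamma$ is fixed, parallel transport with respect to the connection associated to $\omega_{X(N)}$ trivializes $X(N)$ over $\gamma(\bR)$ (by Remark~\ref{rem:parallel_transport}) and converts \eqref{equ:vertical_Floer_equation} into a Hamiltonian-perturbed Floer equation on the fiber $X$ with boundary on $L_0,L_1$. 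It therefore suffices to prove transversality of this vertical equation along each $\gamma$.

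I would then split into two cases according to whether $\gamma$ is constant. When $\gamma\equiv c\in\crit(f_N)$, the local product structure provided by Definition~\ref{defn:family_Hamiltonians}(3) identifies the problem with the ordinary Floer equation on $X$ for the linear Hamiltonian $H_c$; standard Floer transversality, applied to the fiberwise $J^{\ver}|_{F_c}$, delivers a smooth moduli space of dimension $|x_0|-|x_1|$. When $\gamma$ is non-constant the vertical equation is a continuation-type equation with $\tau$-dependent data $\bigl(H_{(\tau)},J^{\ver}_{(\tau,t)}\bigr)$ along the trivialization. Applying Sard--Smale to the universal moduli space fibered over the Banach manifold of $\tau$-dependent perturbations of $J^{\ver}$ reduces transversality to somewhere-injectivity: if a solution were $\tau$-independent in the parallel-transport trivialization, it would be exactly the parallel transport of a Hamiltonian chord of $H_{c_0}$ along $\gamma$, which by Condition~(4) of Definition~\ref{defn:family_Hamiltonians} cannot also be a Hamiltonian chord of $H_{c_1}$. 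Hence every solution has a regular point for a generic choice and the linearized operator is surjective.

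To upgrade this to a coherent choice across the approximation tower I would induct on $N$. Having fixed $J_{X(N-1)}$ on $X(N-1)$, property~(4) of Theorem-Definition~\ref{thm:family_of_Morse_functions} ensures that any Morse trajectory leaving $BG(N-1)$ never re-enters it, so the moduli spaces in $X(N)$ split cleanly into those already contained in $X(N-1)$, which are regular by the inductive hypothesis, and those whose horizontal projection exits $BG(N-1)$, for which I may perturb $J_{X(N)}$ freely on $X(N)\setminus X(N-1)$. Intersecting the resulting countable collection of Baire sets (one for each pair $(\mathbf{x}_0,\mathbf{x}_1)$ at each level $N$) produces a residual set of admissible almost complex structures for which every $\cM(\mathbf{x}_0,\mathbf{x}_1;X(N))$ is smooth. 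Compactness in the fiber direction is guaranteed by the admissibility of $J_{X(N)}$ (contact type outside $X^{\mathrm{in}}(N)$, cf.\ Definition~\ref{defn:admissible_almost_complex_structures}) together with the sign condition $\partial_\tau H_{(\tau)}\le 0$ outside $X^{\mathrm{in}}(N)$ that provides the usual integrated maximum principle.

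The dimension count then follows from the additivity of Fredholm indices: the linearized operator at a solution is block-upper-triangular with diagonal blocks equal to the linearized Morse operator (index $|c_0|-|c_1|$) and the fiberwise linearized Floer operator (index $|x_0|-|x_1|$), so $\moduli$ has dimension $|\mathbf{x}_0|-|\mathbf{x}_1|$, and the $\bR$-quotient gives~\eqref{equ:virtual_dimension}. The main obstacle I anticipate is the non-constant case in the second paragraph: the standard somewhere-injectivity trick for continuation maps relies on genuine $\tau$-dependence of the data, and here Condition~(4) of Definition~\ref{defn:family_Hamiltonians} is exactly the geometric input that forbids a $\tau$-translation-invariant solution in the trivialization. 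Verifying this condition, and checking that it is preserved under the inductive extension of perturbations across the approximation tower, is the technical heart of the argument.
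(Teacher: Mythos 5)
Your proof takes essentially the same approach as the paper's: decouple the horizontal Morse equation (transverse by the built-in Morse--Smale condition) from the fiberwise Floer/continuation equation (transverse by generic perturbation of the vertical almost complex structure), and invoke Condition~(4) of Definition~\ref{defn:family_Hamiltonians} to rule out $\tau$-translation-invariant vertical solutions along non-constant trajectories. Your description of the linearization as block-triangular rather than a genuine direct sum is slightly more accurate than the paper's phrasing, but the Fredholm index and surjectivity conclusions are the same, and your inductive coherence argument across the approximation tower spells out what the paper handles implicitly via properties~(3) and~(4) of Theorem-Definition~\ref{thm:family_of_Morse_functions}.
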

\begin{proof}
    It is standard that the transversality for moduli space of Floer strips can be achieved by choosing generic time-dependent almost complex structures; see, for example, \cite{FHS95} and \cite[(9k)]{Sei08}. We briefly recall the  argument and explain how it applies in our setting. 
    
    The key observation is that for a generic choice of time-dependent almost complex, the associated linearized Cauchy-Riemman operator is surjective, except possibly at constant components, which must be treated separately. 

    In the present case,  the zero set defining the moduli space is the joint  zero locus of the two Floer (Morse) equations  \eqref{equ:horizontal_Floer_equation} and \eqref{equ:vertical_Floer_equation}. The corresponding linearized operator decomposes as the  direct sum of the linearizations of horizontal and vertical  components. 
    
    Surjectivity of this operator follows from the surjectivity of each component: the Floer part can be made transverse by a generic perturbation of the almost complex structure, while the Morse part is transverse by the Morse-Smale condition, which holds by construction. 

    To complete the proof, we must rule out the constant solutions.  This is ensured by  Condition (4) in Definition \refeq{defn:family_Hamiltonians}.  
    \end{proof}

\subsubsection{Compactness}
We now show that the usual maximum principle is satisfied, ensuring the  Gromov-Floer compactification applies. 
\begin{thm}
    \label{thm:Gromov_compactification}
        If $\dim \cM(\mathbf{x}_{0},\mathbf{x}_{1};X(N)) \leq 1 $, $\cM(\mathbf{x}_{0},\mathbf{x}_{1};X(N))$ admits a Gromov compactification $\overline{\cM}(\mathbf{x}_{0},\mathbf{x}_{1};X(N))$.  
\end{thm}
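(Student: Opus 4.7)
The plan is to adapt the standard Gromov--Floer compactness argument to the parametrized setting of Definition \ref{defn:moduli_space}. Each element of the moduli space consists of a Morse trajectory $\gamma$ on the compact zero section $0_{BG(N)}\subset T^*BG(N)$ together with a fiberwise Hamiltonian-perturbed $J$-holomorphic strip $u$ projecting to $\gamma$. The strategy proceeds in four steps: (i) an energy bound, (ii) a $C^0$-bound ensuring $u$ remains in a compact region of $X(N)$, (iii) exclusion of sphere and disk bubbling, and (iv) standard Floer/Morse breaking analysis.

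First I would establish the energy bound. Writing $u$ in a local trivialization of $X(N)$ along $\gamma$ provided by parallel transport (cf.\ Remark \ref{rem:parallel_transport}), the usual Floer identity for the vertical part gives
\begin{equation*}
E(u)\;\leq\; \cA_{c_0}(x_0)-\cA_{c_1}(x_1)+\int_{\mathbb R\times [0,1]}(-\partial_\tau H_{(\tau)})(u(\tau,t))\,d\tau\,dt,
\end{equation*}
where the parametric term is controlled since $H(N)$ varies in a compact range along $\gamma$ inside $X^{\mathrm{in}}(N)$ and is non-increasing in $\tau$ outside of it. The Morse energy of $\gamma$ is simultaneously bounded by $f_N(c_0)-f_N(c_1)$, yielding a uniform bound on each connected component.

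Next, the $C^0$-bound. The horizontal equation in Definition \ref{defn:moduli_space} forces $\pi_N\circ u=\gamma$ to lie in the compact base $0_{BG(N)}$, so the only escape direction is fiberwise to infinity. Using admissibility of $J_{X(N)}$ (Definition \ref{defn:admissible_almost_complex_structures}) together with the fiberwise linearity of $H_{(\tau)}$ near infinity (Definition \ref{defn:family_Hamiltonians}), I would apply the maximum principle to the fiberwise radial coordinate $\rho\coloneqq r\circ u$ outside $X^{\mathrm{in}}(N)$. After trivializing along $\gamma$ via parallel transport and using Lemma \ref{lem:Liouville_fibration} to identify the fibers Liouville-isomorphically with $X$, the strip $u$ becomes a Hamiltonian-perturbed $J^{\ver}$-holomorphic strip for a $\tau$-family of linear Hamiltonians on $X$. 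The standard subharmonicity inequality for $\rho$ in the contact-type setting picks up a parametric term proportional to $\partial_\tau H_{(\tau)}$; the imposed monotonicity $\partial_\tau H_{(\tau)}\leq 0$ outside $X^{\mathrm{in}}(N)$ preserves the sign, so $\rho$ attains its maximum at the boundary or at $\tau=\pm\infty$, forcing $u$ to remain in a compact subset of $X(N)$.

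Finally, bubbling and breaking. Sphere bubbles are excluded by exactness of $\omega_{X(N)}=d\lambda_{X(N)}$ obtained from Theorem \ref{thm:Liouville_reduction}. Disk bubbles are excluded by exactness of the Borel Lagrangians $L_i(N)$, whose $G$-invariant primitives from Section \ref{rem:Borel_Lagrangians} descend along the quotient. Combined with the energy and $C^0$-bounds, the usual Floer strip breaking, together with the standard compactification of broken Morse trajectories on the compact manifold $BG(N)$, yields the desired Gromov--Floer compactification $\overline{\cM}(\mathbf x_0,\mathbf x_1;X(N))$. The main obstacle I anticipate is step (ii): one must verify that when $u$ is expressed through parallel transport along $\gamma$, the ``horizontal noise'' from the varying fiber identification does not generate additional terms that could spoil the standard contact-type inequality. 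This ultimately reduces to the compatibility conditions built into Definition \ref{defn:family_Hamiltonians} together with the orthogonal splitting of $TX(N)$ coming from the connection associated with $\omega_{X(N)}$.
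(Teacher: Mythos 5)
Your proposal is correct and recovers the same structure (energy bound, confinement, bubbling exclusion, breaking), but the confinement step proceeds by a technically different route than the paper. The paper follows Abouzaid--Seidel style \emph{integrated} monotonicity (Lemma \ref{lem:no_escape_lemma}): one bounds the vertical geometric energy by a boundary integral via Stokes' theorem and the linearity of $H$ near infinity, using the contact-type identity $dr = \lambda \circ J$ to show this boundary integral is $\leq 0$. This avoids any local subharmonicity analysis and sidesteps exactly the ``horizontal noise'' you flag at the end. Your proposal instead applies the \emph{pointwise} maximum principle to $\rho = r\circ u$ after trivializing along $\gamma$; this is also standard and would work here, but it requires checking that (a) parallel transport along $\gamma$ (over $0_{BG(N)}$) fixes the Lagrangian boundary conditions — this holds by the discussion in Section \ref{rem:Borel_Lagrangians} — and (b) the $\tau$-dependence of the trivialized almost complex structure does not contribute adverse terms, which follows from admissibility (Definition \ref{defn:admissible_almost_complex_structures}) together with the $G$-invariance of $r$. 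The paper's Stokes argument packages points (a) and (b) into a single global computation, so its manipulations stay in $X(N)$ rather than in a $\tau$-family of copies of $X$; your version needs those two checks made explicit. Both approaches buy you the same $C^0$ bound; the integrated one is cleaner to write, while the pointwise one is perhaps more transparent about where the sign of $\partial_\tau H_{(\tau)}$ is used. Your remarks on sphere/disk bubbling and breaking are correct and in fact fill in details the paper leaves implicit.
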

The proof is almost identical to the classical case. For completeness and reader's convenience, we include the proof here. A more global formulation of this argument will be presented later in Definition \ref{lemd:fiberwise_wrapped_data}.

Since each fiber is non-compact, we must impose additional conditions on the Lagrangian boundary (equivalently, on the Hamiltonian functions) to prevent the strip form escaping to infinity. Recall that we require $\partial_{\tau} H_{(\tau)}\leq 0$ outside $X^{\mathrm{in}}(N)$.

Our approach follows the standard argument using \emph{non-negative moving boundary conditions}. We adapt the following lemma from \cite[Lemma 7.2]{abouzaid2010open} (see also \cite[Lemma 2.46]{GPS1}) to our setting.

To define the energy, for later use, we give  a more general statement. Let $\Sigma$ be a compact Riemman surface with boundary, and $\alpha$ a  $1$-form satisfying $d\alpha\leq 0$ (see \eqref{equ:sub_closed_1_form} for detailed definitions).  We write $\lambda^{\ver}$ the vertical component of the Liouville form $\lambda_{X(N)}$ (for simplicity, we drop the subscripts as it is independent of the stratum). We define the  \textit{geometric energy} by 
\begin{align*}
   E(u)^{\ver} \coloneqq  \frac{1}{2}\int_{\Sigma} \Vert du^{\ver} -X_{H_{(\tau)}}\otimes\alpha\Vert^2
    = \int_{\Sigma} u^* d\lambda^{\ver} - u^*(dH_{(\tau)})\wedge \alpha 
\end{align*}
where $dH_{(\tau)}$ is taken fiberwise. Integrating by parts, we have 
\begin{align}
\label{equ:topological_energy}
    E(u)^{\ver} & = \int _{\Sigma} u^* d\lambda^{\ver} - d (u^*H_{(\tau)}\wedge \alpha) + \partial_{\tau} (u^* H_{(\tau)})d\tau\wedge \alpha  + u^* H_{(\tau)} d\alpha \\
    & \leq \int _{\Sigma} u^* d\lambda^{\ver} - d (u^*H_{(\tau )} \alpha)
\end{align}   
For the inequality, we use the condition $\partial_\tau H_{(\tau)}\leq 0$ and the subclosedness of $\alpha$. When $\Sigma$ is a strip as in Definition \ref{defn:moduli_space}, applying Stokes' theorem we obtain 
\begin{align}
     E(u)^{\ver} \leq \mathcal A_{c_0}(x_0) - \mathcal A_{c_1}(x_1)
\end{align}
which is the standard inequality bounding geometric energy by topological energy.

\begin{rem}
 When we trivialize the bundle $X(N)$ over a given trajectory,  the vertical geometric energy  coincides with the standard energy of a Floer trajectory with (possibly) moving Lagrangian boundary conditions. The same statement also applies to the action functional.
\end{rem}

For a Liouville manifold, the following lemma is sufficient to ensure the compactness of the moduli spaces. In the case of a Liouville sector, the wrapping Hamiltonian we construct is chosen so that $\partial_{\tau} H_{(\tau)}= 0$ outside $X^{\mathrm{in}}(N)$; see Lemma-Definition \ref{lemd:fiberwise_wrapped_data}. In other words, we do not consider moving boundary condition which are not compactly supported on a Liouville sector.
\begin{lemma}
\label{lem:no_escape_lemma}
    Let $u$ be a pseudo-holomorphic curve as in Definition \ref{defn:moduli_space}. Suppose that $u(\partial \Sigma) \subset X(N) \setminus X^{\mathrm{in}}(N),$ then either $u$ is a locally constant map, or its image lies entirely in $\partial X^{\mathrm{in}}(N)$.
\end{lemma}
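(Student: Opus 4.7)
My plan is to adapt the integrated maximum principle of \cite{abouzaid2010open} to the fibered setting, accounting for the $\tau$-dependence of the Hamiltonian. The three crucial facts, all guaranteed by Definitions \ref{defn:admissible_almost_complex_structures} and \ref{defn:family_Hamiltonians}, are: (a) fiberwise contact type of $J_{(\tau,t)}^{\ver}$ outside $X^{\mathrm{in}}(N)$; (b) fiberwise linearity of $H_{(\tau)} = h_{(\tau)}(y)\,r$ near infinity, which in particular gives $dr(X_{H_{(\tau)}}) = 0$; and (c) $\partial_\tau H_{(\tau)} \leq 0$ outside $X^{\mathrm{in}}(N)$, imposed just before Definition \ref{defn:moduli_space}.

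First I would trivialize $\pi_N^{-1}(\gamma) \to \bR$ along the base trajectory $\gamma = \pi_N\circ u$ by parallel transport. Since $L_0(N), L_1(N)$ lie over the zero section of $T^*BG(N)$ and are preserved by parallel transport (Section \ref{rem:Borel_Lagrangians}), and since parallel transport is symplectic (Remark \ref{rem:parallel_transport}) with every fiber Liouville-isomorphic to $X$ (Lemma \ref{lem:Liouville_fibration}), the vertical component $u^{\ver}$ becomes an honest strip $\tilde u : \Sigma \to X$ with stationary Lagrangian boundaries $L_0, L_1$ and a $\tau$-dependent linear Hamiltonian $H_{(\tau)}$ on the cylindrical end of $X$. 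The fiberwise radial coordinate $r$ and the contact-type property pull back intact.

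On $\Omega := u^{-1}\bigl(X(N) \setminus X^{\mathrm{in}}(N)\bigr)$, set $\rho := r \circ \tilde u$. The standard computation (cf.\ \cite[Lemma 7.2]{abouzaid2010open}, \cite[Lemma 2.46]{GPS1}), using contact type together with $dr(X_H) = 0$ from linearity, yields weakly
\[
    \Delta \rho \;\geq\; -\,r\cdot \partial_\tau h_{(\tau)}(y\circ \tilde u)\;\geq\; 0
\]
on $\Omega$, where the non-negativity of the right-hand side uses (c) and $r > 0$. Along $\partial \Sigma \cap \Omega$, the cylindrical Lagrangian condition ensures a favorable boundary behavior for the Hopf lemma. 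By hypothesis $\partial \Sigma \subset \Omega$, so applying the strong maximum principle (with Hopf lemma on the boundary) to $\rho$ on each connected component of $\Omega$ yields the claimed dichotomy: either $\rho$ is locally constant—and unique continuation for the Floer equation then forces $\tilde u$, hence $u$, to be locally constant—or the component is pinned to the level hypersurface $\partial X^{\mathrm{in}}(N) = \{r = R\}$, in which case $u(\Sigma) \subset \partial X^{\mathrm{in}}(N)$.

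The main technical obstacle is precisely the extra term $r\,\partial_\tau h$ in the subharmonicity estimate that appears from the moving Hamiltonian under the parallel-transport trivialization; condition (c) is tailored to absorb it with the correct sign. A secondary point to check is that the trivialization introduces no other spurious contributions (for instance from horizontal curvature of the connection associated with $\omega_{X(N)}$) to the Laplacian of $\rho$. This reduces to the preservation of the fiberwise Liouville form under parallel transport, which is exactly the content of Lemma \ref{lem:Liouville_fibration}. In the Liouville-sector case, Lemma-Definition \ref{lemd:fiberwise_wrapped_data} will further impose $\partial_\tau H_{(\tau)} = 0$ outside $X^{\mathrm{in}}(N)$, which trivializes this extra term altogether.
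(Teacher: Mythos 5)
Your proposal is correct in spirit but goes a genuinely different route from the paper. The paper's proof is the \emph{integrated} maximum principle: it bounds the (non-negative) vertical geometric energy $E(u)^{\ver}$ from above by a boundary integral via Stokes' theorem, then rewrites that boundary term using linearity of $H$ (so $H=\lambda^{\ver}(X_{H})$), the vertical Cauchy--Riemann equation, and the contact-type identity $\lambda^{\ver}\circ J^{\ver}=dr$, landing on $\int_{\partial\Sigma}-dr\,du^{\ver}j\leq 0$. Combined with $E(u)^{\ver}\geq 0$ this forces $E(u)^{\ver}=0$, i.e.\ $du^{\ver}=X_{H_{(\tau)}}\otimes\alpha$, which (since $\alpha|_{\partial\Sigma}=0$) gives the dichotomy directly. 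You instead propose to show pointwise weak subharmonicity of $\rho=r\circ\tilde u$ and invoke the strong maximum principle and Hopf lemma --- a legitimate strategy, but a different one.

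Two points to watch. First, your references are misattributed: \cite[Lemma 7.2]{abouzaid2010open} and \cite[Lemma 2.46]{GPS1} are precisely the Stokes/energy argument the paper follows, not a pointwise Laplacian estimate; they do not establish the inequality $\Delta\rho\geq -r\,\partial_\tau h_{(\tau)}$ that you rely on. Second, that pointwise estimate is stated but not derived. It is plausible --- differentiating $d(\lambda^{\ver}(du^{\ver}-X_{H_{(\tau)}}\otimes\alpha))$ does produce an extra $-\partial_\tau H_{(\tau)}$ term of the right sign relative to the time-independent case --- but the computation has to be carried out, not cited, and must be checked against the $\tau$-dependent parallel-transport trivialization you set up. As things stand there is a gap at exactly the step your own comments flag as ``the main technical obstacle.'' The integrated argument avoids this issue entirely, which is one of its main advantages and the reason it is the standard tool here: it only needs the sub-closedness of $\alpha$, $\partial_\tau H_{(\tau)}\leq 0$, and contact type, all fed into a single application of Stokes, and the conclusion $E(u)^{\ver}=0$ is immediate without appealing to Hopf's lemma or unique continuation.
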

\begin{proof}
It follows from Equation \eqref{equ:topological_energy} that 
{\allowdisplaybreaks
\begin{align}
    E(u)^{\ver} 
    & \leq \int _{\Sigma} u^* d\lambda^{\ver} - d (u^*H_{(\tau )} \alpha)\\
    &= \int_{\partial \Sigma} u^*\lambda^{\ver}  - u^*H_{(\tau)} \alpha \\
    & = \int_{\partial \Sigma} u^* \lambda^{\ver} - u^* \lambda^{\ver}(X_{H_{(\tau)}})\alpha \\ \label{equ:linear_Hamiltonian_substitution}
    & = \int_{\partial \Sigma} \lambda^{\ver}\left(du^{\ver}- X_{H_{(\tau )}}\otimes \alpha \right) \\
    & =   \int_{\partial \Sigma} -\lambda^{\ver}J\left(du^{\ver}- X_{H_{(\tau )}}\otimes \alpha \right)j \\
    & = \int_{\partial \Sigma} -dr \left(du^{\ver}- X_{H_{(\tau )}}\otimes \alpha \right)j \\ \label{equ:coordinate_r}
    & = \int_{\partial \Sigma}  -dr du^{\ver} j \\ 
    & \leq 0. 
\end{align}
}

The identity in Equation~\eqref{equ:linear_Hamiltonian_substitution} follows directly from the assumption that $H_{(\tau)}$ is linear outside $X^{\mathrm{in}}(N)$.
We note that the radius function $r$ appearing in Equation~\eqref{equ:coordinate_r} is globally defined outside of $X^{\mathrm{in}}(N)$ since it is $G$-invariant. 

This suffices to conclude the argument.
\end{proof}  

\subsubsection{Differential and Equivariant Floer cohomology}
\label{sec:differential}
We define the Floer differential on $ CF^{\bullet}(L_{0},L_{1};X(N))$ by 
\begin{align*}
    \partial_{N} \mathbf{x}_{1}\coloneqq  \sum_{|\mathbf{x}_{0}| = |\mathbf{x}_{1}|+1}\#\cM (\mathbf{x}_{0},\mathbf{x}_{1};X(N)) \mathbf{x}_{0}. 
\end{align*}
By the compactness of the moduli space, this differential is well-defined and satisfies $\partial_{N}^2= 0$. We denote the corresponding Floer cohomology by 
\begin{align*}
    HF^{\bullet}(L_{0},L_{1}; X(N))\coloneqq  H^{\bullet}(CF^{\bullet}(L_{0},L_{1};X(N)), \partial_{N}).
\end{align*}
 Since $\crit{f_{N}}\subset \crit{f_{N+1}}$ and $H(N+1)|_{X(N)} = H(N)$, there is a natural inclusion of chain complexes
\begin{align*}
    CF^{\bullet}(L_{0},L_{1};X(N)) \hookrightarrow CF^{\bullet}(L_{0},L_{1};X(N+1)).
\end{align*}
 We define $P_{N+1}$ to be the projection map  
 \begin{align*}
    P_{N+1}:CF^{\bullet}(L_{0},L_{1};X(N+1))\rightarrow CF^{\bullet}(L_{0},L_{1};X(N)).
 \end{align*}

\begin{lemma}
\label{lem:approximation_commutative}
    The following diagram commutes:
\begin{equation}
    \label{equ:approximation_commutative}
     \begin{tikzcd}
         CF^{\bullet}(L_{0}, L_{1}; X(N+1)) \arrow[r, "P_{N+1}"] \arrow[d, "\partial_{N+1}"'] &  CF^{\bullet}(L_{0}, L_{1}; X(N)) \arrow[d, "\partial_{N}"] \\
         CF^{\bullet+1}(L_{0}, L_{1}; X(N+1)) \arrow[r, "P_{N+1}"] & CF^{\bullet+1}(L_{0}, L_{1}; X(N))
     \end{tikzcd}.
 \end{equation}  
 In other words, the projection map $P_{N}$ defines a chain map.  
\end{lemma}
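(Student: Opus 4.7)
The plan is to reduce the commutativity to a fiberwise identification of moduli spaces, using the strong compatibility built into the admissible families from Theorem-Definition~\ref{thm:family_of_Morse_functions} and Definition~\ref{defn:family_Hamiltonians}. Recall from the proof of Theorem-Definition~\ref{thm:family_of_Morse_functions} (and the explicit construction of $f_A$ via $A_n$-invariant $k$-planes) that $\crit(f_N) = \crit(f_{N+1}) \cap BG(N)$. I would split into two cases depending on whether the input generator $\mathbf{x}_1 = (c_1, x_1) \in CF^{\bullet}(L_0,L_1;X(N+1))$ lies in the subcomplex $CF^{\bullet}(L_0,L_1;X(N))$.

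If $c_1 \notin \crit(f_N)$, then $P_{N+1}(\mathbf{x}_1) = 0$, so the right-hand side of \eqref{equ:approximation_commutative} vanishes. For the left-hand side it suffices to show that $\cM(\mathbf{x}_0,\mathbf{x}_1;X(N+1)) = \emptyset$ for every $\mathbf{x}_0 = (c_0,x_0)$ with $c_0 \in \crit(f_N)$. Any solution $(\gamma,u)$ in that moduli space has Morse trajectory $\gamma$ contained in $W^u(f_{N+1},\cV_{N+1};c_0)$, which by condition (3) of Theorem-Definition~\ref{thm:family_of_Morse_functions} coincides with $W^u(f_N,\cV_N;c_0) \subset BG(N)$. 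Hence $c_1 = \lim_{\tau\to+\infty}\gamma(\tau) \in \overline{BG(N)} \cap \crit(f_{N+1}) = \crit(f_N)$, a contradiction.

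If instead $c_1 \in \crit(f_N)$, then $P_{N+1}(\mathbf{x}_1) = \mathbf{x}_1$, and the claim reduces to showing that for every $c_0 \in \crit(f_N)$ the two moduli spaces $\cM(\mathbf{x}_0,\mathbf{x}_1;X(N))$ and $\cM(\mathbf{x}_0,\mathbf{x}_1;X(N+1))$ agree as finite sets. The inclusion ``$\subset$'' is automatic from $X(N) \hookrightarrow X(N+1)$ and the compatibility of Hamiltonians and almost complex structures. For ``$\supset$'', the same application of condition (3) shows that $\gamma$ lies in $BG(N)$; since the fibration $X(N) \to T^*BG(N)$ is the restriction of $\pi_{N+1}$ to $\pi_{N+1}^{-1}(T^*BG(N))$, the vertical Floer strip $u$ takes values in $\pi_{N+1}^{-1}(\operatorname{Im}\gamma) = \pi_N^{-1}(\operatorname{Im}\gamma) \subset X(N)$. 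Finally, $H(N+1)|_{X(N)} = H(N)$ and $J_{X(N+1)}|_{X(N)} = J_{X(N)}$ imply that the vertical Floer equation \eqref{equ:vertical_Floer_equation} for $u$ is the same whether computed in $X(N)$ or in $X(N+1)$; hence $(\gamma,u) \in \cM(\mathbf{x}_0,\mathbf{x}_1;X(N))$.

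The only place that requires some care is verifying the fiberwise identification $\pi_{N+1}^{-1}(T^*BG(N)) = X(N)$ as Liouville fibrations with the same Floer data, i.e.\ that the inclusion $X(N) \hookrightarrow X(N+1)$ respects not only the underlying symplectic manifolds but also the parallel transport, the Hamiltonian flows, and the compatible almost complex structures. Once this compatibility is spelled out, both cases above are direct consequences of the admissibility conditions, and $P_{N+1}$ becomes a chain map essentially by construction.
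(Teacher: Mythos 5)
Your proof is correct and takes essentially the same approach as the paper's: both identify the Floer moduli spaces across consecutive approximations by combining the compatibility conditions on stable and unstable manifolds from Theorem-Definition~\ref{thm:family_of_Morse_functions} with the agreement of fiberwise Floer data ($H(N+1)|_{X(N)}=H(N)$, $J_{X(N+1)}|_{X(N)}=J_{X(N)}$), so that a trajectory asymptotic to $c_0\in\crit(f_N)$ is forced into $BG(N)$ and its attached strip into $X(N)$. Your explicit two-case split (notably the case $c_1\notin\crit(f_N)$, which the paper's proof leaves implicit by fixing $\mathbf{x}_1$ in the subcomplex at the outset) and the set-theoretic identification of moduli spaces rather than merely of counts are reasonable refinements, but they rest on the same facts — in particular $\crit(f_{N+1})\cap BG(N)=\crit(f_N)$ and $\cV_{N+1}|_{BG(N)}=\cV_N$, which follow from the explicit construction in the proof of Theorem-Definition~\ref{thm:family_of_Morse_functions} even though they are not among its enumerated conditions.
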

\begin{proof}
Let $\mathbf{x}_{i} = (c_{i},x_{i}) \in CF^{\bullet}(L_{0}, L_{1}; X(N))$ for $i=0,1$. Each element $ (\gamma, u) \in \cM (\mathbf{x}_{0},\mathbf{x}_{1};X(N))$ projects to a Morse trajectory $\gamma$ on $BG(N)$. The moduli space admits a fibration 
\begin{align}
    \label{equ:fibration_of_moduli_space}
    \cM_{\gamma}(\mathbf{x}_{0},\mathbf{x}_{1};X(N)) \rightarrow  \cM (\mathbf{x}_{0},\mathbf{x}_{1};X(N)) \rightarrow \cM_{\mathrm{Morse}}(c_{0},c_{1};\cV_{N}),
\end{align}
where $\cM_{\mathrm{Morse}}(c_{0},c_{1};\cV_{N})$ denotes the moduli space of Morse trajectories connecting $c_{0}$ and $c_{1}$. 

By the properties of the pseudo-gradient vector field $\cV_{N}$ established in Theorem \ref{thm:family_of_Morse_functions}, we have 
\begin{align*}
    W^u(f_{N+1},\cV_{N+1}; c_{0})\cap  W^s(f_{N+1},\cV_{N+1}; c_{1}) = W^u(f_{N},\cV_{N}; c_{0})\cap  W^s(f_{N},\cV_{N}; c_{1}),
\end{align*}
which implies that  the count of trajectories is preserved under the inclusion $X(N)\subset X(N+1)$. In particular, for $|x_{0}|=|x_{1}|+1$,
\begin{align*}
    \#\cM (\mathbf{x}_{0},\mathbf{x}_{1};X(N+1)) = \#\cM (\mathbf{x}_{0},\mathbf{x}_{1};X(N)).
\end{align*}
Hence, the Floer differential commutes with the projection $P_{N}$: 
\begin{align*}
    P_{N+1} \circ \partial_{N+1} \mathbf{x}_{1} & = \sum_{\substack{|\mathbf{x}_{0}| = |\mathbf{x}_{1}|+1\\c_{0}\in \crit(f_{N+1})}}\#\cM (\mathbf{x}_{0},\mathbf{x}_{1};X(N+1)) P_{N+1}(\mathbf{x}_{0})\\
    & =  \sum_{\substack{|\mathbf{x}_{0}| = |\mathbf{x}_{1}|+1\\c_{0}\in \crit(f_{N})}}\#\cM (\mathbf{x}_{0},P_{N+1}(\mathbf{x}_{1});X(N)) \mathbf{x}_{0} \\
    & =  \partial_{N}\circ P_{N+1}\mathbf{x}_{1}.
\end{align*}
Thus, the diagram \eqref{equ:approximation_commutative} commutes and $P_{N}$ defines a chain map. 
\end{proof}

\begin{defn}
The \textit{equivariant Lagrangian Floer cohomology} is defined as the inverse limit 
\begin{align*}
    HF^{\bullet}_{G}(L_{0}, L_{1})\coloneqq  \varprojlim_{N} HF^{\bullet}(L_{0},L_{1}; X(N)). 
\end{align*}
\end{defn}
Recall that an inverse system $\{A_{i}, f_{ij}\}_{i\in I}$ satisfies the \textit{Mittag-Leffler condition} if for every index $i$, there exists $j\geq i$ such that for all $k\geq j$, 
\begin{align*}
    \mathrm{Im}(f_{ik}) = \mathrm{Im}(f_{ij}).
\end{align*}
 If this condition holds, then the  the first derived functor $\varprojlim^1$ vanishes, and  the inverse limit respects the cohomology. 

In our case, the inverse system is formed by projections maps which are always surjective. Hence the system satisfies Mittag-Leffler condition. Accordingly, we define the \textit{equivariant Floer cochain complex} by 
\begin{align*}
    CF_{G}^{\bullet}(L_{0},L_{1})\coloneqq \varprojlim_{N}{CF^{\bullet}(L_{0}, L_{1}; X(N))}.
\end{align*}
This complex depends on several choices of auxiliary data. However, up to quasi-isomorphism, it is well-defined and its cohomology agrees with the equivariant Floer cohomology $HF^{\bullet}_{G}(L_{0}, L_{1})$. 

\subsection{Continuation maps}
\label{sec:continuation_maps}
We now discuss the the natural maps between Floer complexes arising from varying the different choices of auxiliary data such as Hamiltonian functions and almost complex structures. This involves two aspects: the invariance of the Floer cohomology and the wrapped Floer cohomology. 

Let $L_{0}, L_{1}\subset X$ be exact cylindrical Lagrangians. We first fix an admissible family of Morse--Smale paris $\{f_{N},\cV_{N}\}$. Let $\{H^{-}(N)\}$ and  $\{H^{+}(N)\}$ be admissible families of compatible Hamiltonian functions satisfying $$\partial_{\tau}H^{\pm}_{(\tau)}\leq 0.$$ 

For a one-parameter family of admissible cylindrical Hamiltonians $\{H_{\tau}(N)\}$, we say it is \textit{monotone homotopy} if  satisfy the following conditions: 
\begin{enumerate}
    \label{equ:non_negative_homotopy}
    \item $H_{\tau} = H_{-}$ for $\tau \ll 0$, 
    \item $H_{\tau} = H_{+}$ for $\tau  \gg 0 $, and 
    \item $\partial_{\tau} ((H_{\tau})_{(\tau)})\leq 0$ outside $X^{\mathrm{in}}(N)$, where $ (H_{\tau})_{(\tau)} \coloneqq  H_{\tau}|_{\gamma(\tau)}$ and the derivative is taken with respect to parallel transport along a trajectory $\gamma$ of $\cV_{N}$,
\end{enumerate}
and we define a \textit{continuation map}:
\begin{align*}
    \kappa^{N}_{H^{-},H^{+}}: CF^{\bullet}(L_{0}, L_{1}; X(N), H^{+}(N)) \rightarrow CF^{\bullet}(L_{0}, L_{1}; X(N), H^{-}(N)). 
\end{align*}

\begin{rem}
    Let $\psi \in C^{\infty}(\bR, [0,1])$ be a monotone increasing function such that $\psi(\tau) = 0$ for $\tau \ll 0$ and $\psi(\tau ) =1$ for $\tau \gg 0$. This gives rise to a homotopy of Hamiltonians
\begin{align}
    H_{\tau } = (1-\psi(\tau)) H^{-}+ \psi(\tau) H^{+},\quad  \tau \in \bR
\end{align} 
 which induces an exact Lagrangian isotopy. Moreover, we observe that $ H_{\tau }$ satisfies $\partial_{\tau} ((H_{\tau})_{(\tau)})\leq 0$ away from $X^{\mathrm{in}}(N)$ whenever $H^{-} \geq  H^{+}$ near infinity. In particular, this includes the case where $L_{0}$ and $L_{1}$ are disjoint near infinity and $H^{\pm}$ are compactly supported. 
\end{rem}
For  generators $\mathbf{x}_{-} = (c_{-},x_{-})$ and $\mathbf{x}_{+} = (c_{+},x_{+})$  of $CF^{\bullet}(L_{0}, L_{1}; H^{-}(N), X(N))$ and $ CF^{\bullet}(L_{0}, L_{1}; H^{+}(N), X(N))$ respectively, choose a family of admissible almost complex structures, we define the moduli space as follows. 

\begin{defn}
    \label{defn:moduli_space_continuation_map}
    Let $\cM(\mathbf{x}_{-},\mathbf{x}_{+};X(N), (H_{\tau})_{\tau \in \bR})$ denote the space of pairs $$ \{(\gamma, u)\mid  \gamma: \bR\rightarrow 0_{BG(N)}, u: \bR\times [0,1] \rightarrow X(N)\}$$ satisfying the following conditions: 
    \begin{numcases}{}
        \partial_{\tau} \gamma - \cV_{N} = 0, \quad \pi_{N}\circ u = \gamma,    & \\ \label{equ:cont_horizontal_Floer_equation}
            \left(du^{\ver} - X_{(H_{\tau})_{(\tau)}}\otimes dt\right)^{0,1}_{J_{(\tau,t)}^{\ver}} = 0, & \\ \label{equ:cont_vertical_Floer_equation} 
            u(\tau,0)\in L_{0}(N),\quad u(\tau,1) \in L_{1}(N),  & \\ 
            \lim_{ \tau \rightarrow -\infty} (\gamma (\tau ), u(\tau,\cdot)) = \mathbf{x}_{-}, \quad  \lim_{ \tau \rightarrow +\infty} (\gamma (\tau ), u(\tau,\cdot)) = \mathbf{x}_{+}.
    \end{numcases}
    Here $du^{\ver}$ denotes vertical component of the differential $du$. 
\end{defn}
The virtual dimension of $\cM(\mathbf{x}_{-},\mathbf{x}_{+};X(N), (H_{\tau})_{\tau \in \bR})$ is given by $|\mathbf{x}_{-}|-|\mathbf{x}_{+}|$. By arguments analogous to Theorem \ref{thm:regular_moduli} and Theorem \ref{thm:Gromov_compactification}, we obtain the following conclusions: 
\begin{enumerate}
    \item For a generic choice of admissible almost complex structure,  the moduli space   $\cM(\mathbf{x}_{-},\mathbf{x}_{+};X(N), (H_{\tau})_{\tau \in \bR})$ is a smooth manifold of the expected dimension. 
    \item  If $\dim \cM(\mathbf{x}_{-},\mathbf{x}_{+};X(N), (H_{\tau})_{\tau \in \bR}) \leq 1 $, $\cM(\mathbf{x}_{0},\mathbf{x}_{1};X(N))$ admits a Gromov-Floer compactification. 
\end{enumerate}
We therefore define \textit{continuation map} 
\begin{align}
    \kappa_{H^{-},H^{+}}^{N} (\mathbf{x}_{+})\coloneqq  \sum_{|\mathbf{x}_{-}| =|\mathbf{x}_{+}|} \#\cM(\mathbf{x}_{-},\mathbf{x}_{+};X(N), (H_{\tau})_{\tau \in \bR}) \mathbf{x}_{-}. 
\end{align}
A standard homotopy argument leads to the following argument. 
\begin{prop}
\label{prop:continuation_map_properties}
\begin{enumerate}
    \item  The linear map $ \kappa_{H^{-},H^{+}}^{N}$ is a chain map. 
    \item Up to homotopy, $ \kappa_{H^{-},H^{+}}^{N}$ depends only on the endpoints $H^{-}$ and $H^{+}$. 
    \item The induced maps on Floer cohomology satisfy the composition law $F_{H^{-},H'}^{N}\circ F_{H',H^{+}}^{N} = F_{H^{-},H^{+}}^{N}$, provided the corresponding homotopies are monotone. 
\end{enumerate}
\end{prop}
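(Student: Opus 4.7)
The three claims are standard facts in Floer theory which, in our fibered setup, reduce to the corresponding statements in each fiber once the analytic framework of Section~\ref{sec:borel_construction_of_floer_cohomology} is in place. The plan is to mimic the classical monotone continuation argument (as in \cite{Sei08}, but carried out over the parametrizing base $BG(N)$), using that the horizontal Morse and vertical Floer equations decouple after passing to parallel transport along a given trajectory $\gamma$ of $\mathcal V_N$.

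For (1), I would analyze the Gromov--Floer boundary of the compactified $1$-dimensional moduli spaces $\overline{\cM}(\mathbf{x}_-, \mathbf{x}_+; X(N), (H_\tau)_{\tau\in\mathbb R})$ for $|\mathbf{x}_-|=|\mathbf{x}_+|+1$. Compactness is guaranteed by a monotone version of Lemma~\ref{lem:no_escape_lemma}: the topological energy estimate \eqref{equ:topological_energy} still gives $E(u)^{\ver}\le \mathcal A_{c_-}(x_-) - \mathcal A_{c_+}(x_+)$ because monotonicity $\partial_\tau (H_\tau)_{(\tau)}\le 0$ outside $X^{\mathrm{in}}(N)$ is precisely what makes the extra boundary term non-positive, and the same argument with the linear Hamiltonian substitution rules out escape to infinity. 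Codimension-one strata of the boundary consist of a constant-in-$\tau$ continuation solution joined with either a Floer strip for $H^+$ at the positive end or one for $H^-$ at the negative end (horizontal Morse breakings are absorbed into the Morse factor of the generators). Counting these strata yields $\partial_N^- \circ \kappa_{H^-,H^+}^N - \kappa_{H^-,H^+}^N \circ \partial_N^+ = 0$.

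For (2), given two admissible monotone homotopies $(H^0_\tau)$ and $(H^1_\tau)$ from $H^-$ to $H^+$, I would interpolate between them by a two-parameter family $H^s_\tau$, $s\in[0,1]$, that is monotone in $\tau$ for every $s$ (convex combinations of monotone homotopies satisfy the monotonicity condition on $X(N)\setminus X^{\mathrm{in}}(N)$ since that condition is convex). The parametrized moduli space $\bigcup_{s\in[0,1]}\cM(\mathbf{x}_-,\mathbf{x}_+;X(N),(H^s_\tau))$ has expected dimension $|\mathbf{x}_-|-|\mathbf{x}_+|+1$, and for $|\mathbf{x}_-|=|\mathbf{x}_+|$ its compactification defines a degree $-1$ map $K: CF^\bullet(L_0,L_1;H^+(N))\to CF^\bullet(L_0,L_1;H^-(N))$ whose boundary analysis gives the chain homotopy $\kappa_{H^-,H^+}^{N,1}-\kappa_{H^-,H^+}^{N,0} = \partial K + K\partial$.

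For (3), I would use the usual neck-stretching argument. Given monotone homotopies from $H^+$ to $H'$ and from $H'$ to $H^-$, glue them at parameter $\tau=R$ to produce a one-parameter family of homotopies $H_\tau^{(R)}$ parametrized by $R\ge R_0$. For $R$ very large, a standard gluing theorem identifies the count defining $\kappa_{H^-,H'}^N \circ \kappa_{H',H^+}^N$ with the count for $H_\tau^{(R)}$. On the other hand, $H_\tau^{(R)}$ itself can be deformed, through admissible monotone homotopies, to any chosen monotone homotopy from $H^+$ to $H^-$; applying (2) gives a chain homotopy between the two continuation maps, and hence equality on cohomology.

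The main obstacle is the compactness argument for continuation trajectories in the fibered setting: one must verify that the energy estimate in Lemma~\ref{lem:no_escape_lemma} continues to work when the Hamiltonian varies with $\tau$ and when $\pi_N\circ u$ is a non-constant Morse trajectory, i.e.\ that the only contributions in \eqref{equ:topological_energy} from the extra term $u^*(\partial_\tau H_{(\tau)})d\tau\wedge \alpha$ have the correct sign. This is exactly what monotonicity of $H_\tau$ along parallel transport, combined with fiberwise linearity outside $X^{\mathrm{in}}(N)$, is designed to guarantee; verifying it carefully in the Borel--Liouville setting is the only step that requires more than a routine transcription of the classical argument.
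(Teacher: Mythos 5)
Your proposal is correct and follows essentially the same route as the paper's proof, which is itself very terse: the paper just cites boundary strata analysis for (1), convexity of the admissible data for (2), and "the usual gluing argument" for (3), deferring details to \cite{Flo88,MS04}. Your write-up fleshes out exactly those three steps; the only small imprecision is the phrase "a constant-in-$\tau$ continuation solution" in (1) — the broken codimension-one configurations consist of a ($\tau$-dependent) continuation trajectory concatenated with a translation-invariant Floer trajectory for $H^{\pm}$ at the appropriate end, not a $\tau$-constant piece — but this does not affect the argument.
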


\begin{proof}
    The first statement follow from  analyzing the boundary strata of the moduli space  $\cM(\mathbf{x}_{-},\mathbf{x}_{+};X(N), (H_{\tau})_{\tau \in \bR})$. 
    
    For (2), note that the space of admissible auxiliary data is convex. The standard construction of chain homotopies in Floer theory therefore applies verbatim; see \cite{Flo88,MS04} for detailed proofs.
    
    Finally, the statement (3) follows from the usual gluing argument.
    \end{proof}

We now turn our attention to the Borel construction. An argument entirely analogous to that of Lemma \ref{lem:approximation_commutative} shows that the continuation maps are compatible with the projection maps in the inverse system, in the sense that the following diagram commutes: 
\begin{equation}
    \label{equ:continuation_commutative}
     \begin{tikzcd}
        CF^{\bullet}(L_{0}, L_{1}; X(N+1), H^{+}(N+1)) \arrow[r, "P_{N+1}"] \arrow[d, "\kappa_{N+1}"'] &  CF^{\bullet}(L_{0}, L_{1}; X(N), H^{+}(N)) \arrow[d, "\kappa_{N}"] \\
        CF^{\bullet}(L_{0}, L_{1}; X(N+1), H^{-}(N+1)) \arrow[r, "P_{N+1}"] & CF^{\bullet}(L_{0}, L_{1}; X(N),H^{-}(N))
     \end{tikzcd}.
 \end{equation}  
 This commutativity follows from the fact that the moduli space of continuation trajectories projects to the moduli space of Morse trajectories, and the rest of the argument proceeds  exactly as in Lemma \ref{lem:approximation_commutative}. 
 
 As a consequence, we obtain a continuation map on the level of equivariant  Floer cohomology for different choices of Hamiltonian data 
 \begin{align*}
    F_{H^{-},H^{+}}^{G}: HF_{G}^{\bullet}(L_{0},L_{1}; \{H^{+}(N)\}) \rightarrow HF_{G}^{\bullet}(L_{0},L_{1}; \{H^{-}(N)\}), 
 \end{align*}
 which we refer to as the \textit{G-equivariant continuation map}. 

\subsubsection{Invariance of equivariant Floer cohomology}
Suppose that the admissible families of Hamiltonians $\left\{H^{-}(N)\right\}$ and  $\left\{H^{+}(N)\right\}$ agree outside the region $X^{\mathrm{in}}(N)$ for each $N$.  In this situation, the monotone homotopy may be defined in either direction, and Proposition \ref{prop:continuation_map_properties} immediately implies that the resulting equivariant Floer cohomologies coincide. 

More generally, if the Hamiltonian isotopy is sufficiently small, for instance when it is supported inside a Weinstein neighborhood of $L$, one  can also construct an inverse homotopy of such  an isotopy; see for example \cite[Lemma 3.21]{GPS1} and \cite[Proposition 5.6]{AA24}.

We formalize this as follows. 
\begin{cor}
    Let $\left\{H^{-}(N)\right\}$ and  $\left\{H^{+}(N)\right\}$ be admissible families of Hamiltonians compatible with $L_{0}$ and $L_{1}$, and suppose that $H_{-}(N)$ agrees with $H_{+}(N)$ outside a family of closed subspace $X^{\mathrm{in}}(N)$. Then the $G$-equivariant continuation map
    \begin{align*}
        F_{H^{-},H^{+}}^{G}: HF_{G}^{\bullet}(L_{0},L_{1}; \{H^{+}(N)\}) \rightarrow HF_{G}^{\bullet}(L_{0},L_{1}; \{H^{-}(N)\}), 
     \end{align*}
     is an isomorphism. 
\end{cor}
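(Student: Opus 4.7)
The plan is to produce, at each finite Borel stage $N$, a two-sided homotopy inverse to the continuation map $\kappa^{N}_{H^{-},H^{+}}$ using Proposition \ref{prop:continuation_map_properties}, and then to assemble these level-wise equivalences into an isomorphism of inverse limits.

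First I would observe that because $H^{-}(N)$ and $H^{+}(N)$ coincide outside $X^{\mathrm{in}}(N)$, any linear interpolation $H_{\tau}(N) = (1-\psi(\tau)) H^{-}(N) + \psi(\tau) H^{+}(N)$ with monotone $\psi$ satisfies $\partial_{\tau} H_{(\tau)} \equiv 0$ outside $X^{\mathrm{in}}(N)$, and hence qualifies as a monotone homotopy in \emph{both} directions. This defines both $\kappa^{N}_{H^{-},H^{+}}$ and $\kappa^{N}_{H^{+},H^{-}}$; Gromov compactness of the associated moduli spaces is guaranteed by Lemma \ref{lem:no_escape_lemma} since the homotopy is constant near infinity, so the no-escape argument applied to the fiber Liouville form goes through verbatim.

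By Proposition \ref{prop:continuation_map_properties}(3), the composition $\kappa^{N}_{H^{+},H^{-}} \circ \kappa^{N}_{H^{-},H^{+}}$ agrees on cohomology with the continuation map for the concatenated monotone homotopy from $H^{+}(N)$ to itself through $H^{-}(N)$. The space of monotone homotopies from $H^{+}(N)$ to itself that agree with $H^{+}(N)$ outside $X^{\mathrm{in}}(N)$ is convex, and the constant homotopy lies in this space; hence Proposition \ref{prop:continuation_map_properties}(2) identifies the induced map with the one arising from the constant homotopy, which is the identity on cohomology. The symmetric argument handles the reverse composition, so each $\kappa^{N}_{H^{-},H^{+}}$ is a quasi-isomorphism.

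Finally, the continuation maps commute with the projection maps $P_{N+1}$ in the inverse system by the fiberwise argument of Lemma \ref{lem:approximation_commutative}: the continuation moduli space fibers over the Morse-trajectory moduli space, which is preserved under the inclusion $X(N) \hookrightarrow X(N+1)$ thanks to the admissibility conditions of Theorem-Definition \ref{thm:family_of_Morse_functions}. Surjectivity of $P_{N+1}$ keeps the inverse system Mittag--Leffler, so the level-wise quasi-isomorphisms induce an isomorphism on the inverse limit, yielding the desired equivalence $F^{G}_{H^{-},H^{+}}$. The main technical point to be careful about is coherence of the chosen interpolation profiles $\psi$ across the tower of approximations, and of the chain homotopies witnessing Proposition \ref{prop:continuation_map_properties}(2); fixing a single profile $\psi$ globally and applying it uniformly at every level should make all the relevant homotopies themselves commute with the projections $P_{N+1}$, so that they descend to a chain homotopy on the inverse limit.
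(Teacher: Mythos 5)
Your proof matches the paper's argument: it rests precisely on the observation that agreement of $H^{\pm}(N)$ outside $X^{\mathrm{in}}(N)$ lets the monotone homotopy run in either direction, on Proposition~\ref{prop:continuation_map_properties}(2)--(3) for the level-wise two-sided homotopy inverse, and on the compatibility diagram \eqref{equ:continuation_commutative} to pass to the tower. One stray detail: the Mittag--Leffler condition plays no role in this particular statement, since $HF^{\bullet}_{G}$ is defined as an inverse limit of cohomology groups, and a compatible family of level-wise isomorphisms commuting with the transition maps automatically induces an isomorphism of the inverse limits.
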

\begin{defn}
    For a single exact cylindrical Lagrangian $L\subset X$, we  define $HF_{G}^{\bullet}(L, L)$ to be the equivariant Floer cohomology computed using any  admissible family of Hamiltonians with sufficiently $C^2$-small slope. 
\end{defn}

 We now verify that equivariant Floer cohomology is independent of all other admissible auxiliary choices. By the preceding argument, it suffices to fix the slopes of Hamiltonians and vary the remaining data within admissible class.

Let $\{(f^{\pm}_{N},\cV^{\pm}_{N})\}$ be admissible families of Morse-Smale pairs, where each $f_{N}^{\pm}$ and  $\cV^{\pm}_{N}$ are constructed based on strictly increasing sequence of negative real numbers 
$$A^{\pm}\coloneqq  \diag\{a^{\pm}_{1},a_{2}^{\pm},\cdots \},$$
as shown in Theorem \ref{thm:family_of_Morse_functions}. 

Since the space of such sequences is convex, we may choose  a one-parameter family  $A_{\tau}$ of strictly increasing negative sequences such that $A_{\tau} = A^{-}$ for $\tau \ll 0$ and $A_{\tau }= A^{+}$ for $\tau \gg 0$. As in Theorem \ref{thm:family_of_Morse_functions}, this determines a corresponding family of Morse-Smale pairs $\{f_{\tau, N}, \cV_{\tau, N}\}$ on each $BG(N)$, interpolating  between $\{(f^{-}_{N},\cV^{-}_{N})\}$  and $\{(f^{+}_{N},\cV^{+}_{N})\}$ at the ends. 

The chain map between the associated Morse complexes is  defined by counting solutions $\gamma: \bR\rightarrow BG(N)$  to the gradient equation 
\begin{align*}
    \partial_{\tau}\gamma  = \cV_{\tau, N}.
\end{align*}
Each trajectory satisfies the same compatibility condition as in Theorem \ref{thm:family_of_Morse_functions}, since it is obtained by lifting of gradient vector fields of functions   \begin{align}
    f_{A_{\tau, N+k}}(V) \coloneqq  \mathrm{Re} \left( \Tr(A_{\tau, N+k} P_V) \right)    
    \end{align} defined on the Grassmannian  $\mathrm{Gr}(k, N+k)$. 

Therefore, we can define the moduli space  $\cM(\mathbf{x}_{-},\mathbf{x}_{+};X(N), ( \cV_{\tau, N})_{\tau \in \bR})$ similarly as in  Definition \ref{defn:moduli_space_continuation_map}. A similar argument to that in Lemma \ref{lem:approximation_commutative} shows that the corresponding chain map commutes with the projection map $P_{N}$, and hence induces a well-defined continuation map on equivariant Floer cohomology: 
\begin{align*}
    HF_{G}(L_{0},L_{1}; \{\cV^{+}_{N}\}) \rightarrow         HF_{G}(L_{0},L_{1}; \{\cV^{-}_{N}\}).
\end{align*}
The inverse continuation map is obtained by reversing the family $(\cV_{\tau, N})_{\tau \in \bR}$, implying the above map is an isomorphism. 

The invariance with respect to other auxiliary data, such as choice of almost complex structures and metrics can be proved in the same way. 

\subsubsection{Wrapped Floer cohomology as direct limit}
With these preparations in place, we are now ready to define the equivariant version of wrapped Floer cohomology. Let $L_{0}$ and $L_{1}$ be exact cylindrical Lagrangian submanifolds. We consider admissible families of liner Hamiltonians whose slopes are positive constants.

Fix an increasing sequence  of positive real numbers $\{s_{n}\}_{n\in \bN}\in \bR_{>0}$ satisfying  $$\lim_{n\rightarrow \infty} s_{n}= \infty,$$ and such that, for each $n$, $s_{n}$ is not equal to the length of any Reeb chord from $\partial_{\infty}L_{0}$ and $\partial_{\infty}L_{1}$ over any critical point. For a residual set of almost complex structure, we may therefore define 
\begin{align*}
    HF_{G}(L_{0},L_{1}; H_{s_{n}})
\end{align*}
as in Section \ref{sec:borel_construction_of_floer_cohomology}.   
Whenever $s_{n}\geq s_{m}$, the continuation map constructed in the previous section induces a morphism 
\begin{align*}
    HF_{G}(L_{0},L_{1}; H_{s_{m}}) \rightarrow HF_{G}(L_{0},L_{1}; H_{s_{n}}). 
\end{align*}
which makes the collection $\{HF_{G}(L_{0},L_{1}; H_{s_{n}})\}$ a direct system. 
\begin{defn}
The \textit{$G$-equivariant wrapped Floer cohomology} is defined as  
\begin{align*}
    HW_{G}(L_{0},L_{1})\coloneqq \varinjlim_{s_{n}} HF_{G}(L_{0},L_{1}; H_{s_{n}}).
\end{align*}
\end{defn}

\subsection{\texorpdfstring{$G$}{LG}-equivariant PSS isomorphism}
\label{sec:G_equivariant_PSS_isomorphism}
There is an isomorphism 
    \begin{align*}
        \mathrm{PSS}: H^{\bullet}(L)\rightarrow HF^{\bullet}(L,L). 
    \end{align*} 
    which is called Piunikhin-Salamon-Schwarz (PSS) map. 
    The modules $HF^{\bullet}(L,K)$ and $HF^{\bullet}(K,L)$ over it are unital. The aim of this subsection is to enhance the PSS isomorphism to the  following identification.  
\begin{thm}[$G$-equivariant PSS isomorphism]
\label{thm:G-equivariant_PSS_isomorphism}
For an embedded $G$-invariant exact cylindrical Lagrangian,  there is an isomorphism 
\begin{align*}
    H_{G}^{\bullet}(L;\mathbb{K}) \cong HF_{G}^{\bullet}(L,L)
\end{align*}
which we call the \textit{$G$-equivariant PSS isomorphism.}  
\end{thm}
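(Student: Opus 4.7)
The plan is to establish the isomorphism level by level along the Borel--Liouville approximation $\{X(N)\}$ and then pass to the inverse limit. Concretely, for each $N$ I aim to produce an isomorphism
\[
H^\bullet(L(N);\mathbb{K}) \;\cong\; HF^\bullet(L,L;X(N))
\]
that is compatible with the projections $P_{N+1}$ from Lemma~\ref{lem:approximation_commutative}. Since both sides form Mittag--Leffler inverse systems (the projections are surjective), the derived limit vanishes and the isomorphism will descend to the limits, giving $H_G^\bullet(L;\mathbb{K}) \cong HF_G^\bullet(L,L)$.

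For the auxiliary data I will pick a $G$-invariant Morse function $h_L$ on $L$ (monotone along the cylindrical end, so with finitely many critical points) and extend it to a $C^2$-small $G$-invariant Morse-type Hamiltonian $H$ on $X$, supported in a $G$-invariant Weinstein neighborhood of $L$, so that the time-$1$ chords of $H$ from $L$ to itself are the constant chords at critical points of $h_L$. At each level $N$ the admissible family $\{H(N)\}$ of Definition~\ref{defn:family_Hamiltonians} will be obtained by letting $H_c$ be small perturbations of $H$ at each $c \in \crit(f_N)$, arranged to satisfy condition (4) via Lemma~\ref{lem:abundant_choices}; the perturbations can be made small enough that the chords of $H_c$ remain in bijection with $\crit(h_L)$. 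Thus the generators of $CF^\bullet(L,L;X(N))$ will be pairs $(c,x)$ with $c\in\crit(f_N)$ and $x$ corresponding to a point of $\crit(h_L)$. On the topology side, Lemma~\ref{lem:pseudo_gradient_Lemma} applied to the $L$-bundle $L \hookrightarrow L(N) \to BG(N)$ with base data $(f_N,\cV_N)$ and fiber data $h_L$ will produce a Morse--Smale pair on $L(N)$ whose critical set is canonically the same index set.

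The heart of the argument is matching the differentials. Each rigid element of the moduli space $\cM(\mathbf{x}_0,\mathbf{x}_1;X(N))$ splits by \eqref{equ:horizontal_Floer_equation} into a rigid Morse trajectory $\gamma$ of $\cV_N$ on $BG(N)$ together with a vertical Floer strip bounded by $L$ in the trivialization of $X(N)$ along $\gamma$. Because the perturbations of $H$ stay uniformly $C^2$-small and parallel transport along the base preserves $L$ (Section~\ref{rem:Borel_Lagrangians}), the standard Floer--Fukaya--Oh reduction applies fiberwise: the vertical Floer strip is identified with a gradient trajectory of $h_L$ on $L$. Concatenating the horizontal Morse trajectory with the vertical gradient trajectory recovers exactly a rigid flow line of the pseudo-gradient $(-\nabla f_N)^{\#} - \nabla^v h_L$ of Lemma~\ref{lem:pseudo_gradient_Lemma}, yielding the chain-level identification at level $N$. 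Compatibility with $P_{N+1}$ then follows from conditions (1)--(3) in Theorem-Definition~\ref{thm:family_of_Morse_functions} by the same bookkeeping as Lemma~\ref{lem:approximation_commutative}.

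The main technical obstacle I expect is the \emph{uniform} validity of the fiberwise Floer-to-Morse reduction across the base: the Weinstein neighborhood, the smallness parameter controlling $H_c$, and the transversality perturbations must all be selected uniformly in $c$ so that no extra, non-Morse Floer trajectories intrude for any base point. This will be feasible because $\crit(f_N)$ is finite at each level, $G$-invariant Weinstein neighborhoods of $L$ exist since $G$ is compact, and the admissibility condition ensures $H(N)$ is controlled outside a region proper over the base. Once this uniformity is secured, the chain-level isomorphisms stack coherently along the tower $X(N)\hookrightarrow X(N+1)$ by construction, and passing to the inverse limit delivers the $G$-equivariant PSS isomorphism.
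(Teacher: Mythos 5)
Your proposal takes a genuinely different route from the paper and contains a real gap. The paper does not attempt a chain-level identification of the Borel Floer complex with a Morse complex on $L(N)$. Instead, it constructs a filtered pearl/PSS-type chain map $\mathrm{PSS}_{N}$ into a \emph{family} Morse complex $CM^{\bullet}(L(N);\widehat{f}_{N})$ (Theorem~\ref{thm:comparison_between_spectral_sequences}) and compares the resulting Leray--Serre spectral sequences: on the $E_1/E_2$-page the comparison reduces to the classical fiberwise PSS isomorphism over each critical point of $f_{N}$, and the spectral-sequence comparison theorem then upgrades this to a quasi-isomorphism. Separately, the family Morse cohomology is identified with $H^{\bullet}(L(N))$ for the non-compact total space via a CW-decomposition argument (Proposition~\ref{prop:comparison_between_Morse_and_LS}). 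Your route replaces the PSS map with a direct $C^2$-small-Hamiltonian Floer-to-Morse identification, which would be more elementary if it could be carried out at the chain level.

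The gap sits precisely where the paper's spectral-sequence argument provides insulation. The moduli space $\cM(\mathbf{x}_{0},\mathbf{x}_{1};X(N))$ contains continuation-type solutions over \emph{non-constant} Morse trajectories $\gamma$ of $\cV_{N}$; for these, the vertical Floer strip obeys a $\tau$-dependent equation with perturbation $H_{\gamma(\tau)}$, and the Hamiltonians $H_{c}$ must genuinely differ from one critical point $c$ to another by condition~(4) of Definition~\ref{defn:family_Hamiltonians}. The Floer--Fukaya--Oh reduction you invoke is a statement about a single fixed $C^2$-small Hamiltonian and does not directly yield a bijection between such family continuation strips and flow lines of the pseudo-gradient $(-\nabla f_{N})^{\#}-\nabla^{v} h_{L}$. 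Establishing that bijection requires a family adiabatic-limit argument, uniform over the (compactified) moduli of Morse trajectories in $BG(N)$, which you assert rather than prove; it is not obviously easier than the spectral-sequence route the paper takes. Relatedly, because the $H_{c}$ must vary with $c$, the natural comparison target is a \emph{family} Morse complex with $c$-dependent fiber functions, so Lemma~\ref{lem:pseudo_gradient_Lemma} applied with a single fiber function $h_{L}$ does not produce the right complex; and the identification of the resulting family Morse cohomology with $H^{\bullet}(L(N))$ for the non-compact $L(N)$ (handled in the paper by Proposition~\ref{prop:comparison_between_Morse_and_LS}) needs its own argument, which you leave implicit.
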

\begin{proof}
    \hyperlink{proof:G-equivariant_PSS_isomorphism}{The proof is given at the end of this section. }
\end{proof}
In the case where the $G$-action is free, the equivariant and ordinary cohomologies coincide via the standard isomorphism $H_G^{\bullet}(L) \cong H^{\bullet}(L/G)$.  
Applying this identification to the equivariant Floer cohomology yields the following corollary, which is a special case of a more general statement in the next section. 
\begin{cor}
	\label{cor:floer_equivariant_isomorphism}
	Let $ L \subset \mu^{-1}_{X}(0)$ be an embedded, $G$-invariant, exact, cylindrical Lagrangian submanifold.  
	If the $G$-action on \(L\) is free, then there is an isomorphism
	\begin{align*}
		HF_{G}^{\bullet}(L, L) \;\cong\; HF^{\bullet}(L / G,\, L / G).
	\end{align*}
	\end{cor}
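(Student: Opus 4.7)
The proof is a straightforward concatenation of three isomorphisms, so the plan is to assemble them rather than to prove anything new; the substance lies in checking that each intermediate object is well-defined in the Liouville setting.

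First, I would invoke the $G$-equivariant PSS isomorphism (Theorem \ref{thm:G-equivariant_PSS_isomorphism}) to identify
\[
HF_G^\bullet(L,L) \;\cong\; H_G^\bullet(L;\mathbb{K}).
\]
This step requires no freeness hypothesis and uses only that $L$ is an embedded, $G$-invariant, exact cylindrical Lagrangian.

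Second, under the assumption that $G$ acts freely on $L$, I would apply the classical topological identification. By the Borel construction description from Section \ref{sec:liouvilleborel}, we have $L_G = L \times_G EG$, and the bundle projection $L_G \to L/G$ has fiber $EG$ (which is contractible). Hence $L_G \simeq L/G$ is a homotopy equivalence, and this gives
\[
H_G^\bullet(L;\mathbb{K}) \;=\; H^\bullet(L_G;\mathbb{K}) \;\cong\; H^\bullet(L/G;\mathbb{K}).
\]

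Third, I would observe that by Theorem \ref{thm:Liouville_reduction}, the symplectic quotient $X \sq G$ is a smooth Liouville manifold (or sector, via Corollary \ref{cor:symplectic_reduction_of_Liouville_pair}) provided the $G$-action is free on $\mu^{-1}(0)$; here it suffices that the action is free on $L \subset \mu^{-1}(0)$ in a neighborhood where the relevant Floer data live. As explained in Section \ref{rem:Borel_Lagrangians}, the $G$-invariant primitive $f_L$ descends to a primitive of $\underline{L} \coloneqq L/G$ with respect to the reduced Liouville form, so $\underline{L}$ is an embedded exact cylindrical Lagrangian in $X\sq G$. Applying the ordinary (non-equivariant) PSS isomorphism to $\underline{L}$ yields
\[
H^\bullet(L/G;\mathbb{K}) \;\cong\; HF^\bullet(L/G,\, L/G).
\]
Composing the three isomorphisms establishes the corollary.

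The only point requiring care, and the one I would treat as the main (but minor) obstacle, is verifying that the reduced Lagrangian $\underline{L}$ is honestly cylindrical in $X\sq G$, so that the standard PSS isomorphism of the last step is legitimately available. This follows because the Liouville vector field is $G$-invariant and descends to $X\sq G$ (see the proof of Theorem \ref{thm:Liouville_reduction}), and cylindricity of $L$ is preserved under the quotient. A cleaner and more uniform argument is provided by the broader correspondence theory developed later in the paper (the functor $\Psi_\downarrow$ from Theorem \ref{prop: GoingDown}), which reproves this corollary as a special case, but the elementary three-step concatenation above suffices for the statement at hand.
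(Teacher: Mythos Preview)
Your proposal is correct and follows essentially the same approach as the paper: the paper derives the corollary immediately from Theorem \ref{thm:G-equivariant_PSS_isomorphism} by composing the equivariant PSS isomorphism with the standard identification $H_G^\bullet(L)\cong H^\bullet(L/G)$ (valid for free actions) and the ordinary PSS isomorphism on the quotient, and it likewise remarks that the later correspondence theory gives a more general route. Your additional care about the cylindricity of $\underline{L}$ is a reasonable point to flag, though the paper treats it as implicit.
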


Recall that a \textit{local system} on a topological space $X$ is a functor from the fundamental groupoid $\pi_{1}(X)$ to the category of abelian groups (in our case, one may always assume the target is the category $\bK$-vector spaces). More explicitly, it assigns to each point of $X$ an abelian group $A_{x}$, and to every homotopy class $[\gamma]$ of paths a group homomorphism $\Phi_{[\gamma]}$ between the corresponding abelian groups, in a way compatible with the concatenation. We will refer $\Phi_{[\gamma]}$ as the \textit{parallel transport} along $\gamma$. 

If all the  $A_{x}$ is a fixed abelian group $A$, and $\Phi_{[\gamma]}$ are all identity maps, we say this is a \textit{constant local system with value $A$}. 

Just as in the usual construction of the PSS isomorphism, we will employ a Morse model as an intermediate step to build the correspondence between Floer cohomology and singular cohomology. The exposition here follows closely \cite{Hutchings08, Oan08,SS10}. 

\subsubsection{The $E_{2}$-page of the Leray--Serre spectral sequence}
Following \cite{MaC01,Oan08}, we briefly recall the construction of the second page of Leray--Serre spectral sequence. Let $F\hookrightarrow E\xrightarrow{\pi} B$ be a locally trivial fibration such that $B$ is a CW-complex and both $B$ and $F$ are path-connected. The second page $LS_{2}^{p,q}$ of the Leray--Serre spectral sequence admits an explicit description as $$H^{p}(B;\cH^{q}(F)),$$ where $\cH^{q}(F)$ denotes the local system on $B$ whose fiber at a point $b\in B$ is $H^{q}(F)$. 

For a path $\gamma$ contained in a contractible open space, we define the morphism as $\Phi_{[\gamma]}\coloneqq  i_{\gamma(1)}^*(i_{\gamma(0)}^*)^{-1}$ where \begin{align*}
    F_{\gamma(0)}\xhookrightarrow{i(0)} \pi^{-1}(U)\xhookleftarrow{i(1)} F_{\gamma(1)} 
\end{align*}
are the inclusions. For a general path $\gamma$, choose a subdivision $0= t_{0}<\cdots < t_{n+1} =1$ such that each $\gamma|_{[t_{k},t_{k+1}]}$ is contained in a contractible open set. For $x,y \in [0,1]$, let $\Phi_{x,y}\coloneqq  \Phi_{\gamma|_{[x,y]}}$.  Define $\Phi_{[\gamma ]}\coloneqq  \Phi_{t_{n},t_{n+1}}\circ \cdots \Phi_{t_{0},t_{1}}$. For $|x-z|$ sufficiently small, we have $\Phi_{x,y}\circ \Phi_{y,z} = \Phi_{x,z}$. It follows that $\Phi_{[\gamma]}$ is independent of the choice of sufficiently fine partitions. The proof of the composition is similar.

\subsubsection{A Morse model}
In the proof of Theorem \ref{thm:family_of_Morse_functions}, we have constructed a  vector field $\cV_{\infty}$ on $BG$ by lifting the gradient on the base  and adding a vertical gradient component, and we realize this vector field as the pseudo-gradient stratum-wise. 

Here we present an alternative viewpoint, following \cite{Hutchings08}, by interpreting the construction in terms of a \emph{family of Morse functions} (not to be confused with the family of Morse-Smale pairs used earlier).

Our goal is to build a  Morse-theoretic model that realizes the  Leray--Serre spectral sequence associated to the fibration $$L\hookrightarrow L(N) \xrightarrow{\pi_{N}} BG(N).$$
 Choose a Morse-Smale pair $(f_{N},\cV_{N})$ on $0_{BG(N)}\subset T^*BG(N)$ as in Theorem \ref{thm:family_of_Morse_functions}. Next, we equip it with a smooth family of functions on the fibers of $f_{N}$, that is, a family of functions \begin{align}
    \label{equ:parametrized_Morse_functions}
    f_{b}: L\rightarrow \bR, \quad b\in 0_{BG(N)},
\end{align} parametrized by points $b\in 0_{BG(N)}$. Moreover, we assume that $f_{b}$ is locally constant as a family in a neighborhood of each $c \in \Crit(f_{N})$. 

In addition, we choose a corresponding family of Riemannian metrics so that, for each critical point $c\in \crit(f_{N})$,  the  corresponding fiberwise function $f_{c}$  is Morse. Recall that we have already chosen the metric on $BG(N)$ generically so that each pair $(f_{N},\cV_{N})$ is Morse-Smale, as in Theorem \ref{thm:family_of_Morse_functions}.  

Since each fiber is noncompact, we further require that $f_{c}$ is nongegative, linear near infinity, and has no critical point on the cylindrical end. Morse functions on $L$ satisfying these conditions automatically fulfill the  Palais-Smale Condition (C) (See Remark \ref{rem:PS_conditions}), ensuring that Morse theory is well-defined in the noncompact setting; because each $f_{c}$ is bounded below, its negative gradient flow is defined for all positive time and converges to a critical point; see \cite{MW10}. 

We denote the family of Morse functions by $\widehat{f}_{N}$ and the family of Riemannian metrics by $\widehat{g}_{N}$, which are required to be compatible with the system of approximations in an obvious way. 

As before, we shall choose $(\widehat{f}_{N}, \widehat{g}_{N})$ generically so that the parallel transport does not map a critical point to another critical point, ensuring the generic transversality conditions. For brevity, we will always omit the Riemannian metric from the notation. 

The fiberwise Morse cohomology is $\bZ$-graded over an algebraically closed field $\bK$ of characteristic $2$.

Now, define a bigraded cochain complex $CM^{\bullet,\bullet}(L(N); \widehat{f}_{N})$ as follows. The generators are given by 
\begin{equation}
    CM^{p,q}\coloneqq \{(c,\alpha)\mid c\in\crit^{p}(f_{N}), \alpha \in \crit^{q}(f_{c})\}.
\end{equation}

For generators $\mathbf{a}\coloneqq  (c_{i}, \alpha)$  and  $\mathbf{b}\coloneqq (c_{j}, \beta) $, we define the  unparametrized moduli space $  \cM_{\mathrm{Morse}}(\mathbf{a}, \mathbf{b}; \widehat{f}_{N})$  to consist of pairs $(\gamma, \varphi)$ of the form: 
 \begin{equation}
    \label{equ:Family_Morse}
     \cM_{\mathrm{Morse}}(\mathbf{a}, \mathbf{b}; \widehat{f}_{N}) \coloneqq   \left\{ \begin{aligned} & \gamma: \bR\rightarrow 0_{BG(N)}\\ & \varphi: \bR\rightarrow L(N) \end{aligned}  \hspace{2pt}\middle|\hspace{2pt} \begin{aligned}  & \partial_{\tau}\gamma - \cV_{N}=0  \\  & \frac{d\varphi^{\mathrm{vert}}}{d \tau}  + \nabla^{v} f_{{\gamma(\tau)}}(\varphi(\tau)) = 0 \\ & \pi_{N}\circ \varphi(\tau) = \gamma(\tau) \\  & \lim_{\tau\rightarrow -\infty} (\gamma, \varphi)(\tau) = (c_{i}, \alpha) \\
    & \lim_{\tau \rightarrow +\infty} (\gamma, \varphi)(\tau) = (c_{j}, \beta) \end{aligned} 
      \right\}  /\bR. 
\end{equation}
Here $d\varphi^{\mathrm{vert}}$ denotes the vertical component of $d\varphi$. The gradient vector field $\nabla^{v} f_{\gamma(\tau)}(\varphi(\tau))$ is taken fiberwise with respect to the chosen fiberwise Riemannian metric.

By trivializing the fibration along $\gamma$, we see that this definition recovers precisely the continuation maps associated with the family of functions $f_{\gamma(\tau)}$, where $f_{\gamma(\tau)} = f_{c_{i}}$ for $\tau\ll 0$ and $f_{\gamma(\tau)} = f_{c_{j}}$ for $\tau\gg 0$. The expected dimension of the moduli space is 
\begin{align*}
    \dim \cM_{\mathrm{Morse}}(\mathbf{a}, \mathbf{b}; \widehat{f}_{N}) &= |\mathbf{a}|-|\mathbf{b}|-1 
\end{align*}
For each $k \geq 0$, we define the component of the differential $\delta_{k}: CM^{p,q}\rightarrow CM^{p+k, q-k+1}$ by setting 
\begin{align*}
   \delta_{k}(\mathbf{b})= \sum_{\substack{|c_{i}|= p+k, \\|\alpha| = q-k+1 }}\# \cM_{\mathrm{Morse}}(\mathbf{a}, \mathbf{b}; \widehat{f}_{N})\mathbf{a}
\end{align*}
The total differential is $\delta = \sum_{k\geq 0}\delta_{k}$. A standard compactness and gluing argument shows that $\delta^{2} = 0$. We write the (family) Morse cohomology of the total complex $CM^{\bullet}$ by 
\begin{align*}
    HM^{\bullet}(L(N);\widehat{f}_{N})\coloneqq  H^{\bullet}\left(CM^{\bullet}(L(N); \widehat{f}_{N}),\delta\right). 
\end{align*}

\begin{rem}
Recall that a \emph{morphism of spectral sequences} $K$ is a collection of maps 
$K_{r}:E_{r}^{\bullet,\bullet}\rightarrow {}'E_{r}^{\bullet,\bullet}$ for each $r \geq 0$, such that $K_{r}$ commutes with the differentials. 
If $K_{r}$ is an isomorphism on the $E_{r}$-page for some $r$, then it induces isomorphisms on all subsequent pages $E_{s}$ for $s \geq r$ by five lemma. In particular, if the spectral sequences converge, then $K$ induces an isomorphism on the cohomology. 
\end{rem}

We have the filtration 
\begin{align}
\label{equ:filtration}
    F^p CM^{\bullet}\coloneqq  \left\{(c,\alpha)\mid |c|\geq p\right\}
\end{align}
 to be the subcomplex generated by pairs whose component $c$ has grading at least  $p$. 

 In the closed case, Hutchings \cite{Hutchings08} proved  that the corresponding spectral sequence agrees with the  Leray--Serre spectral sequence from the  $E_{2}$-page. In what follows, we adapt this argument to the case when the fiber is a  cylindrical Lagrangian. 

By our construction, the  Morse--Smale pair $(f_{N},\cV_{N})$ induces  a CW-complex decomposition of $BG(N)$ (cf. Remark \ref{rem:CW_complex} below). The  unstable (descending) manifold $ W^{u}(c) $ of  a critical point $c\in \crit(f_{N})$ admits a compactification with faces, whose $k$-stratum is given by 
\begin{align*}
    \overline{W^{u}(c)_{k}} = \coprod_{I} \cM_{I}\times W^{u}(r_{k}) , 
\end{align*}
where $$\cM_{I}\coloneqq \coprod_{c = r_{0}, r_{1},\cdots, r_{k}\text{ distinct} }\cM(r_{0},r_{1})\times \cdots \cM(r_{k-1}, r_{k})$$ for each sequence of critical points $I=\{r_{0},r_{1},\cdots, r_{k}\}$. The evaluation map $e: \overline{W^{u}(c)} \rightarrow BG(N)$ is smooth, and its restriction to $ \cM_{I}\times W^{u}(r_{k})$ coincides with  the projection onto $W^{u}(r_{k})$, while $e|_{W^{u}(c)}$ is the identity. Each unstable manifold $W^{u}(c)$ (resp. closure $\overline{W^{u}(c)}$) is homeomorphic to an open (resp. closed) disk of dimension $|c|$, with $e$ serving as the characteristic map. As a result, every unstable manifold closure is contained in finitely many cells, satisfying the axioms of a CW-complex.

\begin{rem}
\label{rem:CW_complex} As shown in \cite{Lau92,Qin10}, if a pseudo-gradient vector field agrees  with the genuine gradient vector field of a quadratic form with respect to the Euclidean metric in a neighborhood of each critical point, then the unstable manifolds define a CW decomposition of the underlying manifold as illustrated above. Such assumptions are referred to as \emph{special Morse functions} in \cite{Lau92} and as \emph{locally trivial metrics} in \cite{Qin10}.

In our case, this assumption is automatic and is incorporated into Definition~\ref{defn:pseudo_gradient_vector_field}. For the Morse functions $ \mathrm{Re} \left( \Tr(A_n P_V) \right) $ on the complex Grassmannian $\mathrm{Gr}(k,N)$, this property can be verified directly from the explicit expression of the gradient flow in \eqref{equ:Morse_flow}. In fact, the resulting CW decomposition coincides with the classical Schubert cell decomposition, as shown in Example \ref{ex:Schubert_cell}. When lifting a gradient vector field on the base to a pseudo-gradient vector field on the total space of a fibration, we always have sufficient flexibility in the choice of Morse functions and Riemannian metrics in the fiber direction. 
\end{rem}

 The existence of CW-complex structure relies on the compactness of each $BG(N)$. When working with a general manifold $M$, the situation changes substantially.  For a Morse-Smale pair $(f,\cV)$ on $M$, let  $M^a \coloneqq f^{-1}\left((-\infty,a]  \right)$ and $    K^{a}\coloneqq \bigsqcup_{f(p)\leq a} W^{u}(p)$.  Assuming that $f$ is bounded below and satisfies Palais--Smale condition (C), one only obtains a CW decomposition of the sets $K^{a}$, because the flow is only forward complete. 

 However, in our case, where $M=L$ or its Borel space $L(N)$ is a cylindrical Lagrangian,  the Morse function is linear near infinity, and therefore proper and bounded below (also known as exhaustive). There  is a homotopy equivalence between $K^{a}$ and $M^{a}$ (see, for example, \cite[Theorem 3.8]{Qin10}). Moreover, the vector fields that appear in the family of Morse theory can also be  realized as  pseudo-gradient vector fields for suitable Morse functions. By \cite[Theorem 2.8]{AM06}, this implies a further homotopy equivalence $M^a \cong M$,
whenever $a$ is taken larger than the maximal critical value of $f$. 

Therefore, even though $L$ is noncompact, it suffices to compare the Morse cohomology of $\widehat{f}_{N}$ with the  cellular cohomology of the underlying finite CW-complex, which is a standard identification. This reduction relies crucially on the fact that we only work with Liouville manifolds of \emph{finite type} (in fact, only those obtained by completions of Liouville domains). The result may fail depending on one's definition of Liouville manifolds. 

\begin{prop}[Morse-theoretic Leray--Serre spectral sequence]
\label{prop:comparison_between_Morse_and_LS}
Let $(L(N),\widehat{f}_{N})$ be defined as above, and let $\pi_{N}: L(N)\rightarrow BG(N)$ denote the projection in the Borel construction. Let $\cE_{k}^{p,q}$ denote the spectral sequence associated to the filtration \eqref{equ:filtration}.  Then the following hold:
\begin{enumerate}
    \item The Morse-theoretic spectral sequence $\cE_{k}^{p,q}$ is isomorphic to the Leray--Serre spectral sequence $LS_{k}^{p,q}$ for $k\geq 2$. 
    \item The family Morse cohomology $HM^{\bullet}(L(N); \widehat{f}_{N})$ is isomorphic to the cellular cohomology of the total space, that is, $$HM^{\bullet}(L(N); \widehat{f}_{N})\cong H_{\mathrm{Cell}}^{\bullet}(L(N)),$$  and hence to the singular cohomology $H^{\bullet}(L(N))$. 
\end{enumerate}
\end{prop}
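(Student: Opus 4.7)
The plan is to follow Hutchings's strategy from the closed case, adapted to our noncompact cylindrical fibers. The key is to compare the spectral sequence $\cE_k^{p,q}$ associated to the horizontal filtration \eqref{equ:filtration} of $CM^{\bullet,\bullet}$ with the Leray--Serre spectral sequence $LS_k^{p,q}$ of $L\hookrightarrow L(N)\to BG(N)$ page-by-page, starting from $E_0$, and then to deduce (2) from (1) via strong convergence.

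First I would compute the $E_0$ and $E_1$ pages. Since $\delta = \sum_{k\geq 0}\delta_k$ increases the horizontal filtration by $k$, only $\delta_0$ contributes to $d_0$. An element of $\cM_{\mathrm{Morse}}(\mathbf a,\mathbf b;\widehat f_N)$ contributing to $\delta_0$ has $\gamma=\pi_N\circ\varphi$ constant at a critical point $c\in\crit(f_N)$; since $\{f_b\}$ is locally constant in $b$ near each such $c$, these are precisely the usual unparametrized negative gradient trajectories of $f_c$ on $L$. Hence
\begin{align*}
\cE_0^{p,q} \;=\; \bigoplus_{|c|=p} CM^q(L;f_c), \qquad d_0 \;=\; \text{fiberwise Morse differential},
\end{align*}
so $\cE_1^{p,q} = \bigoplus_{|c|=p} HM^q(L;f_c) \cong \bigoplus_{|c|=p} H^q(L)$. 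The second identification uses that each $f_c$ is proper, bounded below, has no critical point on the cylindrical end, and therefore satisfies the Palais--Smale Condition (C) as invoked in Remark \ref{rem:PS_conditions}.

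Next I would identify $d_1$ with the cellular Morse differential on $BG(N)$ twisted by the local system $\cH^q(L)$. A $\delta_1$-contribution consists of an isolated Morse trajectory $\gamma$ of $\cV_N$ from $c_0$ to $c_1$ with $|c_1|=|c_0|+1$, together with a fiberwise continuation solution $\varphi$ for the family $(f_{\gamma(\tau)})_{\tau\in\bR}$. Trivializing the fibration over a neighborhood of $\gamma$ reduces this to a standard Morse continuation map between $HM^q(L;f_{c_0})$ and $HM^q(L;f_{c_1})$; a chain-homotopy argument through a reference function shows that on cohomology this continuation equals the topological parallel transport $\Phi_{[\gamma]}$ in $\cH^q(L)$ defined earlier via inclusions $F_{\gamma(0)}\hookrightarrow \pi_N^{-1}(U)\hookleftarrow F_{\gamma(1)}$. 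Summing over $\gamma$ identifies $d_1$ with the Morse-with-coefficients differential, whence
\begin{align*}
\cE_2^{p,q} \;\cong\; H^p\bigl(BG(N);\,\cH^q(L)\bigr) \;=\; LS_2^{p,q}.
\end{align*}
Because this identification is induced by a chain-level comparison of filtered complexes, it is a morphism of spectral sequences, so the isomorphism propagates to every later page, proving (1). For (2), on each finite-dimensional $BG(N)$ the filtration is finite (only finitely many horizontal degrees appear), hence both spectral sequences are bounded and converge strongly: $LS$ to $H^\bullet(L(N))$ and $\cE$ to $HM^\bullet(L(N);\widehat f_N)$. Since the $E_\infty$-pages agree, so do the abutments; the identification with cellular and singular cohomology of $L(N)$ is supplied by the CW structure coming from the unstable manifolds of the pseudo-gradient on $L(N)^a$, together with the homotopy equivalence $L(N)^a\simeq L(N)$ for $a$ larger than all critical values, both of which are already available in our set-up.

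The main obstacle will be the rigorous identification of $d_1$ with the topological parallel transport $\Phi_{[\gamma]}$. This requires a coherent choice of trivializations of the fibration along Morse trajectories of $\cV_N$, together with a continuation-type chain homotopy showing that the fiberwise continuation for the varying family $(f_{\gamma(\tau)})$ is chain-homotopic to the pullback-pushforward map through the ambient trivialization. The non-compactness of $L$ is controlled because the fiberwise Morse functions are linear at infinity, preventing escape of gradient trajectories and ensuring that all moduli spaces involved in the comparison remain compact up to breaking.
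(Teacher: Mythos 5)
Your plan follows the Hutchings route that the paper mentions in passing (namely that Hutchings's argument from the closed case ``applies verbatim'' once Palais--Smale Condition~(C) is verified), whereas the paper's own proof goes via an explicit isomorphism of filtered cochain complexes. Your computations of $\cE_0^{p,q}$, $d_0$, and $\cE_1^{p,q}$ are correct, and identifying $d_1$ with the Morse differential on $BG(N)$ twisted by the local system $\cH^q(L)$ via trivialization along Morse trajectories is the right idea. But there is a genuine gap at the end of part~(1): you assert that the identification of $E_2$-pages ``is induced by a chain-level comparison of filtered complexes, [so] it is a morphism of spectral sequences,'' yet no such filtered chain map is ever constructed. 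A page-by-page identification of the bigraded groups $\cE_0, \cE_1, \cE_2$ together with $d_0, d_1$ gives an isomorphism of $E_2$-pages, but the differentials $d_r$ for $r \geq 2$ are \emph{not} determined by the $E_2$-page alone — they depend on the underlying filtered complex — so agreement at $E_2$ does not propagate to later pages on its own. The five-lemma argument you invoke takes a morphism of spectral sequences as its hypothesis; it does not manufacture one.

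The paper closes this gap by a different mechanism. It realizes $\cV_{L(N)} = \cV_N^{\#} - \nabla^v f_{\gamma(\tau)}$ as the pseudo-gradient of a genuine Morse function $F_N$ on $L(N)$ (glued from $\pi_N^* f_N + \sum_i \epsilon_i \phi_i f_{c_i}$ inside $L^{\mathrm{in}}(N)$ and a radial exhaustion outside), so the unstable manifolds of $F_N$ give a finite CW structure on $L(N)^a \simeq L(N)$ in which cells project to cells of the CW decomposition of $BG(N)$. The assignment $(c,\alpha) \mapsto W^u(c,\alpha)$ is then a filtration-preserving \emph{isomorphism of cochain complexes} from $CM^{\bullet,\bullet}$ to the cellular cochain complex, filtered by the dimension of the base cell — and the cellular filtration is a standard chain-level model for Leray--Serre. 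This delivers the morphism of spectral sequences in one stroke and proves~(2) simultaneously without a separate convergence argument. If you want to keep your page-by-page approach, you must supplement it by constructing the filtered chain map explicitly (Hutchings compares with a cubical singular model of the Serre spectral sequence), rather than asserting that the page computations implicitly supply one.
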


\begin{proof}
Since each fiberwise Morse function $f_{c}$ satisfies the Palais-Smale condition (C),  the argument of \cite{Hutchings08} applies verbatim in our case. Here we give an alternative proof based on the CW-complex decomposition; a closely related approach can be found in \cite{Oan08}.  

The vector field defining the moduli space \eqref{equ:Family_Morse} is given by 
\begin{align*}
    \cV_{L(N)}\coloneqq  \cV_{N}^{\#}  -\nabla^{v} f_{\gamma(\tau)},
\end{align*}
where $\cV_{N}^{\#}$ denotes the horizontal lift of the negative pseudo-gradient vector field  $\cV_{N}$. We now show that this vector field can also be realized as the pseudo-gradient of a suitably constructed Morse function $F_{N}$. 

Note that the argument of Lemma \ref{lem:pseudo_gradient_Lemma} does not apply directly, as each fiber is noncompact and we would need a uniform bound on the fiberwise Morse function $|f_{c}|$ and its gradient $\norm{\nabla^{v}f_{c}}$ to ensure that  it will not break  the pseudo-gradient properties and no new critical points are generated. However, because each fiber $L$ admits  a cylindrical structure, we can borrow some ideas from Floer theory and resolve this issue in a soft manner.  

As before, choose a subspace $L^{\mathrm{in}}(N)\subset L(N)$, proper over the base, that it contains all the critical points $$\prod_{c_{i}\in \crit(f_{N})}\crit(f_{c_{i}}).$$ Outside a collared neighborhood of  $L^{\mathrm{in}}(N)$, choose a smooth function $F_{\mathrm{out}}$  such that $dF_{\mathrm{out}}(\cV_{L_{N}})<0$.  This is possible because  each fiberwise  function $f_{b}$ is linear near infinity; indeed, any positive multiple of the radial coordinate $r$ suffices. (Note that  $r$ is globally defined outside $L^{\mathrm{in}}(N)$ since it is $G$-invariant). 

In the same notation as in Lemma \ref{lem:pseudo_gradient_Lemma}, we define a  function on $L(N)$ by 
\begin{align}
\label{equ:Morse_function_wrt_pseudo_gradient}
    F_{N} \coloneqq  \begin{dcases*}
        \pi_{N}^*f_{N} + \sum_{c_{i}\in \crit(f_{N})}\epsilon_{i}\phi_{i}f_{c_{i}} &   in $ L^{\mathrm{in}}(N)$  \\
        F_{\mathrm{out}}   & outside a neighborhood of $L^{\mathrm{in}}(N)$
    \end{dcases*}
\end{align}
where the two definitions are interpolated by a smooth cutoff function on a collared neighborhood of $\partial L^{\mathrm{in}}(N)$, so that $F_{N}$ is a smooth function. 

Since each $\phi_{i}$ is constant near the critical point $c_{i}$, for sufficiently small  $\epsilon_{i}$, $F_{N}$ is a Morse function and its critical points coincide with the generators  of the complex $CM^{\bullet}(L(N); \widehat{f}_{N})$.  To verify the pseudo-gradient property, observe that inside $L^{\mathrm{in}}(N)$ we have  
\begin{align}
    \label{equ:pseudo_gradient_vector_field}
    dF_{N}( \cV_{L(N)})=  df_{N}( \cV_{N}) - \sum_{c_{i}\in \crit(f_{N})}\epsilon_{i} d\phi_{i}( \cV_{N})f_{c_{i}} +  \sum_{c_{i}\in \crit(f_{N})} \epsilon_{i}\phi_{i}df_{c_{i}}(- \nabla^{v} f_{\gamma(\tau)}). 
\end{align} 
Since $\cV_{N}$ is a pseudo-gradient adapted to $f_{N}$, and the family $f_{b}$ is locally constant  around $c_{i}$, the first and third term are both non-positive. Inside $L^{\mathrm{in}}(N)$, the second term is also bounded. The rest of the proof is identical to that in Lemma \ref{lem:pseudo_gradient_Lemma}.

As discussed above, there is a homotopy equivalence  \begin{align*}
    K(N)^{a} \coloneqq \bigsqcup_{F_{N}(p)\leq a} W^{u}(p) \cong L(N)^{a} \cong L(N),
\end{align*}
when $a$ is chosen larger than all critical values of $F_{N}$.

The resulting CW-complex has the property that the projection of each $k$-cell corresponds to a $k'$-cell in the base, since  by construction, the projection of the unstable manifold of $\cV_{L(N)}$ is precisely the unstable manifold of $\cV_{N}$. Hence, the associated cellular complex \begin{align*}
    \mathrm{Cell}^k(K(N)^{a})\coloneqq  \bigoplus_{i}\bK \langle e_{i}^k \rangle
\end{align*}
  admits a natural filtration 
\begin{align*}
    F^{p} \mathrm{Cell}^k(K(N)^{a}) = \bigoplus_{\dim \pi(e_{i}^k)\geq p}\bK \langle e_{i}^k\rangle,
\end{align*}
and the corresponding spectral sequence is a cellular model for the Leray--Serre spectral sequence (see,  for example, \cite{MaC01}). We remark that the compactness of the base 
$BG(N)$  ensures that the unstable manifolds of $(f_{N},\cV_{N})$ assemble into a finite CW–complex structure.

By the functoriality of the Leray--Serre spectral sequence and the homotopy equivalences established above, it therefore suffices to work with the Leray--Serre spectral sequence associated to the finite CW-complex $K(N)^{a}$.

We define a map from the Morse complex $CM^{\bullet}$ to the cellular complex $ \mathrm{Cell}^k(K(N)^{a})$ by sending each  generator $\mathbf{a}\coloneqq  (c,\alpha)$ to the corresponding unstable manifold, which can be regarded as a basis of the cellular cohomology. That this map is a chain morphism follows from the classical principle ``Morse cohomology  is the cellular cohomology''. We briefly recall the construction  here (see, for example,  \cite{Oan08, Qin10,AD14}).

Let $\cB^k$ denote the $k$-skeleton of the CW-complex. Then the  cellular cochain complex $\mathrm{Cell}^k(L(N)) $ is given by $H^k(\cB^k,\cB^{k-1};\bK) $, with the differential $\partial_{\mathrm{Cell}}$ defined as the  connecting homomorphism in the long exact sequence 
\begin{align*}
    H^k(\cB^k,\cB^{k-1};\bK) \rightarrow H^k(\cB^k;\bK)\rightarrow H^{k+1}(\cB^{k+1},\cB^{k};\bK). 
\end{align*}
Explicitly, one has 
\begin{align}
    \label{equ:cell_complex}
    \partial_{\mathrm{Cell}}(e_{i}^k)= \sum_{j}[e_{i}^k:e_{j}^{k+1}]e_{j}^{k+1}
\end{align}
where $[e_{i}^k:e_{j}^{k+1}]$ denotes the incidence number. 

These incidence numbers satisfy $[W^u (\mathbf{b}), W^u (\mathbf{a})]= \#\cM(\mathbf{a},\mathbf{b};\widehat{f}_{N})$. Furthermore, by the construction of $(f_{N},\cV_{N})$ (Definition \ref{thm:family_of_Morse_functions}) and the assumption that each $f_{c_{i}}$ is a Morse function that is linear near infinity, the right-hand side of \eqref{equ:cell_complex} is  a finite sum. This correspondence respects the filtration tautologically and induces an isomorphism of the cochain complex.

Therefore, the Morse spectral sequence coincides with the Leray--Serre spectral  sequence from the $E_{2}$-page (all models of Leray--Serre spectral sequence are isomorphic from $E_{2}$-page). 
\end{proof}

\subsubsection{From Floer to Morse}
Similarly we consider  the spectral sequence $E_{\bullet}^{p,q}$ associated with the filtration 
\begin{align*}
    F^p CF^{\bullet}(L,L;X(N))\coloneqq  \{(c, x)\mid |c|\geq p \}.
\end{align*}

To compare the spectral sequence $E_{\bullet}^{p,q}$ with the Leray--Serre spectral sequence, by  Proposition \ref{prop:comparison_between_Morse_and_LS} it suffices  to compare $E_{\bullet}^{p,q}$ with the Morse-theoretic spectral sequence $\cE_{\bullet}^{p,q}$  associated with the filtration \eqref{equ:filtration} for the family of Morse function $(L(N),\widehat{f}_{N})$. 
\begin{thm}[Comparison with Leray--Serre]
\label{thm:comparison_between_spectral_sequences} 
Let $\pi_{N}: L(N)\rightarrow 0_{BG(N)}$ be the fibration constructed before. Then: 
\begin{enumerate}
    \item There is an isomorphism from the $E_{2}$-page onward between the spectral sequence $E_{\bullet}^{p,q}$ and the Morse-theoretic Leray--Serre spectral sequence $\cE_{\bullet}^{p,q}$. 
    \item The  Floer cohomology $HF^{\bullet}(L,L;X(N))$ is isomorphic to the family Morse cohomology $HM^{\bullet}(L(N);\widehat{f}_{N})$.   
\end{enumerate}
\end{thm}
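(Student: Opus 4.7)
The plan is to construct a filtration-preserving chain map between the parametrized Morse and Floer complexes that descends to the fiberwise PSS isomorphism on the $E_1$-page; statements (1) and (2) will then follow by the standard comparison theorem for spectral sequences.

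First I would compute the two $E_1$-pages. For the Floer complex, $d_0$ preserves the base component $c$, and since the projection of any element of $\cM(\mathbf{x}_0,\mathbf{x}_1;X(N))$ is a gradient trajectory of $\cV_N$, keeping $c_0 = c_1$ forces the strip to be constant in the base direction and hence entirely contained in the fiber $F_c$. Thus $d_0$ reduces to the fiberwise Floer differential for $(L,L)$ in $F_c \simeq X$ with Hamiltonian $H_c$, giving
\[
E_1^{p,q} \;\cong\; \bigoplus_{c \in \crit^p(f_N)} HF^q(L,L;H_c).
\]
The same analysis for the parametrized Morse complex, together with the local constancy of the family $\widehat{f}_N$ near $\crit(f_N)$, yields
\[
\cE_1^{p,q} \;\cong\; \bigoplus_{c \in \crit^p(f_N)} HM^q(L;f_c).
\]
For each $c\in\crit(f_N)$, the cylindrical Lagrangian $L$, the admissible linear Hamiltonian $H_c$ and the Morse function $f_c$ (linear at infinity and satisfying Palais--Smale Condition (C) by Remark \ref{rem:PS_conditions}) fit into the classical PSS setup, producing an isomorphism $\Phi_c: HM^\bullet(L;f_c)\xrightarrow{\cong} HF^\bullet(L,L;H_c)$, both being isomorphic to $H^\bullet(L;\bK)$. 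Summing over $c$ gives an isomorphism $\Phi: \cE_1^{\bullet,\bullet}\to E_1^{\bullet,\bullet}$.

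To upgrade $\Phi$ to a morphism of spectral sequences I would realize it as the $E_1$-level of a filtration-preserving chain map $\widetilde\Phi : CM^\bullet(L(N);\widehat{f}_N)\to CF^\bullet(L,L;X(N))$. Concretely, for $\mathbf a = (c_0,\alpha)$ and $\mathbf b = (c_1,x)$ one counts hybrid configurations consisting of a $\cV_N$-trajectory $\gamma$ from $c_0$ to $c_1$, a vertical negative gradient half-trajectory of the family $\widehat f_N$ along $\gamma$ asymptotic to $\alpha$, and a PSS half-cylinder (with the standard cap turning one end into a point constraint) asymptotic to $x$ inside the fiber over the forward end of $\gamma$. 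The projection of every such solution is a $\cV_N$-trajectory in $BG(N)$, so the construction tautologically respects the filtration by $|c|$, and when the base trajectory is constant the hybrid equation degenerates to the fiberwise PSS equation for $(L;f_c,H_c)$, so $\widetilde\Phi_1 = \Phi$. Transversality decomposes into a horizontal (Morse--Smale) and vertical (PSS) part as in the proof of Theorem \ref{thm:regular_moduli}, while vertical compactness follows from the non-escape Lemma \ref{lem:no_escape_lemma} applied to the admissible data.

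Finally, an isomorphism on $E_1$ commuting with $d_1$ propagates to all higher pages by the five-lemma, and both spectral sequences converge because $\crit(f_N)$ is finite and each fiberwise summand is bounded in the relevant range for the wrapping slope under consideration. This gives (2), and combined with Proposition \ref{prop:comparison_between_Morse_and_LS} identifies the common $E_2$-page with the Leray--Serre $E_2$, proving (1). The main obstacle I anticipate is the fiberwise noncompactness: one must choose the interpolating Hamiltonian and almost complex structure along the $\cV_N$-trajectory so that the hybrid moduli space has uniformly bounded geometry in the fiber direction, obeys a monotonicity sufficient for the maximum principle in each fiber, and is compatible with the family structure so that Condition (4) of Definition \ref{defn:family_Hamiltonians} continues to rule out spurious constant strata along the homotopy.
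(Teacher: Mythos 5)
Your proposal is essentially the paper's own argument: the paper also defines a filtration-preserving chain map (there called $\mathrm{PSS}_N$, running Floer $\to$ Morse rather than your Morse $\to$ Floer) by counting pearl-trajectory moduli spaces $\cP(\mathbf{a},\mathbf{x};H(N),\widehat f_N)$ consisting of a family Morse half-trajectory lying over a $\cV_N$-flow line together with a half-strip solving an interpolated vertical Cauchy--Riemann/Floer equation, verifies the filtration property via the base projection to $\cM_{\mathrm{Morse}}(c_i,c_j;f_N)$ and Morse--Smaleness, establishes transversality by the horizontal/vertical splitting of the linearized operator, invokes Lemma \ref{lem:no_escape_lemma} for compactness, and concludes via the comparison theorem for spectral sequences after constructing the reverse map $\mathrm{PSS}_N^{-1}$. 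One small point of imprecision in your write-up: as stated, the PSS half-cylinder sits entirely ``inside the fiber over the forward end of $\gamma$,'' which would describe a broken (codimension-one) stratum of the relevant moduli space rather than its interior; in the paper's pearl configuration the strip itself also projects to the continuation of $\gamma$, with a cutoff function $\psi(\tau)$ interpolating between the unperturbed and Floer equations along its length, and the Morse half-trajectory and the strip meet at an intermediate base point rather than at the critical point $c_1$. This does not change the method, only the exact parametrization of the hybrid domain, and the remainder of your argument (explicit $E_1$-identification via the fiberwise PSS, propagation by the five-lemma, convergence from finiteness of $\crit(f_N)$) coincides with the paper's.
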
 
\begin{proof}
The proof follows from an adaptation of the standard pearl trajectory argument (see, for example, \cite{Alb08, BC09}) to the setting of a fibration, see Figure \ref{fig:pearl_trajectory}. For generators of the Lagrangian Floer complex associated to the Morse function $f_{N}$, we write 
\begin{align*}
   \mathbf{x} \coloneqq  (c_{i}, x) & \in CF^{\bullet}(L_{0},L_{1};X(N)),  \\
   \mathbf{a} \coloneqq  (c_{j}, \alpha) & \in CM^{\bullet}(L(N); \widehat{f}_{N}).
\end{align*}
We define the moduli space $$\cP\left(\mathbf{a}, \mathbf{x}; H(N), \widehat{f}_{N}\right)$$ to consist of triples $(\gamma, \varphi,u)$, where $(\gamma, \varphi)$ is a family of Morse trajectory and $u$ is a pseudo-holomorphic strip satisfying the following conditions: 
\begin{itemize}
    \item The family Morse trajectory is a map 
    \begin{align*}
        \varphi: (-\infty,0] \longrightarrow L(N)
    \end{align*}
    together with a projection $\gamma:(-\infty,0] \rightarrow BG(N)$, and $(\gamma,\varphi)$ solves the fiberwise Morse equation \eqref{equ:Family_Morse} associated to the family $\widehat{f}_{N}$. 
    \item The pseudo-holomorphic component $u$   
    \begin{align*}
        u:\bR\times[0,1]\rightarrow L(N)
    \end{align*}
    is a strip mapping into $L(N)$, which also projects to $\gamma$,   that satisfies the inhomogeneous Cauchy-Riemann equation 
    \begin{align*}
        \partial_{\tau}u^{\mathrm{vert}} + J_{(\tau,t)}^{\mathrm{vert}}\left(\partial_{t}u^{\mathrm{vert}} - \psi(\tau)X_{H_{(\tau)}}\right) = 0,
    \end{align*}
    where $\psi:\bR\to [0,1]$ is a smooth cutoff function such that $\psi(\tau)=0$ for $\tau\ll 0$ and $\psi(\tau)=1$ for $\tau\gg 0$. 
\end{itemize}
Thus, $\psi(\tau)$ interpolates between the Cauchy--Riemann equation $\overline{\partial}_{J}u=0$ and the inhomogeneous Floer equation \eqref{equ:vertical_Floer_equation}.  They are required to satisfy following boundary and asymptotic conditions: 
\begin{align}
    \begin{dcases}
        u(\tau,i)  \in L(N),&  \tau \in \bR, i= 0,1;  \\
        \lim_{\tau \rightarrow +\infty}u(\tau, t) = \mathbf{x};\\
        \lim_{\tau \rightarrow -\infty}u(\tau, t) = \varphi(0) ; \\
        \lim_{\tau \rightarrow -\infty} (\gamma, \varphi)(\tau) = \mathbf{a}. 
    \end{dcases}
\end{align}

\begin{figure}[ht]
	\includegraphics[scale=0.6]{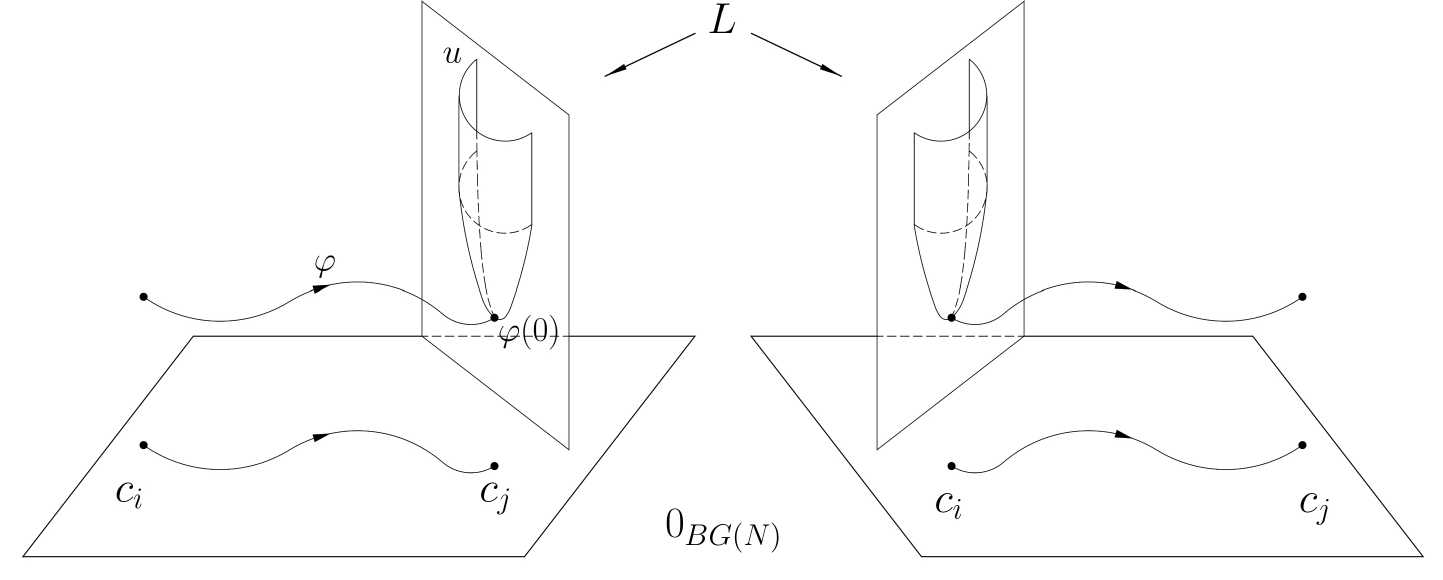}
	\caption{Configurations of pearl trajectories that define the maps $\mathrm{PSS}_{N}$ and $\mathrm{PSS}_{N}^{-1}$.}
	\label{fig:pearl_trajectory}
\end{figure}
Up to generic choices, $\cP\left(\mathbf{a},\mathbf{x};H(N),\widehat{f}_{N} \right)$ is a smooth manifold.
By our choice of Hamiltonians, we have $E(u)<\infty$, and hence Gromov-Floer compactness applies.  Consequently, we can compactify the one-dimensional stratum of $\cP\bigl(\mathbf{a},\mathbf{x};H(N),\widehat{f}_{N}\bigr)$ by adding either a  Morse trajectory or a Floer strip. The boundary strata admit the explicit description
\begin{align}
\label{equ:boundary_of_pearl_trajectory}
\begin{split}
    \partial_{[1]} \overline{\cP}\left(\mathbf{a},\mathbf{x};H(N),\widehat{f}_{N} \right) = &  \bigcup_{\mathbf{b}\in CM^{\bullet}} \cM_{\mathrm{Morse}}(\mathbf{a}, \mathbf{b}; \widehat{f}_{N}) \times  \cP\left(\mathbf{b},\mathbf{x};H(N),\widehat{f}_{N}\right) \\ 
& \cup \bigcup_{\mathbf{y}\in CF^{\bullet}}    \cP\left(\mathbf{a},\mathbf{y};H(N),\widehat{f}_{N} \right)\times  \cM_{\mathrm{Morse}}(\mathbf{y}, \mathbf{b}; \widehat{f}_{N}).
\end{split}
\end{align}
Now define $\mathrm{PSS}_{N}:  CF^{\bullet}(L_{0},L_{1};X(N)) \rightarrow CM^{\bullet}(L(N); \widehat{f}_{N})$ by 
\begin{align*}
    \mathrm{PSS}_{N} (\mathbf{x})\coloneqq  \sum_{\substack{\mathbf{a} = (c_{i}, \alpha) \\ |\mathbf{a}|=  |\mathbf{x}|}  }\# \cP\left(\mathbf{a},\mathbf{x};H(N),\widehat{f}_{N} \right) \mathbf{a} .
\end{align*}
It follows from \eqref{equ:boundary_of_pearl_trajectory} that $\mathrm{PSS}_{N}$ is a chain map. To see that $\mathrm{PSS}_{N}$ respects the filtration, note that there is a natural projection $\cP\left(\mathbf{a},\mathbf{x};H(N),\widehat{f}_{N} \right) \rightarrow \cM_{\mathrm{Morse}}(c_{i},c_{j};f_{N})$ which implies that whenever 
$\cP\left(\mathbf{a}, \mathbf{x}; H(N), \widehat{f}_{N}\right)\neq\emptyset$, 
we have $|c_{i}|\geq |c_{j}|$, since $(f_{N},\cV_{N})$ is Morse-Smale.  On the $E_{2}$-page, it induces a   morphism between the spectral sequence
\begin{align}
    \label{equ:E_2_page_Comparison}
   E_{2}^{p,q} \xrightarrow{[\mathrm{PSS}_{N}]} \cE_{\bullet}^{p,q}.
\end{align}
The inverse morphism $\mathrm{PSS}_{N}^{-1}$ is defined similarly: 
the pearl trajectory now starts from a Hamiltonian chord and connects to a Morse critical point via a negative gradient flow line. 
In this case, the half–disk solves the Floer equation
    \begin{align*}
        \partial_{\tau}u^{\mathrm{vert}} + J_{(\tau,t)}^{\mathrm{vert}}\left(\partial_{t}u^{\mathrm{vert}} - \psi(-\tau)X_{H_{(\tau)}}\right) = 0,
    \end{align*}

To conclude the proof, we apply a standard gluing and homotopy argument (see \cite{Alb08}) to show the compositions $\mathrm{PSS}_{N}^{-1}\circ \mathrm{PSS}_{N}$ and $\mathrm{PSS}_{N}\circ \mathrm{PSS}_{N}^{-1}$ are homotopic to the identity respectively.

One convenient way to reduce the fibration setting to the classical case is to trivialize the pearl trajectory over the Morse trajectory of $f_{N}$.  
Recall that the connection induced by the symplectic form $\omega_{X(N)}$ defines a parallel transport that preserves the Lagrangian submanifolds (see Section \ref{rem:Borel_Lagrangians}).  
Since $L$ is exact, we conclude that \eqref{equ:E_2_page_Comparison} is an isomorphism, with inverse given by $\mathrm{PSS}_{N}^{-1}$. 
\end{proof}

\begin{proof}[\hypertarget{proof:G-equivariant_PSS_isomorphism}{Proof of Theorem \ref{thm:G-equivariant_PSS_isomorphism}}]
    Similar to the argument of Lemma \ref{lem:approximation_commutative}, the family Morse cohomology $HM^{\bullet}(L(N);\widehat{f}_{N})$ forms  an inverse system under the projection maps $P_{N}$ induced by the inclusions. 

    In view of the proof of  Theorem \ref{thm:comparison_between_spectral_sequences},  an argument parallel to Lemma \ref{lem:approximation_commutative} gives rise to the following commutative diagram:
    \begin{equation}
        \label{equ:PSS_commutative}
         \begin{tikzcd}
             HF^{\bullet}(L, L; X(N+1)) \arrow[r, "P_{N+1}"] \arrow[d, "PSS_{N+1}"'] &  HF^{\bullet}(L, L; X(N)) \arrow[d, "PSS_{N}"] \\
            HM^{\bullet}(L(N+1);\widehat{f}_{N+1}) \arrow[r, "P_{N+1}"] & HM^{\bullet}(L(N);\widehat{f}_{N}) 
         \end{tikzcd}.
     \end{equation} 
Taking the inverse limit over $N$, we obtain $$HF^{\bullet}_{G}(L,L) \cong  \varprojlim_{N}HM^{\bullet}(L(N);\widehat{f}_{N}).$$ Thus it remains to verify the isomorphisms $$ HM^{\bullet}(L(N);\widehat{f}_{N}) \cong H^{\bullet}(L(N))$$ are  functorial  with respect to the approximation. 

Recall that in the proof of Proposition \ref{prop:comparison_between_Morse_and_LS}, this isomorphism was constructed by passing through the cellular cohomology of the CW-complex structure associated to $\widehat{f}_{N}$. The inclusion $L(N)\hookrightarrow L(N+1)$ induces a morphism $H^{\bullet}_{\mathrm{Cell}}(L(N+1))\rightarrow H^{\bullet}_{\mathrm{Cell}}(L(N))$, and the following diagram commutes 
    \begin{equation}
        \label{equ:Morse_commutative}
         \begin{tikzcd}
            HM^{\bullet}(L(N+1);\widehat{f}_{N+1}) \arrow[r, "P_{N+1}"] \arrow[d] &  HM^{\bullet}(L(N);\widehat{f}_{N}) \arrow[d] \\
            H^{\bullet}_{\mathrm{Cell}}(L(N+1)) \arrow[r] & H^{\bullet}_{\mathrm{Cell}}(L(N))
         \end{tikzcd}.
     \end{equation} 
This commutativity is tautological,  since the cellular differential \eqref{equ:cell_complex} is exactly the Morse differential by construction. Therefore, we conclude that 
\begin{align*}
    HF^{\bullet}_{G}(L,L)  \cong  \varprojlim_{N}H^{\bullet}_{\mathrm{Cell}}(L(N)) \cong H_{\mathrm{Cell}}^{\bullet}(L_{G};\bK) \cong  H_{G}^{\bullet}(L;\bK). 
\end{align*}
The last two isomorphisms are standard. 
\end{proof}
\section{Equivariant Fukaya categories}
\label{sec:equivariant_Floer_categories} 
\subsection{A \texorpdfstring{$G$}{LG}-equivariant directed category}
We shall now extend the discussion from the previous section to an $A_{\infty}$-category. The construction follows an approach analogous to the \emph{fiberwise wrapped Fukaya category} developed by Abouzaid and Auroux \cite{AA24}. However, due to the structure provided by the Borel--Liouville  construction, our situation is technically more accessible than in \emph{loc. cit.}

To proceed, we reformulate part of the setup from the previous section in a more compact form for later reference. 
\begin{lemd}[cf. \cite{AA24}]
\label{lemd:fiberwise_wrapped_data}
    Let $X(N)\xrightarrow{\pi_{N}} T^*BG(N)$ be the Borel--Liouville  construction of a  manifold. Fixed a closed subspace $X^{\mathrm{in}}(N)\subset X(N)$ which is proper over the base. Then one can choose a triple $(H,h,J_{N})$ consisting of: 

    \begin{enumerate}
        \item   an almost complex structure $J_{N}$  on $X(N)$, such that its restriction to each fiber is   compatible with the symplectic form, and  $\pi_{N}$ is $J_{N}$-holomorphic; 
        \item  a $J_{N}$-plurisubharmonic function $h:X(N)\rightarrow [0,\infty)$;
        \item a non-negative Hamiltonian $H: X(N) \rightarrow [0,\infty)$ called the  \textit{wrapping Hamiltonian}.
    \end{enumerate}
    These data are required to satisfy: 
    \begin{enumerate}[resume]
        \item The restrictions of $h$ and $H$ to each fiber of $\pi_{N}$ are proper. The  intersection of $X^{\mathrm{in}}$ with any fiber is a  sublevel set of $h$.   
        \item The Hamiltonian vector field $X_{H}$ is vertical, i.e. $d\pi_{N}(X_{H}) = 0$.  The parallel transport preserves the level sets of $h$ outside $X^{\mathrm{in}}(N)$, i.e.  $$dh(X_{H}) = 0\quad \text{ outside }  X^{\mathrm{in}}(N). $$ 
        \item If $\xi^{\#}$ is the horizontal lift of a vector on $T^*BG(N)$, then $dH(\xi^{\#}) = 0 $, and 
        \begin{align*}
            dh(\xi^{\#})= 0, \quad d^c h(\xi^{\#}) = 0\quad  \text{outside }  X^{\mathrm{in}}(N)
        \end{align*}
        where $d^c h \coloneqq  -dh\circ J_{N}$. 
        \item The $1$-form $d^c h$ is invariant under both flows near infinity: $\cL_{X_{H}}d^c h =\cL_{\xi^{\#}}d^c h  = 0$ away from $X^{\mathrm{in}}(N)$. 
        \item{(Non-negative wrapping)} $d^c h(X_{H})\geq 0$ outside $X^{\mathrm{in}}(N)$. 
    \end{enumerate}
    Finally, the choices $(H,h, J_{N})$ are required to be compatible with the approximation as $N$ varies. 
\end{lemd}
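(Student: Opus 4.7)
The plan is to build all three data simultaneously, starting from a fiberwise Liouville model on $X$ that is $G$-invariant, and then verifying the compatibility conditions via the structure of the Liouville reduction $X(N) = (X\times T^*EG(N))\sq G$ (Construction II of Definition~\ref{defn:Borel_Liouville_space}) together with Lemma~\ref{lem:Liouville_fibration}.

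\textbf{Step 1 (almost complex structure).} I would take $J_N$ to be an admissible almost complex structure in the sense of Definition~\ref{defn:admissible_almost_complex_structures}: a block-diagonal structure with respect to the splitting $TX(N)=TX(N)^{\hor}\oplus TX(N)^{\ver}$. On the vertical subbundle I would choose the reduction of a $G$-invariant contact-type almost complex structure $J_X$ on the cylindrical end of $X$, extended arbitrarily inside. On the horizontal subbundle I would use the pullback $\pi_N^*J_{T^*BG(N)}$ of a fixed almost complex structure on $T^*BG(N)$. The block structure forces $\pi_N$ to be $(J_N,J_{T^*BG(N)})$-holomorphic, and the $G$-invariance of $J_X$ ensures the vertical part descends to the quotient and is compatible with the tower of approximations.

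\textbf{Step 2 (plurisubharmonic function).} I would take $h$ to be the $G$-invariant radial coordinate $r_X=e^s$ on the positive half-symplectization of $\pX$, extended smoothly and properly into the interior. By Definition~\ref{defn:Liouville_action}, the Liouville $G$-action preserves $r_X$ near infinity, so $r_X$ descends through the Liouville reduction to a well-defined function on $X(N)$ outside a compact region, whose fiberwise restriction over any $b\in T^*BG(N)$ equals the radial coordinate of $F_b\simeq X$ (by Lemma~\ref{lem:Liouville_fibration}). The set $X^{\mathrm{in}}(N)$ is then taken to be a sublevel set $\{h\leq R\}$, which is proper over the base, and $h$ is fiberwise $J_N$-plurisubharmonic near infinity since it is such on $X$.

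\textbf{Step 3 (wrapping Hamiltonian).} I would set $H\coloneqq\rho(h)$ for a smooth $\rho\colon[0,\infty)\to[0,\infty)$ that vanishes on $[0,R+\varepsilon]$ and is linear with positive slope at infinity. Then $X_H=\rho'(h)\,X_h$, and outside $X^{\mathrm{in}}(N)$ the vector field $X_h$ is (a positive multiple of) the fiberwise Reeb vector field of $\alpha=\lambda_X|_{\pX}$. In particular $X_H$ is automatically vertical, $dh(X_H)=0$, and $dH(\xi^{\#})=0$ for any horizontal lift $\xi^{\#}$ (since $H$ is a function of $h$ alone).

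\textbf{Step 4 (verification of (6)--(8)).} The identities $dh(\xi^{\#})=0$ and $d^{c}h(\xi^{\#})=0$ outside $X^{\mathrm{in}}(N)$ amount to saying that the $\omega_{X(N)}$-horizontal distribution, restricted to the cylindrical end, lies in $\ker(dr_X)\cap\ker(dr_X\circ J_X)$ on each fiber. This follows from a direct computation in the Liouville reduction: for $[x,q,p]$ outside $X^{\mathrm{in}}(N)$ a horizontal lift of $\eta\in T_bT^*BG(N)$ is represented on $X\times T^*EG(N)$ by a vector whose $X$-component lies in the infinitesimal $G$-orbit direction (so as to stay in $\mu_{\mathrm{diag}}^{-1}(0)$ as one moves horizontally on $T^*EG(N)$); since the $G$-action preserves both $\lambda_X$ and $J_X$ near infinity, this $X$-component lies in $\ker(dr_X)\cap\ker(d^c r_X)$. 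The Lie derivative statements in~(7) follow from the pointwise vanishing together with $G$-invariance of $d^{c}h$. Finally the non-negative wrapping (8) is immediate: $d^{c}h(X_H)=\rho'(h)\,d^{c}h(X_h)=\rho'(h)\,\alpha(R_\alpha)\,r_X\geq 0$.

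The main obstacle I anticipate is \emph{Step~4}, specifically pinning down the horizontal distribution of $\omega_{X(N)}$ near infinity and checking that it respects the cylindrical product structure. The cleanest way to handle this is to exploit the Sternberg--Weinstein identification (used in the proof of equivalence in Definition~\ref{defn:Borel_Liouville_space}) to reduce the computation to $\mu^{-1}_{T^*EG(N)}(0)\times X$, where the Liouville $G$-action decouples and the $G$-invariance of $\lambda_X$ and of the cylindrical coordinate can be applied directly. Compatibility of the entire construction $(H,h,J_N)$ with the approximation tower is automatic once one observes that every ingredient is pulled back from the fixed fiber $X$ together with a choice on $T^*BG(N)$ that extends across the inclusions $T^*BG(N)\hookrightarrow T^*BG(N+1)$ (whose existence is guaranteed by the Riemannian compatibility chosen in Section~\ref{sec:liouvilleborel}).
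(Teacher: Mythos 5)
Your proposal follows essentially the same route as the paper: $J_N$ is the admissible block-diagonal structure from Definition~\ref{defn:admissible_almost_complex_structures}, and $h$ agrees near infinity with the $G$-invariant radial coordinate $r$ descended from $X$. Your $H=\rho(h)$ is a constant-slope special case of the $G$-invariant linear Hamiltonian the paper obtains by averaging in Lemma~\ref{lem:G_equivariant_wrapping}; both choices are $G$-invariant, descend through the quotient $(X\times T^*EG(N))\to X(N)$, and give vertical Hamiltonian vector fields, so this is only a superficial divergence.

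Step~4, however, contains a genuine error. You reduce conditions (6)--(7) to the assertion that the $X$-component of a horizontal lift lies in $\ker(dr_X)\cap\ker(d^c r_X)$, where $d^c r_X=-dr_X\circ J_X$ is computed with the \emph{fiberwise} complex structure $J_X$. This is the wrong reduction: the lemma requires $d^c h(\xi^{\#})=0$ where $d^c h=-dh\circ J_N$ and $J_N$ is the block-diagonal structure on the total space; the relevant quantity is $dr_X$ evaluated on the $X$-component of $J_N\xi^{\#}$, not $d^c r_X$ evaluated on the $X$-component of $\xi^{\#}$. Worse, the claim you state is false: the $X$-component of a horizontal lift is of the form $-X_{\xi}(x)$ in the $G$-orbit direction, and for a fiberwise contact-type $J_X$ one has $d^c r_X(X_{\xi})=\lambda_X(X_{\xi})=\langle\mu_X(x),\xi\rangle$, which is generically nonzero on $\mu_{\mathrm{diag}}^{-1}(0)$ (only the diagonal moment map $\mu_X+\mu_{T^*EG(N)}$ vanishes there, not $\mu_X$ alone). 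The paper's argument is both simpler and correct: since $h$ descends from the $G$-invariant function $r$ on $X$, its Hamiltonian vector field on $X(N)$ is the descent of the vertical field $(X^X_r,0)$, hence $X_h$ is vertical; thus $dh(\xi^{\#})=-\omega_{X(N)}(X_h,\xi^{\#})=0$ because the horizontal distribution is the $\omega_{X(N)}$-orthocomplement of the vertical, and $d^c h(\xi^{\#})=-dh(J_N\xi^{\#})=0$ because $J_N$ preserves the horizontal distribution. Replace your Step~4 computation with this orthogonality argument.
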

\begin{rem}
When $X$ is a Liouville sector, in addition to the requirements of Lemma \ref{lemd:fiberwise_wrapped_data}, we also impose a boundary compatibility condition. Namely,  near  $\pi_{\partial X}$ the product decomposition induces a projection  $$\left(\mathrm{Nbh}(\partial X), J_{N}\right)\rightarrow (\bC_{\mathrm{|Re|\geq 0}},j),$$ which we require to be  $(J,j)$-holomorphic. In the Borel construction, by a standard partition of unity argument, this extends to a projection
\begin{align*}
    \pi_{\partial X(N)}: \mathrm{Nbh}(\partial X(N))\rightarrow \bC_{\mathrm{|Re|\geq 0}}.
\end{align*}
As shown  in \cite{GPS1}, there is abundant freedom in the  choice of almost complex structures so that the vertical holomorphicity condition 
\begin{align*}
     d\pi_{\partial X(N)}|_{TX(N)^{\mathrm{vert}}}\circ J_{N} = j\circ d\pi_{\partial X(N)}|_{TX(N)^{\mathrm{vert}}}
\end{align*}
is satisfied. This ensures that the standard  maximum principal applies, thereby preventing the pseudo-holomorphic strips from escaping cross the boundary. 
\end{rem}

To construct a wrapping Hamiltonian $H$, we require it to be compatible with the $G$-action, in the sense that $\cL_{X_{\xi}}X_{H} =0$ for $\xi \in \mathfrak{g}$. In the case where $X$ is a Liouville manifold, such a Hamiltonian can be constructed in a straightforward manner: one begins with a proper linear Hamiltonian function and and averages it over the group $G$. The following lemma shows that a similar  construction extends to Liouville sectors, and moreover produces a  cofinal wrapping sequence of Lagrangians.
\begin{lemma}[$G$-invariant wrapping]
    \label{lem:G_equivariant_wrapping}
    Let $X$ be a Liouville sector with compact contact boundary $\partial_{\infty} X$. Then there exists a $G$-invariant, non-negative Hamiltonian function $$H:X\rightarrow \bR_{\geq 0 }$$
    such that:
    \begin{enumerate}
    \item $H$ is a $G$-invariant linear Hamiltonian that vanishes  to second order precisely on $\partial X$;
    \item the Hamiltonian vector field $X_H|_{X^{\circ}}$ is complete;
    \item the action of $G$ preserves the Hamiltonian vector field $X_{H}$;
    \item the Hamiltonian flow lies in the levels sets of $\mu_{X}$. 
    \end{enumerate}
    \end{lemma}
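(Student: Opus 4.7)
The plan is to construct the wrapping Hamiltonian first as an explicit non-equivariant expression with the correct boundary behavior near $\partial X$ and conical behavior near infinity, and then $G$-average. Since the $G$-action preserves $\lambda$ and hence the Liouville vector field $Z$, averaging preserves the linearity condition $ZH = H$ and, provided the boundary condition is formulated in $G$-invariant coordinates, also preserves the second-order vanishing along $\partial X$. (For a Liouville manifold with empty boundary, the statement is essentially trivial: average any proper linear Hamiltonian; only the sectorial case requires care.)

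First, I would invoke Theorem \ref{thm:existence_of_linear_function} to fix a $G$-invariant defining function $I$, so that the product decomposition \eqref{equ:decomposition} is $G$-compatible in the sense established in the proof of Corollary \ref{cor:symplectic_reduction_of_Liouville_pair}: the $G$-orbits near $\partial X$ lie within slices $F \times \{\sqrt{-1}y\}$. Next I would pick a proper, non-negative, Reeb-invariant slope function $h_0 \colon \partial_\infty X \to \bR_{\geq 0}$ that is uniformly positive away from $\partial_\infty X \cap \partial X$ and vanishes to second order in $y = I$ there. Its $G$-average $\bar h$ inherits these features since $G$ preserves the Reeb flow, the function $I$, and $\partial X$. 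Extending conically via the Liouville flow and cutting off smoothly near the skeleton to guarantee non-negativity and smoothness then produces the required $G$-invariant Hamiltonian $H$.

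The four properties are then checked directly. Property (1) is built into the construction. Property (2) follows from linearity of $H$ at infinity combined with the fact that $X_H$ stays in the level sets of $I$, forcing trajectories to remain in fixed compact slices. Property (3) follows from $G$-invariance of $H$ together with the $G$-action being symplectic, giving $\tau_g^* X_H = X_{\tau_g^* H} = X_H$. Property (4) is Noether's identity: for $\xi \in \mathfrak{g}$,
\[
d\langle \mu_X, \xi \rangle (X_H) = -\omega(X_\xi, X_H) = -dH(X_\xi) = 0,
\]
where the final equality is $G$-invariance of $H$.

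The main obstacle I anticipate is coordinating the conical extension over the positive half symplectization with the cutoff near the skeleton so that, after $G$-averaging, $H$ remains simultaneously smooth, non-negative, strictly linear at infinity, and vanishing to exactly second order on $\partial X$. I would address this by performing every cutoff well inside regions where both the product decomposition near $\partial X$ and the cylindrical structure at infinity are already $G$-equivariant, so that $G$-averaging commutes with the cutoffs and the second-order vanishing on $\partial X$ is preserved identically under the integral over $G$.
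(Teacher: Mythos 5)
Your overall strategy---build a non-equivariant wrapping Hamiltonian with the correct behavior near $\partial X$ and at infinity, then $G$-average using the $G$-invariance of $Z$, $\lambda$, and $\partial X$---is the same as the paper's. Your treatment of (4) via the Noether identity
\[
d\langle \mu_X, \xi\rangle(X_H) = -\omega(X_\xi, X_H) = -dH(X_\xi) = 0
\]
is a more direct route than the paper's, which first establishes $\cL_{X_\xi}X_H = 0$ and then uses $(\phi^t_H)^*\lambda = \lambda$ to deduce invariance of $\mu_X$ along the flow. Your attention to fixing a $G$-invariant defining function $I$ before averaging (so that the product decomposition and the cutoffs are $G$-equivariant and commute with the integral over $G$) also fills in a step the paper leaves implicit.

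However, your argument for completeness (2) has a genuine error. In the product decomposition \eqref{equ:decomposition}, the defining function $I = y$ is a coordinate \emph{tangent} to $\partial X = \{\mathrm{Re}\,z = 0\}$; the transverse direction is $x = \mathrm{Re}\,z$, and the second-order vanishing of $H$ on $\partial X$ is in $x$, not in $I$. With $\lambda_\C = -y\,dx$, $\omega_\C = dx\wedge dy$, and the convention $\omega(X_H,\cdot) = -dH$, one computes $dI(X_H) = dy(X_H) = \partial_x H$, which is nonzero away from $\partial X$. So $X_H$ does \emph{not} preserve the level sets of $I$; and even if it did, those slices $F\times\{y\}$ are non-compact copies of $F$, so ``remaining in fixed compact slices'' would not follow. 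The same confusion appears in your construction of the slope function, which you ask to vanish ``to second order in $y = I$'' near $\partial_\infty X \cap \partial X$; the vanishing must be in the direction transverse to $\partial X$, not along $I$. The correct completeness argument combines two facts: linearity of $H$ at infinity gives $dr(X_H) = 0$, so trajectories cannot escape radially, and second-order vanishing of $H$ on $\partial X$ forces $X_H|_{\partial X} = 0$, so trajectories starting in $X^\circ$ cannot reach $\partial X$ in finite time. The paper packages this as properness of $H|_{X^\circ}$ together with $dH(X_H) = 0$.
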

    
    \begin{proof}
    We begin by choosing a proper nonnegative function that is linear on the cylindrical end and vanishes  to second order along $\partial X$. Average this function over $G$, we obtain a nonnegative Hamiltonian function $H$.

    For property (1), note that because the $G$-action is a local diffeomorphism, it preserves the boundary $\partial X$. Since $G$ preserves the Liouville structure, it also preserves the radial coordinate; hence $H$ is a $G$-invariant linear Hamiltonian that vanishes precisely on $\partial X$.

    By construction, $H|_{X^\circ}$ is strictly positive and proper. Furthermore, since $dH(X_H)=0$, the Hamiltonian flow preserves the level sets of $H$. In particular, the vector field $X_H$ cannot drive points away from compact sets in finite time. Escape lemma then implies that $X_H$ is a complete vector field on $X^\circ$, establishing (2). 

     To verify (3) and (4), let $\xi \in \mathfrak{g}$ and denote by $X_{\xi}$ the infinitesimal vector field. Taking the Lie derivative of defining relation $\omega(X_{H},\cdot) = -dH$, we obtain $\cL_{X_{\xi}}X_{H} =0$. Let $\phi_{H}^t $ denote the Hamiltonian flow of $X_{H}$. Recall  that the moment map of a Liouville $G$-action is given by $\langle \mu_{X}(x),\xi  \rangle = \lambda(X_{\xi})$ (See Proposition \ref{prop:Group_action_1_form}).
     We compute for   $x\in X$ and $\xi \in \mathfrak{g}$:
     \begin{align*}
          \langle \mu_{X}(\phi^t_{H}(x)),\xi\rangle & = \lambda\left(X_{\xi}(\phi^{t}_{H}(x))\right)\\
         & = ((\phi^{t}_{H})^*\lambda)(X_{\xi})\\
         & = \lambda (X_{\xi})\\
         & = \langle \mu_{X}(x),\xi\rangle,
     \end{align*}
     where the second equality follows from the commutation  $[X_{H},X_{\xi}] = 0$ established in (3). 
     Thus we have shown that $\mu_{X}$ is invariant under the Hamiltonian flow of $X_{H}$. Clearly, the same argument applies verbatim in the Liouville manifold case by taking $\partial X= \emptyset$. 
     \end{proof} 

\begin{proof}[Proof of Lemma \ref{lemd:fiberwise_wrapped_data}]
The family of almost complex structures has  already been constructaed in Section \ref{sec:almost_complex_structure}. Choose  a $J_{N}$-plurisubharmonic function $h: X(N)\rightarrow \bR$ which agrees with the radial coordinate $r$ outside $X^{\mathrm{in}}(N)$.  Let $H$ be a G-invariant non-negative function chosen as in Lemma \ref{lem:G_equivariant_wrapping}; it desecnds to a Hamiltonian function on $X(N)$ through the quotient map 
\begin{align}
    \label{equ:quotient_map_used_in_proof}
    \left(X\times T^*EG(N), d\lambda_{X}+d\lambda_{\mathrm{can}}\right) \longrightarrow( X(N), d\lambda_{X(N)}). 
\end{align}
The verification of the conditions in Lemma~\ref{lemd:fiberwise_wrapped_data} proceeds as follows.  

Condition (4) holds  since both $H$ and $h$ are linear near infinity. The projection $\pi_{N}$ is induced by  the projection to the second factor in \eqref{equ:quotient_map_used_in_proof}, so that $d\pi_{N}(X_{H}) = 0$. By construction $h$ agrees with $r$ outside $X^{\mathrm{in}}(N)$, and $r$ is invariant under the parallel transport away from $X^{\mathrm{in}}(N)$, hence Condition (5) follows.  To see (6), recall that the connection we use is induced by the symplectic form $\omega_{X(N)}$ so that $$dH(\xi^{\#})  = -\omega_{X(N)}(X_{H}, \xi^{\#})=0.$$ The same results apllies to $dh$  when restricted  away from $X^{\mathrm{in}}(N)$. Finally,  both conditions (7) and (8) follow immediately from the assumption that $J_{N}$ is fiberwise of contact type near infinity. 

\end{proof}

Let us fix a countable collection of exact cylindrical $G$-invariant Lagrangians 
\begin{align*}
    \mathbf{L} \coloneqq  \{L_{0}, L_{1}, L_{2}, \ldots \},    
\end{align*}
which we refer to as \textit{admissible Lagrangians}. 
Applying the  Liouville  Borel construction  to these objects yields a  corresponding countable collection of Borel Lagrangian spaces
\begin{align*}
    \mathbf{L}_{N} \coloneqq  \{L_{0}(N), L_{1}(N), L_{2}(N), \ldots\}.    
\end{align*}
In analogy with  the construction of equivariant Floer cohomology, our goal is to associate to each $N \in \mathbb{N}$ an $A_{\infty}$-category, and to show that the approximation functors arising from varying the level $N$ are compatible with the $A_{\infty}$-structures.  

\subsubsection{Trees and Disks}
To this end, we study  a hybrid object consisting of Morse trees and  pseudo-holomorphic disks that solve fiberwise Cauchy-Riemann equations with inhomogeneous perturbation terms. As before, each such disk lies over the  corresponding tree.

We begin by recalling the following definition. 
\begin{defn}
A \textit{directed $d$-leafed planar tree} $T$ is a driected tree embedded in $\bR^2$ consisting of the following data: 
\begin{enumerate}
    \item a set of vertices $V(T)$ and a set of edges $E(T)$; 
    \item a collection of $d$ semi-infinite outgoing edges, referred to as leaves; 
    \item a distinguished vertex $v_{0}\in V(T)$, called the \textit{root}, which is connected to a single semi-infinite incoming edge; 
    \item a decomposition $E(T) = E^{\mathrm{ext}}(T)\sqcup E^{\mathrm{int}}(T)$, where $E^{\mathrm{ext}}(T)$ consists of semi-infinite (external) edges, and $E^{\mathrm{int}}(T)$ consists of the (internal) internal edges; 
 \end{enumerate}
For a vertex $v\in V(T)$, we denote by $|v|$ its valency.  The tree $T$ is called 
\textit{stable} if $|v|\geq 3$ for all $v$, and \textit{semistable} if $|v|\geq 2$. 

Let $\mathbf{L}_{T}\subset \mathbf{L}$ be a finite subset of admissible Lagrangians. We say  that a tree $T$ is \textit{labeled by} $\mathbf{L}_{T}$ if each  connected components of $\bR^2 \setminus T$ assigned a label from $\mathbf{L}_{T}$.

We fix an orientation so that every edge is directed from root to the leaves. This  determines functions $$\mathrm{lef},\mathrm{rig}:E(T)\rightarrow \mathbf{L}_{T} $$ such that for each edge $e\in E(T)$, the label $\mathrm{lef}(e)$ (resp. $\mathrm{rig}(e)$) denotes the Lagrangian lying to the left (resp. right)  of $e\in E(T)$ with respect to its orientation.  

A Riemannian metric $g_{T}$ on $T$ assigns a length to  each edge of $T$ such that any external edge is of infinite length. 
\end{defn}

For  each pair of Lagrangian $L_{i},L_{j} \in \mathbf{L}$, we assign an admissible family of Morse-Smale pairs $$(f_{(i,j), N}, \cV_{(i,j), N})$$ where $f_{(i,j), N}: BG(N) \rightarrow \bR$ is a Morse function and  $\cV_{(i,j), N} $ is the corresponding pseudo-gradient vector field of $f_{(i,j),N}$ in the sense of Theorem-Definition \ref{thm:family_of_Morse_functions}. 

By Lemma \ref{lem:abundant_choices}, such data can be chosen inductively. The corresponding $G$-equivariant Floer complex  $CF_{G}^{\bullet}(L_{i_{0}},L_{i_{1}})$ is then defined using the associated admissible Morse-Smale pairs $(f_{(i_{0},i_{1}), N}, \cV_{(i_{0},i_{1}), N})$ following the construction as in Section \ref{sec:equivariant_Floer_cohomology}. 

In some cases, such as when defining  a product structure on  $$\mathbf{x}_{i}\coloneqq  (c_{i}, x_{i}) \in CF^{\bullet}(L_{0}, L_{0}; X(N)),$$ the moduli space involves counting intersections of  stable manifolds of critical points $c_{i}$ and $c_{j}$ of $f_{(0,0)}$.  These intersections are empty unless $c_{i}= c_{j}$. To achieve the transversality  in such situations, it is therefore necessary to introduce appropriate perturbation data. We follow the construction as in \cite[Definition 2.6]{Abo11}. 
\begin{defn}
A \textit{gradient flow perturbation datum} on a metric tree $(T,g_{T})$ consists of an assignment, for each edge $e\in E(T)$, of a family of vector fields: 
\begin{align*}
X_{e}(N): e\rightarrow T^{\infty}(BG(N))
\end{align*}
which vanishes outside a  bounded subset of $e$. 
\end{defn}  
These data are chosen coherently in two directions: first, they are compatible with perturbations of the gradient flow equation on higher-dimensional moduli spaces of trees; second, they are compatible with the system of approximations as $N$ varies. 
\begin{defn}[Horizontal Morse trees]
\label{defn:horizontal_Morse_tree}
A (perturbed) Morse tree corresponding to $(T, g_{T})$ with labels $L_{T}$ is a continuous map 
\begin{align*}
    \psi: T\rightarrow BG(N)
\end{align*}
whose restriction to every edge $e\in E(T)$ solves perturbed gradient flow equation 
\begin{align*}
    \frac{d}{d\tau}  \psi|_{e}= \cV_{(\mathrm{lef}(e),\mathrm{rig}(e)), N}+X_{e}(N).
\end{align*}
\end{defn}

The vertical component of the construction proceeds as in the standard setting, except that it now solves the pseudo-holomorphic equation fiberwise. We recall some notions following \cite{Sei08,abouzaid2010open}. 

Let $\cS^{d+1} \rightarrow \cR^{d+1}$ denote universal family of disks with $d + 1$ ordered boundary marked points. The base $\cR^{d+1, T}$ can be compactified to a manifold with corners by adjoining all stable broken disks, 
\begin{align*}
    \overline{\cR}^{d+1} = \coprod_{T} \cR^{d+1, T}. 
\end{align*}
where each stratum $\cR^{d+1, T}$ denote the moduli space of broken disk modeled on $T$.

Let $$\Sigma = \bD\setminus \{z_{0},\cdots, z_{d}\}$$ denote a disk with  $d+1$ boundary marked points removed. For each puncture $z_{i}$, we choose a strip-like end $\epsilon_{i}:\bR_{\pm }\times [0,1]\rightarrow \Sigma$  in a neighborhood of $z_{i}$ for  all surfaces in $\overline{\cR}^{d+1, T}$.

Such  a family of strip-like ends is required to be chosen in a \textit{universal way} in the sense of \cite{Sei08}, meaning that  it is compatible with  gluing along  collar neighborhoods of boundary strata:  
\begin{align*}
    \overline{\cR}^{i+1, T_{1}}\times  \overline{\cR}^{j+1, T_{2}}\rightarrow  \overline{\cR}^{i+j+1, T}. 
\end{align*} 
We denote by $\partial_{i}\Sigma$ the boundary component between $z_{i}$ and $z_{i+1}$, with the convention $z_{d+1} = z_{0}$. 
 
Similarly, we choose a   \emph{universal perturbation datum}, namely a $1$-form $Y\otimes \alpha $ on $\partial\Sigma$ with values in vector fields. In our case, we  consider moving boundary condition given by the flow of the  $G$-invariant wrapping Hamiltonian $H$. 

Fix a sub-closed  $1$-form $\alpha \in \Omega^1(\Sigma)$ satisfying 
\begin{align}
    \label{equ:sub_closed_1_form}
    \epsilon_{i}^*\alpha =w_{i}dt \text{  for some }  w_{i}\in \bR_{\geq 0}, \quad d\alpha = 0 \text{ near } \partial \Sigma, \quad  \alpha|_{\partial \Sigma } = 0. 
\end{align}

\begin{defn}[Fiberwise pseudoholomorphic treed disks]
    \label{defn:Fiberwise_pseudoholomorphic_treed_disks}
    For a  tree labeled by $\mathbf{L}_{T}$, the moduli space $\cM(\mathbf{x}_{0}; \mathbf{x}_{1}\cdots, \mathbf{x}_{d};X(N))$ consists of  pairs of maps $(\psi , u)$ satisfying the following conditions: 
    \begin{enumerate}
        \item The map $\psi: T\rightarrow BG(N)$ is the Morse tree defined in Definition \ref{defn:horizontal_Morse_tree}. 
        \item The map $u:\Sigma \rightarrow X(N)$ solves the inhomogeneous fiberwise pseudoholomorphic holomorphic equation 
        \begin{align}
            \label{equ:inhomogeneous_floer_equation_used_in_surface}
            (du^{\mathrm{vert}} -  X_{H}\otimes \alpha )^{0,1}=0.
        \end{align} 
        \item The projection  $\pi_{N}\circ u = \psi$ sends the thin parts of the disks to the edges of the tree, and thick parts to vertices. 
        \item (Boundary and asymptotic condition)  Theses maps satisfy the boundary and asymptotic conditions: 
        \begin{align*}
            u(\partial_{i}\Sigma) \subset L_{i}(N) \quad \lim_{\tau\rightarrow \pm\infty}(\psi,u)(\epsilon_{i}(\tau,\cdot))= \mathbf{x}_{i},
        \end{align*}
        for generators $\mathbf{x}_{i}\in CF^{\bullet}(L_{i-1}, L_{i};X(N))$. 
    \end{enumerate}
\end{defn}

By construction, the wrapping Hamiltonian $H$ typically does not produce transverse intersections, since the intersection points occur in $G$-orbits. To remedy this issue, one supplements \eqref{equ:inhomogeneous_floer_equation_used_in_surface} with an additional compactly supported vertical inhomogeneous perturbation term. This local perturbation breaks the $G$-symmetry near the relevant intersection loci and achieve transversality.

After a generic choice of the perturbation data, the moduli space is  a regular manifold of the expected dimension 
\begin{align*}
   d- 2+|\mathbf{x}_{0}|- \sum_{1\leq k\leq d} |\mathbf{x}_{k}|.
\end{align*}
This follows from the surjectivity of the linearized Cauchy-Riemann operator,  which decomposes into vertical and horizontal components,  each known to be surjective under generic perturbations.  The surjectivity for the horizontal Morse tree component is established in  \cite{Abo11}.

\begin{prop}[Maximum principle]
    \label{pro:maximum_principle}
    Let $u:\Sigma \to X(N)$ be a solution to the inhomogeneous Floer equation \eqref{equ:inhomogeneous_floer_equation_used_in_surface}. 
    Then $u$ satisfies the following maximum principles:
 The function $h\circ u$ attains no local maxima outside of $X^{\mathrm{in}}(N)$, where $h$ is the $J_{N}$-plurisubharmonic function defined in Lemma~\ref{lemd:fiberwise_wrapped_data}.
    \end{prop}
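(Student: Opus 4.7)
The plan is to show that $\phi := h \circ u$ is subharmonic on the open set $\Sigma \setminus u^{-1}(X^{\mathrm{in}}(N))$, and then invoke the classical weak maximum principle for subharmonic functions on a Riemann surface.

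The first step is to reduce the computation to the vertical component of $du$. By conditions (5)--(6) of Lemma~\ref{lemd:fiberwise_wrapped_data}, outside $X^{\mathrm{in}}(N)$ both $dh$ and $d^c h = -dh \circ J_N$ annihilate horizontal lifts $\xi^{\#}$; since $J_N$ is block-diagonal with respect to the horizontal/vertical splitting and $X_H$ is purely vertical, the pulled-back forms $u^*(d^c h)$ and $u^*(dd^c h)$ depend only on $du^{\mathrm{vert}}$. I would then rewrite the Floer equation \eqref{equ:inhomogeneous_floer_equation_used_in_surface} as the pointwise identity $du^{\mathrm{vert}}(j\xi) = J_N du^{\mathrm{vert}}(\xi) + \alpha(j\xi) X_H - \alpha(\xi) J_N X_H$, and combine it with $dh(X_H) = 0$ and $dh(J_N X_H) = -d^c h(X_H)$ to obtain
\begin{equation*}
d^c_\Sigma \phi \;=\; u^* d^c h \;-\; d^c h(X_H)\,\alpha \qquad \text{on } \Sigma \setminus u^{-1}(X^{\mathrm{in}}(N)),
\end{equation*}
where $d^c_\Sigma := -d \circ j$ is the conformal dual on $\Sigma$.

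Taking $d$ of this identity and using the Cartan formula together with condition (7), which gives $d(d^c h(X_H)) = -\iota_{X_H} dd^c h$ outside $X^{\mathrm{in}}(N)$, the next step is to evaluate on $(\partial_s, \partial_t)$ in a local conformal coordinate $z = s + it$ on $\Sigma$. Setting $w := du^{\mathrm{vert}} - X_H \otimes \alpha$, the Floer equation reads $w(\partial_t) = J_N w(\partial_s)$. Using the antisymmetry of $dd^c h$ together with its $J_N$-invariance $dd^c h(J_N A, J_N B) = dd^c h(A, B)$ (the $(1,1)$-character), the cross terms involving $X_H$ and $J_N X_H$ recombine exactly into the Levi-form expression for $w(\partial_s)$, producing
\begin{equation*}
dd^c_\Sigma \phi (\partial_s, \partial_t) \;=\; dd^c h\!\left(w(\partial_s),\, J_N w(\partial_s)\right) \;-\; d^c h(X_H)\cdot d\alpha(\partial_s, \partial_t).
\end{equation*}
Each term on the right is non-negative: the first by $J_N$-plurisubharmonicity of $h$, and the second because $d^c h(X_H) \geq 0$ (the non-negative wrapping, condition (8)) while $d\alpha \leq 0$ (sub-closedness of $\alpha$, \eqref{equ:sub_closed_1_form}). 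Hence $dd^c_\Sigma \phi \geq 0$, equivalently $\phi$ is subharmonic on $\Sigma \setminus u^{-1}(X^{\mathrm{in}}(N))$, and the classical weak maximum principle precludes any interior local maximum there.

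The main technical hurdle is the bookkeeping in the second step. The naive expansion of $dd^c_\Sigma \phi$ produces several cross terms such as $\alpha(\partial_s)\, dd^c h(du^{\mathrm{vert}}(\partial_s), J_N X_H)$ and $\alpha(\partial_s)^2\, dd^c h(X_H, J_N X_H)$, none of which individually has an a priori sign. Showing that these contributions coherently assemble into the manifestly non-negative square $dd^c h(w(\partial_s), J_N w(\partial_s))$ is where conditions (5)--(8) of Lemma~\ref{lemd:fiberwise_wrapped_data} are used jointly rather than separately, and is the point at which the specific axiomatization adopted for the fiberwise wrapping data pays off.
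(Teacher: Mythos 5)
Your proof is correct and fills in the details that the paper outsources. The paper's own proof is a two-sentence sketch that offers two alternative routes: (i) invoke the ``global version of the maximum principle'' of Abouzaid--Auroux (\cite[Proposition~3.11]{AA24}), or (ii) fall back on the Stokes-type no-escape argument of Lemma~\ref{lem:no_escape_lemma}, which is a global energy estimate rather than a local subharmonicity statement. What you have done is work out route~(i) explicitly: you derive the pointwise identity $d^c_\Sigma(h\circ u)=u^*d^c h - d^c h(X_H)\,\alpha$ on $\Sigma\setminus u^{-1}(X^{\mathrm{in}}(N))$ and then show $dd^c_\Sigma(h\circ u)\ge 0$ by combining the plurisubharmonicity of $h$ with the non-negative wrapping $d^c h(X_H)\ge 0$ and the sub-closedness $d\alpha\le 0$, exactly the Levi-form computation behind the Abouzaid--Auroux lemma. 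Your computation is correct, and the cross terms do cancel into the manifestly non-negative square $dd^c h\bigl(w(\partial_s),J_Nw(\partial_s)\bigr)$ as you claim. Two small remarks. First, the justification that $u^*(dd^c h)$ sees only $du^{\mathrm{vert}}$ is slightly incomplete as stated: knowing $d^c h(\xi^\#)=0$ is not by itself enough to conclude $dd^c h(\xi^\#,\cdot)=0$; one also needs the $\cL_{\xi^\#}$-invariance from condition~(7), via $\iota_{\xi^\#}dd^c h = \cL_{\xi^\#}d^c h - d\,\iota_{\xi^\#}d^c h = 0$. You do cite~(7) one step later, so this is a matter of where the hypothesis is flagged, not a gap. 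Second, your argument establishes the interior case of the maximum principle; to exclude boundary maxima one should additionally note that, since $\alpha\vert_{\partial\Sigma}=0$, the identity you derived gives $d^c_\Sigma\phi\vert_{\partial\Sigma}=(u^*d^c h)\vert_{\partial\Sigma}$, and this vanishes along an exact cylindrical Lagrangian boundary (because $d^c h$ is a positive multiple of the Liouville form $\lambda$ near infinity for contact-type $J_N$, and $\lambda\vert_L=0$ there), so the Hopf lemma applies. The paper's alternative route via Lemma~\ref{lem:no_escape_lemma} handles the boundary automatically through the Stokes term, which is one reason the authors offer it; your local argument is more informative but needs this extra boundary sentence.
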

    
    \begin{proof}[Sketch of proof]
        The statement follows from the standard maximum principle for $J_{N}$-holomorphic maps with inhomogeneous perturbations, applied to almost complex structures of contact type near infinity. One may either apply the general global version of the maximum principle \cite[Proposition~3.11]{AA24}, or give a direct argument exploiting the fact that we are working with a Liouville–Borel construction as follows. 
        
        Since the radial coordinate on the cylindrical end $r$ is invariant under $G$-action, it is globally defined function away from $X^{\mathrm{in}}(N)$. Lemma \ref{lem:no_escape_lemma} ensures that no pseudo-holomorphic strip escapes to infinity.
    \end{proof}
    \begin{rem}[The product and $H_{G}^{\bullet}(*)$-module structure] The product structure of $HW^{\bullet}$ is defined by considering the tree with three edges. In particular, this induces a natural  $H_{G}^{\bullet}(*)$-module structure on $HW^{\bullet}$. 
    Recall that the terminal map    $L\rightarrow *$ induces a morphism $$H^{\bullet}_{G}(*)\rightarrow H^{\bullet}_{G}(L),$$ endowing  $ H^{\bullet}_{G}(L)$ with the  structure of a module over $H^{\bullet}_{G}(*)$. 
By combining this map with the $G$-equviariant PSS isomorphism, the $\mu^2$ product operation, and  then passing to the direct limit, we obtain the composition 
\begin{align*}
    H^{\bullet}_{G}(*)\rightarrow  H_{G}^{\bullet}(L) \xrightarrow{\mathrm{PSS}_{G}} HF_{G}^{\bullet}(L,L)
    \xrightarrow{\mu^2(\cdot,\cdot)}  HF_{G}^{\bullet}(L,K)\rightarrow HW_{G}^{\bullet}(L,K),
\end{align*}
which equips $HW^{\bullet}_{G}$ with the desired  $H^{\bullet}_{G}(*)$-module structure. 
\end{rem}

\subsection{\texorpdfstring{$G$}{LG}-equivariant wrapped Fukaya category}
For each  object $L(N)\in \mathbf{L}_{N}$, we write 
\begin{align*}
    L(N)(t) \coloneqq  \phi_{H}^{t}(L(N))
\end{align*} 
where $\phi_{H}^{t}$ denotes the flow of the wrapping Hamiltonian $H$. By Condition (4) of Lemma \ref{lem:G_equivariant_wrapping}, the $H$-wrapping preserves the  moment map level, so $L(t)$ remains in the same level set of the moment map, and the same is true for all of its associated Borel spaces. Condition (6) of Lemma \ref{lemd:fiberwise_wrapped_data} implies that the wrapping isotopy commutes with the isotopy generated by the horizontal perturbation data.

    Choose a generic $\epsilon>0$ such that for every pair of Lagrangians $L_{0}, L_{1} \in \mathbf{L}$, the intersection  $$L_{0}(N)(\epsilon k_{0})\cap L_{1}(N)(\epsilon k_{1}) $$ is compact for  all distinct integers $k_{0} \neq k_{1}$ and for all $N \in \mathbb{N}$. This set of such admissible $\epsilon$ is a countable intersection of Baire sets and hence is dense as well.  
    
  At each level $N$,   we define a directed $A_{\infty}$-category $\cO_{N}$ whose objects are $L^{k}(N)\coloneqq  L(N)(-\epsilon k)$ for all $k\in \bZ$ and $L\in \mathbf{L}$. The morphism spaces are given by 
    \begin{align*}
        \cO_{N}(L_{0}^{k_{0}},L_{1}^{k_{1}})\coloneqq \begin{dcases*}
            CF^{\bullet}\left(L_{0}(N)(-\epsilon k_{0}), L_{1}(N)(-\epsilon k_{1})\right) & if $k_{0}<k_{1}$,\\ 
            \mathbb{K}\cdot \mathrm{id} & if $k_{0}= k_{1}$ and $L_{0} = L_{1}$,\\
            0 & otherwise.
        \end{dcases*}
    \end{align*}
    The $A_{\infty}$-structures $\mu^d_{N}$ are defined by counting the solutions to the inhomogeneous Floer equation \eqref{equ:inhomogeneous_floer_equation_used_in_surface} with compactly supported vertical perturbation terms. The perturbation data are chosen coherently across all moduli space (see \cite{Sei08} for details). Compactness of the resulting moduli space  follows from the maximum principal as in Proposition \ref{pro:maximum_principle}.

    \subsubsection{Equivariant Fukaya--Seidel category}

Let $(I,\leq)$ be a partially ordered set, viewed as a category in which there exists a unique morphism  $i \to j$ whenever $i \leq j$, and $i \to i$ is the identity morphism.  
Suppose we are given a strictly commuting diagram of $A_\infty$-categories 
\begin{align*}
    \{\cC_i, f_{ij} : \cC_i \to \cC_j\}_{i \leq j \in I},
\end{align*}
in the sense that for any composable sequence $[i \to j \to k]$ in $I$, one has
\begin{align*}
    f_{jk} \circ f_{ij} = f_{ik}.
\end{align*}
Here each $f_{ij}$ is assumed to be a strict $A_\infty$-functor.  

\begin{defn}
\label{defn:inverse_limit_of_A_infinity_categories}
The \emph{inverse limit} \(A_\infty\)-category $$ \cC \coloneqq  \varprojlim_{i \in I} \cC_i $$
is defined as follows:
\begin{enumerate}
    \item An object $C$ of $\cC$ is a  family  
    \begin{align*}
        \{C_i\}_{i \in I} \in \prod_{i \in I} \mathrm{Ob}(\cC_i) 
    \end{align*}
    satisfying $f_{ij}(C_i) = C_j$ for all $i \leq j$.
    \item For objects $C^{k_0}, C^{k_1} \in \cC$, the morphism space is given by the inverse limit
   \begin{align*}
    \cC(C^{k_0}, C^{k_1}) \coloneqq   \varprojlim_{i \in I}  \cC_i(C^{k_0}_i, C^{k_1}_i).
   \end{align*}
    \item The $A_\infty$-operations are defined componentwise:
    \begin{align*}
        \mu^d_{\cC}(c^{k_1}, \ldots, c^{k_d})\coloneqq  \varprojlim_{i \in I} \mu^d_{\cC_i}(c^{k_1}_i, \ldots, c^{k_d}_i)
    \end{align*}
    for morphisms $c^{k_j} \in \cC(C^{k_{j}}, C^{k_{j+1}})$. Since each $\mu_{\cC_{i}}^{d}$ satisfies the $A_{\infty}$-relations, so does $\mu^d_{\cC}$. 
\end{enumerate}
\end{defn}
We now show that the projection map induces a strict $A_{\infty}$-functor.
\begin{prop}
\label{pro:strict_A_infinity_functor}
Let $P_{N}$ denote the projection map introduced in  Section \ref{sec:differential}. Then $P_{N}$ induces a strict $A_{\infty}$-functor, that is  
\begin{align}
    \label{equ:strict_A_infinity_functor}
    P_{N+1}\left(\mu^{d}_{\cO_{N+1}}(\mathbf{x}_{i_{1}},\cdots, \mathbf{x}_{i_{k}})\right)= \mu_{\cO_{N}}^{d}\left(P_{N+1}(\mathbf{x}_{i_{1}}, \cdots, P_{N+1}( \mathbf{x}_{i_{d}})\right),
\end{align} 
for all  $\mathbf{x}_{i_{k}}\in CF^{\bullet}(L_{i_{k-1}},L_{i_{k}}; X(N+1))$. 
\end{prop}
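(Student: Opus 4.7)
The plan is to unfold both sides of \eqref{equ:strict_A_infinity_functor} as counts of rigid fiberwise pseudoholomorphic treed disks and then exhibit a bijection between these moduli spaces at levels $N$ and $N+1$, subject to all external critical points lying in $\crit(f_N)$. First I would fix auxiliary data inductively across approximations: by Lemma~\ref{lem:abundant_choices}, the Morse--Smale pairs, wrapping Hamiltonians, almost complex structures, and edge-wise perturbation vector fields $X_e$ can be chosen so that their restrictions from level $N+1$ to level $N$ agree while preserving transversality at each stage. The compatibilities $J_{X(N+1)}|_{X(N)}=J_{X(N)}$ and $H(N+1)|_{X(N)}=H(N)$ are already encoded in the admissibility conditions.

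The heart of the argument is the claim that any rigid element $(\psi,u)\in\cM(\mathbf{x}_0;\mathbf{x}_1,\ldots,\mathbf{x}_d;X(N+1))$ whose external critical points $c_0,c_1,\ldots,c_d$ all lie in $\crit(f_N)$ automatically factors through $X(N)\subset X(N+1)$. For the horizontal component $\psi$, I would use forward-invariance of $BG(N)\subset BG(N+1)$ under $\cV_{N+1}$: in the construction of Theorem-Definition~\ref{thm:family_of_Morse_functions}, $\cV_{N+1}=\cV_\infty|_{BG(N+1)}$, and both the horizontal lift $(-\nabla f_A)^{\#}$ (whose base flow $e^{-At}V$ preserves every finite-dimensional coordinate subspace of $\bC^\infty$) and the vertical fiberwise gradient are tangent to every stratum $BG(N)$. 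Since the root $c_0$ is in $BG(N)$, the flow along each edge stays in $BG(N)$, so every vertex and every leaf lie in $BG(N)$, and the leaf critical points land in $\crit(f_{N+1})\cap BG(N)=\crit(f_N)$. The latter identity is direct from the construction: the new critical points of $f_{A_{k+N+1}}$ are $A$-invariant planes meeting the $(k{+}N{+}1)$-th basis vector, hence disjoint from $BG(N)$ (cf.\ Example~\ref{ex:Schubert_cell}). Once $\psi$ is in $BG(N)$, the lift $u$ maps into $\pi_{N+1}^{-1}(T^*BG(N))=X(N)$, and the Floer equation~\eqref{equ:inhomogeneous_floer_equation_used_in_surface} at level $N+1$ reduces verbatim to the equation at level $N$ by the compatibilities of $J$, $H$, and the perturbation data. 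The converse inclusion of moduli spaces is tautological, yielding the bijection.

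The strict functoriality \eqref{equ:strict_A_infinity_functor} then follows by case analysis. If every input has critical-point part in $\crit(f_N)$, then $P_{N+1}(\mathbf{x}_{i_k})=\mathbf{x}_{i_k}$, and the projection of $\mu^d_{\cO_{N+1}}(\mathbf{x}_{i_1},\ldots,\mathbf{x}_{i_d})$ onto generators with output in $\crit(f_N)$ matches $\mu^d_{\cO_N}(\mathbf{x}_{i_1},\ldots,\mathbf{x}_{i_d})$ via the bijection above. If some input has critical-point part in $\crit(f_{N+1})\setminus\crit(f_N)$, the right-hand side vanishes; the left-hand side vanishes too, since any rigid tree with output in $\crit(f_N)$ would force every leaf into $\crit(f_N)$ by forward-invariance, contradicting the assumption on the input.

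The main obstacle I anticipate is making the two construction-level facts fully rigorous, namely the forward-invariance of $BG(N)$ under $\cV_{N+1}$ and the identity $\crit(f_{N+1})\cap BG(N)=\crit(f_N)$, since these are implicit in the construction rather than listed as axioms in Theorem-Definition~\ref{thm:family_of_Morse_functions}. Either one records them as additional properties of the admissible family, or one extracts them from the diagonal structure of $A$ together with the stratum-compatible choice of fiber metric and partition of unity used in the proof. Once these are in place, the rest of the argument is a direct compatibility check paralleling Lemma~\ref{lem:approximation_commutative}.
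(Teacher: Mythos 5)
Your proposal matches the paper's argument, which likewise reduces the claim to the invariance of the Morse-tree moduli counts under the inclusion $BG(N)\hookrightarrow BG(N+1)$, using the fibration of the fiberwise treed-disk moduli space over the space of Morse trees. The paper dispatches the key step by citing ``the construction of admissible families of Morse functions,'' whereas you correctly unpack this as forward-invariance of $BG(N)$ under $\cV_{N+1}$ (and the resulting identity $\crit(f_{N+1})\cap BG(N)=\crit(f_N)$), observing accurately that these follow from the tangency implicit in the definition $\cV_N := \cV_\infty|_{BG(N)}$ rather than from the numbered conditions of Theorem-Definition~\ref{thm:family_of_Morse_functions} alone.
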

\begin{proof}
    The argument is similar to that of Lemma \ref{lem:approximation_commutative}, with the role of Morse trajectories replaced by Morse trees.  Since $\psi = \pi_{N+1}\circ u$ solves the Morse tree equations, 
we obtain, whenever, $\cM(\mathbf{x}_{i_{0}}; \mathbf{x}_{i_{1}}\cdots, \mathbf{x}_{i_{d}};X(N+1))\neq \emptyset$, a fibration of moduli spaces
\begin{align*}
    \cM_{[T]}(\mathbf{x}_{i_{0}}; \mathbf{x}_{i_{1}}\cdots, \mathbf{x}_{i_{d}};X(N+1))\hookrightarrow \cM(\mathbf{x}_{i_{0}}; \mathbf{x}_{i_{1}}\cdots, \mathbf{x}_{i_{d}};X(N+1))\\ \xrightarrow{\pi_{N+1}} \cM_{\mathrm{Morse}}(c_{i_{0}}; c_{i_{1}}\cdots, c_{i_{d}};X(N+1)),  
\end{align*} 
where $\cM_{\mathrm{Morse}}$ denotes the moduli space of Morse trees defining the $A_{\infty}$-algebra  structure of $0_{BG_{N+1}}$, and $\cM_{[T]}$ denote the subspace of $\cM $ projecting to  a fixed tree $T$. 

 By the construction of admissible families of Morse functions (Definition \ref{thm:family_of_Morse_functions}), the counts of such configurations remain invariant under inclusion. As a result, the equality \eqref{equ:strict_A_infinity_functor} follows. 
\end{proof}

As a corollary,  let $I \coloneqq  (\mathbb{N}, \leq)$ be the poset of natural numbers, then  the collection $\{\cO_i\}_{i \in I^{\op}}$ of Fukaya--Seidel categories forms an inverse system, where the morphisms are given by the projection functors 
$P_{i+1}: \cO_{i+1} \to \cO_i$.
\begin{defn}
    We define the \emph{equivariant directed category} as the inverse limit
\begin{align*}
    \cO_{G} \coloneqq  \varprojlim_{i \in I^{\op}} \cO_i.
\end{align*}
 An object of $\cO_{G}$ associated to a Lagrangian $L_{k}\in \mathbf{L}$  is given by a compatible sequence of Borel Lagrangians. For notational convenience,  we continue to denote such sequences simply by $L_{k}$. The morphism space are given by 
\begin{align*}
    \cO_{G}(L_{0}^{k_{0}},L_{1}^{k_{1}})\coloneqq \begin{dcases*}
        CF_{G}^{\bullet}\left(L_{0}(-\epsilon k_{0}), L_{1}(-\epsilon k_{1})\right) & if $k_{0}<k_{1}$,\\ 
        \mathbb{K}\cdot \mathrm{id} & if $k_{0}= k_{1}$ and $L_{0} = L_{1}$,\\
        0 & otherwise.
    \end{dcases*}
\end{align*}
The $A_{\infty}$-operations are defined componentwise, as in Definition \ref{defn:inverse_limit_of_A_infinity_categories}. 
\end{defn}

A standard construction \cite{GPS1,AA24} produces a distinguished collection of elements 
\begin{align*}
    \{e_{L^k}\} \subset HF^{0}(L^{k},L^{k+1}),
\end{align*}
called the \emph{quasi-units}, defined as the images of the identity class in $H^{\bullet}(L^k)$ under the composition
\begin{align*}
    H^{\bullet}(L^k) \xrightarrow{\mathrm{PSS}} HF^{\bullet}(L^k,L^{k})\longrightarrow HF^{\bullet}(L^k,L^{k+1}).  
\end{align*}
Analogously, we define the \emph{$G$-equivariant quasi-units}. Let $\cZ_{G}$ denote the set of representatives of elements in $HF_{G}^{0}(L^k,L^{k+1})$ obtained as the image of the identity class under the composition
\begin{align*}
    H_{G}^{\bullet}(L^k) \xrightarrow{\mathrm{PSS}_{G}} HF_{G}^{\bullet}(L^k,L^{k})
    \longrightarrow HF_{G}^{\bullet}(L^k,L^{k+1}).
\end{align*}
Here, the continuation maps and $G$-equivariant PSS isomorphisms are established in  Sections \ref{sec:continuation_maps} and  \ref{sec:G_equivariant_PSS_isomorphism} respectively.

\begin{defn} 
Let $(X,\lambda)$ be a Liouville manifold acted by a compact connected Lie group $G$ which preserves the Liouville structure.  The $G$-equivariant wrapped Fukaya category is defined as the localization of the equivariant directed category $\cO_{G}$ by the quasi-units
\begin{align*}
    \cW_{G}(X)\coloneqq  \cZ_{G}^{-1}\cO_{G}
\end{align*}
in the sense of \cite{LO06}.
\end{defn}
Following \cite{AA24}, we now present an explicit model of the $G$-equivariant wrapped Fukaya category, which will be convenient for subsequent computations. We begin by constructing explicit cochain-level representatives of the $G$-equivariant quasi-units. 

At each level $N\in \bN$, the quasi-units are defined by counting maps $u :\bD\setminus \{1\} \rightarrow X(N)$ solving the Floer equation \eqref{equ:inhomogeneous_floer_equation_used_in_surface}. Here, $\bD\setminus \{1\}$ is a punctured Riemann disk, and the moving boundary condition is  given by $L^t = L(-\epsilon t)$ for $t\in [k,k+1]$, constant near the end. We denote by 
\begin{align*}
    e_{L^k}^N \in CF^{0}\left(L(N)(-\epsilon k), L(N)(-\epsilon (k+1))\right) =  \cO_{N}(L^k, L^{k+1}),
\end{align*}
the cochain-level quasi-unit.
By choosing the perturbation data coherently with respect to the system of approximations, a similar argument as before shows that
\begin{align}
    \label{equ:quasi_units_identification}
    P_{N+1}(e_{L^k}^{N+1}) = e_{L^k}^N.
\end{align}
We therefore define
\begin{align*}
    e^{G}_{L^k} \coloneqq  \varprojlim_{N} e_{L^k}^N \in CF^{0}_{G}(L^k, L^{k+1}),
\end{align*}
and refer to it as the \emph{$G$-equivariant quasi-unit}.  
Although the construction of quasi-units depends on several auxiliary choices, standard homotopy arguments ensure that the resulting elements are well-defined up to homotopy equivalence.

On the other hand, we have continuation maps as defined in Section \ref{sec:continuation_maps}, 
\begin{align*}
    F^N_{L_{0}^k, L_{1}^j}: \cO_{N}(L_{0}^{k}, L_{1}^j)\rightarrow \cO_{N}(L_{0}^{k+1}, L_{1}^{j+1})
\end{align*}
obtained by counting maps $u:\bR\times [0,1]\rightarrow X(N)$ that solve the Floer equation with corresponding moving boundary conditions.  It is standard that composition with quasi-units is homotopic to the corresponding continuation map. We now adapt this construction to the setting of Borel construction. 

More precisely, we stack the diagram of quasi-unit compositions over one another via the projections induced by the Borel construction (see Diagram \ref{diagram:commutativity_of_wrapping_and_approximations}) and we verify that all such compositions commute up to homotopy.  
Note that we adopt a different $A_{\infty}$-order convention from that used in \cite{AA24}.

\begin{center}
    \begin{tikzcd}[row sep={65,between origins}, column sep={90,between origins}]
        & \cO_{N+1}(L_{0}^{k+1}, L_{1}^j) \arrow[rr ] \arrow[dd] \arrow[dl] 
          & &  |[alias=youshangjiao]| \cO_{N+1}(L_{0}^{k+1}, L_{1}^{j+1})  \arrow[dd] \arrow[dl] \\
          \cO_{N+1}(L_{0}^{k}, L_{1}^j) \arrow[to=youshangjiao] \arrow[rr, crossing over] \arrow[dd, "P_{N+1}"'] 
          & & \cO_{N+1}(L_{0}^{k}, L_{1}^{j+1}) \arrow[dd] \\
        &  \cO_{N}(L_{0}^{k+1}, L_{1}^j)  \arrow[rr, "\mu^2_{N}\bigl(\cdot \text{, }e^N_{L_{1}^j}\bigr)" near start] \arrow[dl, "\mu^2_{N}\bigl(e^N_{L_{0}^k}\text{, }\cdot\bigr)"' near start] 
          & &  |[alias=youshangjiao2]| \cO_{N}(L_{0}^{k+1}, L_{1}^{j+1}) \arrow[dl, "\mu^2_{N}\bigl(e^N_{L_{0}^k}\text{, }\cdot\bigr)"] \\
          \cO_{N}(L_{0}^{k}, L_{1}^j)  \arrow[to=youshangjiao2, "F^N_{L_{0}^k, L_{1}^j}"] \arrow[rr, "\mu^2_{N}\bigl(\cdot \text{, }e^N_{L_{1}^j}\bigr) "'] 
          & & \cO_{N}(L_{0}^{k}, L_{1}^{j+1}) \arrow[from=uu, crossing over]
      \end{tikzcd}

      \captionsetup{type=figure}
      \captionof{figure}{Quasi-units are natural with respect to continuation maps and approximations.}
      \label{diagram:commutativity_of_wrapping_and_approximations}
\end{center}
\begin{prop}
\label{prop:commutativity_of_wrapping_and_approximations}
    The horizontal squares and triangles in Diagram \ref{diagram:commutativity_of_wrapping_and_approximations} commute up to homotopy, while the vertical squares commute strictly.
\end{prop}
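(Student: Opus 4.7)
The plan is to split the cube-shaped diagram into its structural pieces---three pairs of vertical squares, two horizontal squares, and the two horizontal triangles that identify continuation with quasi-unit composition---and verify each, paying particular attention to coherence across the Borel approximations $N \to N+1$.

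First, for the vertical squares: by Proposition~\ref{pro:strict_A_infinity_functor}, $P_{N+1}$ is a strict $A_\infty$-functor, so it intertwines $\mu^2$ on the nose. Together with the identity $P_{N+1}(e^{N+1}_{L^k}) = e^N_{L^k}$ from~\eqref{equ:quasi_units_identification}, this yields strict commutativity for the two vertical squares whose horizontal edges are one-sided multiplications by quasi-units. The third vertical square, whose horizontal edges are the continuation maps $F^N$, commutes strictly by the same mechanism as in diagram~\eqref{equ:continuation_commutative}: with coherently chosen homotopies of Hamiltonians, the parametrized moduli of continuation strips fiber over the corresponding Morse trajectories in $BG(N)$, and Theorem-Definition~\ref{thm:family_of_Morse_functions} ensures that no new contributions arise in passing from $N$ to $N+1$.

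Next, the horizontal faces at a single level $N$. The square expressing the compatibility of the two one-sided quasi-unit multiplications is a direct consequence of the $A_\infty$-relation for the triple $(e^N_{L_0^k}, a, e^N_{L_1^j})$: because the quasi-units are $\mu^1$-cycles, the operation $\mu^3(e^N_{L_0^k}, \cdot, e^N_{L_1^j})$ provides an explicit chain homotopy between $\mu^2(\mu^2(e^N_{L_0^k}, a), e^N_{L_1^j})$ and $\mu^2(e^N_{L_0^k}, \mu^2(a, e^N_{L_1^j}))$ on cycles. Each horizontal triangle records the standard fact that multiplication by a quasi-unit is homotopic to the corresponding continuation map (cf.\ \cite{AA24}): the homotopy is realized by a one-parameter family of treed disks interpolating between a broken configuration (a quasi-unit half-disk glued to a Floer strip) and an unbroken continuation strip with moving Lagrangian boundary condition.

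The main obstacle is propagating these homotopies coherently up the Borel tower. I would address this inductively in $N$, in the spirit of Lemma~\ref{lem:abundant_choices}: having chosen the interpolating perturbation data at level $N$, extend to $X(N+1)$ so that the restriction to $X(N)$ agrees with the previous choice and the parametrized moduli at level $N+1$ project, under $P_{N+1}$, to those at level $N$. By the properties established in Theorem-Definition~\ref{thm:family_of_Morse_functions}, no spurious configurations appear in the projection, so the level-$N$ homotopies assemble into a coherent system over all $N$. Passing to the inverse limit of morphism spaces yields the required homotopy in $\cO_G$.
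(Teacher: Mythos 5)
Your proof takes essentially the same approach as the paper: the vertical faces commute strictly by Proposition~\ref{pro:strict_A_infinity_functor} together with \eqref{equ:quasi_units_identification} (and \eqref{equ:continuation_commutative} for the face carrying $F^N$), while the horizontal square and triangles commute up to homotopy by the argument of \cite[Lemma 3.19]{AA24}, which you correctly unpack as the $\mu^3(e^N_{L_0^k},\cdot,e^N_{L_1^j})$-homotopy for the square and an interpolating family of treed configurations for the triangles. Your closing paragraph on propagating the homotopies coherently up the Borel tower goes beyond what the proposition as stated requires (each level suffices on its own, together with the strict vertical faces), but it is exactly the observation that licenses passing to the inverse limit in Corollary~\ref{cor:G_equivariant_homotopy_commutative}.
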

\begin{proof}
    The homotopy commutativity of the horizontal squares and triangles follows from an  argument identitical to that in  \cite[Lemma 3.19]{AA24}.  
    The strict commutativity of the vertical squares is an immediate consequence of  Proposition \ref{pro:strict_A_infinity_functor} together with the identification \eqref{equ:quasi_units_identification}.  
\end{proof}
By taking the inverse limit, we obtain the corresponding commutative diagram in $\cO_{G}$. 
\begin{cor}
\label{cor:G_equivariant_homotopy_commutative}
The following diagram commutes up to homotopy:
\begin{equation}
    \label{equ:G_equivariant_homotopy_commutative}
     \begin{tikzcd}[row sep={70,between origins}, column sep={120,between origins}]
        \cO_{G}(L_{0}^{k+1}, L_{1}^{j}) \arrow[r, "\mu^2_{G}\bigl(\cdot \text{, }e^G_{L_{1}^j}\bigr)"] \arrow[d, "\mu^2_{G}\bigl(e^G_{L_{0}^k}\text{, }\cdot\bigr)"'] & |[alias=youshangjiao]| \cO_{G}(L_{0}^{k+1}, L_{1}^{j+1}) \arrow[d, "\mu^2_{G}\bigl(e^G_{L_{0}^k}\text{, }\cdot\bigr)"] \\
        \cO_{G}(L_{0}^{k}, L_{1}^{j})  \arrow[to=youshangjiao, "F^G_{L_{0}^k, L_{1}^j}"]\arrow[r, "\mu^2_{G}\bigl(\cdot \text{, }e^G_{L_{1}^j}\bigr)"] &  \cO_{G}(L_{0}^{k}, L_{1}^{j+1})
     \end{tikzcd}.
 \end{equation} 
\end{cor}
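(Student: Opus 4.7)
The plan is to deduce the corollary directly from Proposition \ref{prop:commutativity_of_wrapping_and_approximations} by lifting the homotopies constructed at each finite approximation level $N$ to the inverse limit. Concretely, at each level $N$, the homotopy commutativity of the horizontal square in Diagram \ref{diagram:commutativity_of_wrapping_and_approximations} produces a degree $-1$ map
\begin{equation*}
    h_{N}:\cO_{N}(L_{0}^{k+1},L_{1}^{j})\longrightarrow \cO_{N}(L_{0}^{k},L_{1}^{j+1})
\end{equation*}
satisfying the standard chain homotopy identity relating $F^{N}_{L_{0}^{k},L_{1}^{j}}\circ \mu^{2}_{N}(e^{N}_{L_{0}^{k}},\cdot)$ and $\mu^{2}_{N}(\cdot,e^{N}_{L_{1}^{j}})\circ F^{N}_{L_{0}^{k},L_{1}^{j}}$, modulo $\mu^{1}_{N}$ and its precomposition. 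The homotopies $h_{N}$ are defined by counting one-parameter families of pearl-like configurations that interpolate between the two compositions in question; the relevant moduli spaces are cut out by the same family of Cauchy--Riemann equations described in Definition \ref{defn:Fiberwise_pseudoholomorphic_treed_disks}, subject to perturbation data parametrized by an interval.

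The first key step is to choose the auxiliary data defining $h_{N}$ coherently across the approximation tower. Since the perturbation data for the $A_{\infty}$-operations, the wrapping isotopies, and the quasi-units $e^{N}_{L^{k}}$ are already selected to be compatible with the inclusions $X(N)\hookrightarrow X(N+1)$ (see the identification \eqref{equ:quasi_units_identification} and Proposition \ref{pro:strict_A_infinity_functor}), one may inductively extend any choice of interpolating data on $X(N)$ to $X(N+1)$ in such a way that the moduli spaces defining $h_{N+1}$ restrict to those defining $h_{N}$ over the appropriate strata of $BG(N+1)$. This is essentially the same mechanism as in Lemma \ref{lem:approximation_commutative} and Proposition \ref{pro:strict_A_infinity_functor}: the horizontal component of the pearl configuration projects to a Morse tree on $BG(N+1)$, and condition (3) of Theorem-Definition \ref{thm:family_of_Morse_functions} ensures the count of such trees agrees with the count on $BG(N)$ whenever both asymptotes are already present at level $N$.

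The second step is to conclude, via the fibration of moduli spaces analogous to \eqref{equ:fibration_of_moduli_space}, the strict compatibility
\begin{equation*}
    P_{N+1}\circ h_{N+1}=h_{N}\circ P_{N+1}.
\end{equation*}
Given this, the sequence $\{h_{N}\}$ defines an element of the inverse system, and we set
\begin{equation*}
    h_{G}\coloneqq \varprojlim_{N} h_{N}:\cO_{G}(L_{0}^{k+1},L_{1}^{j})\longrightarrow \cO_{G}(L_{0}^{k},L_{1}^{j+1}).
\end{equation*}
Because the $A_{\infty}$-operations, continuation maps, and quasi-units in $\cO_{G}$ are all defined componentwise, the chain homotopy identity satisfied by $h_{N}$ at each level passes to the inverse limit, yielding the desired homotopy commutativity of diagram \eqref{equ:G_equivariant_homotopy_commutative}.

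The main technical subtlety will be the careful inductive construction of compatible perturbation data. One must ensure simultaneously that (i) the moduli spaces defining $h_{N}$ are regular at every stage, (ii) the Morse-theoretic perturbation on the base $BG(N)$ fits into the admissible family of Theorem-Definition \ref{thm:family_of_Morse_functions}, and (iii) the strict equality $P_{N+1}\circ h_{N+1}=h_{N}\circ P_{N+1}$ holds on the nose rather than merely up to a further homotopy. The last point is where the Morse-theoretic input of Theorem-Definition \ref{thm:family_of_Morse_functions} is essential: without the nestedness of stable and unstable manifolds it would not be possible to turn the finite-level homotopies into an actual morphism in the inverse limit category.
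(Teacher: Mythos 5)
Your proposal is correct and follows the same route as the paper, which in fact disposes of the corollary in a single sentence: ``By taking the inverse limit, we obtain the corresponding commutative diagram in $\cO_{G}$.'' You have filled in the details that this sentence glosses over, and you identify the one genuinely non-trivial point — that the chain homotopies $h_{N}$ at each finite stage must be chosen so that $P_{N+1}\circ h_{N+1}=h_{N}\circ P_{N+1}$ holds strictly, not merely up to a further homotopy, since otherwise the $h_{N}$ would not assemble into a morphism of the inverse system. Your explanation that this strictness comes from the same Morse-theoretic nestedness conditions (Theorem-Definition \ref{thm:family_of_Morse_functions}) and moduli-space fibration argument underlying Lemma \ref{lem:approximation_commutative} and Proposition \ref{pro:strict_A_infinity_functor} is exactly the mechanism the paper is implicitly invoking. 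One minor wrinkle: the composition you write out when stating the chain homotopy identity for $h_{N}$ (involving $F^{N}_{L_{0}^{k},L_{1}^{j}}$ pre- and post-composed with $\mu^{2}_{N}$) does not quite parse with the sources and targets you've given $h_{N}$; the homotopy $h_{N}:\cO_{N}(L_{0}^{k+1},L_{1}^{j})\to\cO_{N}(L_{0}^{k},L_{1}^{j+1})$ interpolates between the two ways around the square, namely $\mu^{2}_{N}(\cdot,e^{N}_{L_{1}^{j}})\circ\mu^{2}_{N}(e^{N}_{L_{0}^{k}},\cdot)$ and $\mu^{2}_{N}(e^{N}_{L_{0}^{k}},\cdot)\circ\mu^{2}_{N}(\cdot,e^{N}_{L_{1}^{j}})$, with separate homotopies for the two triangles involving $F^{N}$. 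This is a typo-level issue and does not affect the validity of the argument.
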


To proceed, for $L^k \in \cO_{G}$, we denote by   $\cY_{L^k}$ the corresponding  Yoneda module 
\begin{align*}
    X \rightarrow \cO_{G}(X,L^k). 
\end{align*}
By Corollary \ref{cor:G_equivariant_homotopy_commutative}, these Yoneda modules form a direct system, where the connecting morphisms $\cY_{L^k}\rightarrow \cY_{L^{k+1}}$ are induced by composition with  $G$-equivariant quasi-units $e_{L^k}^{G}$.

Hence, we define 
\begin{align*}
    \cY_{L^{\infty}} \coloneqq  \uhclim{k\rightarrow \infty} \cY_{L^k},
\end{align*}
where we model the homotopy colimit by the usual telescope construction:  
\begin{align*}
    \uhclim{k\rightarrow \infty} \cY_{L^k} \cong \Cone\left(\bigoplus_{k=0}^{\infty} \cY_{L^k}\rightarrow  \bigoplus_{k=0}^{\infty} \cY_{L^k}\right).
\end{align*}
Here, the arrow denotes the direct sum of the maps $\mathrm{id}- e_{L^k}^{G}$, where $e_{L^k}^{G}$ are the $G$-equivariant quasi-units. 

Corollary~\ref{cor:colimit_compute_localization} essentially means that the category generated by Yoneda modules $\cY_{L^{\infty}}$ are $\cZ_{G}$-local (see \cite[Section 3]{GPS1} and \cite[Section 3.5]{AA24}), and therefore quasi-equivalent to the localization of $\cO_{G}$. In other words, we have 
\begin{align*}
        \cW_{G}(L_{0},L_{1}) \cong \hom_{\cO_{G}}(\cY_{L_{0}^{\infty}}, \cY_{L_{1}^{\infty}}).
\end{align*}
For our purposes, we record only the minimal properties that will be used below, and refer the reader to the references above for a complete discussion.
\begin{lemma}
\label{lem:quasi_isomorphism_between_wrapping}
There is a quasi-isomorphism 
\begin{align*}
    \hom_{\cO_{G}}(\cY_{L_{0}^{\infty}}, \cY_{L_{1}^{\infty}}) \cong \uhlim{k\rightarrow \infty} \uhclim{j\rightarrow \infty}\cO_{G}(L_{0}^k,L_{1}^j),
\end{align*}
where the  morphisms in the direct and inverse  system are given by composition with the $G$-equivariant quasi-units. 
\end{lemma}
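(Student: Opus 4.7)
The plan is to unfold both sides through standard $A_\infty$-module manipulations, combining hom-out-of-hocolim with the $A_\infty$-Yoneda lemma and the fact that evaluation at a fixed object commutes with telescopes. Everything needed has already been assembled in the preceding discussion.

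First, using the telescope model
$$\cY_{L_0^\infty} \;=\; \Cone\Bigl(\bigoplus_{k\ge 0}\cY_{L_0^k}\xrightarrow{\,\mathrm{id}-e\,}\bigoplus_{k\ge 0}\cY_{L_0^k}\Bigr),$$
applying the dg-functor $\hom_{\cO_G}(-,\cY_{L_1^\infty})$ converts direct sums into direct products and a mapping cone into a (shifted) mapping cone, producing precisely the telescope model of a homotopy inverse limit:
$$\hom_{\cO_G}(\cY_{L_0^\infty},\cY_{L_1^\infty}) \;\simeq\; \uhlim{k\to\infty}\hom_{\cO_G}(\cY_{L_0^k},\cY_{L_1^\infty}),$$
whose transition maps are, by construction, precomposition with $\mu^2_G(e^G_{L_0^k},-)$, i.e.\ the equivariant quasi-units on the $L_0$-side.

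Next, the $A_\infty$-Yoneda lemma provides a natural quasi-isomorphism $\hom_{\cO_G}(\cY_{L_0^k},M)\xrightarrow{\sim} M(L_0^k)$ for every $M\in\mathrm{Mod}^{\cO_G}$. Evaluation at a fixed object is an exact dg-functor that strictly commutes with direct sums and mapping cones, hence with the telescope defining $\cY_{L_1^\infty}$. Applying this to $M=\cY_{L_1^\infty}$ yields
$$\hom_{\cO_G}(\cY_{L_0^k},\cY_{L_1^\infty}) \;\simeq\; \cY_{L_1^\infty}(L_0^k) \;\simeq\; \uhclim{j\to\infty}\cO_G(L_0^k,L_1^j),$$
with the inner direct system's structure maps given by postcomposition with $\mu^2_G(-,e^G_{L_1^j})$. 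Splicing this with the previous step produces the claimed identification.

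The remaining task---and the only point that requires genuine care---is to verify that the transition maps of the double tower obtained through Steps~1 and~2 really agree with those induced by composition with equivariant quasi-units on both sides, so that the iterated $\uhlim\,\uhclim$ is well-defined and matches the one appearing in the statement. This is essentially the homotopy-compatibility check furnished by Corollary~\ref{cor:G_equivariant_homotopy_commutative}: it ensures that precomposition with $e^G_{L_0^k}$ and postcomposition with $e^G_{L_1^j}$ can be interchanged up to coherent homotopy in $\cO_G$, so the two natural orders of passing to the telescope-limits agree. The bookkeeping is then a $G$-equivariant adaptation of the argument in \cite[Section 3.5]{AA24}, and no additional geometric input is required.
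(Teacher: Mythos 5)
Your proposal is correct and follows essentially the same route as the paper's proof: unfold the telescope model of $\cY_{L_0^\infty}$ under $\hom_{\cO_G}(-,\cY_{L_1^\infty})$, which turns direct sums into products and the cone into the dual cone (producing the $\uhlim$), then apply the $A_\infty$-Yoneda lemma together with the fact that evaluation at $L_0^k$ commutes with the telescope defining $\cY_{L_1^\infty}$ (producing the inner $\uhclim$); this is precisely what the paper's terse proof and its citation of \cite[Lemma 3.22]{AA24} package together. Your Step 4 is a reasonable precaution but is largely handled by the naturality of the Yoneda quasi-isomorphism once the direct system of Yoneda modules $\{\cY_{L_1^j}\}$ has been set up via Corollary~\ref{cor:G_equivariant_homotopy_commutative}, so it is more a confirmation of well-definedness than a separately needed argument.
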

\begin{proof}
    Since the hom-functor preserves the limits,  we obtain a natural duality between the  the homotopy equalizer and coequalizer of the identity map and the connecting morphism. 
These constructions provide respective  models for  homotopy (co)limit. See \cite[Lemma 3.22]{AA24} for further details. 
\end{proof}

\begin{cor}
\label{cor:colimit_compute_localization}
For each pair of objects $L_{0}^{k}$ and $L_{1}$ of objects in $\cO$, there is a  natural isomorphism 
\begin{align*}
    \varinjlim_{j\rightarrow \infty } HF^{\bullet}_{G}(L^k_{0},L_{1}^j) = H^{\bullet} \hom_{\cO}(\cY_{L_{0}^{\infty}}, \cY_{L_{1}^{\infty}}).
\end{align*}
\end{cor}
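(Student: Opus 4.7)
The plan is to deduce the corollary from Lemma \ref{lem:quasi_isomorphism_between_wrapping} in two steps: first collapse the homotopy inverse limit in the $k$-direction using the homotopy commutativity established in Corollary \ref{cor:G_equivariant_homotopy_commutative}, and second compute the inner homotopy colimit in the $j$-direction via the telescope, identifying its cohomology with the ordinary direct limit of equivariant Floer groups.

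By Lemma \ref{lem:quasi_isomorphism_between_wrapping}, the complex $\hom_{\cO_G}(\cY_{L_0^\infty}, \cY_{L_1^\infty})$ is quasi-isomorphic to $\uhlim{k\to\infty}\uhclim{j\to\infty}\cO_G(L_0^k, L_1^j)$, with the structure maps of the inverse and direct systems given by pre- and post-composition with the $G$-equivariant quasi-units $e^G_{L_0^k}$ and $e^G_{L_1^j}$. Fixing $k$, I would replace the inner term by the telescope
\begin{align*}
    \uhclim{j\to\infty}\cO_G(L_0^k, L_1^j) \;\simeq\; \Cone\Bigl(\bigoplus_{j}\cO_G(L_0^k, L_1^j) \xrightarrow{\mathrm{id}-\mu^2_G(\,\cdot\,, e^G_{L_1^j})} \bigoplus_{j}\cO_G(L_0^k, L_1^j)\Bigr),
\end{align*}
whose long exact sequence, together with the exactness of sequential direct limits, yields $H^\bullet\bigl(\uhclim{j}\cO_G(L_0^k, L_1^j)\bigr) \cong \varinjlim_j HF^\bullet_G(L_0^k, L_1^j)$. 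Here I invoke Corollary \ref{cor:G_equivariant_homotopy_commutative} to identify, on the level of cohomology, right multiplication by $e^G_{L_1^j}$ with the $G$-equivariant continuation map $F^G_{L_0^k, L_1^j}$, so that the resulting direct system matches the one in the statement.

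Next, I would show that the outer homotopy inverse limit over $k$ is equivalent to any single stage. Again by Corollary \ref{cor:G_equivariant_homotopy_commutative}, the structure map $\cO_G(L_0^{k+1}, L_1^j)\to\cO_G(L_0^k, L_1^j)$ given by left multiplication with $e^G_{L_0^k}$ commutes up to homotopy with right multiplication by $e^G_{L_1^j}$. After passing to $\varinjlim_j$ at the level of cohomology, the continuation maps in the $L_1$-direction render multiplication by $e^G_{L_0^k}$ an isomorphism; this is tautologically the content of the localization $\cW_G = \cZ_G^{-1}\cO_G$ at the quasi-units. Consequently the inverse system $\{\varinjlim_j HF^\bullet_G(L_0^k, L_1^j)\}_k$ has all transition maps isomorphisms, the Mittag-Leffler condition is trivially satisfied, the $\varprojlim^1$ contribution vanishes, and $H^\bullet\bigl(\uhlim{k}\uhclim{j}\cO_G(L_0^k, L_1^j)\bigr)$ reduces to $\varinjlim_j HF^\bullet_G(L_0^k, L_1^j)$ for any fixed $k$, which is the desired identification.

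The main delicate point is the interplay between the two limits: one must ensure that taking the homotopy inverse limit over $k$ commutes with passing to cohomology after $j$-stabilization. This is not automatic for abstract inverse systems of complexes, but follows once the system is shown to be essentially constant in the derived sense, which is precisely what Corollary \ref{cor:G_equivariant_homotopy_commutative} provides. A reader familiar with the localization formalism of \cite{GPS1, AA24} will recognize this as the standard mechanism by which homotopy colimits of Yoneda modules compute wrapped morphism spaces, so beyond tracking the equivariant analogues of the quasi-units no new analytical input is required.
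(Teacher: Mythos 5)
Your proposal is correct and follows essentially the same route as the paper: compute the inner homotopy colimit via the telescope to identify its cohomology with $\varinjlim_j HF^\bullet_G$, invoke Corollary~\ref{cor:G_equivariant_homotopy_commutative} to show the $k$-direction transition maps are quasi-isomorphisms after $j$-stabilization, and then apply Mittag--Leffler / vanishing of $\varprojlim^1$ to collapse the outer homotopy inverse limit. One phrasing to tighten: the claim that left multiplication by $e^G_{L_0^k}$ becomes an isomorphism ``tautologically from the localization $\cW_G = \cZ_G^{-1}\cO_G$'' reads as circular, since the corollary is precisely what lets one compute the localized hom via colimits; the actual mechanism (which you also correctly identify) is that the diagonal continuation map $F^G_{L_0^k,L_1^j}$ in Corollary~\ref{cor:G_equivariant_homotopy_commutative} supplies a two-sided homotopy inverse to quasi-unit multiplication once one passes to $\uhclim{j}$, as in \cite[Lemma 3.23]{AA24}.
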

\begin{proof}
The map 
\begin{align*} 
    \uhclim{j\rightarrow \infty}\cO_{G}(L_{0}^{k+1},L_{1}^j) \rightarrow  \uhclim{j\rightarrow \infty}\cO_{G}(L_{0}^{k},L_{1}^j)
\end{align*}
given by composing the qusi-unit is a quasi-isomorphism. This follows from the compatibility of the continuation maps with $G$-equivariant  quasi-unit (Corollary \ref{cor:G_equivariant_homotopy_commutative}); see \cite[Lemma 3.23, Corollary 3.24]{AA24} for details. The inverse system obtained by varying $k$ satisfies the Mittag-Leffler condition.  In particular, the following composition 
\begin{align}
    \uhclim{j\rightarrow \infty}\cO_{G}(L_{0}^{k+1},L_{1}^j) \leftarrow  \uhlim{k\rightarrow \infty} \uhclim{j\rightarrow \infty}\cO_{G}(L_{0}^k,L_{1}^j) \cong  \hom_{\cO_{G}}(\cY_{L_{0}^{\infty}}, \cY_{L_{1}^{\infty}}) 
\end{align}
induces an isomorphism on cohomology. 
\end{proof}

\section{Equivariant wrapped Floer theory for Lagrangian correspondences}

Throughout this section, we restrict  to the case where the  Liouville action of  $G$ on a Liouville manifold $X$ is free. For simplicity of exposition, we will work with ungraded  (wrapped) Floer groups.

Let $\cW_{G}(X)_{0}\subset \cW_{G}(X)$ denote the full subcategory generated by $G$-invariant exact Lagrangians contained in $\mu^{-1}(0)$, we construct a $(\cW_{G}(X)_{0}, \cW(X\sq G))$-bilinear module
\begin{align}
    \label{equ:bilinear_functor}
    \cB_{G}: \cW_{G}(X)^{\mathrm{op}}_{0}\times \cW(X\sq G) \rightarrow \mathbf{Ch},
\end{align}
which on the level of cohomology takes $L_{i}\times \underline{L}_{j}$ to  isomorphic cohomology groups $HW^{\bullet}_{G}(L_{i}, L_{j}) = HW^{\bullet} ( \underline{L}_{i},  \underline{L}_{j}) $.  We then  show for $L \in \ob \cW_{G}(X)_{0}$,  the functor  
\begin{align*}
    \cB_{G}(L, -) : \cW_{G}(X)_{0}\rightarrow \mathrm{Mod}_{\cW(X\sq G)}
\end{align*}
takes values in representable right modules.  Up to quasi-equivalence, this defines an $A_{\infty}$-functor
\begin{align*}
    \Psi_{\downarrow}:  \cW_{G}(X)_{0} \rightarrow \cW(X\sq G),
\end{align*}
which sends a $G$-invariant exact cylindrical Lagrangian  $L$ to its reduction  $\underline{L}$. 

An similar argument also applies to $\cB_{G}(-, \underline{L})$, yielding a corresponding $A_{\infty}$-functor 
\begin{align*}
    \Psi_{\uparrow}:  \cW(X\sq G) \rightarrow  \cW_{G}(X)_{0}, 
\end{align*}
 which sends a reduced Lagrangian $\underline{L}$ to its preimage $L\subset \mu^{-1}(0)$. Hence, $\cW(X\sq G)$ is quasi-equivalent to this full subcategory of $\cW_{G}(X)$.

 \begin{convention}
 To simplify notation, from now on we will drop the subscript $0$ and, whenever we refer to the equivariant Fukaya category, implicitly restrict to the full subcategory consisting of Lagrangians contained in the zero level set of the moment map. 
 \end{convention}

 The construction was first introduced in \cite{Ma15} and, in the exact setting, further developed by \cite{gao2018functors}. See also \cite{KLZ23,GPS3} for related constructions.  We adapt the classical construction to the Borel construction. 
\subsection{Moment correspondence}
\begin{defn}
    Let  $(X, \lambda_X), (Y, \lambda_Y)$ be Liouville manifolds. An \textit{exact Lagrangian correspondence}  from $X$ to $Y$ is an exact Lagrangian submanifold 
    $$L_{X,Y} \subset (X^- \times Y, -\mathrm{pr}_{X}^*\lambda_X +  \mathrm{pr}_{Y}^*\lambda_Y),$$
    where $X^{-}$ denotes $X$ equipped with the negated Liouville form $-\lambda_{X}$. 
\end{defn}
In the setting of Liouville reduction, the moment correspondence provides a canonical choice for Lagrangian correspondence. 
\begin{defn}    
Let $X$ be a Liouville manifold equipped with a Liouville action of a Lie group $G$. The \emph{moment correspondence} is defined as the Lagrangian submanifold 
\begin{align*}
    L^{\pi}\coloneqq  \left\{ (x, [x]) \in X^{-}\times X\sq G \mid x\in \mu^{-1} (0)\right\}\subset X^{-}\times X\sq G.
\end{align*}
\end{defn}
The fact that $L^{\pi}$ is Lagrangian follows directly from the defining property of symplectic reduction.  
Moreover, since the $G$-action preserves the Liouville structure, we obtain the following characterization.
\begin{lemma}
    \label{lem:moment_correspondence}
    The moment correspondence $L^{\pi}$ is an exact, cylindrical Lagrangian submanifold. 
\end{lemma}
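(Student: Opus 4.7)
The plan is to verify the three asserted properties of $L^\pi$ in turn, with the Lagrangian property as a warm-up, exactness by an explicit primitive computation, and cylindricity by tracing the Liouville vector field through the reduction construction.

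First I would record that the map $\iota: \mu^{-1}(0) \to X^- \times X\sq G$ defined by $x \mapsto (x, [x])$ is a smooth embedding whose image is $L^\pi$: smoothness of $[\cdot]$ follows from the assumption that $G$ acts freely on $\mu^{-1}(0)$, and injectivity is immediate. The Lagrangian property is the standard content of symplectic reduction (Marsden--Weinstein): if $\omega_{\sq}$ denotes the reduced form on $X\sq G$ and $p: \mu^{-1}(0) \to X\sq G$ is the quotient, then by definition $p^*\omega_{\sq} = i^*\omega_X$, so $\iota^*(-\mathrm{pr}_X^*\omega_X + \mathrm{pr}_{X\sq G}^*\omega_{\sq}) = -i^*\omega_X + p^*\omega_{\sq} = 0$, and the dimension count $\dim L^\pi = \dim \mu^{-1}(0) = \dim X - \dim G = \tfrac{1}{2}\dim(X^- \times X\sq G)$ confirms half-dimensionality.

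Next, for exactness, the key observation is that the Liouville form $\lambda_{X\sq G}$ on the reduction is constructed precisely by descending $i^*\lambda_X$ from $\mu^{-1}(0)$ (see the proof of Theorem~\ref{thm:Liouville_reduction}, using $\iota_{X_\xi}\lambda_X = \langle \mu_X, \xi\rangle \equiv 0$ on $\mu^{-1}(0)$ so that $\lambda_X$ is basic). Consequently $p^*\lambda_{X\sq G} = i^*\lambda_X$, and therefore
\begin{equation*}
\iota^*\bigl(-\mathrm{pr}_X^*\lambda_X + \mathrm{pr}_{X\sq G}^*\lambda_{X\sq G}\bigr) = -i^*\lambda_X + p^*\lambda_{X\sq G} = 0.
\end{equation*}
Thus $L^\pi$ is in fact strictly exact with primitive identically zero, which is strictly stronger than what is claimed.

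Finally, for cylindricity, I would verify that the product Liouville vector field $Z_{X} \oplus Z_{X\sq G}$ on $X^- \times X\sq G$ is tangent to $L^\pi$ outside a compact set. Using Definition~\ref{defn:Liouville_action}(1) together with the identity derived in the proof (namely $Z_X\,\mu_X = \mu_X$ near infinity), one sees $Z_X$ is tangent to $\mu_X^{-1}(0)$ along the cylindrical end. Moreover, by the construction in Theorem~\ref{thm:Liouville_reduction}, the restriction of $Z_X$ to $\mu^{-1}(0)$ descends to $Z_{X\sq G}$, so $d\iota(Z_X|_x) = (Z_X|_x, Z_{X\sq G}|_{[x]}) = (Z_X \oplus Z_{X\sq G})|_{\iota(x)}$. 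Hence the flow of $Z_X \oplus Z_{X\sq G}$ preserves $L^\pi$ near infinity, which is the cylindricity condition.

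The hard part is nothing more than bookkeeping about the compactness exhaustions: I need to be sure that a single compact subset of $X$ controls both the region in which the action is not yet of pre-Liouville form and the region in which $\mu^{-1}(0)$ is not yet cylindrical, so that ``cylindrical'' really holds beyond a compact set of $L^\pi$. This follows from Theorem~\ref{thm:perturbing_1_form} (which lets us assume the action is Liouville everywhere after a compactly supported deformation) and the finite-type assumption on $X$. Modulo these standard reductions, the three bullet points above complete the proof.
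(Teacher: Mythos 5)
Your proof is correct, and the exactness step matches the paper's exactly: you both identify $p^*\lambda_{X\sq G} = i^*\lambda_X$ and conclude $\lambda_{X^-\times X\sq G}|_{L^\pi} = 0$, so the primitive is constant. For the cylindricity step, however, you take a more laborious path than necessary. The paper derives cylindricity directly from what you have already proved: once one knows that $\lambda$ restricts to zero on the Lagrangian $L^\pi$, the Liouville vector field $Z$ (defined by $\iota_Z\omega = \lambda$) is automatically tangent to $L^\pi$ \emph{everywhere}, since for any $v \in TL^\pi$ one has $\omega(Z,v) = \lambda(v) = 0$, which places $Z$ in $(TL^\pi)^{\omega\perp} = TL^\pi$. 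Your version instead unpacks the Liouville vector field $Z_X \oplus Z_{X\sq G}$ explicitly, shows $Z_X$ is tangent to $\mu^{-1}(0)$, and chases the descent through the reduction; this works, but it re-derives from scratch information that the exactness step already gives you for free, and it only shows tangency near infinity (which is enough for cylindricity, but the slicker argument gives a genuinely stronger conclusion). Your closing paragraph about compactness exhaustions and Theorem~\ref{thm:perturbing_1_form} is also unnecessary here: the section in which this lemma lives already assumes the $G$-action is a genuine Liouville action (preserving $\lambda$ globally), so there is no compact region in which the hypothesis fails. The takeaway is that once $\lambda|_{L^\pi}=0$ is in hand, the Lagrangian condition does the remaining work; your explicit route buys nothing extra.
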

\begin{proof}
Since  $\lambda_{X\sq G}$ is obtained by descending the $G$-invariant Liouville form $\lambda_{X}|_{\mu_{X}^{-1}(0)}$, we have 
\begin{align*}
    \lambda_{X^{-}\times (X\sq G)}|_{L^{\pi}} = 0.
\end{align*}
Hence $L^{\pi}$ is an exact Lagrangian submanifold with constant primitive. In particular, this implies that the Liouville vector field is tangent to $L^{\pi}$ everywhere, since $Zf = 0$ for any function $f$ vanishing on $L^{\pi}$. 
\end{proof}

\begin{defn}
\label{defn:G_composable_intersections}
A tuple of exact cylindrical $G$-invariant Lagrangians $$(L_{0},L_{1},\cdots, L_{d})\subset \mu^{-1}(0)$$  is said to be  \textit{$G$-composable} if it satisfies the following equivalent conditions:
\begin{enumerate}
    \item For any $i,j$, the intersection $L_{i}\cap L_{j}$ is clean and consists of a (possibly empty) union of $G$-orbits of single points; that is,
\begin{align}
            L_{i}\cap L_{j} = \bigsqcup_{\alpha} G\cdot z_{\alpha}.
\end{align}
    \item The corresponding reduced Lagrangians \( \underline{L}_{i} \subset X \sslash G \) intersect pairwise transversely. 
    \item The product $L_{i}\times \underline{L}_{j} \subset X^{-}\times (X\sq G)$ intersects the moment correspondence $L^{\pi}$ cleanly along the $G$-orbits of the form $(G\cdot z_{\alpha}) \times z_{\alpha}$, where $z_{\alpha}$ denotes  both the intersection point of  $\underline{L}_{i}\cap \underline{L}_{j}$  and a chosen representative of its lift in $\mu^{-1}(0)$. 
\end{enumerate}    
\end{defn}
For a general tuple of Lagrangians, we may replace each Lagrangian by an isomorphic object  so that the resulting tuple becomes $G$-composable. In Condition (2), by requirement of pairwise transversality, we also exclude triple or higher intersections. For a pair of Lagrangians $(L_{i},\underline{L}_{j}) \subset X^{-}\times (X\sq G)$, we similarly say it is $G$-composable if the pair  $(L_{i},L_{j}) $ is $G$-composable.

Formally, we will consider Lagrangian correspondence $$L_{G}^{\pi} \subset X^{-}_{G}\times X\sq G,$$  obtained as the direct limit of   a sequence of finite-dimensional approximations, which fits into the diagram  
\begin{equation} 
    \label{equ:correspondence_borel_construction}
\begin{tikzcd} 
  L^{\pi} \arrow[r, hook] \arrow[d]
    & L^{\pi}(N) \arrow[r] \arrow[d]
    & 0_{BG(N)}  \arrow[d]\\
    X^{-}\times X\sq G \arrow[r, hook] 
    & X^{-}(N)\times X\sq G \arrow[r, "\pi_{N}"] 
     & T^*BG(N)  
    \end{tikzcd}.
\end{equation}
 Correspondingly, for each pair of $G$-composable Lagrangians $L_{i},L_{j}\subset \mu^{-1}(0)$, we consider Lagrangian Borel spaces $(L_{i}(N)\times \underline{L}_{j})$ in $X^{-}(N)\times X\sq G$ lying over the zero section of $T^*BG(N)$.  
 
Before proceeding with the detailed construction, let us briefly outline the main ideas underlying the bilinear functor  \eqref{equ:bilinear_functor} and its representable property.

It is a classical fact that for a pair of Lagrangians $L$ and $K$ intersecting cleanly, there exists a spectral sequence converging to the Floer cohomology $HF^{\bullet}(L,K)$; see, for example, \cite{Poz99,Sei99, Sch16}.  The  $E_1$-page of this spectral sequence takes the form 
\begin{align}
    \label{equ:clean_spectral_sequence}
    E_{1}^{p,q}= \begin{dcases*}
        HF^{p+q,\mathrm{loc}}(L, K; C_{p}) & $1\leq p\leq r$ \\
        0 & otherwise
    \end{dcases*}.
\end{align}
Here $C_{1},\cdots, C_{r}$ are the connected components of $L\cap K$, ordered according to the  values of the action  on the constant paths.  The \textit{local Floer cohomology group} $HF^{p+q,\mathrm{loc}}$ is defined by counting pseudo-holomorphic strips whose images are  contained in a neighborhood of connected component $C_{l}$. The filtration underlying this spectral sequence is defined by 
\begin{align}
    F^{p}CF^{\bullet}(L,K) \coloneqq \{ x \in CF^{\bullet}(L,K) \mid  x\not \in C_{1}\cup \cdots \cup C_{r}\}. 
\end{align}
Since the Floer differential  increases  the action, it preserves this filtration.

Each local Floer cohomology group $ HF^{\mathrm{loc}}(L, K; C_{l})$ can be identified with the singular cohomology $ H^{\bullet}( C_{l})$, up to a grading shift determined by Robbin-Salamon index $\mu(C_{l})$. In the special case where the intersection $L\cap K$ is transverse, the local Floer group reduces to   $HF^{\mathrm{loc}}(L, K; C_{l}) = \bK$.

In our case, the clean intersection between $L^{\pi}$ and $ L_{i}\times \underline{L}_{j}$ are given by $G$-orbits, and hence are invariant under the $G$-action. This invariance allows us to identify the connected component $C_{l}$ of $L^{\pi} \cap \left(L_{i}\times \underline{L}_{j}\right)$  across the  fibers of  the Borel construction \eqref{equ:correspondence_borel_construction}; for simplicity,  we continue to denote them  by $C_{l}$. 

The above identifications also extend to the primitives and hence to the values of the action functional. For future reference, we formalize this as follows.
\begin{rem}[Continuation of Section~\ref{rem:Borel_Lagrangians}]
    \label{rem:primitive_identification}
    By Lemma \ref{lem:moment_correspondence}, the primitive of $L^{\pi}$ may  be chosen to be zero. Thus the value of the action functional (defined using $-\lambda_{X}+\lambda_{X\sq G}$) on $L^{\pi} \cap \left(L_{i}\times \underline{L}_{j}\right)$ is simply $f_{L_{j}} -f_{L_{i}}$ (note that $L_{i}\subset X^{-}$ contributes the opposite primitive). 

In other words, under the natural identifications among: 
\begin{enumerate}
    \item the  connected components of clean intersection $L_{i}\cap L_{j} $, 
    \item the transverse intersection points $\underline{L}_{i}\cap \underline{L}_{j}$, 
    \item the connected components of $L^{\pi} \cap \left(L_{i}\times \underline{L}_{j}\right)$, 
    \item the corresponding components in the Borel--Liouville construction,
\end{enumerate}
the respective action functionals take the same value $f_{L_{j}} -f_{L_{i}}$. 

If the components are ordered so that
\begin{align*}
    f_{L_{j}}-f_{L_{i}}|_{C_{1}}< \cdots < f_{L_{j}}-f_{L_{i}}|_{C_{r}},
\end{align*}
then this ordering is preserved under all of the identifications above. 

The above discussion is  carried out under the assumption  $H=0$.  In general case, we may restrict  $H$ to  a sufficiently small neighborhood of  $0$ in  the space of admissible Hamiltonians, ensuring that its contribution to the action functional is small enough that will not alter the ordering. A more convenient approach, avoiding the Hamiltonian, is to work entirely in the Morse--Bott model,  see Remark \ref{rem:Morse_Bott_models} below. 
\end{rem}

We may define the local Floer cohomology group in Borel construction by considering the Floer strips \eqref{defn:moduli_space} whose Hamiltonian chords and images lie in a small neighborhood of $C_{l}$.

Taking the inverse limit with respect to the approximations yields a spectral sequence whose $E_{1}$-page is given by 
\begin{align}
    \label{equ:G_spectral_sequence}
    E_{1}^{p,q}= \begin{dcases*}
        HF^{p+q,\mathrm{loc}}_{G}(L^{\pi},  L_{i}\times \underline{L}_{j} ; C_{p}) & $1\leq p\leq r$ \\
        0 & otherwise. 
    \end{dcases*}
\end{align}
As  shown in Section \ref{sec:G_equivariant_PSS_isomorphism}, there is an  isomorphism $HF^{\mathrm{loc}}_{G}(L^{\pi},  L_{i}\times \underline{L}_{j}; C_{l}) \cong H_{G}^{\bullet}(C_{l})$ up to a grading shift (cf. Remark \ref{rem:Morse_Bott_models}).

Since the $G$-action is free, we have  
\begin{align}
\label{equ:reduced_Lagrangians}
H_{G}^{\bullet}(C_{l}) \cong H^{\bullet}(C_{l}/G)\cong HF^{\bullet, \mathrm{loc}}(\underline{L}_{i}, \underline{L}_{j}), 
\end{align}
since $C_{l}/G$ is precisely the transverse intersection between the reduced Lagrangians  $\underline{L}_{i}$ and $\underline{L}_{j}$. On the other hand, by the identification between $L^{\pi}\cap \left(L_{i}\times \underline{L}_{j}\right)$ and $L_{i}\cap L_{j}$ described in Definition \ref{defn:G_composable_intersections}, we obtain 
\begin{equation}
\label{eq: CohoID}
HF^{\bullet, \mathrm{loc}}_{G}(L^{\pi},  L_{i}\times \underline{L}_{j}; C_{l}) \cong H_{G}^{\bullet}(C_{l}) \cong  HF_{G}^{\bullet, \mathrm{loc}}(L_{i}, L_{j};C_{l}).    
\end{equation}

As a result, we obtain a naive identification between the $E_{1}$-pages of two spectral sequence converging to  $HF^{\bullet}(\underline{L}_{i}, \underline{L}_{j})$ and $HF^{\bullet}_{G}(L_{i},L_{j})$ respectively. The remaining  task is to strengthen this identification to  an isomorphism between spectral sequences, and moreover, to promote it to a representable $A_{\infty}$-functor at the categorical level.

\begin{rem}[Morse-Bott model]
\label{rem:Morse_Bott_models}
    Since the relevant intersections occur cleanly in our case, it is  convenient to adapt the Definition \ref{defn:Fiberwise_pseudoholomorphic_treed_disks} to the  ``Morse-Bott" model; see, for example, \cite{BC09,Sch16}. 
    
    For a cleanly intersecting pair of Lagrangians $(L,K)$ , we choose a Morse function on each connected component of $L\cap K$  and work with hybrid configurations consisting of a pseudo-holomorphic disk together with  Morse trajectories connecting  boundary marked points of the disk to critical points of the chosen Morse functions. Equivalently, one may  require that each boundary marked point lies on the stable or unstable manifolds of a Morse critical point. 

    This provides a realization of the spectral sequence  \eqref{equ:clean_spectral_sequence} obtained by filtering the Floer complex by energy. The $E_{1}$-page is computed by the local Morse cohomology on the clean intersection components (see, for example, \cite{Poz99,Sch16}).

    Returning to the Borel construction, we choose a Morse function for each connected component  $$C_{l}\hookrightarrow L^{\pi}\cap (L_{i}\times \underline{L}_{j})\hookrightarrow L^{\pi}_{G}\cap \left(L_{i,G}\times \underline{L}_{j}\right) \rightarrow 0_{BG}$$ over  critical points of Morse function on $0_{BG}$.  The hybrid objects we count are   pseudo-holomorphic strips parametrized by Morse trajectories (Definition \ref{defn:moduli_space} with no Hamiltonians) whose  boundary marked point  are connected to critical points in these components by (families of) Morse trajectories \eqref{equ:Family_Morse}. This  is analogous to the PSS-type configurations considered earlier in Section \ref{sec:G_equivariant_PSS_isomorphism}.

    Then the filtration by energy produces the spectral sequence \eqref{equ:G_spectral_sequence},  and  the $E_{1}$-page is computed by (families of) Morse trajectories in these components. The  argument in Section \ref{sec:G_equivariant_PSS_isomorphism} shows that its $E_{1}$-page is isomorphic to $G$-equivariant cohomology of each connected component $C_{l}$ (up to a shift of grading).  
\end{rem}
    
\subsection{\texorpdfstring{$A_{\infty}$}{LG}-bimodules}
To define the bimodule 
\begin{align}
    \cB_{G}: \cW_{G}(X)^{\mathrm{op}}\times \cW(X\sq G) \rightarrow \mathbf{Ch},
\end{align}
we proceed in a manner analogous to Definition \ref{defn:Fiberwise_pseudoholomorphic_treed_disks}. We consider objects consisting of  a Morse tree together with a  fiberwise pseudo-holomorphic strip mapping  to the Borel space \eqref{equ:correspondence_borel_construction}.

 For the Morse tree part, we now require  the existence of one  distinguished incoming semi-infinite edge and one outgoing edge.

 \begin{rem}
    From this point, we will no longer refer explicitly to the finite-dimensional approximation construction.
    All pseudo-holomorphic strips or disks mapping to  $X_{G}$, together with their boundary and asymptotic conditions, are understood  as in Definition~\ref{defn:moduli_space} and Definition~\ref{defn:Fiberwise_pseudoholomorphic_treed_disks}. 
    
    For example, a pseudo-holomorphic strip $u:Z\rightarrow L_{G}$ should always be interpreted  as a pair $(\gamma, u)$ exactly as in  Definition~\ref{defn:moduli_space}; the same convention applies to disks, and all associated asymptotic and boundary conditions. 
 \end{rem}
 
The  quilted strip is  illustrated in Figure \ref{fig:Quilted_strip}. It can  be viewed either as a pair of pseudo-holomorphic strips mapping to  $X\sq G$ and $X_{G}$  respectively, or equivalently as a single  pseudo-holomorphic map to $X_{G}^{-}\times X\sq G$ via the standard ``folding strip'' argument. See for example \cite{Ma15}.

 \begin{figure}[ht]
    \def\svgwidth{0.3\textwidth}
	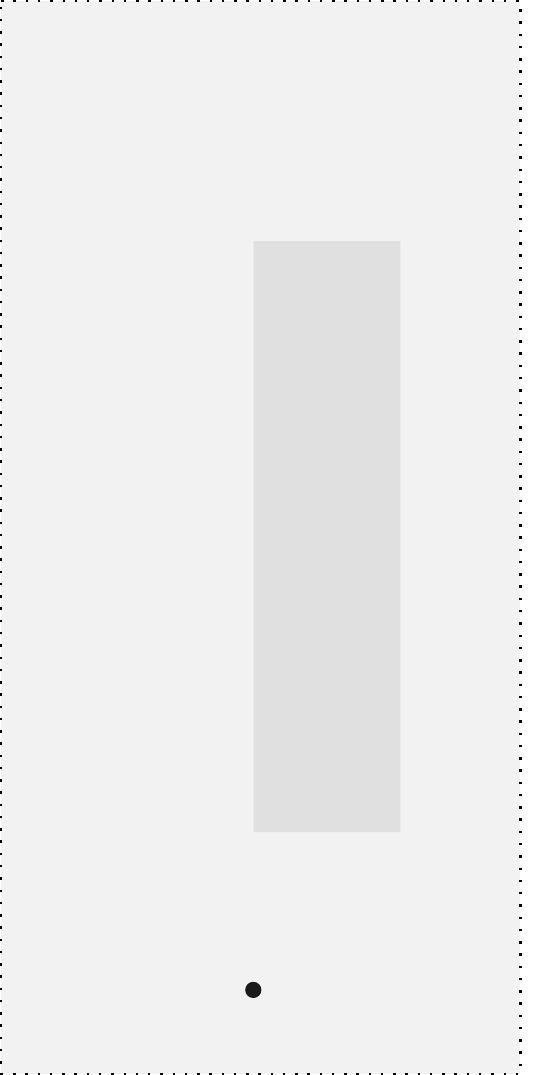
	\caption{Quilted Strips Defining the Bimodule.}
\label{fig:Quilted_strip}
\end{figure}

The quilted maps $u = (u_1, u_2): \Sigma = \Sigma_1 \cup \Sigma_2 \rightarrow X\sq G \times X_{G} $ with Lagrangian boundary conditions $(\underline{L}'
    _{0}, \cdots \underline{L}'
    _{k'}) \subseteq X\sq G
    $, $(L
    _{0, G}, \cdots, L
    _{k, G}) \subseteq X_G
    $ and seam condition $L^\pi_G$,  are described as follows.
\begin{itemize}
    \item The domain  $\Sigma \coloneqq [-1,1] \times \mathbb{R}$
    is a bordered quilted strip with two patches $\Sigma_1 \coloneqq [-1,0] \times \mathbb{R}$ and $\Sigma_2 \coloneqq [0,1] \times \mathbb{R}$  with a seam $\sigma \coloneqq \{0\} \times \mathbb{R}$. The left and right boundary components are denoted by $\sigma^{(1)} = \{-1\}\times \bR$ and $\sigma^{(2)} = \{1\}\times \bR$. 
    \item Let $z^{(1)} = (z^{(1)}_1, \cdots z^{(1)}_{k'})$  (resp. $z^{(2)} = (z^{(2)}_1, \cdots z^{(2)}_{k})$) be boundary marked points on $\sigma^{(1)}$ (resp. $\sigma^{(2)}$) such that for any $j_1 < j_2$, we have $\operatorname{Im} z^{(i)}_{j_1} > \operatorname{Im} z^{(i)}_{j_2}$. 
    \item For $j = 0, \dots k$ (resp. $j' = 0, \dots k'$), we  denote  the boundary segments between marked points by  $$\sigma^{(1)}_{j'} \coloneqq \{-1\} \times [\operatorname{Im}z^{(1)}_{j'+1}, \operatorname{Im}z^{(1)}_{j'}], \quad \sigma^{(2)}_{j} \coloneqq  \{1\} \times [\operatorname{Im}z^{(2)}_{j+1}, \operatorname{Im}z^{(2)}_{j}],$$
    with the convention  $\operatorname{Im}z^{(i)}_{0}\coloneqq +\infty$ and $\operatorname{Im}z^{(1)}_{k'+1} \coloneqq -\infty \eqqcolon \operatorname{Im}z^{(2)}_{k+1}$. Note that 
    $$\sigma^{(1)}_{j'-1} \cap \sigma^{(1)}_{j'} = \{z^{(1)}_{j'}\}, \quad  \sigma^{(2)}_{j-1} \cap \sigma^{(2)}_{j} = \{z^{(2)}_{j}\}.$$
    \item A quilted map  consists of  
    \begin{align*}
        u_1: \Sigma_1 \rightarrow (X\sq G, J_{X\sq G}),\quad  u_2: \Sigma_2 \rightarrow (X_{G}, J_{X_{G}}), 
    \end{align*}
    each pseudo-holomorphic with respect to the corresponding almost complex structures, subject to: 
    \begin{itemize}
    \item{[Boundary conditions]} For  each boundary segment,  $$u_1|_{\sigma^{(1)}_{j'}}: \sigma^{(1)}_{j'} \rightarrow \underline{L}^{'}
    _{j'} \subset X\sq G,  \quad  u_2|_{\sigma^{(2)}_{j}}: \sigma^{(2)}_{j} \rightarrow L_{j, G} \subset X_G. $$
    In particular, at the marked points: $u_1(z^{(1)}_{j'}) \in \underline{L}^{'}_{j'-1}\cap \underline{L}^{'}_{j'}$ and  $u_2(z^{(2)}_{j}) \in L_{j-1, G}\cap L_{j, G}$. 
    \item{[Seam condition]} Along the seam: $(u_1, u_2)|_{\sigma}: \sigma \rightarrow L^\pi_{G}$. 
    \item {[Asymptotic conditions]} For any $t_1\in [-1,0]$ (resp. $t_2\in [0,1]$), the following limits 
    $$\lim_{\tau\rightarrow \pm \infty}u_1(t_1, \tau) \eqqcolon q_{\pm\infty} \in X \sq G, \quad \lim_{\tau\rightarrow \pm \infty}u_2(t_2, \tau)\eqqcolon p_{\pm\infty} \in X_G$$ exist and are independent of $t_i$. 
  The seam conditions forces 
    \begin{align*}
        (q_{+\infty},  p_{+\infty}) & \in L^\pi_G\cap (\underline{L}^{'}_0 \times L_{0, G})\\
        (q_{-\infty},  p_{-\infty})& \in L^\pi_G\cap (\underline{L}^{'}_{k'} \times L_{k, G}). 
    \end{align*}
    These limiting points correspond either to generators of the equivariant Floer complexes or connecting to  critical points of Morse functions, depending on whether we work in the Floer or Morse--Bott model (cf. Remark~\ref{rem:Morse_Bott_models}).
    \item{[Stability conditions]} 
    The automorphism group $\Aut(u)$ is finite.
    \end{itemize}
\end{itemize}
Suppose all tuples are $G$-composable in the sense of Definition \ref{defn:G_composable_intersections}. After choosing the  perturbation data generically, transversality is achieved. Assume further that the sequences satisfy $L_{0}> \cdots > L_{k-1}> L_{k}$ and $\underline{L}_{k'}'>\cdots \underline{L}_{1}'>\underline{L}_{0}'$ so that the  maximum principal applies. We apply the Gromov--Floer compactification and  count the zero-dimensional moduli spaces, this  defines the bimodule structure. An  argument analogous to that  in Lemma \ref{lem:approximation_commutative} shows that these bimodule strucutres commute with the projection $P_{N}$, and we can take the inverse limit to obtain the following bimodule strucutre. 
\begin{align*}
  \mu^{k|1|k'}:CF_{G}^\bullet(L_{k}, L_{k-1})\otimes \cdots \otimes CF_{G}^\bullet(L_{1}, L_{0}) \otimes CF_{G}^\bullet(L_0, L^{\pi}, \underline{L}'_{0})\otimes CF^{\bullet}(\underline{L}_{0}', \underline{L}_{1}' )\otimes \cdots \\
   \otimes CF^\bullet (\underline{L}_{k'-1}', \underline{L}_{k'}') \rightarrow CF_{G}^\bullet(L_k, L^{\pi},\underline{L}_{k'}'). 
\end{align*}
As before, we can choose the Floer data coherently across all moduli spaces, which produces an  $(\cO_{G},\cO_{X\sq G})$-bimodule
\begin{equation}
    \label{eq: defn Q}
    \cQ \coloneqq \left\{CF_{G}^\bullet(-, L^{\pi},-),  \left\{ \mu^{k|1|k'} \right\}_{k, k' \in \Z_{\geq0}} \right\}.
\end{equation} 
Localizing with respect to quasi-units $\cZ_{G}$ and $\cZ_{X\sq G}$ defines a $(\cW_{G}(X), \cW(X\sq G))$-bimodule, which we denote by 
\begin{equation}
    \label{eq: defn B_G}
    \cB_G \coloneqq {}_{\cZ_{G}^{-1}}{\cQ}_{\cZ^{-1}_{X\sq G}}.
\end{equation} 
We remark that because  the wrapping Hamiltonian $H$ is $G$-invariant, its flow induces wrapping both in  $X$ and  in $X\sq G$. Thus the wrapping sequences in   $\cO_{G}$ and  $\cO_{X\sq G}$ agree under the natural identification of  $L_{i}^k\Leftrightarrow \underline{L}_{i}^k$.
\begin{defn}(\cite{Fukaya2023})
\label{defn: cyclic elt}
Let $L$ be a $G$-invariant Lagrangian. We say an element $\mathbf {1}_L \in CF^0_G(L, L^{\pi}, \underline L)$ is:
\begin{enumerate}
    \item \emph{left cyclic} if it is $\mu^{0|1|0}$-closed and 
    \begin{equation*}
        \mu^{1|1|0}(-, \mathbf{1}_L): CF_{G}^{\bullet}(K, L) \rightarrow CF^{\bullet}_{G}(K, L^{\pi},\underline{L})
    \end{equation*}
    is a quasi-isomorphism for all $K$.
    \item \emph{right cyclic} if it is $\mu^{0|1|0}$-closed and 
    \begin{equation*}
        \mu^{0|1|1}(\mathbf{1}_L, -)\colon CF^{\bullet}(\underline{L}, \underline{K}) \rightarrow CF^{\bullet}_{G}(L, L^{\pi},\underline{K})
    \end{equation*}
    is a quasi-isomorphism for all $\underline{K}$.
    \item \emph{bicyclic} if it is both left cyclic and right cyclic.
\end{enumerate}
\end{defn}
\begin{rem}
    \label{rem: modulemap}
    From the $A_\infty$-bimodule equations satisfied by the operations $\{\mu^{k|1|k'}\}$, it follows that the collection of maps 
\begin{align*}
    \{\mu^{k|1|0}(-, \mathbf{a})\}    
\end{align*}
    defines a morphism of left $A_\infty$-modules 
\begin{align*}
        CF^{\bullet}_G(-, L) \rightarrow CF^{\bullet}_G(-, L^{\pi}, \underline{L})
\end{align*}
    whenever $\mathbf{a}$ is $\mu^{0|1|0}$-closed.  
    In particular, having a \emph{left cyclic element} $\mathbf{1}_L$ is equivalent to requiring that
\begin{align*}
        \{\mu^{k|1|0}(-, \mathbf{1}_L)\} : CF^{\bullet}_G(-, L) \longrightarrow CF^{\bullet}_G(-, L^{\pi}, \underline{L})
\end{align*}
    is a quasi-isomorphism of left $\mathcal{O}_G$-modules.  
    Analogous characterizations for right cyclic and bicyclic elements are formulated in the same way.
\end{rem}

By the connectedness assumption on $L$, $L^{\pi}$ intersects $L\times \underline{L}$ cleanly along a single connected component $C_0$, namely $L$ itself. Hence the $E_1$-page of the spectral sequence \eqref{equ:G_spectral_sequence} has only one nonzero row $E_1^{0,q}$. Therefore, it degenerates at the $E_{1}$-page, yielding an isomorphism 
\begin{align}
\label{equ:identification_between_equviariant_cohomology_and_self_intersection}
H_G^\bullet(L) \cong HF_{G}^{\bullet, \mathrm{loc}}(L^{\pi}, L\times \underline{L}; C_0) \stackrel{\eqref{equ:G_spectral_sequence}}{\cong} HF^{\bullet}_{G}(L, L^{\pi},\underline{L}).
\end{align}
The first map is defined by a PSS-type map supported near $C_0$, in complete analogy with the equivariant PSS map from Section~\ref{sec:G_equivariant_PSS_isomorphism} (cf.~Remark~\ref{rem:Morse_Bott_models}).  
We denote the cochain-level map  of  \eqref{equ:identification_between_equviariant_cohomology_and_self_intersection} by
\begin{align*}
    \Psi_L \colon C_G^\bullet(L) \rightarrow 
    CF_G^\bullet(L^\pi,\, L \times \underline{L};\, C_0).
\end{align*}
\begin{defn}
    Define
    \begin{equation*}
        \mathbf 1_L \coloneqq \Psi_L(c_{\min}, x_{\min}) \in CF_G^\bullet(L^{\pi}, (L\times \underline{L}))
    \end{equation*}
    where $c_{\min}$ is the sum of minima of the Morse functions chosen on $0_{BG}$ and $x_{\min}$ is the sum of minima of the Morse function on $L$, viewed as the  connected component of $L^{\pi} \cap (L \times \underline{L})$.
\end{defn}
In other words, $\mathbf 1_L$ is the cochain level representative of the unit transferred via \eqref{equ:identification_between_equviariant_cohomology_and_self_intersection}. Also notice that by Condition (5) of Definition \ref{thm:family_of_Morse_functions} and the assumption that the Morse function on $L$ is linear near infinity, there are only finitely many critical points contributing to these sums. Therefore, $\mathbf1_L$ is well-defined.

\begin{lemma}
\label{lem: bicyclic}
 The element  $\mathbf{1}_L$ is bicyclic. 
\end{lemma}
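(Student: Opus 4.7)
I will verify left cyclicity and right cyclicity separately, each via a spectral sequence comparison along the lines sketched in Remark \ref{rem:primitive_identification} and Remark \ref{rem:Morse_Bott_models}. The closedness $\mu^{0|1|0}\mathbf{1}_L = 0$ is automatic, since $\mathbf{1}_L$ is defined as the image under $\Psi_L$ of a Morse-theoretic cycle representing the equivariant unit and $\Psi_L$ is a chain map coming from a PSS-type construction.

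For left cyclicity, fix a $G$-composable $K$ and consider the module map
\begin{equation*}
    \Phi_K \coloneqq \mu^{\bullet|1|0}(-, \mathbf{1}_L) : CF_G^\bullet(K, L) \longrightarrow CF_G^\bullet(K, L^\pi, \underline{L}).
\end{equation*}
Both sides carry action filtrations. By Remark \ref{rem:primitive_identification}, these filtrations are indexed by the common ordered list of values $f_L - f_K|_{C_1} < \cdots < f_L - f_K|_{C_r}$, where $C_1,\dots,C_r$ are the clean-intersection components of $K \cap L$, canonically identified with those of $L^\pi \cap (K \times \underline{L})$ via Definition \ref{defn:G_composable_intersections}. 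A standard energy estimate for quilted strips shows that $\Phi_K$ respects these filtrations. On the $E_1$-page, the source identifies with $\bigoplus_l HF_G^{\bullet,\mathrm{loc}}(K,L;C_l)$ and the target with $\bigoplus_l HF_G^{\bullet,\mathrm{loc}}(L^\pi, K\times \underline{L}; C_l)$, and by \eqref{eq: CohoID} both groups are canonically isomorphic to $H_G^\bullet(C_l)$ through the equivariant PSS isomorphism of Theorem \ref{thm:G-equivariant_PSS_isomorphism}.

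The heart of the argument is to check that the induced map $[\Phi_K]$ on the $E_1$-page is the identity componentwise. Working in the Morse--Bott model of Remark \ref{rem:Morse_Bott_models}, the contribution of $\Phi_K$ localized near $C_l$ is cut out by a moduli space of quilted strips whose asymptotics lie over $C_l$ at both ends and which carry an interior input fed by $\mathbf{1}_L$. As the strips shrink under the filtration, they degenerate to a configuration consisting of a trivial quilted strip along $C_l$ together with the pearl configuration defining $\mathbf{1}_L$ via $\Psi_L$. Since $\mathbf{1}_L$ is by construction the PSS image of the equivariant unit, this degenerate count realizes the identity on $H_G^\bullet(C_l)$. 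A standard spectral sequence comparison then upgrades this componentwise identity to a quasi-isomorphism of the total complexes, verifying left cyclicity. Right cyclicity for the map $\mu^{0|1|\bullet}(\mathbf{1}_L, -) : CF^\bullet(\underline{L}, \underline{K}) \to CF_G^\bullet(L, L^\pi, \underline{K})$ follows by a symmetric argument, where now the clean-intersection components are identified with the transverse points of $\underline{L}\cap \underline{K}$ and the $E_1$-identification uses the ordinary PSS isomorphism on the quotient combined with $H_G^\bullet(C_l)\cong H^\bullet(C_l/G)$ (which holds because $G$ acts freely on $\mu^{-1}(0)$).

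The main technical obstacle is the $E_1$-identification in the previous paragraph: one must control all quilted configurations contributing to the local map and verify that, after accounting for signs and orientations, the only surviving contribution is the geometric identity built from $\Psi_L$. This requires a careful degeneration analysis of short quilted strips in the Morse--Bott setup, together with a PSS-type gluing argument parallel to the one in Section \ref{sec:G_equivariant_PSS_isomorphism}. Once this local computation is established, the remainder of the proof is formal.
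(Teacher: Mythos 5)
Your overall structure — filter by action, pass to the $E_1$-page, identify both sides with $H_G^\bullet(C_l)$ via equivariant PSS and \eqref{eq: CohoID}, then promote a componentwise isomorphism to a quasi-isomorphism of total complexes — matches the paper's. The closedness and filtration-preservation steps are fine. Where you diverge is precisely the step you flag yourself as ``the main technical obstacle'': showing the $E_1$-page map $[\Phi_K]$ is an isomorphism on each $H_G^\bullet(C_l)$.

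You propose to show $[\Phi_K]$ is the \emph{identity} componentwise by a degeneration analysis in which ``short'' quilted strips near $C_l$ break into a constant quilt plus the pearl configuration defining $\mathbf{1}_L$. This is the natural geometric picture, but as stated it is not an argument: the phrase ``as the strips shrink under the filtration'' does not describe any degeneration parameter (the filtration level is not a parameter of the moduli space), and even if you introduce a genuine Morse--Bott/neck-stretching parameter, you would need to control all broken configurations and their signs and rule out extraneous contributions. You acknowledge this gap, but the lemma is not proved until it is closed.

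The paper avoids that analysis by an algebraically sharper move. It suffices to show $[\mu^{0|1|1,\mathrm{loc}}(\mathbf{1}_L,-)_{C_l}]$ is \emph{injective}, since both source and target are the same finite-dimensional vector space $H_G^\bullet(C_l)\cong H^\bullet(C_l/G)$ over the field $\mathbb{K}$. The paper builds a map $F:CF_G^\bullet(L,L^\pi,\underline{K})\to CF^\bullet(\underline{L},\underline{K})$ by reversing the roles of input and output in the $\mu^{0|1|1}$ quilted moduli space, and shows via a standard gluing (Figure \ref{fig:left_inverse_of_chain_map}) that $F\circ\mu^{0|1|1}(\mathbf{1}_L,-)$ is homotopic to $\mu^2_{\cO_{X\sslash G}}(F(\mathbf{1}_L),-)$. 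The only nontrivial geometric computation is then $[F(\mathbf{1}_L)]=e_{\underline{L}}$, and this is easy: when both input and output are $\mathbf{1}_L$, only constant quilted strips contribute, so the count localizes to constant Morse trajectories at $(c_{\min},x_{\min})$. This is a much lighter burden than a full degeneration analysis of the local quilted moduli space — it only requires identifying contributions to a single element, not determining the entire local map. You should adopt this left-inverse trick to close your gap; alternatively you would need to supply the PSS-type gluing argument rigorously, which would amount to re-proving an analogue of Theorem \ref{thm:comparison_between_spectral_sequences} in the quilted setting.
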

\begin{proof}
    By construction, $\mathbf{1}_L$ is closed. To establish right cyclicity, it suffices to show that 
    \begin{align}
        \label{equ:from_downstairs_to_bimodules}
        \mu^{0|1|1}(\mathbf{1}_{L}, -): CF^{\bullet}(\underline{L}, \underline{K}) \rightarrow CF^{\bullet}_{G}(L, L^{\pi},\underline{K}).
    \end{align}
    induces isomorphisms between cohomologies.
    
    Because the action functionals on both sides agree under the identifications of Remark~\ref{rem:primitive_identification}, the map \eqref{equ:from_downstairs_to_bimodules} preserves the energy filtrations. Therefore, it induces a morphism of  the corresponding spectral sequences. On the $E_{1}$-page, it is given by 
    $$
    [\mu^{0|1|1, \mathrm{loc}}(\mathbf{1}_L, -)_{C_l}] : HF^{\bullet,\mathrm{loc}}(\underline{L}, \underline{K}; C_l/G) \to HF^{\bullet, \mathrm{loc}}_G(L^\pi, L\times \underline{K}; C_l),
    $$
where $C_{l}$ denotes a connected component of the clean intersection, and   $\mu^{0|1|1, \mathrm{loc}}(\mathbf{1}_L, -)_{C_l}$ is defined by the same moduli spaces as in  \eqref{equ:from_downstairs_to_bimodules}, except  localized to a sufficiently small neighborhood of $C_l$. 

Recall that, by \eqref{equ:reduced_Lagrangians}, both sides are identified with $H_{G}^{\bullet}(C_{l}) \cong H^{\bullet}(C_{l}/G)$ as vector spaces. Therefore, it suffices to show that $[\mu^{0|1|1, \mathrm{loc}}(\mathbf{1}_L, -)_{C_l}]$ is \emph{injective}.

    To this end, we define a map $F: CF^{\bullet}_{G}(L, L^{\pi},\underline{K})  \rightarrow CF^{\bullet}(\underline{L}, \underline{K})$ by reversing the roles of the input and output of $\mu^{0|1|1}(-,-)$ and we show that $F$ provides a left inverse to  $ [\mu^{0|1|1, \mathrm{loc}}(\mathbf{1}_L, -)_{C_l}]$; see Figure \ref{fig:left_inverse_of_chain_map}.

    \begin{figure}[ht]
    \def\svgwidth{0.6\textwidth}
	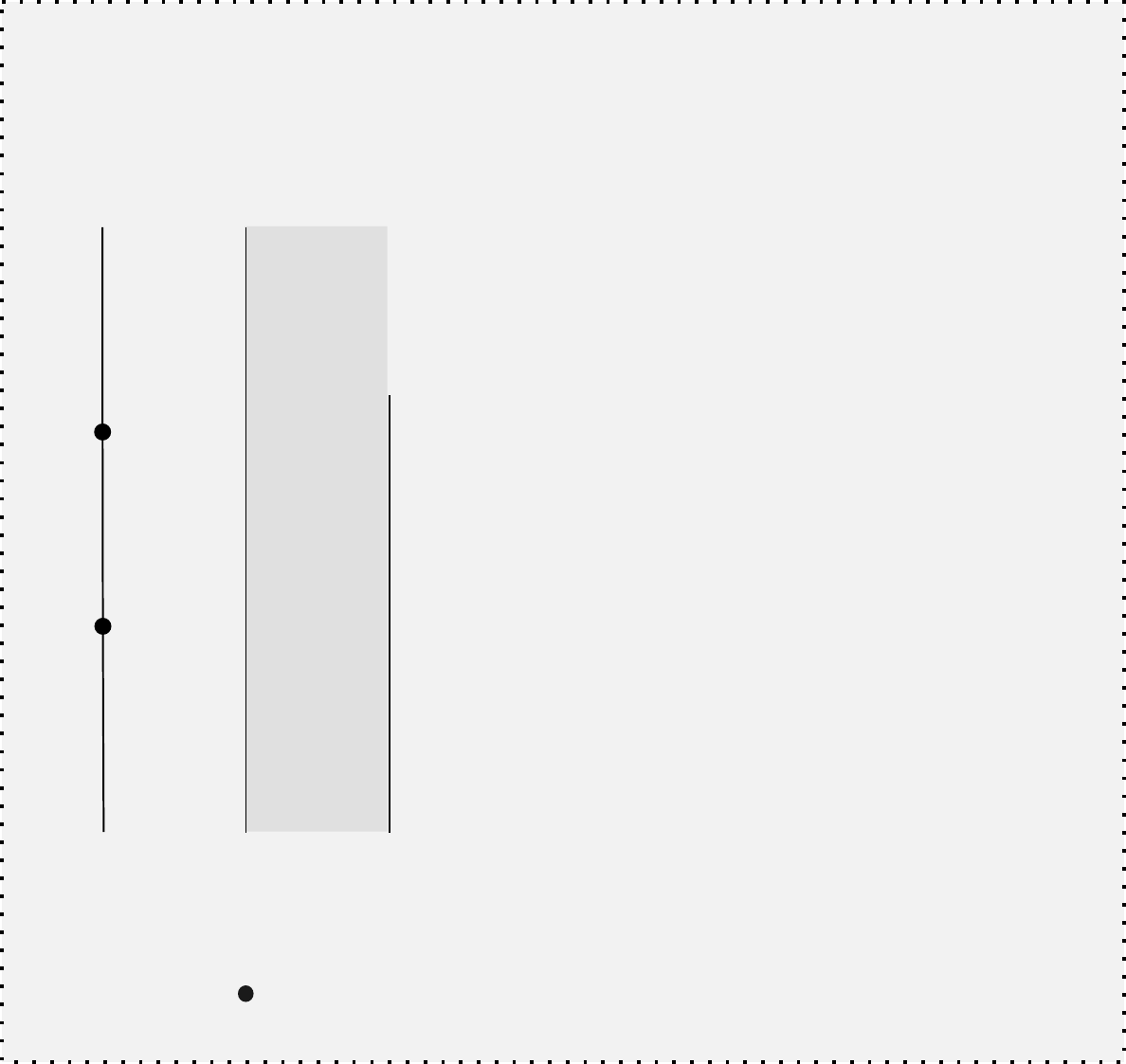
	\caption{Composition of $[\mu^{0|1|1}(\mathbf{1}_{L}, -)]$ and its left inverse.}
\label{fig:left_inverse_of_chain_map}
\end{figure}
    
    As depicted by Figure \ref{fig:left_inverse_of_chain_map}, the composition  $$F\circ\mu^{0|1|1}(\mathbf{1}_{L}, -)$$ is homotopic to $$\mu_{\cO_{X\sq G}}^2(F(\mathbf 1_L), -).$$ Passing to the spectral sequence, we obtain 
    \begin{equation}
    \label{eq: Fmu2eq}
    [F^{\mathrm{loc}}]\circ[\mu^{0|1|1}(\mathbf{1}_{L}, -)] \sim [\mu_{\cO_{X\sq G}}^{2, \mathrm{loc}}]\left([F(\mathbf{1}_L)], -\right),
    \end{equation}
    where $F^{\mathrm{loc}}$ and $\mu_{\cO_{X\sq G}}^{2,\mathrm{loc}}$ are defined by restricting $F$ and $\mu_{\cO_{X\sq G}}^{2}$ to those contributions whose outputs lie in the component $C_l$.
    
We claim that 
    \begin{equation}
    \label{eq: computeF(1)}
        [F(\mathbf 1_L)] = e_{\underline{L}}\in HF^{\bullet}(\underline{L}, \underline{L}).
    \end{equation}
    Indeed,  when both input and output are the element $\mathbf{1}_{L}$, the only quilted configurations that can contribute to the strip component in Figure~\ref{fig:left_inverse_of_chain_map} are constant quilts.  
   Therefore, the relevant moduli space is identified with constant Morse trajectories at the minimum $(c_{\min},x_{\min})$, which implies that $F(\mathbf{1}_L)$ represents the unit class.

    Combining \eqref{eq: Fmu2eq} with  \eqref{eq: computeF(1)}, we conclude that
    $$[F^{\mathrm{loc}}]\circ[\mu^{0|1|1}(\mathbf{1}_{L}, -)] = \mathrm{Id}_{HF^{\bullet, \mathrm{loc}}(\underline{L}, \underline{K},  C_l/G)}.$$  
    Thus  $[\mu^{0|1|1, \mathrm{loc}}(\mathbf{1}_{L}, -)]$ is injective.  This completes the proof of the right cyclic property of $\mathbf 1_L$. 
    
    A similar argument applied to
    \begin{align}
        \label{equ:from_G_to_bimodule}
        \mu^{1|1|0}(-,\mathbf{1}_{L}): CF_{G}^{\bullet}(K, L) \rightarrow CF^{\bullet}_{G}(K, L^{\pi},\underline{L})
    \end{align}
    shows that it is also a quasi-isomorphism. The proof is now complete. 
\end{proof}

\begin{cor}
\label{cor: Qfunctor}
    The $A_\infty$-functor defined by the bimodule $\cQ$,
\begin{align*}
        \cQ \colon \cO_G \longrightarrow \mathrm{Mod}_{\cO_{X\sslash G}}, 
        \qquad 
        L \longmapsto \cQ(L,-),
\end{align*}
lands in  representable right $\cO_{X\sslash G}$-modules, each of which is represented by the reduced Lagrangian $\underline{L}$.  
    
    Similarly, the functor
\begin{align*}
        \cQ \colon \cO_{X\sslash G} \longrightarrow \mathrm{Mod}^{\cO_G}, 
        \qquad 
        \underline{K} \longmapsto \cQ(-,\underline{K}),
\end{align*}
    lands in representable left $\cO_G$-modules, represented by the G-invariant Lagrangian $K$.
    \end{cor}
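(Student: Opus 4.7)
The plan is to deduce Corollary~\ref{cor: Qfunctor} as a direct consequence of the bicyclicity of $\mathbf{1}_L$ established in Lemma~\ref{lem: bicyclic}, together with the standard principle (Remark~\ref{rem: modulemap}) that insertion of a closed cyclic element promotes a cochain-level quasi-isomorphism to a quasi-isomorphism of $A_\infty$-modules. No new moduli spaces or analytic inputs are needed; the argument is essentially categorical.

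For the first statement, fix a $G$-invariant Lagrangian $L \subset \mu^{-1}(0)$ with reduction $\underline{L}$. The $A_\infty$-bimodule relations satisfied by $\{\mu^{k|1|k'}\}$ imply that inserting the $\mu^{0|1|0}$-closed element $\mathbf{1}_L \in CF^0_G(L, L^\pi, \underline{L})$ into the first input of the right-factor yields a collection of maps
\begin{equation*}
\Phi_L^{k'} \coloneqq \mu^{0|1|k'}(\mathbf{1}_L, -, \ldots, -), \quad k' \geq 0,
\end{equation*}
which assemble into a morphism of right $\cO_{X \sq G}$-modules $\Phi_L: \cO_{X \sq G}(\underline{L}, -) \to \cQ(L, -)$. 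Its linear term is precisely $\mu^{0|1|1}(\mathbf{1}_L, -)$, which, by the right cyclicity of $\mathbf{1}_L$ (Definition~\ref{defn: cyclic elt}(2)), is a quasi-isomorphism on every object $\underline{K}$. Since a morphism of $A_\infty$-modules is a quasi-isomorphism if and only if its linear term is one, $\Phi_L$ identifies $\cQ(L, -)$ with the right Yoneda module of $\underline{L}$.

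For the second statement, note that by the free-action assumption every Lagrangian $\underline{K} \subset X \sq G$ has a unique $G$-invariant lift $K \subset \mu^{-1}(0)$, and the bicyclic element $\mathbf{1}_K \in CF^0_G(K, L^\pi, \underline{K})$ is defined in exactly the same way. Insertion of $\mathbf{1}_K$ into the last slot of the left-factor produces a collection
\begin{equation*}
\Psi_K^{k} \coloneqq \mu^{k|1|0}(-, \ldots, -, \mathbf{1}_K), \quad k \geq 0,
\end{equation*}
which assemble into a morphism of left $\cO_G$-modules $\Psi_K: \cO_G(-, K) \to \cQ(-, \underline{K})$. Its linear term $\mu^{1|1|0}(-, \mathbf{1}_K)$ is a quasi-isomorphism by left cyclicity, and the same general principle exhibits $\cQ(-, \underline{K})$ as quasi-isomorphic to the left Yoneda module of $K$.

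There is no real obstacle in this proof; all the substantive analytic content has already been isolated in Lemma~\ref{lem: bicyclic}, where the identification of local Floer groups \eqref{eq: CohoID} and the explicit left inverse construction in Figure~\ref{fig:left_inverse_of_chain_map} were used. The remaining task is purely formal: verifying that inserting a closed bimodule element in a fixed slot yields a one-sided module morphism (immediate from the $A_\infty$-bimodule relations), and invoking the standard fact that linear-level quasi-isomorphisms upgrade to quasi-isomorphisms of $A_\infty$-modules.
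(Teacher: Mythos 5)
Your proof is correct and follows essentially the same path as the paper's: bicyclicity of $\mathbf{1}_L$ (Lemma~\ref{lem: bicyclic}) plus the module-morphism construction of Remark~\ref{rem: modulemap} give a morphism of one-sided $A_\infty$-modules to the Yoneda module whose leading term is a pointwise quasi-isomorphism, hence the module is representable. The only difference is presentational: the paper routes through cohomology-level representability and invokes cofibrancy over a field together with the $A_\infty$-Whitehead theorem of \cite[Appendix~A]{GPS3} to upgrade to a module-level equivalence, whereas you observe directly that a module morphism with quasi-isomorphic linear term is itself a quasi-isomorphism; these are equivalent justifications of the same final step. (There is a harmless off-by-one in your indexing of $\Phi_L^{k'}$: the component matching the linear term $\mu^{0|1|1}(\mathbf{1}_L,-)$ is $\Phi_L^1$, not $\Phi_L^0$, so the range should be $k'\geq 1$, or equivalently set $\Phi_L^d = \mu^{0|1|d+1}(\mathbf{1}_L,-,\ldots,-)$ for $d\geq 0$.)
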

\begin{proof}
By the bicyclicity of the element $\mathbf 1_L$, we obtain quasi-isomorphisms
\begin{align}
    \label{equ:from_G_to_downstairs}
    \cQ(-, \underline{K}) & = CF^\bullet_{G}(-, L^\pi,\underline{K})\xleftarrow[\sim]{\left\{ \mu^{k|1|0}(-, \mathbf{1}_{K}) \right\}}CF_{G}^{\bullet}(-, K),  \\    
    \label{equ:from_downstairs_to_G} 
    CF^{\bullet}(\underline{L}, -) \xrightarrow[\sim]{\left\{ \mu^{0|1|k}(\mathbf{1}_{L},-) \right\}}\cQ(L, -) & = CF^{\bullet}_{G}(L, L^{\pi}, -). 
 \end{align}
between left $\cO_G$-modules and right $\cO_{X\sslash G}$-modules, respectively; see Remark~\ref{rem: modulemap}.  
Here ``quasi-isomorphism'' means a pointwise quasi-isomorphism, i.e.\ for each test object the maps  
$\mu^{1|1|0}(-,\mathbf{1}_K)$ and $\mu^{0|1|1}(\mathbf{1}_L,-)$ induce quasi-isomorphisms of cochain complexes.

Thus the  cohomology functor  $H^{\bullet}\cQ(-, \underline{K})$
is pointwise isomorphic to the left $ H^{\bullet}\cO_{G}$-module obtained via the Yoneda embedding 
\begin{align*}
    H^{\bullet}\cO_{G} \hookrightarrow  H^{\bullet}\mathrm{Mod}^{\cO_G}, 
    \quad 
    K \longmapsto H\cO^{\bullet}_{G}(-, K).    
\end{align*}
Since we work over a field (so all $\cO_G$-modules are cofibrant), this cohomology representability is sufficient to conclude that the $A_{\infty}$-module $\cQ(-, \underline{K})$ is representable, This follows from an $A_\infty$-version of Whitehead’s theorem; see \cite[Appendix~A]{GPS3}.  An identical argument applies to the other functor, showing that it is likewise representable.
\end{proof}

In particular, combining \eqref{equ:from_G_to_downstairs} with \eqref{equ:from_downstairs_to_G},  we obtain a quasi-isomorphism of cochain complexes
\begin{align}
    \label{equ:cochain_level_isomorphism}
    CF^{\bullet}(\underline{L},\underline{K}) \;\xrightarrow{\;\simeq\;}\; CF_{G}^{\bullet}(L, K).
\end{align}

A similar argument to that of Proposition \ref{prop:commutativity_of_wrapping_and_approximations} and Corollary \ref{cor:G_equivariant_homotopy_commutative} gives rise to  the following corollary.
\begin{cor}
    \label{prop:left_local_property}
    The following diagram 
    \begin{equation}
        \label{equ:left_local_property}
         \begin{tikzcd}[row sep={70,between origins}, column sep={120,between origins}]
            \cQ(L^{k+1}, \underline{K}^{j}) \arrow[r] \arrow[d] & |[alias=youshangjiao]| \cQ(L^{k+1}, \underline{K}^{j+1}) \arrow[d] \\
            \cQ(L^{k}, \underline{K}^{j})  \arrow[to=youshangjiao, "F^G_{L_{0}^k, L_{1}^j}"]\arrow[r] &  \cQ(L^{k}, \underline{K}^{j+1})
         \end{tikzcd}
     \end{equation} 
     commutes up to homotopy. 
     Here, the horizontal and vertical arrows are given by multiplication by the  images of the tensor products of units and quasi-units under the K\"unneth isomorphism  $$HF^{\bullet}_{G}(L'\times \underline{K}',L\times \underline{K}) \cong HF^{\bullet}_{G}(L',L)\otimes HF^{\bullet}(\underline{K}',\underline{K}).$$  For example, the upper horizontal arrow is given  by multiplication with  $\mathrm{id}_{L^{k+1}}\otimes e_{\underline{K}^{j}}$ where $\mathrm{id}_{L^{k+1}}$ denote the unit in $HF_{G}^{0}(L^{k+1},L^{k+1})$, and  $e_{\underline{K}^{j}}$ denotes the quasi-unit in $HF^{0}(\underline{K}^{j},\underline{K}^{j+1})$. 

     In particular, this implies the left locality of the direct limit, i.e.
     \begin{equation}
        \label{equ:left_locality_property}
        \varinjlim_{j} H\cQ^{\bullet}(L^{k+1}, \underline{K}^{j}) \cong         \varinjlim_{j} H\cQ^{\bullet}(L^{k}, \underline{K}^{j}).
     \end{equation}
    \end{cor}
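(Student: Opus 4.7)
The plan is to reduce the homotopy commutativity to a direct application of the $A_{\infty}$-bimodule relations, following the same strategy as Proposition \ref{prop:commutativity_of_wrapping_and_approximations} but now in the bimodule setting within the Borel construction. First I would construct the requisite higher operation $\mu^{1|1|1}$ by counting quilted strips as in \eqref{eq: defn Q}, with one additional boundary marked point inserted on each side of the seam carrying the quasi-units $e_{L^k}$ and $e_{\underline{K}^j}$ respectively, parametrized by Morse trees on $0_{BG(N)}$. Transversality and compactness of these moduli spaces follow from Proposition \ref{pro:maximum_principle} and the arguments already developed earlier, and the coherent choice of perturbation data compatible with the approximation system ensures these operations assemble across the inverse limit exactly as in Proposition \ref{pro:strict_A_infinity_functor}.

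Next, for $x \in \cQ(L^{k+1}, \underline{K}^j)$, the $A_{\infty}$-bimodule equation applied to the sequence of inputs $(e_{L^k},\, x,\, e_{\underline{K}^j})$ reduces, after cancelling terms involving $\mu^{1}(e_{L^k})$ and $\mu^{1}(e_{\underline{K}^j})$ (which vanish up to homotopy because the quasi-units are closed), to the relation
\begin{equation*}
\mu^{1|1|0}\bigl(e_{L^k},\, \mu^{0|1|1}(x, e_{\underline{K}^j})\bigr)
\;-\; \mu^{0|1|1}\bigl(\mu^{1|1|0}(e_{L^k}, x),\, e_{\underline{K}^j}\bigr)
\;=\; \partial h(x) \;+\; h(\partial x),
\end{equation*}
where $h(x) \coloneqq \pm\mu^{1|1|1}(e_{L^k},\, x,\, e_{\underline{K}^j})$ is an explicit chain homotopy between the two compositions around the square of \eqref{equ:left_local_property}. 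This establishes the homotopy commutativity of the diagram.

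For the left locality property \eqref{equ:left_locality_property}, my plan is to compare both sides with the $G$-equivariant wrapped complex using the bicyclic element of Lemma \ref{lem: bicyclic}. By Corollary \ref{cor: Qfunctor}, for each $k$ and $j$ the map $y \mapsto \mu^{1|1|0}(y, \mathbf{1}_{K^j})$ defines a quasi-isomorphism $CF_{G}^{\bullet}(L^k, K^j) \xrightarrow{\simeq} \cQ(L^k, \underline{K}^j)$ of left $\cO_G$-modules. Since the wrapping Hamiltonian is $G$-invariant by Lemma \ref{lem:G_equivariant_wrapping}, the wrapping of $K$ upstairs descends to the corresponding wrapping of $\underline{K}$ downstairs, and the cyclic elements $\mathbf{1}_{K^j}$ may be chosen coherently along the wrapping sequence so that the bicyclic quasi-isomorphisms intertwine $\mu^{0|1|1}(-, e_{\underline{K}^j})$ on the right with $\mu^2(-, e_{K^j})$ on the left, up to a further homotopy supplied by the same kind of bimodule relation. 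Passing to the direct limit over $j$ and invoking Corollary \ref{cor:colimit_compute_localization} yields
\[
\varinjlim_j H\cQ^{\bullet}(L^k, \underline{K}^j) \;\cong\; \varinjlim_j HF_G^{\bullet}(L^k, K^j) \;\cong\; H^{\bullet}\hom_{\cO_G}(\cY_{L^\infty}, \cY_{K^\infty}),
\]
and the right-hand side is manifestly independent of $k$, so the vertical map in the corollary induces an isomorphism on direct limits.

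The main obstacle will be the second point: the coherent matching of the bicyclic quasi-isomorphisms across the wrapping sequence. While each individual bicyclic identification is provided by Lemma \ref{lem: bicyclic}, verifying naturality with respect to the wrapping in both $L$ and $\underline{K}$ requires constructing another $\mu^{1|1|1}$-type homotopy, and tracking that all such higher operations are compatible with the inverse limit over the Borel approximations. This compatibility rests crucially on the $G$-invariance of $H$, which ensures that the upstairs and downstairs wrapping sequences are genuinely intertwined rather than merely pointwise related.
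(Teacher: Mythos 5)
Your first step — producing the chain homotopy $h(x) = \pm\mu^{1|1|1}(e_{L^k}, x, e_{\underline{K}^j})$ from the $A_{\infty}$-bimodule equation applied to the input string $(e_{L^k}, x, e_{\underline{K}^j})$ — is essentially the paper's intended argument; this is the bimodule analogue of Proposition \ref{prop:commutativity_of_wrapping_and_approximations} and Corollary \ref{cor:G_equivariant_homotopy_commutative}. However, you only address the outer square, while the diagram \eqref{equ:left_local_property} also contains the diagonal continuation map $F^G_{L_0^k, L_1^j}$, and the statement requires that both triangles containing $F^G$ commute up to homotopy. That part is a separate gluing-degeneration argument comparing the continuation moduli space with the $\mu^{1|1|0}(e_{L^k},\cdot)$ and $\mu^{0|1|1}(\cdot, e_{\underline{K}^j})$ moduli spaces, in the style of the argument cited from \cite[Lemma 3.19]{AA24} — it is not produced by the bimodule equation alone. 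You should state this step explicitly, since the ``in particular'' conclusion depends on it.

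For the left locality \eqref{equ:left_locality_property}, you take a genuinely different route from the paper, and that route has a real obstacle which the paper's route avoids. You propose to transfer to $HF_G^{\bullet}(L^k,K^j)$ via the bicyclic quasi-isomorphisms $\mu^{1|1|0}(-,\mathbf{1}_{K^j})$ and then invoke Corollary \ref{cor:colimit_compute_localization}; as you note, this requires the pointwise quasi-isomorphisms to assemble into a morphism of direct systems over $j$, i.e.\ coherence of the $\mathbf{1}_{K^j}$ across the wrapping sequence. That naturality is not free — verifying it requires another bimodule homotopy — whereas the paper's argument, signaled by the phrase ``in particular,'' derives \eqref{equ:left_locality_property} directly from the homotopy-commutative diagram: after passing to the homotopy colimit over $j$, the top and bottom horizontals become quasi-isomorphisms (they are connecting maps of the direct system), and the diagonal $F^G$ then supplies a two-sided homotopy inverse to the vertical map (up to a harmless shift in $j$), so the vertical is a quasi-isomorphism of colimits. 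This is exactly the argument of Corollary \ref{cor:colimit_compute_localization} / \cite[Lemma 3.23, Corollary 3.24]{AA24} with $\cO_G$ replaced by $\cQ$, and it makes no reference to the cyclic elements at all. Your detour is not wrong in spirit, but it is strictly harder and not what the ``in particular'' is pointing to.
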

    
Once the isomorphism \eqref{equ:cochain_level_isomorphism} and its functoriality have been established, passing to the localized category (wrapped Fukaya cateogry) is immediate and entirely formal. We follow the localization framework of \cite{GPS3}, adapting their construction to the present equivariant setting.

\begin{thm}
\label{prop: GoingDown}
The $A_\infty$-functor associated to the bimodule $\cB_G$ defined in \eqref{eq: defn B_G},
\begin{align*}
    \cB_G: \cW_{G}(X) \rightarrow \mathrm{Mod}_{\cW(X\sq G)}, \quad L \mapsto \cB_G(L, -)\coloneqq{}_{\cZ_{G}^{-1}}{\cQ}_{\cZ^{-1}_{X\sq G}}(L,-)
\end{align*}
lands in representable right modules. Thus, it induces an $A_{\infty}$-functor 
\begin{align*}
    \Phi_{\downarrow}: \cW_{G}(X) \rightarrow \cW(X\sq G),
\end{align*}
well-defined up to quasi-equivalence. 
\end{thm}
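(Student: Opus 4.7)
The plan is to upgrade the representability statement of Corollary \ref{cor: Qfunctor}, which is formulated at the unlocalized level of $\cO_G$ and $\cO_{X \sq G}$, to the localized wrapped categories $\cW_G(X)$ and $\cW(X \sq G)$. The main ingredients are the bicyclic element $\mathbf{1}_L$ from Lemma \ref{lem: bicyclic}, together with the telescope model for morphism complexes recorded in Lemma \ref{lem:quasi_isomorphism_between_wrapping} and Corollary \ref{cor:colimit_compute_localization}.

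First, I would exploit the fact that the wrapping Hamiltonian $H$ produced by Lemma \ref{lem:G_equivariant_wrapping} is $G$-invariant, and so descends to a wrapping Hamiltonian $\underline{H}$ on $X \sq G$. Under the identification $L \leftrightarrow \underline{L}$, this sends the upstairs wrapping sequence $L^k = \phi^{-\epsilon k}_H(L)$ to the downstairs sequence $\underline{L}^k$, and sends the $G$-equivariant quasi-units $e^G_{L^k} \in \cZ_G$ to the ordinary quasi-units $e_{\underline{L}^k} \in \cZ_{X \sq G}$. A coherent choice of perturbation data on the moduli spaces of quilted strips defining $\cQ$ ensures that the module maps $\mu^{0|1|k}(\mathbf{1}_L, -)$ of \eqref{equ:from_downstairs_to_G} intertwine the corresponding quasi-unit multiplications up to homotopy; this is the natural generalization of Corollary \ref{prop:left_local_property} to the case where the left input is the fixed Lagrangian $L$ and $\mathbf{1}_L$ plays the role of a unit for the quilted operations.

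Next, by Corollary \ref{cor:colimit_compute_localization} applied both upstairs and downstairs, the localized bimodule is computed by
\[
\cB_G(L, \underline{K}) \simeq \uhlim{k \to \infty} \uhclim{j \to \infty} \cQ(L^k, \underline{K}^j),
\]
while the Yoneda module $\cW(X \sq G)(\underline{L}, -)$ is computed by the analogous double telescope of $\cO_{X \sq G}(\underline{L}^k, \underline{K}^j)$. The coherent system from the previous paragraph induces a morphism of these telescope diagrams, and since each constituent $\mu^{0|1|k}(\mathbf{1}_L, -)$ is a pointwise quasi-isomorphism by Lemma \ref{lem: bicyclic}, passing to the double (co)limit yields a quasi-isomorphism of right $\cW(X \sq G)$-modules
\[
\cW(X \sq G)(\underline{L}, -) \xrightarrow{\;\simeq\;} \cB_G(L, -).
\]
The $A_\infty$-Whitehead theorem \cite[Appendix A]{GPS3}, invoked exactly as in the proof of Corollary \ref{cor: Qfunctor}, then implies that $\cB_G(L, -)$ is representable, with representing object $\underline{L}$. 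Assembling a choice of representing object for each $L$ through the Yoneda embedding of $\cW(X \sq G)$ into $\mathrm{Mod}_{\cW(X \sq G)}$ produces the desired $A_\infty$-functor $\Phi_{\downarrow}: \cW_G(X) \to \cW(X \sq G)$, well-defined up to quasi-equivalence because of the standard ambiguity in the choice of a representing object.

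The main technical obstacle is the coherence step in the second paragraph. Corollary \ref{prop:left_local_property} supplies only homotopy-commutative squares at each finite stage of the wrapping sequence; promoting these into the system of higher homotopies needed to induce a genuine morphism of telescope diagrams requires an inductive construction of perturbation data parallel to the coherent choices already made in setting up $\cQ$ and the Yoneda modules $\cY_{L^\infty}$. Once this coherence is arranged, the remainder of the argument is a routine application of the localization formalism of \cite{GPS3}.
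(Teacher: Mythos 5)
Your proposal is essentially correct and lands on the same two load-bearing facts as the paper: the bicyclicity of $\mathbf{1}_L$ (via the quasi-isomorphism \eqref{equ:from_downstairs_to_G}) and the $A_\infty$-Whitehead theorem of \cite[Appendix A]{GPS3}. However, the paper sidesteps the coherence problem you flag at the end.

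Rather than constructing an explicitly coherent morphism of double-telescope diagrams $\uhlim_k \uhclim_j \cO_{X\sq G}(\underline{L}^k, \underline{K}^j) \to \uhlim_k\uhclim_j \cQ(L^k, \underline{K}^j)$ as your plan suggests, the paper first localizes only on the right, computing $H^{\bullet}{\cQ}_{\cZ^{-1}_{X\sq G}}(L,-)$ via a single colimit in $j$ (the module analogue of Corollary \ref{cor:colimit_compute_localization}, i.e.\ \cite[Corollary 3.38]{GPS1}). Combined with \eqref{equ:from_downstairs_to_G} this identifies $H^{\bullet}{\cQ}_{\cZ^{-1}_{X\sq G}}(L,-) \simeq HW^{\bullet}(\underline{L},-)$. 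The key extra observation is that this partially localized bimodule is \emph{already} $\cZ_G$-local on the left (which follows from Corollary \ref{prop:left_local_property} together with the fact that the downstairs quasi-units $e_{\underline{L}^k}$ act invertibly on $HW^{\bullet}$). By \cite[Lemma 3.13]{GPS1}, the further left localization ${\cQ}_{\cZ^{-1}_{X\sq G}}(L,-) \to \cB_G(L,-)$ is therefore a quasi-isomorphism, so no second limit needs to be unpacked. Representability then follows exactly as in Corollary \ref{cor: Qfunctor}, via the quasi-isomorphism with the right-localized Yoneda module $CF_{\cZ^{-1}_{X\sq G}}^{\bullet}(\underline{L},-)$. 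The ``already local'' step is what lets one avoid the inductive coherence construction you correctly identify as the main technical burden in your route; both arguments ultimately use the same ingredients, but the paper's ordering of the two localizations makes the coherence issue disappear.
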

\begin{proof}
A module analogue of the argument in Corollary  \ref{cor:colimit_compute_localization} (see \cite[Corollary 3.38]{GPS1}) implies that the cohomology of the right localization ${\cQ}_{\cZ^{-1}_{X\sq G}}$  is computed by taking colimit over the wrapping sequence in $\cO_{X\sq G}$.   Combined with  \eqref{equ:from_downstairs_to_G}, we obtain

\begin{align*}
    H^{\bullet}{\cQ}_{\cZ^{-1}_{X\sq G}}(L,-) \simeq HW^{\bullet}(\underline{L},-) \in \mathrm{Mod}_{H^\bullet\cW(X\sq G)}
\end{align*}
 In particular, ${\cQ}_{\cZ^{-1}_{X\sq G}}$ is  $\cZ_{G}$-local on the left, and therefore the  map ${\cQ}_{\cZ^{-1}_{X\sq G}}(L,-) \rightarrow \cB_{G}(L,-)$ is a quasi-isomorphism by \cite[Lemma 3.13]{GPS1}.

Finally, since $CF_{\cZ^{-1}_{X\sq G}}^{\bullet}(\underline{L}, -) $ is quasi-isomorphic to  ${\cQ}_{\cZ^{-1}_{X\sq G}}(L,-)$ and is, by definition, the Yoneda module represented by $\underline{L}$, we conclude that $\cB_{G}(L,-)$ is represented by $\underline{L}$ by \cite[Appendix.A3]{GPS3}. This completes the proof. 
\end{proof}

A similar argument applied to $\underline{L}\rightarrow \cB_{G}(-, \underline{L}) $ produces the functor in the opposite direction, namely an $A_{\infty}$-functor  
\begin{align*}
    \Psi_{\uparrow}:  \cW(X\sq G) \rightarrow  \cW_{G}(X), 
\end{align*}
 which sends a reduced Lagrangian $\underline{L}$ to its preimage $L$. 

\section{Symplectic quotient with singularities}
In the previous section, we assumed that the Hamiltonian group action on the zero level set of the moment map is free. Unfortunately, this assumption fails in many interesting examples, leading to a singular exact symplectic quotient.  
In this section, we focus on the simplest type of such singularities, arising from the quotient of $\C^{2}$ by the Hamiltonian $\bS^{1}$-action with weights $(1,-1)$. 

\subsection{SNC divisor complement} 
\label{subsec: SNCdivcmpl}
We briefly recall a construction from \cite{ML12} of a Liouville structure on the complement of a divisor. Let 
 $$ 
 K = Z(s) \subset Y
 $$ 
 be a simple normal crossing divisor in a smooth projective variety $Y$, determined by a very ample line bundle $\cL$. Here $s$ denotes a defining section and we write the  smooth irreducible components of $K$ as $K_i$.  We denote the complement  of the divisor by $X \coloneqq Y\setminus K$. Note that $X$ is a complex affine variety, and any complex affine variety can be presented in this way.  

Fix a Hermitian metric $\lVert \cdot \rVert$ on $\cL$, and define 
\begin{equation}
    \label{defn: omegalambda}
    \omega \coloneqq -dd^c\log \lVert s \rVert^2, \quad \lambda \coloneqq  -d^c\log \lVert s \rVert^2.
\end{equation}
Choose a regular open neighborhood $U \supset K $ that is diffeomorphic to (a smoothing of corners of) the union $\bigcup_{i} N^{\leq \epsilon}_{K_i/Y}$ of disk bundles. The positivity of $K$ ensures that $\lambda$ defines a Liouville form on $Y\setminus U$ with respect to the symplectic form $\omega$. By abuse of notation, we denote this Liouville domain by $X$, and denote its completion by $\hat X$.

This particular Liouville form $\lambda$ has simple Reeb dynamics at $\partial X$. There is a projection
    \begin{equation*}
        p: \partial X \to K
    \end{equation*}
whose restriction to $K_i^\circ \coloneqq K_i - \bigcup_{j\neq i}K_j$ is a principal $S^1$-bundle by \cite[Section 5]{ML12}. Its angular form is given by
$$
\lambda \vert_{p^{-1}(K_i^\circ)} = -\Theta, \quad \Theta \coloneqq \text{Hermitian connection 1-form of $\Vert \cdot \Vert$}.
$$
This means that the induced $S^1$-action is generated by the Reeb flow of the minimal period $2\pi$. This is a consequence of the simplicity of the divisor $K$.

\begin{rem}
    The $S^1$-action generated by the Reeb flow is distinct from the given Hamiltonian $\bS^1$-action, hence we use a  different notation. Note that the above construction extends to the equivariant setting, provided that the divisor $K$ is invariant under the group action under consideration.
\end{rem}
\subsection{Conic fibration}
Let $\underline{X}$ be a smooth quasi-projective variety over $\C$ equipped with a positive line bundle $\mathcal L \to \underline X$. Let $\underline D$ be a smooth ample divisor defined by a section $s\in \Gamma(\underline X, \mathcal L^{\otimes 2}).$ The \emph{conic fibration} associated with this data is the following variety with a prescribed $\bS^1$-action, which canonically extends to K\"ahler $\C^*$-action:
\begin{equation*}
       \bS^1 \circlearrowright X \coloneqq  Z(zw=s) \subset \mathrm{Tot}(\mathcal L^{\oplus 2}), \quad c \cdot (\mathbf x, z, w) = (\mathbf x, c z, c^{-1}w),
\end{equation*}
where $(z,w)$ denotes the fiber coordinates. As its name suggests, $X$ comes with a conic fibration $X \xrightarrow{\pi} \underline X$ induced from $\mathcal L^{\oplus 2} \rightarrow \underline X$. The pullback divisor $D \coloneqq \pi^{-1}(\underline D)$ is a normal crossing divisor with two irreducible components, 
$$
D = D_1 + D_2,\quad D_1 \coloneqq \mathrm{Tot}_{\underline{D}}(\mathcal L \oplus 0), \quad D_2\coloneqq \mathrm{Tot}_{\underline{D}}(0\oplus \mathcal L).
$$
In the language of geometric invariant theory, $\C^*$-orbits of $X$ are of the following types: 
\begin{itemize}
    \item the stable locus $X^s \coloneqq X \setminus D$;
    \item the strictly semistable locus $X^{s.s} \coloneqq D$; and 
    \item the polystable locus $X^{p.s} \coloneqq D_1 \cap D_2 = \mathrm{Tot}_{\underline{D}}(0\oplus 0)$. 
\end{itemize} 
We introduce some notations that will be used throughout the rest of the section. We denote the polystable locus by $F$, emphasizing that it coincides with the fixed-point locus of the $\C^*$-action. Let $N$ be a regular neighborhood of $F$, symplectomorphic to the disk bundle $DN_{F/X}$. Locally near the $F$, $X$ is modeled on a product with the standard Lefschetz fibration $zw: \C^2 \to \C$. If $V$ is a small trivializing neighborhood of $\mathcal L\vert_{\underline{D}}$ , then we have 
\begin{equation}
\label{eq: LocModelConicFib}
\begin{tikzcd}
    U\coloneqq\pi^{-1}(V) \ar[r, "\sim"]&V\times \mathbb D^2_{z,w} \ar[d, "zw"] \ar[r, hook] & N \simeq DN_{F/X} \ar[d]\\
    &V \times \mathbb D \ar[r, hook] & DN_{\underline{D}/\underline{X}}
\end{tikzcd}.
\end{equation}
We denote the regular locus of $X$ and its quotient, respectively, by
$$ 
X^{\circ} \coloneqq X^s, \quad \underline{X}^\circ \coloneqq \underline{X}\setminus \underline{D}.
$$ 
From now on, we impose the following assumptions on  $\underline X$:
\begin{itemize}
    \item there exists a smooth projective compactification $\underline{Y}\supset\underline{X}$ with a SNC boundary divisor $\underline{K}$, such that $\mathcal{L}\rightarrow \underline{X}$ extends to a positive line bundle $\mathcal{L}\rightarrow\underline{Y}$ (still denoted $\mathcal{L}$ by abuse of notation);
    \item  the space $\underline{X}$ is equipped with a Liouville structure described in \eqref{defn: omegalambda};
    \item  $s$ extends to a section of $\mathcal{L}^{\otimes 2}\rightarrow\underline{Y}$, still denoted as $s$, such that  $Z(s) \subset \underline{Y}$ is a smooth ample divisor extending $\underline{D}\subset \underline{X}$, still denoted as $\underline{D}$.
\end{itemize}
Under these assumptions, we can realize $X$ as a divisor complement as follows. Define 
$$
Y\coloneqq Z(zw=su^2) \subset \mathbb P_{\underline{Y}}(\mathcal L^{\oplus2} \oplus \mathcal O),
$$
equipped with a natural $\bS^1$-action extending that on $X$, and a $\mathbb P^1$-fibration $Y \xrightarrow{\pi} \underline{Y}$ extending the conic fibration $X \xrightarrow{\pi} \underline X$ (still denoted as $\pi$). It is straightforward to check that $X = Y \setminus(K + H_\infty)$ where $K \coloneqq \pi^{-1}(\underline{K}) \subset Y$ and $H_\infty$ is the hyperplane divisor at infinity. Note that both divisors are $\bS^1$-invariant. 

Next, we describe an $\bS^1$-equivariant Liouville structure on 
$$
X^{\circ} = Y\setminus (K+D+H_\infty) = X\setminus D.
$$ 
The construction is motivated by Kirwan's desingularization process \cite{Kir85}. We first perform an equivariant blow-up along the polystable locus,
\begin{equation*}
    \begin{tikzcd}
    \bS^1 \circlearrowright\mathrm{Bl}_{F}X \ar[d, "\pi"] \ar[dr, "\tilde \mu"] &  \\
    X \ar[r, "\mu"] & \R,
    \end{tikzcd}
\end{equation*}
equipped with the lifted moment map $\tilde \mu$. In the original desingularization process, one removes the strict transform $\widetilde X^{s.s}$ of the strictly semistable locus $X^{s.s}$. In our setting, we further excise a neighborhood of the exceptional divisor. More precisely, we define 
\begin{equation*}
    X^{\circ} \cong \mathrm{Bl}_{F}X \setminus (D+E).
\end{equation*}
This is symplectomorphic to the original regular locus $X^\circ = X^s$ by \cite{Kir85}. Since $K+D+E +H_\infty$ is very ample and simple normal crossing, we equip $X^\circ$ with the Liouville form defined in \eqref{defn: omegalambda}, and denote the resulting Liouville manifold by 
\begin{equation*}
(X^\circ, \lambda^\circ).    
\end{equation*}
Because the divisor is $\bS^1$-equivariant and the $\bS^1$-action is free on its complement, $\lambda^\circ$ descends to a Liouville form $(\underline X^{\circ}, \underline \lambda^{\circ})$ by Theorem \ref{thm:Liouville_reduction}. 

Since $X$ is already realized as a complement of divisors, it is worth explaining the role of the blow-up. Locally, the blow-up diagram above is modeled on
\begin{equation}
   V \times 
   \left( \begin{tikzcd}
    ((\C^*)^{2\circ}, \lambda^{\circ}) \ar[r, hook] \ar[d, hook, "i"] & (\mathrm{Bl}_{0}\C^2, \widetilde \omega) \ar[d, "p"] \ar[dr, "\tilde \mu"] &  \\
    ((\C^*)^2, \lambda_{\mathrm{std}}) \ar[r, hook ] & (\C^2, \omega_{\C^2} )\ar[r, "\mu"] & \R
\end{tikzcd} \right).
\end{equation}
We focus on the symplectic and Liouville structures on the right-hand side of the diagram. The form $\widetilde \omega$ agrees with the pullback of $\omega_{\C^2}$ away from the blow-up locus. On the other hand, the Liouville form is modified along the  Liouville isotopy $i: (\C^*)^{2\circ} \hookrightarrow (\C^*)^2$.
 
The advantages of the Liouville form $\lambda^{\circ}$ over $\lambda_{\mathrm{std}}$ are as follows. First, $\lambda^{\circ}$ simplifies the Reeb dynamics near the singular locus of $\mu$. Observe that $\tilde \mu^{-1}(0)$ intersects $D+E$ only along the smooth locus $E^\circ$ of $E$. Explicitly, a neighborhood of $\partial X^{\circ}$ near $\tilde \mu^{-1}(0)$ is modeled on 
\begin{equation}
\label{eq: boundaryloc}
    V \times (S^3\setminus p^{-1}(W)) \xrightarrow{(\mathrm{id}, p)} V \times \mathbb (\mathbb P^1\setminus W), \quad  W \text{ a small neighborhood of } \{0,\infty\}.
\end{equation}
The $\bS^1$-action is given by $\lambda\cdot (\mathbf x,z, w) \mapsto (\mathbf x, \lambda z, \lambda^{-1} w)$, whereas the $S^1$-action generated by the Reeb flow is  $\eta\cdot (\mathbf x,z, w) \mapsto (\mathbf x, \eta z, \eta w)$.

Second, $\lambda^\circ$ descends under Liouville reduction to a Liouville form $(\underline X, \underline \lambda^{\circ})$, and admits a similar interpretation. This is closely related to the fact that the resolution
\begin{equation*}
    \tilde \mu^{-1}(0)/\bS^1 \to \mu^{-1}(0)/\bS^1
\end{equation*} 
is a homeomorphism and the left hand side carries a well-defined symplectic form. The inverse image of the singular locus is precisely the divisor $\underline{D} \subset \tilde \mu^{-1}(0)/\bS^1$. The $S^1$-action on $p^{-1}(E^\circ)$ generated by the Reeb flow is transverse to the given $\bS^1$-action. Thus, the Reeb flow of $\underline{\lambda}^\circ$ is periodic of minimal period $2\pi$. This implies  $\underline \lambda^{\circ}$ coincides with the one induced by the presentation of $\underline{X}^{\circ}$ as a divisor complement: 
\begin{equation*}
    \underline X^{\circ} \cong \tilde \mu^{-1}(0)/\bS^1 - \underline{D}.
\end{equation*}

\subsubsection*{Guiding example}
We explain what happens in the simplest example $zw: \C^2 \to \C$, for which we have $\hat X^\circ = (\C^*)^{2\circ}, \hat{\underline{X}}^\circ = \C^*.$
Our goal is to compare the Floer cohomologies associated to the Lagrangians
$$ L=\mathbb D^2_{\Re} \subset \C^2, \quad L^\circ = L \cap X^\circ\subset X^\circ, \quad \text{and} \quad \underline{L}^\circ = L^\circ /\bS^1 = \R_{>0} \subset \C^*.
$$
Here, $\mathbb D_\Re$ denotes the Lefschetz thimbles over $\mathbb R_{\geq 0}$. 

If we view the base $\C$ of the map $zw$ as a quotient $\underline{X}$ of $\bS^1$-action, then the origin has a conical singularity on which the symplectic form blows up. To resolve the singularity, consider $\underline{X}^\circ$, an annulus around the origin in $\C$. Let $\theta$ be the coordinate of one of the contact boundary $S^1$ of $\underline{X}^\circ$ near the origin and $r\in(0,\infty)$ be the radial coordinate of its symplectization. The desingularization process amounts to taking a quotient $S^1\times (0,1]$ by $S^1$-action generated by Reeb flow at $r=1$, recovers a standard symplectic fom on $\C$. The map $zw$ becomes a Lefschetz fibration. 

Fix a small interval $(\delta_0, \delta_1) \subset (0,1)$ to view $S^1 \times (\delta_0, \delta_1)$ as a collar neighborhood of the contact boundary. For each $\nu\in \mathbb N$, choose Hamiltonians on $\C$ satisfying
    \begin{equation}
    \label{eq: HamModel}
        \underline{K}_\nu = 
        \left\{\begin{array}{cc}
        (\nu+\epsilon) r + C_\nu& (\delta_1<r) \\
        \epsilon r& (r<\delta_0)
        \end{array}
        \right ., \quad \quad  \begin{array}{l}
             K_\nu(z,w) \coloneqq \underline{K}_\nu\circ zw,\\
             K^\circ_\nu\coloneqq K_\nu |_{X^\circ}.
        \end{array}
    \end{equation}
in such a way that $\underline{K}_\nu$ depends only on $r$ and strictly convex along $\delta_0 < r<\delta_1$. We extend $\underline{K}_\nu$ to the whole $\underline{X}^\circ$ so that $\underline L^\circ \cap \phi^1_{\underline{K}_\nu^\circ}(\underline{L}^\circ)$ intersect exactly once outside the collar neighborhood; see Figure \ref{fig:wrapped}. Hamiltonian chords of $\underline{K}_\nu$ are either Reeb orbits in the collar neighborhood, or Morse critical points in the interior. By extending the linear part of $\underline{K}_\nu$ to $r\to\infty$, this family will computes the wrapped Floer cohomology of $L^\circ$. On the other hand, they also define Hamiltonians on the standard $\C$ (on which the coordinate $r$ does not extend beyond $1$). We can compute Floer cohomology of $L$ using $K_\nu$. 

The intersection $L\cap \phi^1_{K_\nu}(L)$ consists of two types of components: $\lfloor \nu \rfloor$ cleanly intersecting circles over $\underline{L}^\circ \cap\phi_{\underline{K}^\circ_\nu}^1(\underline{L}^\circ)$, and the origin. Using the identification in Remark \ref{rem:primitive_identification}, we can explicitly describe the Floer complexes as 
\begin{equation}
\label{eq:computeFloerCochain}
    \begin{split}
    CF^\bullet_{\bS^1}(L^\circ, L^\circ; K_\nu^\circ) &= \C[\lambda, t]\langle 1, x, \ldots x^{\lfloor \nu \rfloor}\rangle, \\
    CF^\bullet_{\bS^1}(L, L; K_\nu) &= \C[\lambda, t]\langle 1, x, \ldots x^{\lfloor \nu \rfloor}\rangle \oplus \C[\lambda] \langle p^{\nu+1}\rangle, \\
    \deg x= 2, \quad & \deg \lambda =2, \quad \deg t=1,\quad \deg p^{\lfloor \nu \rfloor+1} =2\nu+2.
    \end{split}
\end{equation}
Here, $t$ denotes a representative of the point class of $\bS^1$, $\lambda$ is the degree $2$ generator of $H_{\mathbb{S}^1}^{\bullet}(*)$, $p^{\lfloor \nu \rfloor+1}$ denotes the fixed point at the origin, and the element $x^k$ record the number of wrappings around the origin. See Figure \ref{fig:wrapped}. Their degrees are computed using the standard complex volume form $dz$ on $\C$. The differentials are given by 
\begin{equation}
\label{eq:computeFloerDiff}
\begin{split}
\mu^{1}_{L^\circ}\left(\sum_{k=0}^\nu c_{k,1} x^k t + \sum_{k=0}^\nu c_{k,0}x^k\right) &= \sum_{k=0}^\nu c_{k,1} x^k \lambda, \\  
\mu^1_L\left(\sum_{k=0}^\nu c_{k,1} x^k t + \sum_{k=0}^{\nu+1} c_{k,0}x^k\right) &= \sum_{k=0}^\nu c_{k,1} x^k \lambda \pm \sum_{k=0}^{\nu} c_{k,1} x^{k+1},
\end{split}
\end{equation}
 where we set $x^{\nu+1} \coloneqq p^{\nu+1}$ for notation convenience (see Figure \ref{fig:wrapped}). The first differential is standard. Indeed, there are no pseudo-holomorphic sections of $zw: X^\circ \to \C^*$ with the given Lagrangian boundary condition. Thus, Floer strips are confined to the fibers of this map, which implies $\mu^1_{L^\circ}(x^k) = 0$. The relation $\mu^1_{L^\circ}(x^kt)=x^k\lambda $ is an instance of the computation in \cite[Lemma 4.4]{KLZ23}. 
 
 In contrast, the second differential contains an additional term $\mu^1_{L}(x^kt)=x^k\lambda \pm x^{k+1}$. When $k={\lfloor \nu \rfloor}$, the strip contributing to $x^{k+1}$ can be  read directly from the base of the fibration: see Figure \ref{fig:wrapped}. Combining these relations, we compute the Floer cohomologies as follows: 
\begin{equation}
\label{eq:computeFloerCoh}
    \begin{split}
        HF_{\bS^1}^\bullet (L^\circ, L^\circ;K^\circ_\nu ) &\simeq \C\langle [1], [x], \ldots, [x^{\lfloor \nu \rfloor}]\rangle, \\
        HF_{\bS^1}^\bullet (L, L ;K_\nu ) &\simeq \C[\lambda], \quad [\lambda^k] \sim [x^k], \quad k\in 0, \ldots, {\lfloor \nu \rfloor}.
    \end{split}
\end{equation}
The first computation confirms Corollary \ref{cor: Qfunctor}, since it is isomorphic to $HF^\bullet(\underline{L}^\circ, \underline{L}^\circ ; \underline{K}_\nu)$. The second computation verifies Theorem \ref{thm:G-equivariant_PSS_isomorphism}; it is isomorphic to $H_{\bS^1}^\bullet(L) = \C[\lambda]$. Letting $\nu \to \infty$, we conclude that the natural inclusion (which is not a subcomplex)
$$CF^\bullet_{\bS^1}(L^\circ, L^\circ; K_\nu^\circ) \subset CF^\bullet_{\bS^1}(L, L; K_\nu), \quad x, \lambda, t \mapsto x, \lambda, t, $$
induces an isomorphism on cohomology, as observed in \cite{lekili2023equivariant}. The generators $x$ and $\lambda$ are intertwined, and we obtain:
\begin{equation}
\label{eq:isoFloerCoh}
     \C[x] \simeq \lim_{\nu\to\infty}HF^\bullet(\underline{L}^\circ, \underline{L}^\circ; \underline{K}_\nu) \simeq \lim_{\nu\to\infty}HF^\bullet_{\bS^1}(L^\circ, L^\circ; K_\nu) \simeq HF^\bullet_{\bS^1}(L, L) \simeq \C[\lambda].
\end{equation}
\begin{figure}[ht]
	\includegraphics[scale=0.6]{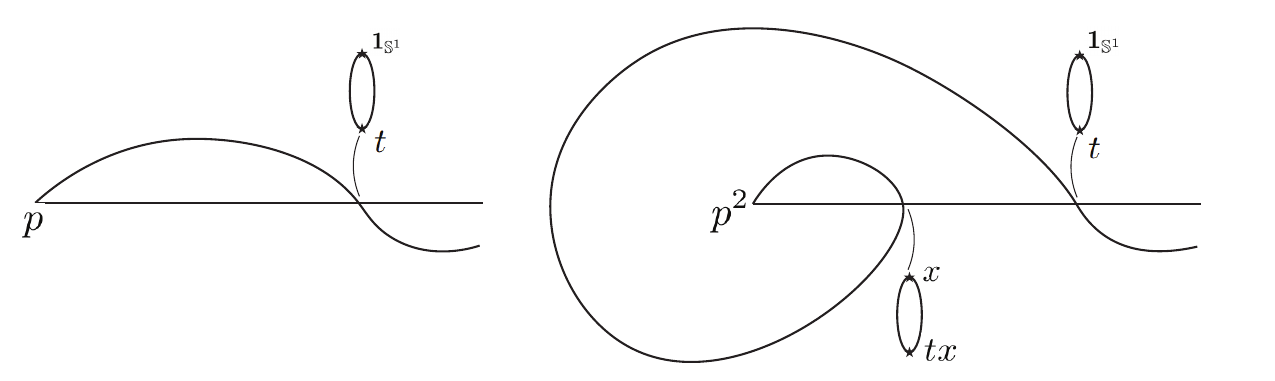}
	\caption{$CF^\bullet_{\bS^1}(L, L; K_\nu)$ for ${\lfloor \nu \rfloor}=0$ and ${\lfloor \nu \rfloor}=1$.}
	\label{fig:wrapped}
\end{figure}
This computation generalizes to Lagrangians in a conic fibration. 
\begin{defn}
    Let $X$ be a conic fibration, and let $L\subset \mu^{-1}(0)$ be a connected exact $G$-invariant Lagrangian. We say that $L$ is of \emph{disk type near $F$} if, in the local model \eqref{eq: LocModelConicFib} near $F$, it can be described as
    \begin{equation}
    \left(
    \begin{tikzcd}
    \mathbb D_{\Re} \ar[r] & L\cap N \ar[d] \\
    & L'
    \end{tikzcd} 
    \right) \;\ \subset \;\ 
    \left(
    \begin{tikzcd}
    \mathbb D^4 \ar[r] & N\simeq DN_{F/X} \ar[d] \\
    & F
    \end{tikzcd}
    \right),
    \end{equation}
    i.e., a Lagrangian disk bundle over a Lagrangian $L'\subset F$ compatible with $DN_{F/X} \to F$.
    \end{defn}

\begin{lemma}
     Let $L \subset \mu^{-1}(0)$ be an exact, $\bS^1$-invariant Lagrangian $L \subset X$ which is of disk type near $F$. Then the Lagrangians
     \begin{equation*}
         L^{\circ}\coloneqq L\cap  X^{\circ}, \quad \underline L^{\circ}\coloneqq (L\cap X^\circ)/\bS^1 \subset \underline X^{\circ},
     \end{equation*}
     are cylindrical Lagrangians of $X^{\circ}$ and $\underline X^{\circ}$ respectively. 
\end{lemma}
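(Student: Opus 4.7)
The plan is to localize the cylindricality check near the polystable locus $F$, where the divisor-complement Liouville structure differs from the original one on $X$, and to verify cylindricality there using the real structure on the local Lefschetz model. The quotient statement then follows formally from Theorem \ref{thm:Liouville_reduction}.

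First I would observe that $\mu^{-1}(0)\cap D = F$: in the local model \eqref{eq: LocModelConicFib} the moment map restricts to $\mu = |z|^2-|w|^2$, so $zw=0$ together with $|z|=|w|$ forces $(z,w)=(0,0)$. Thus $L\cap D \subset L\cap N$ for a regular neighborhood $N$ of $F$. Outside $N$ one has $L^\circ = L$, the divisors $D$ and $E$ are disjoint from $L$, and the two Liouville forms $\lambda$ and $\lambda^\circ$ differ only by smooth terms of the form $-d^c(\log\|s_D\|^2+\log\|s_E\|^2)$, which are bounded on a fixed neighborhood of $L$ outside $N$. A Moser-type argument then transports the old cylindrical structure of $L$ near the boundary coming from $K\cup H_\infty$ to a cylindrical structure for $\lambda^\circ$; since $L$ was assumed cylindrical in $(X,\lambda)$, nothing further is needed on this part of the boundary.

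Inside $N$ the problem reduces, via the trivialization \eqref{eq: LocModelConicFib}, to a product with the local Lefschetz model $zw:\mathbb C^2\to\mathbb C$, so the key local claim is that the strict transform of $\mathbb D_{\Re}\subset\mathbb C^2$ is cylindrical in the Liouville manifold $((\mathbb C^*)^{2\circ},\lambda^\circ)$ obtained by blowing up the origin and removing the SNC divisor $\widetilde D + E$. I would prove this by a real-structure argument: choose the Hermitian metrics defining $\|s_{\widetilde D}\|$ and $\|s_E\|$ to be invariant under the anti-holomorphic involution $(z,w)\mapsto(\bar z,\bar w)$. Then $\lambda^\circ=-d^c\log\|s\|^2$ is anti-invariant under this involution, so the Liouville vector field $Z^\circ$ is invariant, hence tangent to the fixed locus, which contains the strict transform of $\mathbb D_{\Re}$. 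Conicality near the end at $E$ is then a one-line sign check, namely that $Z^\circ$ points outward along $E$; this is immediate in an affine blow-up chart in which $E$ is a coordinate hyperplane, since the dominant contribution to $Z^\circ$ there comes from $-d^c\log\|s_E\|^2$.

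For the quotient, Theorem \ref{thm:Liouville_reduction} shows that the free $\bS^1$-action descends $\lambda^\circ$ to $\underline\lambda^\circ$ together with its Liouville vector field. Since $L^\circ$ is $\bS^1$-invariant and cylindrical in $X^\circ$, the quotient $\underline L^\circ=L^\circ/\bS^1$ is cylindrical in $\underline X^\circ$. I expect the main obstacle to be the explicit local verification on the blow-up: one must pin down the Liouville vector field well enough to confirm that the $E$-contribution dominates near the new end, so that the outward-pointing condition holds. The real-structure reduction above is precisely what makes this a single signed scalar check rather than a full vector-field computation.
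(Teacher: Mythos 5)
The paper's proof is a direct geometric identification: using the explicit local description \eqref{eq: boundaryloc} of the contact boundary of $X^\circ$ near $\tilde\mu^{-1}(0)$ as $V \times (S^3\setminus p^{-1}(W))$, it identifies $\partial L^\circ$ as an $S^1$-bundle over $L'$ sitting inside the $S^3$-bundle $SN_{F/X}$, observes that this $S^1$ is Legendrian, and concludes cylindricality; the quotient case follows by the same description of the reduced contact boundary. Your proposal instead localizes to the $\mathbb{C}^2$ Lefschetz model and tries to show $Z^\circ$ is tangent to $L^\circ$ via an anti-symplectic involution, which is a genuinely different route.

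However, your real-structure argument has a concrete error and a gap. The involution $(z,w)\mapsto(\bar z,\bar w)$ has fixed locus $\mathbb{R}^2\subset\mathbb{C}^2$, not the thimble $\mathbb{D}_{\Re}=\{(z,w):z=\bar w\}$ (on which $zw=|z|^2\in\mathbb{R}_{\geq 0}$ and $\mu=|z|^2-|w|^2=0$); the anti-holomorphic involution fixing $\mathbb{D}_{\Re}$ and preserving $\{zw=0\}$ is $(z,w)\mapsto(\bar w,\bar z)$. More seriously, even with the correct involution the argument is only formulated in the $(z,w)$-factor, but $L\cap N$ is a $\mathbb{D}_{\Re}$-bundle over a Lagrangian $L'\subset F$, and you need $Z^\circ$ tangent to the whole Lagrangian, not just the fiber. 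This requires either an anti-symplectic involution of $V$ fixing $L'\cap V$ (which you cannot assume) or an honest product decomposition $\lambda^\circ|_U = \lambda_V + \lambda^\circ_{\mathbb{C}^2}$ near the new end. The latter does not follow from \eqref{defn: omegalambda} because the sections cutting out $K$ and $H_\infty$ do not separate in the $V$ and $(z,w)$ coordinates, so $\lambda^\circ$ is only asymptotically, not exactly, a product. The paper sidesteps both problems because \eqref{eq: boundaryloc} gives an exact product description of the contact boundary near $\tilde\mu^{-1}(0)$, and the condition to check there is purely about the Legendrian $\partial L^\circ$, which has the manifest $S^1$-over-$L'$ structure. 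Your ``one-line sign check'' for outward pointing of $Z^\circ$ along $E$ is also left unsubstantiated, though this part is minor compared to the product issue.
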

\begin{proof}
With respect to \eqref{eq: boundaryloc}, we can describe $\partial L^{\circ}$ as follows:
\begin{equation*}
    \left(
    \begin{tikzcd}
    S^1_z \ar[r] & \partial L^\circ \ar[d] \\
    & L'
    \end{tikzcd} \right)
    \;\ \subset \;\ 
    \left(
    \begin{tikzcd}
    \mathbb S^3 \ar[r] & SN_{F/X} \ar[d] \\
    & F
    \end{tikzcd}
    \right).
    \end{equation*}
Here, $S^1$ is a Legenderian circle lying inside $S^3 \subset \mathbb D^2_{z,w}$. Thus, $L^\circ$ is cylindrical near infinity in $X^\circ$. A similar argument shows that $\underline L^\circ$ is a cylindrical Lagrangian of $\underline X^\circ$.
\end{proof}

\begin{lemma}
\label{lem: step1}
Let $L_i \subset \mu^{-1}(0)$ be exact, $\bS^1$-invariant, immersed Lagrangians which are of disk type near $F$. Then there is an isomorphism 
    \begin{equation*}
    HW_{\bS^1}^\bullet(L^\circ_1,L^\circ_2) \cong HW^\bullet (\underline{L}^\circ_1,\underline{L}^\circ_2).
\end{equation*}
\end{lemma}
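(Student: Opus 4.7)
The plan is to recognize this lemma as a direct application of the equivariant correspondence theorem (Theorem \ref{prop: GoingDown}) to the Liouville manifold $(X^\circ, \lambda^\circ)$. The construction in Section \ref{subsec: SNCdivcmpl} was engineered precisely so that $X^\circ$ is a Liouville manifold on which $\mathbb{S}^1$ acts \emph{freely} by Liouville automorphisms, with Liouville reduction equal to $\underline{X}^\circ$. Thus the hypotheses of Theorem \ref{prop: GoingDown} are in place on $X^\circ$.

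First, I would verify that the $L_i^\circ$ qualify as objects of $\cW_{\mathbb{S}^1}(X^\circ)_0$: they are exact (as restrictions of exact $L_i$), $\mathbb{S}^1$-invariant, cylindrical (by the preceding lemma), and contained in $\mu^{-1}(0)\cap X^\circ$. In the immersed case where $L_i$ near $F$ is a union of two thimbles $\mathbb{D}_\Re\cup \mathbb{D}_\Im$, the double points lie on the fixed-point locus $F$ and are removed in $X^\circ$, so $L_i^\circ$ is in fact embedded. The reduction map sends $L_i^\circ$ to $\underline{L}_i^\circ$ tautologically.

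Next, I would invoke Theorem \ref{prop: GoingDown}, which produces a fully faithful $A_\infty$-functor
\begin{equation*}
\Psi_{\downarrow}\colon \cW_{\mathbb{S}^1}(X^\circ)_0 \;\longrightarrow\; \cW(\underline{X}^\circ),
\quad L_i^\circ \;\longmapsto\; \underline{L}_i^\circ.
\end{equation*}
Passing to cohomology on the morphism complexes yields the required isomorphism
\begin{equation*}
HW_{\mathbb{S}^1}^{\bullet}(L_1^\circ, L_2^\circ) \;\cong\; HW^{\bullet}(\underline{L}_1^\circ, \underline{L}_2^\circ).
\end{equation*}

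The main point requiring attention is the compatibility of wrapping data between the two sides, since a priori the wrapping Hamiltonians on $X^\circ$ and $\underline{X}^\circ$ are chosen independently. The resolution is to apply Lemma \ref{lem:G_equivariant_wrapping} to produce an $\mathbb{S}^1$-invariant wrapping Hamiltonian $H$ on $X^\circ$ whose quotient $\underline{H}$ is an admissible wrapping Hamiltonian on $\underline{X}^\circ$; both generate cofinal wrapping sequences in the respective categories, and the moment correspondence $L^{\pi}$ intertwines the two via parallel transport. The local computation \eqref{eq:isoFloerCoh} serves as a fiberwise verification of this mechanism: at each finite slope $\nu$ in the model $zw\colon\mathbb{C}^2\to\mathbb{C}$, the Hamiltonians $K_\nu^\circ$ and $\underline{K}_\nu^\circ$ are related by pullback through the moment map, and the finite-level equivariant and non-equivariant Floer cohomologies coincide, so that passing to the direct limit in both $\nu$ (wrapping) and $N$ (Borel approximation) is consistent. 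Once these compatibilities are unwound, there is no extra content beyond the general correspondence theorem.
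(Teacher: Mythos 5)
Your proposal is correct and takes essentially the same approach as the paper: the paper's proof is a one-liner noting that the $\bS^1$-action on $L^\circ\subset X^\circ$ is free and invoking Theorem~\ref{prop: GoingDown}. Your additional observations---that the immersed double points lie on $F$ and are therefore excised in $X^\circ$, making $L_i^\circ$ embedded, and that the wrapping data on $X^\circ$ and $\underline{X}^\circ$ are intertwined by choosing an $\bS^1$-invariant Hamiltonian as in Lemma~\ref{lem:G_equivariant_wrapping}---are correct justifications the paper leaves implicit.
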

\begin{proof}
    The $\bS^1$-action on each $L^\circ_i \subset X^\circ$ is free. The result follows from  Theorem \ref{prop: GoingDown}. 
\end{proof}

As a next step, we relate $HF_{\bS^1}^\bullet(L,L)$ to $HW_{\bS^1}^\bullet(L^\circ,L^\circ)$. We extend the Hamiltonians defined in \eqref{eq: HamModel} in a similar manner. Let $N$ denote a neighborhood of $F$ which is symplectomorphic to a disc bundle $DN_{F/X}$, which can be viewed as a union of  local models \eqref{eq: LocModelConicFib} along $\underline{D}$. We choose $\bS^1$-invariant Hamiltonians $H_\nu$ on $X$ satisfying:
\begin{enumerate}
    \item $H_{\nu}$ is $C^2$-small on the complement $X\setminus N$, and 
    \item on a model neighborhood $U \subset N$ \eqref{eq: LocModelConicFib}, $H_{\nu}$ is of product form $$H_\nu \vert_U = K_\nu \times H'\vert_V,$$ where $K_\nu$ is the Hamiltonian on $\C^2$ defined in \eqref{eq: HamModel} and $H'$ is a $C^2$-small Hamiltonian on $F$ chosen so that intersections $L'_1\cap \phi_{H'}^1(L_2')$ are transverse. 
\end{enumerate}
We arrange the generators of $CF^\bullet_{\bS^1}(L_1, L_2; H_\nu)$ according to whether it occurs in $X\setminus N$ or not. The former one consists of $CF^\bullet_{(X\setminus N)\sslash\bS^1}(\underline{L}^\circ_1, \underline{L}^\circ_2; \underline{H}_\nu)$-many cleanly intersecting circles on which $\bS^1$ acts freely, where the subscript means the intersections appearing in $(X\setminus N)\sslash\bS^1$. On the other hand, the latter ones are further decomposed into two parts as in the local model \eqref{eq:computeFloerCochain}: for each transverse intersection $L_1'\cap \phi_{H'}^1(L_2')$, we obtain $\nu$ cleanly intersecting free $\bS^1$-orbits with a single $\bS^1$-fixed point. This decomposes the Floer complex into three summands as follows, 
\begin{align}
\label{equ:FloerDecomp}
    CF^\bullet_{\bS^1}(L_1, L_2; H_\nu) &= C_{I}[\lambda, t] \oplus C_{II}[\lambda, t]\langle x^0, \ldots x^{\lfloor \nu \rfloor} \rangle \oplus C_{III}[\lambda]\langle p^{{\lfloor \nu \rfloor}+1}\rangle.
\end{align}
where the variable $t$ denotes the point class of degree one on $S^1$, the variable $x$ labels the wrapping number, $p^{\nu+1}$ labels the intersections inside $F$, and
\begin{equation*}
     C_I = CF^\bullet_{(X\setminus N)\sslash\bS^1}(\underline{L}^\circ_1, \underline{L}^\circ_2; \underline{H}_\nu), \quad C_{II}=C_{III} = CF^\bullet(L_1', L_2'; H').
\end{equation*}
Consider a complex 
\begin{equation*}
    CF^\bullet_{I, II}(L_1, L_2; H_\nu) \coloneqq C_I[\lambda, t]\oplus C_{II}[\lambda, t]\langle x^1, \cdots, x^{\lfloor \nu \rfloor}\rangle \subset CF^\bullet_{\bS^1}(L_1, L_2; H_\nu)
\end{equation*}
whose differential, denoted by $\mu^1_{I, II}$ is defined by the counting of pseudo-holomorphic strips between those generators which \emph{does not intersect with $D_{\bS^1} \subset X_{\bS^1}$}. Likewise, we define a continuation map 
\begin{equation*}
    c_{I, II}^{\nu, \nu'}: CF^\bullet_{I, II}(L_1, L_2; H_{\nu}) \to CF^\bullet_{I, II}(L_1, L_2; H_{\nu'}), \quad (\nu\leq\nu')
\end{equation*}
by counting solutions of a Floer continuation equation which does not intersect with $D_{\bS^1}$.
\begin{lemma}
    \label{lem: complexI,II}
    The pair $\left(CF^\bullet_{I, II}(L_1, L_2; H_\nu), \mu^1_{I, II}\right)$ is a complex and $c_{I, II}^{\nu, \nu'}$ is a cochain map. Moreover, 
    $$
    \lim_{\nu \to \infty} HF^\bullet_{I, II}(L_1, L_2; H_\nu) \cong HW^\bullet_{\bS^1}(L_1^\circ, L_2^\circ).
    $$
    Here, the limit has been taken over continuation maps $c^{\nu, \nu'}_{I, II}$.
\end{lemma}
\begin{proof}
Observe that the generators of $CF^\bullet_{I, II}$ are in one-to-one correspondence with the generators of $CF^\bullet_{\bS^1}(L_1^\circ, L_2^\circ; H_\nu^\circ)$. 

To prove the rest of the claim, we use a cobordism version of no-escape lemma \cite[Proposition 5.12]{Ton19} which adapts to our Borel spaces as well. It tells us that Floer strips and continuation solutions are confined to $X^\circ_{\bS^1}$ whenever they do not intersect with $D_{\bS^1}$. In particular, a degeneration of such strips is always happening inside $X^\circ_{\bS^1}$. This ensures that $(\mu^1_{I, II})^2=0$ and $c_{I, II}^{\nu, \nu'}$ are cochain maps. Therefore $(CF^\bullet_{I, II}(L_1, L_2; H_\nu), \mu^1_{I, II})$ is identified with $CF^\bullet_{\bS^1}(L_1^\circ, L_2^\circ; H_\nu^\circ)$, and so does $c^{\nu, \nu'}_{I, II}$ with the continuation map from $CF^\bullet_{\bS^1}(L_1^\circ, L_2^\circ; H_\nu^\circ)$ to $CF^\bullet_{\bS^1}(L_1^\circ, L_2^\circ; H_{\nu'}^\circ)$. 
\end{proof}
\begin{rem}
\label{rem:NoSubcomp}
Notice that $CF^\bullet_{I, II}(L_1, L_2; H_\nu)$ is not a subcomplex of $CF^\bullet_{\bS^1}(L_1, L_2; H_\nu)$.     
\end{rem}
We analyze Morse-Bott type spectral sequence \eqref{equ:clean_spectral_sequence} for $L_1=L_2=L$. There are three types of chords and they match up with the components of \eqref{equ:FloerDecomp}. 
\begin{enumerate}
    \item The first family is $L^\circ_{\bS^1}$ (or its slight pushoff by the Liouville flow) which corresponds $C_{I}[\lambda, t]$.
    \item For each $1\leq k \leq \nu$, we have $\partial L^\circ_{\bS^1}$-family of chord coming from the orbits wrapping $k$ times around $F$. They correspond to $C_{II}[\lambda, t] \cdot x^k$ for each $k$.
    \item The fixed locus $L' = L\cap F$ gives the last family $L' \times B\bS^1$, which corresponds to $C_{III}[\lambda]\cdot p^{{\lfloor \nu \rfloor}+1}$. 
\end{enumerate}
We obtain two spectral sequences \eqref{equ:clean_spectral_sequence} computing $HF^\bullet_{\bS^1}(L, L; H_\nu)$ and $HF^\bullet_{\bS^1}(L^\circ, L^\circ; H_\nu^\circ)$ respectively, which we denote by $E(\nu)$ and $E^\circ(\nu)$. Notice that $X$ and $X^\circ$ are both exact and the primitive $f_L$ of $\lambda$ is locally constant near $\partial X^\circ$. Therefore the action of chords shares the same formal expression
    $$\int_{0}^{1} -x^*\lambda+ K(x(t)) dt.$$
But notice that we use different Liouville forms, namely $\lambda^\circ$ for $X^\circ$ and $\lambda$ for $X$.

The first page of $E^\circ(\nu)$ is as follows. 
\begin{equation*}
\label{equ: computeEcirc}
    E^\circ(\nu)^{p,q}_1 = \left \{\begin{array}{cc}
        H^{q}_{\bS^1}(L^\circ) \cong H^q(\underline{L}^\circ) & p=0\\
        H^{p+q+2p}_{\bS^1}(\partial L^\circ) \cong H^{p+q+2p}(\partial \underline{L}^\circ) & -\lfloor \nu \rfloor \leq p\leq -1\\
        0 & \text{otherwise}. 
    \end{array}
    \right.
    \;\; \Longrightarrow HF^\bullet_{\bS^1}(L^\circ, L^\circ; H_\nu^\circ).
\end{equation*}
The contribution $2p$ is the Conley-Zehnder index of the connected component corresponding to $C_{II}[\lambda, t]\cdot x^p$. We only use the generators of the first and second type and do not use the third type. The action of the first type is close to zero. For the generators of the second type, we use standard analysis \cite{abouzaid2010open}. The chords at $\{r = r_0\}$ is given by
\begin{equation}
\label{equ:actionXcirc}
    -r_0 \cdot \frac{\partial K}{\partial r} (r_0) + K(r_0), \quad \text{whose derivative is} \quad -r_0 \cdot \frac{\partial^2 K}{\partial r^2} (r_0) <0,
\end{equation}
since $K$ is strictly convex along the chosen Liouville collar. It is based on the fact that the Hamiltonian chords near the boundary are the Reeb orbits of the contact form. Therefore the action decreases as the power $x^k$ increases.

On the contrary, the first page of $E(\nu)$ is 
\begin{equation*}
\label{equ: computeE}
    E(\nu)^{p,q}_1 = \left \{\begin{array}{cc}
        H^{q}_{\bS^1}(L^\circ) \cong H^q(\underline{L}^\circ) & p=0\\
        H^{p+q-2p}_{\bS^1}(\partial L^\circ) \cong H^{p+q-2p}(\partial \underline{L}^\circ) & 1 \leq p\leq \lfloor \nu \rfloor \\
        H^{p+q-2(p+1)}_{\bS^1}(L') \cong H^{p+q-2(p+1)}(L')[\lambda]& p=\lfloor \nu \rfloor +1\\
        0 & \text{otherwise}. 
    \end{array}
    \right.
    \;\;\Longrightarrow HF^\bullet_{\bS^1}(L, L; H_{\nu}).
\end{equation*}
In particular, the action is increasing as the power $x^k$ grows. The reason is the following;  the action of the first type is close to zero as before. But the standard analysis \eqref{equ:actionXcirc} is no longer valid for $X$. Instead, observe that these orbits are the boundary of the diagonal disk $\Delta_{\mathbb D^2} \subset \mathbb D^4 \subset \mathbb C^2$ of the fiber of the local neighborhood \eqref{eq: LocModelConicFib} of $F$. Therefore, we can replace the integral $-\int_{S^1}x^*\lambda$ to the symplectic area of the disk, counted with multiplicity. The result is 
\begin{equation}
\label{equ:actionX}
    (1-r_0) \cdot \frac{\partial K}{\partial r} (r_0) + K(r_0), \quad \text{whose derivative is} \quad (1-r_0) \cdot \frac{\partial^2 K}{\partial r^2} (r_0) >0
\end{equation}
because $K$ is strictly convex. As a corollary, we obtain the degeneration of $E^\circ(\nu)$. 
\begin{lemma}
    \label{lem:degenWrapped}
    The spectral sequence $E^\circ(\nu)$ degenerates at the first page. Therefore, 
    \begin{equation*}
        \label{equ:HFXcirc}
        \mathrm{gr}(HF^\bullet_{\bS^1}(L^\circ, L^\circ; H_\nu^\circ)) \cong H^\bullet_{\bS^1}(L^\circ) \oplus  \bigoplus_{p=1}^{\lfloor \nu \rfloor} H^{\bullet-2p}_{\bS^1}(\partial L^\circ)
    \end{equation*}
\end{lemma}
\begin{proof}
    If not, then there are non-trivial pseudo-holomorphic curves inside $X^\circ_{\bS^1}$ that increase the action. By Lemma \ref{lem: complexI,II}, it should also be visible inside $X_{\bS^1}$, but now they decrease the action. This is a contradiction. 
\end{proof}

We will prove a similar degeneration of $E(\nu)$. We want to point out that the specific choice of $H_\nu$ does not matter in the computation of $HF^\bullet_{\bS^1}(L, L)$. We start by looking at an easy case when $\nu$ is small. The following is a consequence of classical results. 

\begin{lemma}
    \label{lem:degenFloerI}
    Choose $\nu$ to be small enough so that
    \begin{enumerate}
        \item $H_\nu$ is $C^2$-small, and
        \item $E(\nu)^{p,q}_1 =0$ for $p\geq 2$, i.e., $E(\nu)_1$ has only two non-zero columns.
    \end{enumerate}
    Then the spectral sequence $E(\nu)$ degenerates at the first page. Therefore, 
    \begin{equation*}
        \label{equ:HFX}
        \mathrm{gr}(HF^\bullet_{\bS^1}(L, L; H_\nu)) \cong H^\bullet_{\bS^1}(L^\circ) \oplus H^{\bullet-2}_{\bS^1}(L').
    \end{equation*}
\end{lemma}
\begin{proof}
Via PSS isomorphism (Theorem \ref{thm:G-equivariant_PSS_isomorphism}) $HF^\bullet_{\bS^1}(L, L)$ is isomorphic to the equivariant cohomology $H^\bullet_{\bS^1}(L)$, which can be computed via Thom-Gysin exact sequence. 
\begin{equation*}
    \cdots \to H^{\bullet-2}_{\bS^1}(L') \xrightarrow{i_*} H^\bullet_{\bS^1}(L) \xrightarrow{\mathrm{res}}  H^\bullet_{\bS^1}(L^\circ) \xrightarrow{\delta} H^{\bullet-1} _{\bS^1}(L') \to \cdots.
\end{equation*}
Here, $i: L' \hookrightarrow L$ is the natural inclusion. We are using the fact that $L$ is of disk type near $F$ so that $N_{L'/L}$ is an equivariant complex line bundle. The differential $d_1$ of $E(\nu)_1$ is a Floer analog of the connecting map $\delta$. 

It is well-known that the exact sequence splits in this case. The reason is that the composition $i^*i_*=e_{\bS^1}(N_{L'/L})$ is injective because the equivariant class is non-zero divisor. In particular, the connecting map $\delta$ always vanishes. By degreewise dimension counting, it also implies the vanishing of $d_1$.
\end{proof}

We have encountered the main result of this section. Lemma \ref{lem: complexI,II} enables us to view $HF^\bullet_{\bS^1}(L, L)$ as a deformation of $HW^\bullet_{\bS^1}(L^\circ, L^\circ)$. The \eqref{equ:HFXcirc} and \eqref{equ:HFX} indicate that the deformation is trivial after exchanging the variables $x$ and $\lambda$. This fits well with the original formulation \cite{lekili2023equivariant}, except for one obstacle. A deformation scheme of the relative Fukaya category does not apply to our situation because our Lagrangian $L$ meets the divisor $D$. 

While keeping the same strategy in mind, we show something slightly different. We will prove that the cohomology $HF^\bullet_{\bS^1}(L, L; H_\nu)$ is getting closer and closer to $HW^\bullet(L^\circ, L^\circ)$ as a filtered vector space as $\nu \to \infty$. 

Consider a continuation map denoted by 
$$
c^{\nu, \nu'}: CF^\bullet_{\bS^1}(L, L; H_\nu) \to CF^\bullet_{\bS^1}(L, L; H_{\nu'}), \quad (\nu \leq \nu').
$$
We know that $c^{\nu, \nu'}$ induces an isomorphism on cohomologies. Since $c^{\nu, \nu'}$ preserves action filtration, it induces a map between spectral sequences. By abuse of notation we still denote them by $c^{\nu, \nu'}$. We approximate the low-energy component of this map. 
\begin{lemma}
    \label{lem:ContiSpectral}
    Suppose ${\lfloor \nu'\rfloor} = {\lfloor \nu \rfloor}+1$. Let us write
    \begin{equation*}
    (c^{\nu, \nu'})^{p,q} = \sum (c^{\nu, \nu'})_k^{p,q}: E(\nu)_1^{p,q} \to \bigoplus_{k=0}^{\lfloor \nu \rfloor+1 -p }E(\nu')_1^{p+k,q-k}.
\end{equation*}
Then we have the following. 
\begin{enumerate}
    \item For $0\leq p \leq \lfloor \nu \rfloor$,  $(c^{\nu, \nu'})_0^{p,q}$ are isomorphisms.
    \item for $p = \lfloor \nu \rfloor +1$, then the sum of 
    \begin{align*}
    (c^{\nu, \nu'})^{p,q}_0 \vert_{H^\bullet(L')} &: H^{q-\lfloor \nu \rfloor}(L') \to H^{q-\lfloor \nu \rfloor}_{\bS^1}(\partial L^\circ) \cong H^{q-\lfloor \nu \rfloor}(L'), \; \;\text{and}\\
    (c^{\nu, \nu'})^{p,q}_1 \vert_{H^\bullet(L')[\lambda]\cdot \lambda} &: H^{q-\lfloor \nu \rfloor}(L')[\lambda]\cdot \lambda \to H^{q-\lfloor \nu \rfloor-2}(L')[\lambda]
\end{align*}
    are isomorphisms of vector spaces. 
\end{enumerate}

\end{lemma}
\begin{proof}
    For (1), observe that $(c^{\nu, \nu'})_0^{p,q}$ counts Floer-theoretic continuation solutions confined in a small neighborhood of Morse-Bott component. They induce Morse-theoretic continuation map on each column, which implies that they are isomorphisms.

    For (2), let us write a representative of a class of the last column as follows. 
    \begin{equation*}
        \phi \lambda^k p^{\nu+1} \in C_{III}[\lambda] \cdot p^{\nu+1} \simeq C^\bullet(L')[\lambda], \quad d^\mathrm{Morse} \phi = 0.
    \end{equation*}
    Recall \eqref{eq: LocModelConicFib}, which states that locally $L\subset X$ is a Lagrangian disk fibration over $L'\subset F$ compatible with the $\mathbb D^4$-fibration $DN_{F/X} \to F$. Each fiber is a Lagrangian disk $\mathbb D_{\Re} \subset \mathbb D^4$ inside a standard $4$-disk.  In this situation, the guiding example \eqref{eq:computeFloerCoh} at the beginning computes the low-energy component.\footnote{Technically, the almost complex structures $J$ on $N_{F/X}$ and $J_F$ on $F$ need to be compatible with the bundle projection $N_{F/X}\to F$. This can be done by choosing a Hermitian connection on $N_{F/X}$ and requiring that $J$ preserves the horizontal and vertical subspaces.} In particular, we obtain
    \begin{equation}
    \label{equ:approximate_continuation}
        c^{\nu, \nu'}(\phi\lambda^kp^{{\lfloor \nu \rfloor}+1}) = \phi \lambda^k x^{{\lfloor \nu \rfloor}+1}+ \mathcal O(\lambda^{k+1}).
    \end{equation}
    In particular, $(c^{\nu, \nu'})_0$ restricted to $H^\bullet(L') \subset H^\bullet(L')[\lambda]$ surjects to $H^\bullet_{\bS^1}(\partial L^\circ) \cong H^\bullet(L')$. Therefore, it is an isomorphism. 

    If $k>0$, then we have $d_{\bS^1}^\mathrm{Morse}(\phi \lambda^{k-1} x^{\lfloor \nu \rfloor+1} t) = \phi  \lambda^{k} x^{\lfloor \nu \rfloor+1}$. Therefore, the element $\phi\lambda^k x^{\lfloor \nu \rfloor+1} = \phi\lambda^k x^{\lfloor \nu' \rfloor}$ becomes zero in $H^\bullet_{\bS^1}(\partial L^\circ)$. In this case, we use the computation of the differential \eqref{eq:computeFloerDiff} in the guiding example to conclude that 
    \begin{equation*}
    \label{equ:approximate_differential}
        \mu^1(\phi \lambda^{k-1} x^{\lfloor \nu' \rfloor}t) =  \phi \lambda^{k-1} p^{\lfloor \nu '\rfloor+1} \pm \phi \lambda^{k+1} x^{\lfloor \nu' \rfloor} + \mathcal O (\lambda^{k+2}).
    \end{equation*}
    The middle term is the equivariant Morse differential, and the first and the last term count extra strips from Floer theory. Combined with \eqref{equ:approximate_continuation}, we conclude that 
    \begin{equation*}
        (c^{\nu, \nu'})^{p,q}_1([\phi\lambda^kp^{{\lfloor \nu \rfloor}+1}]) = \pm[\phi \lambda^{k-1} p^{\lfloor \nu '\rfloor+1}] + \mathcal O(\lambda^k) \in H^\bullet(L')[\lambda] \quad \text{for $k>0$.}
    \end{equation*}
    This proves the last assertion of Lemma. 
\end{proof}
\begin{cor}
    \label{cor:degenFloerII}
The spectral sequence $E(\nu)$ degenerates at the first page. In particular, 
\begin{equation*}
    \mathrm{gr}(HF^\bullet_{\bS^1}(L, L; H_\nu)) = H^\bullet_{\bS^1}(L^\circ) \oplus \bigoplus_{k=1}^{\lfloor \nu \rfloor} H^\bullet_{\bS^1}(\partial L^\circ)\cdot x^k  \oplus H^\bullet_{\bS^1}(L')[\lambda]\cdot p^{\nu+1}
\end{equation*}
\end{cor}
\begin{proof}
Lemma \ref{lem:ContiSpectral} computes a leading term of the induced map of $c^{\nu, \nu'}$ on the first pages. Let $\nu$ sufficiently small as in Lemma \ref{lem:degenFloerI}, and $\nu < \nu'$. Then the leading term of 
\begin{equation*}
    c^{\nu, \nu'}: H^\bullet_{\bS^1}(L^\circ) \oplus H^\bullet_{\bS^1}(L')[\lambda] \to H^\bullet_{\bS^1}(L^\circ) \oplus \bigoplus_{k=1}^{\lfloor \nu \rfloor} H^\bullet_{\bS^1}(\partial L^\circ)\cdot x^k  \oplus H^\bullet_{\bS^1}(L')[\lambda]\cdot p^{\nu+1}
\end{equation*}
exchanges $x$ and $\lambda$ up to the power $k = \lfloor \nu \rfloor$, and then replaces $\lambda^{l+ k+1}$ to $\lambda^l p^{\nu'+1}$. This implies $c^{\nu, \nu'}$ induces an isomorphism on the first pages. Since $E(\nu)$ degenerates for small $\nu$ by Lemma \ref{lem:degenFloerI}, the assertion follows.  
\end{proof}

\begin{prop}
\label{prop: conicsingred}
    Let $L \subset \mu^{-1}(0)$ be an exact, $\bS^1$-invariant Lagrangian which are of disk type near $F$. Then we have an isomorphism of filtered vector spaces
    \begin{equation*}
        \lim_{\nu\to \infty}HF_{\bS^1}^\bullet(L,L; H_{\nu}) \cong HW_{\bS^1}^\bullet(L,L) \cong HW^\bullet(\underline L, \underline L).
    \end{equation*}
\end{prop}
\begin{proof}
As a consequeces of Corollary \ref{cor:degenFloerII}, combined with Lemma \ref{lem: complexI,II}, we obtain
\begin{align*}
     \lim_{\nu\to \infty}\mathrm{gr}(HF^\bullet_{\bS^1}(L, L; H_\nu)) &\cong  \lim_{\nu\to \infty} \mathrm{gr}(HF^\bullet_{I, II}(L, L; H_\nu)) \cong \mathrm{gr}(HW^\bullet_{\bS^1}(L^\circ, L^\circ; H_\nu^\circ)).
\end{align*}
\end{proof}

Considering a colimit over positive isotopies along $\partial X$, we obtain the following corollary. 
\begin{cor} 
    \label{cor: mainthmconicfib}
    Let $L_i \subset \mu^{-1}(0)$ be exact, $\bS^1$-invariant Lagrangians which are of disk type near $F$.  Then we have 
    \begin{equation*}
        HW_{\bS^1}^\bullet (L,L) \cong HW^\bullet (\underline{L}^\circ,\underline{L}^\circ).
    \end{equation*}
\end{cor}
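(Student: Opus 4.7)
The plan is to combine Proposition \ref{prop: conicsingred} with the main reduction theorem of Section 6 (Theorem \ref{prop: GoingDown}) applied to the free Hamiltonian $\bS^1$-action on the smooth locus $X^\circ$, and then take the appropriate colimit over Hamiltonians with increasing slope at infinity.

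First I would enhance the Hamiltonians $H_\nu$ constructed in Proposition \ref{prop: conicsingred} to a family $H_{\nu,s}$ indexed by two parameters: the wrapping parameter $\nu \in \bN$ controlling how many times orbits circle around the singular locus $F$, and a positive real parameter $s$ controlling the slope at infinity along the cylindrical ends of $X$ (and, by restriction, of $X^\circ$). On $X \setminus N$ we interpolate $H_{\nu,s}$ to a linear Hamiltonian of slope $s$ at the contact boundary of $X$; on the neighborhood $N$ we keep the product structure from \eqref{eq: HamModel} used in Proposition \ref{prop: conicsingred}. Since $H_\nu$ is $G$-invariant and the $G$-action on $X^\circ$ is free, the restriction $H^\circ_{\nu,s}$ is admissible in the sense of Lemma \ref{lem:G_equivariant_wrapping} and Lemma-Definition \ref{lemd:fiberwise_wrapped_data}.

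Next, for each fixed $s$, Proposition \ref{prop: conicsingred} shows that the inclusion of cochain complexes induces an isomorphism
\begin{equation*}
    \lim_{\nu \to \infty} HF^\bullet_{\bS^1}(L_1^\circ, L_2^\circ; H^\circ_{\nu,s}) \;\cong\; HF^\bullet_{\bS^1}(L_1, L_2; H_s),
\end{equation*}
since the argument there is purely local near $F$ and commutes with continuation maps that only change the behavior at infinity. Taking the further colimit over $s \to \infty$ and interchanging the two colimits (both are filtered and the continuation maps are compatible by the coherence arguments in Section \ref{sec:continuation_maps}) yields
\begin{equation*}
    HW^\bullet_{\bS^1}(L_1^\circ, L_2^\circ) \;\cong\; HW^\bullet_{\bS^1}(L_1, L_2),
\end{equation*}
where the left-hand side is the $\bS^1$-equivariant wrapped Floer cohomology computed inside the smooth Liouville manifold $X^\circ$, and the right-hand side is the equivariant wrapped Floer cohomology computed in $X$.

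Finally, since the $\bS^1$-action on $X^\circ$ is free and $\underline{X}^\circ = X^\circ \sq \bS^1$ is smooth and carries the Liouville structure constructed in Section \ref{subsec: SNCdivcmpl}, Theorem \ref{prop: GoingDown} applies directly and produces a fully faithful $A_\infty$-functor $\Psi_\downarrow \colon \cW_{\bS^1}(X^\circ) \to \cW(\underline{X}^\circ)$ sending $L_i^\circ$ to $\underline{L}_i^\circ$. In particular
\begin{equation*}
    HW^\bullet_{\bS^1}(L_1^\circ, L_2^\circ) \;\cong\; HW^\bullet(\underline{L}_1^\circ, \underline{L}_2^\circ).
\end{equation*}
Composing the two isomorphisms yields the desired identification. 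The main technical point to verify is the interchange of the two colimits above; this is justified by the strict compatibility of the continuation maps in the $\nu$ and $s$ directions, together with the Mittag-Leffler type stabilization already exploited in Section \ref{sec:differential}, so no new transversality input is needed.
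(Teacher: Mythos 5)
Your proposal is correct and takes essentially the same approach as the paper. The paper's proof of this corollary is the one-line remark ``Considering a colimit over positive isotopies along $\partial X$, we obtain the following corollary,'' which amounts to combining Proposition~\ref{prop: conicsingred} (comparing equivariant Floer cohomology in $X^\circ$ with full wrapping near $F$ to that in $X$) with the preceding Lemma (which invokes Theorem~\ref{prop: GoingDown} for the free action on $X^\circ$ to identify $HW^\bullet_{\bS^1}(L^\circ_1, L^\circ_2) \cong HW^\bullet(\underline{L}^\circ_1, \underline{L}^\circ_2)$); you have simply made explicit the two-parameter family $H_{\nu,s}$ and the filtered-colimit interchange that this remark compresses.
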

The same proof of Proposition \ref{prop: conicsingred} applies when the singularities of a given $\bS^1$-action is nodal. Therefore, we have the following.
\begin{thm}
\label{thm: S1case}
    Let $\bS^1\circlearrowright(X,\lambda)$ be a Liouville domain with a Hamiltonian $\bS^1$-action satisfying the following assumptions: 
    \begin{enumerate}
        \item The isotropy group for each $p \in \mu^{-1}(0)$ is either trivial or the whole $\bS^1$. We denote the fixed locus by $F = (\mu^{-1}(0))^{\bS^1}$.
        \item Each connected component of $F$ is a codimension-four submanifold, admitting an equivariant symplectic neighborhood $U \subset X$ of the form  \eqref{eq: LocModelConicFib}, that is, modeled on a conic fibration. 
    \end{enumerate} 
    Then $X\setminus F$ can be equipped with an $\bS^1$-equivariant convex Liouville structure near the puncture $F$, which descends to the regular locus of its quotient
    $$\underline{X}^\circ \coloneqq  (\mu^{-1}\{0\} - F)/\bS^1.$$
    For any $\bS^1$-invariant Lagrangians $L \subset \mu^{-1}(0)$ which is of disk type near $F$, there is an isomorphism of filtered vector spaces
    $$
    \lim_{\nu \to \infty}HF^\bullet_{\bS^1}(L, L; H_\nu) \simeq HW^\bullet(\underline{L}^\circ, \underline{L}^\circ),
    $$
    where $\underline{L}^\circ \coloneqq  (L-F)/\bS^1$.
\end{thm}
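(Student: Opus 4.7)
The plan is to reduce the statement to the already-established computation for conic fibrations. Since the conditions are purely local near $F$, and elsewhere the $\bS^1$-action is free, the main work is in constructing and globalizing the modified Liouville structure, then checking that the reduction argument of Proposition \ref{prop: conicsingred} still applies.

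First, I will construct the equivariant convex Liouville structure on $X \setminus F$. By assumption (2), each connected component of $F$ admits an equivariant neighborhood $N$ symplectomorphic to a disk subbundle of a conic-fibration local model \eqref{eq: LocModelConicFib}. On this local model, the Kirwan-type desingularization of Section 6 produces an $\bS^1$-equivariant Liouville form $\lambda^\circ$ on $N \setminus F \cong (\mathrm{Bl}_F N)\setminus (D + E)$ which is convex near the exceptional divisor $E$ and whose Reeb flow on the new convex boundary is described by \eqref{eq: boundaryloc}. On the complement of a slightly smaller neighborhood $N'\subset N$, keep the original $\bS^1$-invariant Liouville form $\lambda$ (which exists by Theorem \ref{thm:perturbing_1_form} applied to the pre-Liouville action). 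These two $\bS^1$-invariant primitives differ by an exact $\bS^1$-invariant $1$-form on the overlap $N\setminus N'$ (both are primitives of $\omega$ there); I will interpolate between them using an $\bS^1$-invariant compactly supported function on the collar, producing a global $\bS^1$-invariant Liouville form on $X\setminus F$ which is convex at the new end over $F$. Because the $\bS^1$-action is free outside $F$, Theorem \ref{thm:Liouville_reduction} then shows that this form descends to a Liouville form on $\underline{X}^\circ$.

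Second, I will set up the Floer computation using Hamiltonians adapted to this decomposition. Following the construction in the paragraph preceding Proposition \ref{prop: conicsingred}, choose a wrapping Hamiltonian $H_\nu$ on $X$ such that (i) $H_\nu$ is $C^2$-small on $X \setminus N$; and (ii) on each local model neighborhood $U\subset N$ from \eqref{eq: LocModelConicFib}, $H_\nu$ is of the product form $K_\nu \times H'|_V$, where $K_\nu$ is the local Hamiltonian \eqref{eq: HamModel} on the $\mathbb{C}^2$ factor and $H'$ is a small generic perturbation making $L_1'\cap \phi_{H'}^1(L_2')$ transverse. Since the $L_i$ are of disk type near $F$, the intersection $L_1\cap \phi_{H_\nu}^1(L_2)$ decomposes exactly as in Proposition \ref{prop: conicsingred}: intersections in $X\setminus N$, together with, for each transverse point $p\in L_1'\cap \phi_{H'}^1(L_2')$, $\nu$ clean $\bS^1$-orbits plus one $\bS^1$-fixed point at the node sitting over $p$.

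Third, I will run the action-filtration and continuation argument verbatim from Proposition \ref{prop: conicsingred}. The key analytic input is the choice of an almost complex structure $J$ on $N$ compatible with the bundle projection $N \to F$, so that pseudoholomorphic strips project to strips in $F$; this is the reason I took $H_\nu$ to split as a product on $U$. As in the proof of Proposition \ref{prop: conicsingred}, any strip incident to a $\bS^1$-fixed point generator $\psi\otimes \lambda^i p^{\nu+1}$ must, after pushing forward under the continuation map $c_{\nu,\nu'}$ with $\nu'\gg\nu$, be absorbed into linear combinations of the form $\psi'\otimes \lambda^{i'} x^{\nu'}$, by the local $\C^2$ computation \eqref{eq:computeFloerCoh}--\eqref{eq:isoFloerCoh}. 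Hence the natural inclusion
\[
CF^\bullet_{\bS^1}(L_1^\circ,L_2^\circ; H_\nu^\circ) \hookrightarrow CF^\bullet_{\bS^1}(L_1,L_2; H_\nu)
\]
induces an isomorphism $\varinjlim_\nu HF^\bullet_{\bS^1}(L_1^\circ,L_2^\circ;H_\nu^\circ) \cong HF^\bullet_{\bS^1}(L_1,L_2)$. Combining this with Theorem \ref{prop: GoingDown} (applied to the free $\bS^1$-action on $X\setminus F$), which gives $HW^\bullet_{\bS^1}(L_1^\circ,L_2^\circ) \cong HW^\bullet(\underline{L}_1^\circ, \underline{L}_2^\circ)$, and passing to a cofinal sequence of positive wrappings at the ends of $X$, yields the stated isomorphism.

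The main obstacle is Step 3, specifically showing that the strips between non-constant chords and $\bS^1$-fixed points are truly absorbed in the colimit once one is no longer in the tensor-product geometry of the local conic model. This is already the delicate step in Proposition \ref{prop: conicsingred}, and the argument here relies on the compatibility of $J$ with the normal disk bundle $DN_{F/X}$ together with the action-negativity of heavily wrapped chords near $F$; the conic-fibration hypothesis (2) is exactly what makes this compatible $J$ exist globally near $F$, so no additional geometric input is required beyond adapting the bookkeeping of Proposition \ref{prop: conicsingred} to an arbitrary number of components of $F$.
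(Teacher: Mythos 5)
Your proposal is correct and follows essentially the same route as the paper, which disposes of this theorem in a single sentence by pointing back to Proposition \ref{prop: conicsingred}: "the same proof applies when the singularities are nodal." You have merely unpacked what that sentence means — the gluing construction of the $\bS^1$-equivariant convex Liouville form on $X\setminus F$, the product-form wrapping Hamiltonians near $F$, and the action-filtration/continuation argument absorbing the fixed-point generators — all of which match the argument the paper intends.
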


\subsection{Iterated conic fibration}

In this subsection, we apply Corollary \ref{cor: mainthmconicfib} inductively to extend our results to an iterated conic fibration. Let $\underline X$ be a fixed smooth quasi-projective variety equipped with  sections $\{s_i\in \Gamma(Y, \mathcal L_i^{\otimes 2})\}_{i=1}^n$ of ample line bundles. We assume that $\{\underline{D}_i = Z(s_i=0)\subset Y\}_{i=1}^n $ form a simple normal crossing divisor with each $\underline{D}_i$ being smooth. In this setup we iterate conic fibration construction to obtain: 
\begin{align*}
    X_k \coloneqq  Z(z_iw_i = s_i) &\subset \mathrm{Tot}_Y(\oplus_{i=1}^k \mathcal L_i^{\oplus 2}),\\
    X\coloneqq X_n \xrightarrow{\pi_n}X_{n-1} \xrightarrow{\pi_{n-1}} &\cdots \xrightarrow{\pi_2}X_1 \xrightarrow{\pi_1} X_0\eqqcolon \underline X,
\end{align*}
where \begin{align}
        \pi_k(\mathbf x, z_1, w_i, \ldots, z_{k-1}, w_{k-1}, z_k, w_k)&\coloneqq(\mathbf x, z_1, w_i, \ldots, z_{k-1}, w_{k-1}). 
\end{align}
The total space $X$ is equipped with a Hamiltonian torus action
\begin{equation*}
    (\bS^1)^n \circlearrowright X, \quad (\lambda_i)_{i=1}^n \cdot (\mathbf x, w_i, z_i)_{i=1}^n =(\mathbf x, \lambda_i w_i, \lambda_i^{-1}z_i)_{i=1}^n.
\end{equation*}
Let us denote the circle action acting on the $k$-th coordinate by $\bS^1_k$. Then each $X_{k-1}$ is a singular reduction $X_{k} \sslash \bS^1_k$ with degeneracy locus $\pi_{k}^{-1}\circ \cdots \pi_1^{-1}(\underline{D}_k)$. Define
\begin{equation*}
    X_{k}^\circ \coloneqq  X_k - \bigcup_{i=k+1}^n D_{k, i}, \quad D_{k, i}\coloneqq  \pi_{k}^{-1}\circ \cdots \pi_1^{-1}(\underline{D}_i).
\end{equation*}
The reduction $X_{k}^\circ \sslash \bS^1_k$ is singular along $D_{k, k}$, and its non-singular locus is $X^\circ_k$. 

To apply previous results, we impose similar assumptions on $\underline{X}$ as before; we assume that there exists a smooth projective compactification $\underline{Y}\supset\underline{X}$ of $\underline{X}$ by a SNC divisor $\underline{K}$, and that each $\underline{D}_i$ extends to a smooth ample divisor of $\underline{Y}$. Since $X_{k}^\circ$ is obtained as a divisor complement, the same holds for  $X_{k-1}^\circ$. Both spaces are equipped with the Liouville structure described in \eqref{defn: omegalambda}. We choose Lagrangians compatibly with this structure: we say that an $(\bS^1)^n$-invariant Lagrangian is of disk type if it is of disk type near each $D_{j,n}$ for each $j$. We then inductively define
\begin{equation*}
    \underline L_n^\circ\coloneqq L\cap X^\circ_n, \quad \underline L_k^\circ \coloneqq L_{k+1}/(\bS^1)_{k+1} \cap X^\circ_k. 
\end{equation*}

By inductively applying Proposition \ref{prop: conicsingred}, we have the following result. 
\begin{thm} Let $L \subset \mu^{-1}(0)$ be an exact, $(\bS^1)^n$-invariant Lagrangian that is of disk type near the singularities. Then, for each $k=1,\cdots, n$, there is an isomorphism of filtered vector spaces
    \begin{equation*}
        HF^\bullet_{(\bS^1)^n}(L, L) \simeq HF_{(\bS^1)^{n-k}}^\bullet(\underline L^\circ_{k}, \underline L^\circ_{k}),
    \end{equation*}
    which is a cohomology version of \cite[Conjecture A]{lekili2023equivariant}.
\end{thm}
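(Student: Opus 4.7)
The plan is to prove the theorem by induction on $k$, peeling off one $\bS^1$-factor at a time via an equivariant refinement of Corollary~\ref{cor: mainthmconicfib}. The base case $k=0$ is tautological; for the inductive step, suppose the isomorphism has been established after reducing the first few circle factors, leaving a partially reduced space carrying an action of the remaining torus $T \times \bS^1_{*}$, where $\bS^1_{*}$ acts via the next conic fibration in the tower. We wish to collapse $\bS^1_{*}$ and pick up a reduction by one more factor, landing in the next space in the sequence with residual torus action $T$ acting freely on the zero level of $\mu_{\bS^1_{*}}$.

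The key input is an equivariant version of Proposition~\ref{prop: conicsingred}: for a Liouville manifold $X$ with a Hamiltonian $T \times \bS^1_{*}$-action whose $\bS^1_{*}$-part has conic-fibration type singularities along a $T$-invariant fixed locus $F$, and for $T \times \bS^1_{*}$-invariant exact Lagrangians $L_1, L_2$ of disk type near $F$, one has
\begin{equation*}
    HW_{T \times \bS^1_{*}}^\bullet(L_1, L_2) \;\simeq\; HW_T^\bullet(\underline L_1^\circ, \underline L_2^\circ),
\end{equation*}
where $\underline L_i^\circ$ denotes the Liouville reduction of $L_i$ by $\bS^1_{*}$ restricted to the smooth quotient $\underline X^\circ$. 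Granting this lemma, the induction closes directly by applying it at each step to the space obtained from the previous one, with $T$ being whatever torus symmetry remains on the partially reduced space.

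To establish the equivariant reduction lemma, the plan is to combine the fiberwise Floer calculus of Section~\ref{sec:liouvilleborel} with the local model arguments of Proposition~\ref{prop: conicsingred}. Applying the Borel construction to the residual $T$-action replaces $HW_{T\times \bS^1_{*}}$ by the inverse limit over $N$ of fiberwise wrapped Floer cohomology in the Liouville fibration $X(N) \to T^*BT(N)$. One chooses an admissible family of Hamiltonians $\{H(N)\}$ whose fiberwise restriction near each critical point of the Morse function on $BT(N)$ is of the product form used in \eqref{eq: HamModel}, namely $K_\nu \times H'$ where $K_\nu$ is the conic-fibration model Hamiltonian and $H'$ is a $C^2$-small Hamiltonian on the reduced fiber. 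The fiberwise Floer complex then admits a decomposition analogous to \eqref{eq: normalbundlecase}, so that the action-filtration argument of Proposition~\ref{prop: conicsingred}, which shows that the $p^{\nu+1}$ contributions are absorbed in the colimit $\nu \to \infty$, proceeds inside each fiber and commutes with taking the inverse limit over the Borel tower.

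The main obstacle will be checking that the action filtration used in Proposition~\ref{prop: conicsingred} behaves well under the Borel construction: Morse contributions from $BT(N)$ enter the action functional, and one must verify that they do not interfere with the filtration-preserving property of the continuation maps $c_{\nu,\nu'}$. Because the wrapping Hamiltonian is of product form near the critical locus and the Morse summand on the base is bounded on the relevant configurations, this is plausible but requires care. A secondary compatibility point is that the disk-type condition must propagate through each successive reduction: from the product description of the local model~\eqref{eq: LocModelConicFib}, a $T\times\bS^1_{*}$-invariant disk-type Lagrangian near $F$ descends to a $T$-invariant cylindrical Lagrangian in $\underline X^\circ$ that remains of disk type near the conic singularities of subsequent fibration steps. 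Once these two points are verified, iterating the equivariant reduction lemma $k$ times yields the desired isomorphism and completes the proof.
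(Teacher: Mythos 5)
Your proposal takes the same route as the paper, which disposes of the theorem with the single sentence ``By inductively applying Proposition~\ref{prop: conicsingred}.''\ Your induction peeling off one circle factor at a time, via an equivariant upgrade of Proposition~\ref{prop: conicsingred} (and implicitly of Corollary~\ref{cor: mainthmconicfib}/Theorem~\ref{prop: GoingDown}) for a residual torus $T$, is exactly what is needed to make the paper's one-line argument rigorous, and you have correctly identified the two points (compatibility of the action filtration with the additional $T$-Borel fibration, and propagation of the disk-type condition through each reduction stage) that need to be checked but are left implicit in the paper.
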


\bibliographystyle{alpha}
\bibliography{mybib}

@article {Ton19,
    AUTHOR = {Tonkonog, D},
     TITLE = {From symplectic cohomology to Lagrangian enumerative geometry},
   JOURNAL = {Adv. Math.},
  FJOURNAL = {Advances in Mathematics},
    VOLUME = {352},
      YEAR = {2019},
     PAGES = {717--776},
      ISSN = {0001-8708},
       URL = {https://doi.org/10.1016/j.aim.2019.06.004.},
}

@article{PT1,
	Author = {Pomerleano, D. and Teleman, C.},
	Journal = {preprint},
	Note = {\href{https://arxiv.org/abs/2405.20301}{arXiv:2405.20301}},
    YEAR = {2024},
	Title = {{Q}uantization commutes with reduction again: the quantum {GIT} conjecture {I}},
}

@article {GMP1,
    AUTHOR = {Gonz\'alez, E. and Mak, C. Y. and Pomerleano, D.},
     TITLE = {Affine nil-{H}ecke algebras and quantum cohomology},
   JOURNAL = {Adv. Math.},
  FJOURNAL = {Advances in Mathematics},
    VOLUME = {415},
      YEAR = {2023},
     PAGES = {Paper No. 108867, 47},
      ISSN = {0001-8708,1090-2082},
       DOI = {10.1016/j.aim.2023.108867},
       URL = {https://doi.org/10.1016/j.aim.2023.108867},
}

@article{GMP2,
    AUTHOR = {Gonz\'alez, E. and Mak, C. Y. and Pomerleano, D.},
	Journal = {preprint},
	Note = {\href{https://arxiv.org/abs/2305.04387}{arXiv:2305.04387}},
    YEAR = {2023},
	Title = {{C}oulomb branch algebras via symplectic cohomology},
}

@article{Hu-Lalonde,
	AUTHOR = {Shengda Hu and François Lalonde},
	TITLE = {A relative {S}eidel morphism and the {A}lbers map},
	JOURNAL = {Trans. Amer. Math. Soc.},
	FJOURNAL = {Transactions of the American Mathematical Society},
	VOLUME = {362},
	YEAR = {2010},
	PAGES = {1135--1168},
}

@article{Charette_Cornea, 
title={Categorification of {S}eidel’s representation}, 
volume={211}, 
number={1}, 
journal={Isr. J. Math.}, 
author={Charette, François and Cornea, Octav}, 
year={2016}, 
pages={67–104}
}

@article{Hu_Lalonde_Leclercq, 
title={Homological {L}agrangian monodromy}, 
volume={15}, 
number={3}, 
journal={Geom. Topol.}, 
author={Hu, Shengda and Lalonde, François and Leclercq, Rémi}, 
year={2011}, 
pages={1617–1650} 
}

@article{ML12,
  title={The growth rate of symplectic homology and affine varieties},
  author={McLean, Mark},
  journal={Geometric And Functional Analysis},
  volume={22},
  number={2},
  pages={369--442},
  year={2012},
  publisher={Springer}
}

@INPROCEEDINGS{Kont-HMS,
  author = {M. Kontsevich},
  title = {Homological algebra of mirror symmetry},
  booktitle = {Proceedings of the {I}nternational {C}ongress of {M}athematicians,
	{V}ol.\ 1, 2 ({Z}\"urich, 1994)},
  year = {1995},
  pages = {120--139},
  address = {Basel},
  publisher = {Birkh\"auser}
}

@article{Kir85,
  title={Partial desingularisations of quotients of nonsingular varieties and their {B}etti numbers},
  author={Kirwan, Frances Clare},
  journal={Annals of mathematics},
  volume={122},
  number={1},
  pages={41--85},
  year={1985},
  publisher={JSTOR}
}

@incollection {Sei08,
    AUTHOR = {Seidel, Paul},
     TITLE = {A biased view of symplectic cohomology},
 BOOKTITLE = {Current developments in mathematics, 2006},
     PAGES = {211--253},
 PUBLISHER = {Int. Press, Somerville, MA},
      YEAR = {2008},
      ISBN = {978-1-57146-167-4},
   MRCLASS = {53D42 (53D40)},
  MRNUMBER = {2459307},
MRREVIEWER = {Timothy\ Perutz},
}

@incollection {Yakovgromov91,
    AUTHOR = {Eliashberg, Yakov and Gromov, Mikhael},
     TITLE = {Convex symplectic manifolds},
 BOOKTITLE = {Several complex variables and complex geometry, {P}art 2
              ({S}anta {C}ruz, {CA}, 1989)},
    SERIES = {Proc. Sympos. Pure Math.},
    VOLUME = {52, Part 2},
     PAGES = {135--162},
 PUBLISHER = {Amer. Math. Soc., Providence, RI},
      YEAR = {1991},
      ISBN = {0-8218-1490-7},
   MRCLASS = {58F05 (32E10 53C15 53C23)},
  MRNUMBER = {1128541},
       DOI = {10.1090/pspum/052.2/1128541},
       URL = {https://doi.org/10.1090/pspum/052.2/1128541},
}

@article {GPS1,
    AUTHOR = {Ganatra, Sheel and Pardon, John and Shende, Vivek},
     TITLE = {Covariantly functorial wrapped {F}loer theory on {L}iouville
              sectors},
   JOURNAL = {Publ. Math. Inst. Hautes \'{E}tudes Sci.},
  FJOURNAL = {Publications Math\'{e}matiques. Institut de Hautes \'{E}tudes
              Scientifiques},
    VOLUME = {131},
      YEAR = {2020},
     PAGES = {73--200},
      ISSN = {0073-8301,1618-1913},
   MRCLASS = {53D40},
  MRNUMBER = {4106794},
MRREVIEWER = {Alexander\ Fel\cprime shtyn},
       DOI = {10.1007/s10240-019-00112-x},
       URL = {https://doi.org/10.1007/s10240-019-00112-x},
}

@article {Al,
    AUTHOR = {Albert, Claude},
     TITLE = {Le th\'{e}or\`eme de r\'{e}duction de {M}arsden-{W}einstein en
              g\'{e}om\'{e}trie cosymplectique et de contact},
   JOURNAL = {J. Geom. Phys.},
  FJOURNAL = {Journal of Geometry and Physics},
    VOLUME = {6},
      YEAR = {1989},
    NUMBER = {4},
     PAGES = {627--649},
      ISSN = {0393-0440},
   MRCLASS = {58F05 (53C15)},
  MRNUMBER = {1076705},
MRREVIEWER = {Charles-Michel\ Marle},
       DOI = {10.1016/0393-0440(89)90029-6},
       URL = {https://doi.org/10.1016/0393-0440(89)90029-6},
}

@article {Gh,
    AUTHOR = {Geiges, Hansj\"{o}rg},
     TITLE = {Constructions of contact manifolds},
   JOURNAL = {Math. Proc. Cambridge Philos. Soc.},
  FJOURNAL = {Mathematical Proceedings of the Cambridge Philosophical
              Society},
    VOLUME = {121},
      YEAR = {1997},
    NUMBER = {3},
     PAGES = {455--464},
      ISSN = {0305-0041,1469-8064},
   MRCLASS = {53C15 (58F05)},
  MRNUMBER = {1434654},
MRREVIEWER = {Rostislav\ Matveyev},
       DOI = {10.1017/S0305004196001260},
       URL = {https://doi.org/10.1017/S0305004196001260},
}

@article {GS,
    AUTHOR = {Guillemin, V. and Sternberg, S.},
     TITLE = {Homogeneous quantization and multiplicities of group
              representations},
   JOURNAL = {J. Functional Analysis},
  FJOURNAL = {Journal of Functional Analysis},
    VOLUME = {47},
      YEAR = {1982},
    NUMBER = {3},
     PAGES = {344--380},
      ISSN = {0022-1236},
   MRCLASS = {58F06 (22E45 58G15)},
  MRNUMBER = {665022},
MRREVIEWER = {Alan\ Weinstein},
       DOI = {10.1016/0022-1236(82)90111-2},
       URL = {https://doi.org/10.1016/0022-1236(82)90111-2},
}

@article {Le,
    AUTHOR = {Lerman, Eugene},
     TITLE = {Contact cuts},
   JOURNAL = {Israel J. Math.},
  FJOURNAL = {Israel Journal of Mathematics},
    VOLUME = {124},
      YEAR = {2001},
     PAGES = {77--92},
      ISSN = {0021-2172,1565-8511},
   MRCLASS = {53D35 (53D20)},
  MRNUMBER = {1856505},
MRREVIEWER = {Hansj\"{o}rg\ Geiges},
       DOI = {10.1007/BF02772608},
       URL = {https://doi.org/10.1007/BF02772608},
}

@article {GPS3,
    AUTHOR = {Ganatra, Sheel and Pardon, John and Shende, Vivek},
     TITLE = {Sectorial descent for wrapped {F}ukaya categories},
   JOURNAL = {J. Amer. Math. Soc.},
  FJOURNAL = {Journal of the American Mathematical Society},
    VOLUME = {37},
      YEAR = {2024},
    NUMBER = {2},
     PAGES = {499--635},
      ISSN = {0894-0347,1088-6834},
   MRCLASS = {53D37 (53D40 57R17)},
  MRNUMBER = {4695507},
       DOI = {10.1090/jams/1035},
       URL = {https://doi.org/10.1090/jams/1035},
}

@article{Oh2021,
  title={Presymplectic characterization of {L}iouville sectors with corners, and its monoidality},
  author={Oh, Yong-Geun},
  journal={Proceedings of the Edinburgh Mathematical Society},
  year={2025},
Note = {to appear},
DOI = {10.1017/S0013091525100953},
}

@article {Seidel97,
	AUTHOR = {Seidel, P.},
	TITLE = {{$\pi_1$} of symplectic automorphism groups and invertibles in
	quantum homology rings},
	JOURNAL = {Geom. Funct. Anal.},
	FJOURNAL = {Geometric and Functional Analysis},
	VOLUME = {7},
	YEAR = {1997},
	NUMBER = {6},
	PAGES = {1046--1095},
}

@book {Gaothesis,
    AUTHOR = {Yuan Gao},
     TITLE = {{L}agrangian correspondences and functors in wrapped {F}loer theory},
      NOTE = {Thesis (Ph.D.)--Stony Brook University},
 PUBLISHER = {Stony Brook Theses and Dissertations Collection},
      YEAR = {2018},
     PAGES = {(no paging)},
       URL = {https://commons.library.stonybrook.edu/stony-brook-theses-and-dissertations-collection/3889},
}

@misc{gao2017wrapped,
      title={Wrapped {F}loer cohomology and {L}agrangian correspondences}, 
      author={Yuan Gao},
      year={2017},
      eprint={1703.04032},
      archivePrefix={arXiv},
      primaryClass={math.SG},
note = {\href{https://arxiv.org/abs/1703.04032}{arxiv:1703.04032}}

}

@misc{gao2018functors,
      title={Functors of wrapped {F}ukaya categories from {L}agrangian correspondences}, 
      author={Yuan Gao},
      year={2017},
      eprint={1712.00225},
      archivePrefix={arXiv},
      primaryClass={math.SG},
	note = {\href{https://arxiv.org/abs/1712.00225}{arxiv:1712.00225}}
}

@article {Hutchings08,
    AUTHOR = {Hutchings, Michael},
     TITLE = {Floer homology of families. {I}},
   JOURNAL = {Algebr. Geom. Topol.},
  FJOURNAL = {Algebraic \& Geometric Topology},
    VOLUME = {8},
      YEAR = {2008},
    NUMBER = {1},
     PAGES = {435--492},
      ISSN = {1472-2747,1472-2739},
   MRCLASS = {57R58 (53D40)},
  MRNUMBER = {2443235},
MRREVIEWER = {Tobias\ Ekholm},
       DOI = {10.2140/agt.2008.8.435},
       URL = {https://doi.org/10.2140/agt.2008.8.435},
}

@book {Sylvan15,
    AUTHOR = {Sylvan, Zachary Aaron},
     TITLE = {On partially wrapped {F}ukaya categories},
      NOTE = {Thesis (Ph.D.)--University of California, Berkeley},
 PUBLISHER = {ProQuest LLC, Ann Arbor, MI},
      YEAR = {2015},
     PAGES = {104},
      ISBN = {978-1339-01985-7},
   MRCLASS = {99-05},
  MRNUMBER = {3427304},
       URL =
              {http://gateway.proquest.com/openurl?url_ver=Z39.88-2004&rft_val_fmt=info:ofi/fmt:kev:mtx:dissertation&res_dat=xri:pqm&rft_dat=xri:pqdiss:3720854},
}

@article {KLZ23,
    AUTHOR = {Kim, Yoosik and Lau, Siu-Cheong and Zheng, Xiao},
     TITLE = {{$T$}-equivariant disc potential and {SYZ} mirror
              construction},
   JOURNAL = {Adv. Math.},
  FJOURNAL = {Advances in Mathematics},
    VOLUME = {430},
      YEAR = {2023},
     PAGES = {Paper No. 109209, 70},
      ISSN = {0001-8708,1090-2082},
   MRCLASS = {53D37 (14J33)},
  MRNUMBER = {4621955},
MRREVIEWER = {Hsian-Hua\ Tseng},
       DOI = {10.1016/j.aim.2023.109209},
       URL = {https://doi.org/10.1016/j.aim.2023.109209},
}

@article{Cazassus_2024,
   title={{E}quivariant {L}agrangian {F}loer homology via cotangent bundles of ${EG_N}$},
   volume={17},
   ISSN={1753-8424},
   url={http://dx.doi.org/10.1112/topo.12328},
   DOI={10.1112/topo.12328},
   number={1},
   journal={Journal of Topology},
   publisher={Wiley},
   author={Cazassus, Guillem},
   year={2024},
   month=mar }

@book {CE12,
    AUTHOR = {Cieliebak, Kai and Eliashberg, Yakov},
     TITLE = {From {S}tein to {W}einstein and back},
    SERIES = {American Mathematical Society Colloquium Publications},
    VOLUME = {59},
      NOTE = {Symplectic geometry of affine complex manifolds},
 PUBLISHER = {American Mathematical Society, Providence, RI},
      YEAR = {2012},
     PAGES = {xii+364},
      ISBN = {978-0-8218-8533-8},
   MRCLASS = {53-02 (32E10 32Q65 53C15 53D05 58E05)},
  MRNUMBER = {3012475},
MRREVIEWER = {Chris\ M.\ Wendl},
       DOI = {10.1090/coll/059},
       URL = {https://doi.org/10.1090/coll/059},
}

@article{abouzaid2010open,
  title={An open string analogue of {V}iterbo functoriality},
  author={Abouzaid, Mohammed and Seidel, Paul},
  journal={Geometry \& Topology},
  volume={14},
  number={2},
  pages={627--718},
  year={2010},
  publisher={Mathematical Sciences Publishers}
}

@article{lekili2023equivariant,
  title={{E}quivariant {F}ukaya categories at singular values},
  author={Lekili, Yanki and Segal, Ed},
  journal={Glasgow Mathematical Journal},
  Note = {to appear},
  year={2025},
  PUBLISHER = {Cambridge University Press, Cambridge},
  DOI = {10.1017/S0017089525100487},
  URL = {https://doi.org/10.1017/S0017089525100487}}

@ARTICLE{lauleungli,
  author = {Lau, S.-C. and Leung, N.C. and Li, Y. L.},
  title = {Equivariant {L}agrangian correspondence and a conjecture of {T}eleman},
journal = {preprint},
    YEAR = {2023},
	note = {\href{https://arxiv.org/abs/2312.13926}{arxiv:2312.13926}}
}

@article {LO06,
    AUTHOR = {Lyubashenko, Volodymyr and Ovsienko, Sergiy},
     TITLE = {A construction of quotient {$A_\infty$}-categories},
   JOURNAL = {Homology Homotopy Appl.},
  FJOURNAL = {Homology, Homotopy and Applications},
    VOLUME = {8},
      YEAR = {2006},
    NUMBER = {2},
     PAGES = {157--203},
      ISSN = {1532-0073,1532-0081},
   MRCLASS = {18E35 (18D05 18E30 18G35 55P48)},
  MRNUMBER = {2259271},
MRREVIEWER = {Beno\^it\ Fresse},
       URL = {http://projecteuclid.org/euclid.hha/1175791079},
}

@ARTICLE{Fukaya2023,
    AUTHOR = {Fukaya, Kenji},
     TITLE = {Unobstructed immersed {L}agrangian correspondence and filtered
              {$A_\infty$} functor},
   JOURNAL = {SIGMA Symmetry Integrability Geom. Methods Appl.},
  FJOURNAL = {SIGMA. Symmetry, Integrability and Geometry. Methods and
              Applications},
    VOLUME = {21},
      YEAR = {2025},
     PAGES = {Paper No. 031, 284},
      ISSN = {1815-0659},
       DOI = {10.3842/SIGMA.2025.031},
       URL = {https://doi-org.libproxy.ibs.re.kr/10.3842/SIGMA.2025.031},
}

@article {weinsteinym,
    AUTHOR = {Weinstein, A.},
     TITLE = {A universal phase space for particles in {Y}ang-{M}ills
              fields},
   JOURNAL = {Lett. Math. Phys.},
  FJOURNAL = {Letters in Mathematical Physics. A Journal for the Rapid
              Dissemination of Short Contributions in the Field of
              Mathematical Physics},
    VOLUME = {2},
      YEAR = {1977/78},
    NUMBER = {5},
     PAGES = {417--420},
      ISSN = {0377-9017},
       DOI = {10.1007/BF00400169},
       URL = {https://doi.org/10.1007/BF00400169},
}

@book {Nic07,
    AUTHOR = {Nicolaescu, Liviu I.},
     TITLE = {An invitation to {M}orse theory},
    SERIES = {Universitext},
 PUBLISHER = {Springer, New York},
      YEAR = {2007},
     PAGES = {xiv+241},
      ISBN = {978-0-387-49509-5; 0-387-49509-6},
   MRCLASS = {58E05 (37D15 37J15 53D20 57R45 57R70 58K05)},
  MRNUMBER = {2298610},
MRREVIEWER = {David\ E.\ Hurtubise},

}

@article {FHS95,
    AUTHOR = {Floer, Andreas and Hofer, Helmut and Salamon, Dietmar},
     TITLE = {Transversality in elliptic {M}orse theory for the symplectic
              action},
   JOURNAL = {Duke Math. J.},
  FJOURNAL = {Duke Mathematical Journal},
    VOLUME = {80},
      YEAR = {1995},
    NUMBER = {1},
     PAGES = {251--292},
      ISSN = {0012-7094,1547-7398},
   MRCLASS = {58E05 (57R70 58F05)},
  MRNUMBER = {1360618},
MRREVIEWER = {Karl\ Friedrich\ Siburg},
       DOI = {10.1215/S0012-7094-95-08010-7},
       URL = {https://doi.org/10.1215/S0012-7094-95-08010-7},
}

@article {Nic94,
    AUTHOR = {Nicolaescu, Liviu I.},
     TITLE = {Morse theory on {G}rassmannians},
   JOURNAL = {An. \c Stiin\c t. Univ. Al. I. Cuza Ia\c si Sec\c t. I a Mat.},
  FJOURNAL = {Analele \c Stiin\c tifice ale Universit\u a\c tii ``Al. I.
              Cuza''\ din Ia\c si. Sec\c tiunea I a Matematic\u a},
    VOLUME = {40},
      YEAR = {1994},
    NUMBER = {1},
     PAGES = {25--46},
      ISSN = {0041-9109},
   MRCLASS = {58E05 (57R19 57R70)},
  MRNUMBER = {1328945},
MRREVIEWER = {Andrew\ Dancer},
}

@article {DV96,
    AUTHOR = {Veselov, A. P. and Dynnikov, I. A.},
     TITLE = {Integrable gradient flows and {M}orse theory},
   JOURNAL = {Algebra i Analiz},
  FJOURNAL = {Rossi\u iskaya Akademiya Nauk. Algebra i Analiz},
    VOLUME = {8},
      YEAR = {1996},
    NUMBER = {3},
     PAGES = {78--103},
      ISSN = {0234-0852},
   MRCLASS = {58E05 (57T10 58F07)},
  MRNUMBER = {1402289},
MRREVIEWER = {O.\ Christov},
}

@article {Flo88,
    AUTHOR = {Floer, Andreas},
     TITLE = {Morse theory for {L}agrangian intersections},
   JOURNAL = {J. Differential Geom.},
  FJOURNAL = {Journal of Differential Geometry},
    VOLUME = {28},
      YEAR = {1988},
    NUMBER = {3},
     PAGES = {513--547},
      ISSN = {0022-040X,1945-743X},
   MRCLASS = {58F05 (35J65 58E05)},
  MRNUMBER = {965228},
MRREVIEWER = {Jean-Claude\ Sikorav},
       URL = {http://projecteuclid.org/euclid.jdg/1214442477},
}

@book {MS04,
    AUTHOR = {McDuff, Dusa and Salamon, Dietmar},
     TITLE = {{$J$}-holomorphic curves and symplectic topology},
    SERIES = {American Mathematical Society Colloquium Publications},
    VOLUME = {52},
 PUBLISHER = {American Mathematical Society, Providence, RI},
      YEAR = {2004},
     PAGES = {xii+669},
      ISBN = {0-8218-3485-1},
   MRCLASS = {53D45 (32Q65 53D40 57R17)},
  MRNUMBER = {2045629},
MRREVIEWER = {Ignasi\ Mundet-Riera},
       DOI = {10.1090/coll/052},
       URL = {https://doi.org/10.1090/coll/052},
}

@article {SS10,
    AUTHOR = {Seidel, Paul and Smith, Ivan},
     TITLE = {Localization for involutions in {F}loer cohomology},
   JOURNAL = {Geom. Funct. Anal.},
  FJOURNAL = {Geometric and Functional Analysis},
    VOLUME = {20},
      YEAR = {2010},
    NUMBER = {6},
     PAGES = {1464--1501},
      ISSN = {1016-443X,1420-8970},
   MRCLASS = {53D40 (55M35 57M27 57R58)},
  MRNUMBER = {2739000},
MRREVIEWER = {Michael\ L.\ Hutchings},
       DOI = {10.1007/s00039-010-0099-y},
       URL = {https://doi.org/10.1007/s00039-010-0099-y},
}

@article {Oan08,
    AUTHOR = {Oancea, Alexandru},
     TITLE = {Fibered symplectic cohomology and the {L}eray-{S}erre spectral
              sequence},
   JOURNAL = {J. Symplectic Geom.},
  FJOURNAL = {The Journal of Symplectic Geometry},
    VOLUME = {6},
      YEAR = {2008},
    NUMBER = {3},
     PAGES = {267--351},
      ISSN = {1527-5256,1540-2347},
   MRCLASS = {53D40 (53D35 55T10)},
  MRNUMBER = {2448827},
MRREVIEWER = {Timothy\ Perutz},
       DOI = {10.4310/jsg.2008.v6.n3.a3},
       URL = {https://doi.org/10.4310/jsg.2008.v6.n3.a3},
}

@article {Qin10,
    AUTHOR = {Qin, Lizhen},
     TITLE = {On moduli spaces and {CW} structures arising from {M}orse
              theory on {H}ilbert manifolds},
   JOURNAL = {J. Topol. Anal.},
  FJOURNAL = {Journal of Topology and Analysis},
    VOLUME = {2},
      YEAR = {2010},
    NUMBER = {4},
     PAGES = {469--526},
      ISSN = {1793-5253,1793-7167},
   MRCLASS = {58E05 (37D15 57R19)},
  MRNUMBER = {2748215},
MRREVIEWER = {David\ E.\ Hurtubise},
       DOI = {10.1142/S1793525310000409},
       URL = {https://doi.org/10.1142/S1793525310000409},
}

@article {PS64,
    AUTHOR = {Palais, R. S. and Smale, S.},
     TITLE = {A generalized {M}orse theory},
   JOURNAL = {Bull. Amer. Math. Soc.},
  FJOURNAL = {Bulletin of the American Mathematical Society},
    VOLUME = {70},
      YEAR = {1964},
     PAGES = {165--172},
      ISSN = {0002-9904},
   MRCLASS = {57.50 (57.55)},
  MRNUMBER = {158411},
MRREVIEWER = {J.\ Eells},
       DOI = {10.1090/S0002-9904-1964-11062-4},
       URL = {https://doi.org/10.1090/S0002-9904-1964-11062-4},
}

@article {AA24,
    AUTHOR = {Abouzaid, Mohammed and Auroux, Denis},
     TITLE = {Homological mirror symmetry for hypersurfaces in {$(\Bbb
              C^*)^n$}},
   JOURNAL = {Geom. Topol.},
  FJOURNAL = {Geometry \& Topology},
    VOLUME = {28},
      YEAR = {2024},
    NUMBER = {6},
     PAGES = {2825--2914},
      ISSN = {1465-3060,1364-0380},
   MRCLASS = {53D37 (53D40)},
  MRNUMBER = {4817474},
MRREVIEWER = {Johan\ Asplund},
       DOI = {10.2140/gt.2024.28.2825},
       URL = {https://doi.org/10.2140/gt.2024.28.2825},
}

@book {MaC01,
    AUTHOR = {McCleary, John},
     TITLE = {A user's guide to spectral sequences},
    SERIES = {Cambridge Studies in Advanced Mathematics},
    VOLUME = {58},
   EDITION = {Second},
 PUBLISHER = {Cambridge University Press, Cambridge},
      YEAR = {2001},
     PAGES = {xvi+561},
      ISBN = {0-521-56759-9},
   MRCLASS = {55Txx (18G40)},
  MRNUMBER = {1793722},
MRREVIEWER = {Frank\ Neumann},
}

@book {AD14,
    AUTHOR = {Audin, Mich\`ele and Damian, Mihai},
     TITLE = {Morse theory and {F}loer homology},
    SERIES = {Universitext},
      NOTE = {Translated from the 2010 French original by Reinie Ern\'e},
 PUBLISHER = {Springer, London; EDP Sciences, Les Ulis},
      YEAR = {2014},
     PAGES = {xiv+596},
      ISBN = {978-1-4471-5495-2; 978-1-4471-5496-9; 978-2-7598-0704-8},
   MRCLASS = {53-02 (53D40 58E05)},
  MRNUMBER = {3155456},
MRREVIEWER = {Sonja\ Hohloch},
       DOI = {10.1007/978-1-4471-5496-9},
       URL = {https://doi.org/10.1007/978-1-4471-5496-9},
}

@article {Alb08,
    AUTHOR = {Albers, Peter},
     TITLE = {A {L}agrangian {P}iunikhin-{S}alamon-{S}chwarz morphism and
              two comparison homomorphisms in {F}loer homology},
   JOURNAL = {Int. Math. Res. Not. IMRN},
  FJOURNAL = {International Mathematics Research Notices. IMRN},
      YEAR = {2008},
    NUMBER = {4},
     PAGES = {Art. ID rnm134, 56},
      ISSN = {1073-7928,1687-0247},
   MRCLASS = {53D40 (53D12)},
  MRNUMBER = {2424172},
MRREVIEWER = {Michael\ J.\ Usher},
       DOI = {10.1093/imrn/rnm134},
       URL = {https://doi.org/10.1093/imrn/rnm134},
}

@incollection {BC09,
    AUTHOR = {Biran, Paul and Cornea, Octav},
     TITLE = {A {L}agrangian quantum homology},
 BOOKTITLE = {New perspectives and challenges in symplectic field theory},
    SERIES = {CRM Proc. Lecture Notes},
    VOLUME = {49},
     PAGES = {1--44},
 PUBLISHER = {Amer. Math. Soc., Providence, RI},
      YEAR = {2009},
      ISBN = {978-0-8218-4356-7},
   MRCLASS = {53D40 (53D12 53D42)},
  MRNUMBER = {2555932},
MRREVIEWER = {Timothy\ Perutz},
       DOI = {10.1090/crmp/049/01},
       URL = {https://doi.org/10.1090/crmp/049/01},
}

@article {Ma15,
    AUTHOR = {Ma'u, Sikimeti},
     TITLE = {Quilted strips, graph associahedra, and {$A_\infty$}
              {$n$}-modules},
   JOURNAL = {Algebr. Geom. Topol.},
  FJOURNAL = {Algebraic \& Geometric Topology},
    VOLUME = {15},
      YEAR = {2015},
    NUMBER = {2},
     PAGES = {783--799},
      ISSN = {1472-2747,1472-2739},
   MRCLASS = {14H10 (14M25)},
  MRNUMBER = {3342676},
MRREVIEWER = {Jinwon\ Choi},
       DOI = {10.2140/agt.2015.15.783},
       URL = {https://doi-org.ezproxy.bu.edu/10.2140/agt.2015.15.783},
}

@article {MWW,
	AUTHOR = {Ma'u, S. and Wehrheim, K. and Woodward, C.},
	TITLE = {{$A_\infty$} functors for {L}agrangian correspondences},
	JOURNAL = {Selecta Math. (N.S.)},
	FJOURNAL = {Selecta Mathematica. New Series},
	VOLUME = {24},
	YEAR = {2018},
	NUMBER = {3},
	PAGES = {1913--2002},
	ISSN = {1022-1824},
	DOI = {10.1007/s00029-018-0403-5},
	URL = {https://doi-org.easyaccess2.lib.cuhk.edu.hk/10.1007/s00029-018-0403-5},
}

@book {Lan95,
    AUTHOR = {Lang, Serge},
     TITLE = {Differential and {R}iemannian manifolds},
    SERIES = {Graduate Texts in Mathematics},
    VOLUME = {160},
   EDITION = {Third},
 PUBLISHER = {Springer-Verlag, New York},
      YEAR = {1995},
     PAGES = {xiv+364},
      ISBN = {0-387-94338-2},
   MRCLASS = {53-01 (58-01)},
  MRNUMBER = {1335233},
MRREVIEWER = {Sorin\ Dragomir},
       DOI = {10.1007/978-1-4612-4182-9},
       URL = {https://doi-org.ezproxy.bu.edu/10.1007/978-1-4612-4182-9},
}

@article {MW10,
    AUTHOR = {Mawhin, Jean and Willem, Michel},
     TITLE = {Origin and evolution of the {P}alais-{S}male condition in
              critical point theory},
   JOURNAL = {J. Fixed Point Theory Appl.},
  FJOURNAL = {Journal of Fixed Point Theory and Applications},
    VOLUME = {7},
      YEAR = {2010},
    NUMBER = {2},
     PAGES = {265--290},
      ISSN = {1661-7738,1661-7746},
   MRCLASS = {58E05 (35J20 47J30 49J35)},
  MRNUMBER = {2729392},
MRREVIEWER = {Naoki\ Shioji},
       DOI = {10.1007/s11784-010-0019-7},
       URL = {https://doi-org.ezproxy.bu.edu/10.1007/s11784-010-0019-7},
}

@article {Abo11,
    AUTHOR = {Abouzaid, Mohammed},
     TITLE = {A topological model for the {F}ukaya categories of plumbings},
   JOURNAL = {J. Differential Geom.},
  FJOURNAL = {Journal of Differential Geometry},
    VOLUME = {87},
      YEAR = {2011},
    NUMBER = {1},
     PAGES = {1--80},
      ISSN = {0022-040X,1945-743X},
   MRCLASS = {53D37 (53D40)},
  MRNUMBER = {2786590},
MRREVIEWER = {Timothy\ Perutz},
       URL = {http://projecteuclid.org.ezproxy.bu.edu/euclid.jdg/1303219772},
}

@incollection {Poz99,
    AUTHOR = {Pozniak, Marcin},
     TITLE = {Floer homology, {N}ovikov rings and clean intersections},
 BOOKTITLE = {Northern {C}alifornia {S}ymplectic {G}eometry {S}eminar},
    SERIES = {Amer. Math. Soc. Transl. Ser. 2},
    VOLUME = {196},
     PAGES = {119--181},
 PUBLISHER = {Amer. Math. Soc., Providence, RI},
      YEAR = {1999},
      ISBN = {0-8218-2075-3},
   MRCLASS = {53D40 (57R58)},
  MRNUMBER = {1736217},
MRREVIEWER = {David\ E.\ Hurtubise},
       DOI = {10.1090/trans2/196/08},
       URL = {https://doi-org.ezproxy.bu.edu/10.1090/trans2/196/08},
}

@article {Sei99,
    AUTHOR = {Seidel, Paul},
     TITLE = {Lagrangian two-spheres can be symplectically knotted},
   JOURNAL = {J. Differential Geom.},
  FJOURNAL = {Journal of Differential Geometry},
    VOLUME = {52},
      YEAR = {1999},
    NUMBER = {1},
     PAGES = {145--171},
      ISSN = {0022-040X,1945-743X},
   MRCLASS = {53D12 (53D40 57R17 57R58)},
  MRNUMBER = {1743463},
MRREVIEWER = {Holger\ Spielberg},
       URL = {http://projecteuclid.org.ezproxy.bu.edu/euclid.jdg/1214425219},
}

@misc{Sch16,
      title={Floer homology of {L}agrangians in clean intersection}, 
      author={Felix Schmäschke},
      year={2016},
      eprint={1606.05327},
      archivePrefix={arXiv},
      primaryClass={math.SG},
Note = {\href{https://arxiv.org/abs/1606.05327}{arXiv:1606.05327}},
}

@article {CGS,
    AUTHOR = {Cieliebak, Kai and Gaio, Ana Rita and Salamon, Dietmar A.},
     TITLE = {{$J$}-holomorphic curves, moment maps, and invariants of
              {H}amiltonian group actions},
   JOURNAL = {Internat. Math. Res. Notices},
  FJOURNAL = {International Mathematics Research Notices},
      YEAR = {2000},
    NUMBER = {16},
     PAGES = {831--882},
      ISSN = {1073-7928,1687-0247},
       DOI = {10.1155/S1073792800000453},
       URL = {https://doi-org.libproxy.ibs.re.kr/10.1155/S1073792800000453},
}

@incollection {AM06,
    AUTHOR = {Abbondandolo, Alberto and Majer, Pietro},
     TITLE = {Lectures on the {M}orse complex for infinite-dimensional
              manifolds},
 BOOKTITLE = {Morse theoretic methods in nonlinear analysis and in
              symplectic topology},
    SERIES = {NATO Sci. Ser. II Math. Phys. Chem.},
    VOLUME = {217},
     PAGES = {1--74},
 PUBLISHER = {Springer, Dordrecht},
      YEAR = {2006},
      ISBN = {978-1-4020-4273-7; 1-4020-4273-6},
   MRCLASS = {58E05},
  MRNUMBER = {2276948},
MRREVIEWER = {David\ E.\ Hurtubise},
       DOI = {10.1007/1-4020-4266-3\_01},
       URL = {https://doi-org.ezproxy.bu.edu/10.1007/1-4020-4266-3_01},
}

@article {AM05,
    AUTHOR = {Abbondandolo, Alberto and Majer, Pietro},
     TITLE = {A {M}orse complex for infinite dimensional manifolds. {I}},
   JOURNAL = {Adv. Math.},
  FJOURNAL = {Advances in Mathematics},
    VOLUME = {197},
      YEAR = {2005},
    NUMBER = {2},
     PAGES = {321--410},
      ISSN = {0001-8708,1090-2082},
   MRCLASS = {58E05},
  MRNUMBER = {2173848},
MRREVIEWER = {James\ F.\ Reineck},
       DOI = {10.1016/j.aim.2004.10.007},
       URL = {https://doi-org.ezproxy.bu.edu/10.1016/j.aim.2004.10.007},
}

@incollection {PSS96,
    AUTHOR = {Piunikhin, S. and Salamon, D. and Schwarz, M.},
     TITLE = {Symplectic {F}loer-{D}onaldson theory and quantum cohomology},
 BOOKTITLE = {Contact and symplectic geometry ({C}ambridge, 1994)},
    SERIES = {Publ. Newton Inst.},
    VOLUME = {8},
     PAGES = {171--200},
 PUBLISHER = {Cambridge Univ. Press, Cambridge},
      YEAR = {1996},
      ISBN = {0-521-57086-7},
   MRCLASS = {57R57 (58E05 58F05)},
  MRNUMBER = {1432464},
MRREVIEWER = {Vsevolod\ V.\ Shevchishin},
}

@article {KS-quiver,
    AUTHOR = {Khovanov, M. and Seidel, P.},
     TITLE = {Quivers, {F}loer cohomology, and braid group actions},
   JOURNAL = {J. Amer. Math. Soc.},
  FJOURNAL = {Journal of the American Mathematical Society},
    VOLUME = {15},
      YEAR = {2002},
    NUMBER = {1},
     PAGES = {203--271},
      ISSN = {0894-0347,1088-6834},
       DOI = {10.1090/S0894-0347-01-00374-5},
       URL = {https://doi.org/10.1090/S0894-0347-01-00374-5},
}

@article{MSZ,
    author={McBreen, M. and Shende, V. and Zhou, P.},
	Journal = {preprint},
	Note = {\href{https://arxiv.org/abs/2405.07955}{arXiv:2405.07955}},
YEAR = {2024},
    title={{T}he {H}amiltonian reduction of hypertoric mirror symmetry}}

@article{LLLM,
    author={Lee, S. and Li, Y. and Liu, S.-Y. and Mak, C. Y.},
	Journal = {Geometry and Topology},
	Note = {to appear},
    YEAR = {2025},
    title={{F}ukaya categories of hyperplane arrangements}
}

@article{XL,
    author={Xie, B. and Li, Y.},
	Journal = {preprint},
	Note = {\href{https://arxiv.org/abs/2511.22837}{arXiv:2511.22837}},
    YEAR = {2025},
    title={{P}lumbings of lens spaces and crepant resolutions of compound ${A}_n$ singularities}
}

@article{AsplundLi,
    author={Asplund, J. and Li, Y.},
	Journal = {Journal of Topology},
	Note = {to appear},
    title={{P}ersistence of unknottedness of clean {L}agrangian intersections}, 
     YEAR = {2025}
}

@article {KKK,
    AUTHOR = {Kim, J. and Kim, S. and Kwon, M.},
     TITLE = {Equivariant wrapped {F}loer homology and symmetric periodic
              {R}eeb orbits},
   JOURNAL = {Ergodic Theory Dynam. Systems},
  FJOURNAL = {Ergodic Theory and Dynamical Systems},
    VOLUME = {42},
      YEAR = {2022},
    NUMBER = {5},
     PAGES = {1708--1763},
      ISSN = {0143-3857,1469-4417},
       DOI = {10.1017/etds.2020.144},
       URL = {https://doi.org/10.1017/etds.2020.144},
}

@incollection{Lau92,
     author = {Laudenbach, Fran\c{c}ois},
     title = {Appendix. {On} the {Thom-Smale} complex},
     booktitle = {An extension of a Theorem by Cheeger and M\"uller},
     series = {Ast\'erisque},
     pages = {219--233},
     year = {1992},
     publisher = {Soci\'et\'e math\'ematique de France},
     number = {205},
     language = {en},
     url = {https://www.numdam.org/item/AST_1992__205__219_0/}
}

\end{document}